\newtheorem{theorem}{Theorem}[section]
\newtheorem{lemma}[theorem]{Lemma}
\numberwithin{equation}{section}
\def\p{\partial}
\def\N{\mathbb N}
\def\R{\mathbb R}
\def\C{\mathbb C}
\newcommand\RR{{{\mathbb R}}}
\newcommand{\rr}{\mathbb{R}}
\newcommand{\eps}{\varepsilon}
\newcommand{\nn}{\mathbb{N}}
\newcommand{\cc}{\mathbb{C}}
\def\un{{\mathrm{1~\hspace{-1.4ex}l}}}
\def\poscal#1#2{\langle#1,#2\rangle}
\def\val#1{\vert#1\vert}
\def\valjp#1{\langle#1\rangle}
\def\l2{L^2(\R^{n})}
\def\L2{L^2(\R^{2n})}
\def\vs{\vskip.3cm}
\let \dis=\displaystyle
\let \dis=\displaystyle
\def\mat22#1#2#3#4{\begin{pmatrix}#1&#2\\ #3&#4\end{pmatrix}}
\def\finp{\operatorname{fp}}
\def\io{{\infty}}
\def\R{\mathbb R}\def\C{\mathbb C}\def\N{\mathbb N}
\def\D{\partial}\def\eps{\varepsilon}\def\phi{\varphi}
\newcommand{\real}{\mathbb{R}}
\def\wrt{with respect to }
\begin{document}


\title[Gelfand-Shilov and Gevrey smoothing effect for the Kac equation]{Gelfand-Shilov and Gevrey smoothing effect for the spatially inhomogeneous non-cutoff Kac equation}
\author{N. Lerner, Y. Morimoto, K. Pravda-Starov  \& C.-J. Xu}
\date{\today}
\address{\noindent \textsc{N. Lerner, Institut de Math\'ematiques de Jussieu,
Universit\'e Pierre et Marie Curie,
4 Place Jussieu,
75252 Paris cedex 05,
France}}
\email{nicolas.lerner@imj-prg.fr}
\address{\noindent \textsc{Y. Morimoto, Graduate School of Human and Environmental Studies,
Kyoto University, Kyoto 606-8501, Japan}}
\email{morimoto@math.h.kyoto-u.ac.jp }
\address{\noindent \textsc{K. Pravda-Starov,
IRMAR, CNRS UMR 6625, Universit\'e de Rennes 1,
Campus de Beaulieu, 263 avenue du G\'en\'eral Leclerc, CS 74205,
35042 Rennes cedex, France}}
\email{karel.pravda-starov@univ-rennes1.fr}
\address{\noindent \textsc{C.-J. Xu, Universit\'e de Rouen, CNRS UMR 6085, Laboratoire de Math\'ematiques, 76801 Saint-Etienne du Rouvray, France\\
and\\
School of Mathematics, Wuhan university 430072, Wuhan, P.R. China
}}
\email{Chao-Jiang.Xu@univ-rouen.fr}
\keywords{Kac equation, Gevrey regularity, Gelfand-Shilov regularity, smoothing effect, hypoellipticity, microlocal analysis}
\subjclass[2000]{35H10, 35Q20, 35S05}

\begin{abstract}
We consider the spatially inhomogeneous non-cutoff Kac's model of the Boltzmann equation.  We prove that the Cauchy problem for the fluctuation around the Maxwellian distribution enjoys Gelfand-Shilov regularizing properties with respect to the velocity variable and Gevrey regularizing properties with respect to the position variable.
\end{abstract}

\maketitle

\section{Introduction}

Kinetic equations with long range interactions, such as the Boltzmann equation without angular cutoff, are known to enjoy smoothing effects for the solutions of the associated  Cauchy problems. There have been recently a series of works studying the $C^\infty$ smoothing properties of the spatially inhomogeneous non-cutoff Boltzmann equation
(see the articles by
Alexandre, Morimoto, Ukai, Xu \&Yang 
\cite{AMUXY2,AMUXY1,AM}). These studies were inspired by a pioneer work by Desvillettes \& Wennberg~\cite{DW}, together with previous results~\cite{AS1, AS2,  AMUXY-KJM,  HMUY,MUXY-DCDS} for the spatially homogeneous Boltzmann equation and an earlier work in the mid-nineties for a model equation of the radially symmetric spatially homogeneous Boltzmann equation given by the Kac equation~\cite{D95}
(see also \cite{MR2594923,MR1407542,MR1720101,MR1711273}).

Regarding the Gevrey smoothing features and following the work \cite{MR2556716},
 we studied in the recent article~\cite{LMPX3} the Gelfand-Shilov
regularizing properties of the radially symmetric spatially homogeneous non-cutoff Boltzmann equation and we established that the Cauchy problem for small fluctuations around the Maxwellian distribution enjoys the very same smoothing properties as the linear evolution equation associated to a fractional power of the harmonic oscillator
\begin{equation}\label{fracsmooth}
\begin{cases}
\partial_tg+\mathcal{H}^sg=0,\\
g|_{t=0}=g_0  \in L^2(\rr_v^d),
\end{cases}
\end{equation}
with $0<s<1$, $\mathcal{H}=-\triangle_v+\frac{|v|^2}{4}$ and $d=3$.
This result shows that the radially symmetric spatially homogeneous Boltzmann equation, which reduces to the spatially homogeneous Kac equation, enjoys a Gelfand-Shilov smoothing effect in the space $S^{1/2s}_{1/2s}(\rr_v^d)$ for all positive time $t>0$, where the Gelfand-Shilov spaces $S_{\nu}^{\mu}(\rr_v^d)$, with $\mu,\nu>0$, $\mu+\nu\geq 1$, are defined as the spaces of smooth functions $f \in C^{\infty}(\rr_v^d)$ satisfying
$$\exists C \geq 1, \forall \alpha,\beta \in \nn^d, \quad \sup_{v \in \rr^d}|v^{\alpha}\partial_v^{\beta}f(v)| \leq C^{|\alpha|+|\beta|+1}(\alpha !)^{\nu}(\beta !)^{\mu}.$$
The Gelfand-Shilov spaces  $S_{\nu}^{\mu}(\rr^d)$ may also be characterized as the spaces of Schwartz functions belonging to the Gevrey space $G^{\mu}(\rr^d)$, whose Fourier transforms belong to the Gevrey space $G^{\nu}(\rr^d)$.

The analysis of the Gevrey regularizing properties of spatially inhomogeneous kinetic equations with respect to both position and velocity variables is more intricated. There are up to now only very few results except for a very simplified model of the linearized spatially inhomogeneous non-cutoff Boltzmann equation given by
the generalized Kolmogorov equation
\begin{equation}\label{H2}
\begin{cases}
\partial_tg+v \cdot \nabla_x g+(-\triangle_v)^s g=0,\\
g|_{t=0}=g_0\in L^2(\RR^{2d}_{x,v}),
\end{cases}
\end{equation}
with $0<s<1$, for which the second and the last authors established in~\cite{MX} that the solution to the Cauchy problem \eqref{H2} satisfies
\begin{equation}\label{frc2}
\exists c>0, \forall t>0, \quad e^{c(t^{2s+1}(-\triangle_x)^s +t\,(-\triangle_v)^s)} g(t)\in L^2(\RR^{2d}_{x,v}).
\end{equation}
This result indicates that the generalized Kolmogorov equation enjoys a $G^{\frac{1}{2s}}(\rr_{x,v}^{2d})$ Gevrey smoothing effect with respect to both position and velocity variables, despite the fact that diffusion only occurs in the velocity variables. This phenomenon of hypoellipticity is due to non-commutation and non-trivial interactions between the transport part $v \cdot\nabla_x$ and the diffusion part $(-\triangle_v)^s$ in this evolution equation.
The occurrence of hypoelliptic properties for kinetic equations was used and pointed out in many recent works,
such as the paper by Ars\'enio \& Saint-Raymond \cite{MR2832590}, as well as Golse's survey \cite{MR2976422}.
The work by  Alexandre, Morimoto, Ukai, Xu \& Yang \cite{AMUXY1}
highlighted the importance of regularization effects for Boltzmann equation
(see also \cite{AMUXY2,MR2877343,DFT,MR3050180}).
It served as a motivation for us to explore
more completely the behaviour of solutions of Kac's equation, 
a somewhat simplified model of Boltzmann equation
but still keeping some of the main features of Boltzmann's.
Studying whether this type of Gevrey smoothing features does hold, or not, for the spatially inhomogeneous non-cutoff Boltzmann equation is a challenging problem in mathematical physics. 
The models \eqref{fracsmooth} and \eqref{H2} are linear equations hopefully capturing some of the features of the Boltzmann equation regularizing properties.
We aim here at studying these regularizing properties for a non-linear model 
close to the Boltzmann equation. As an attempt for further understanding of the Gevrey smoothing features of
the Boltzmann equation, we study in this article the Gevrey regularizing properties of the spatially inhomogeneous Kac's model of the non-cutoff Boltzmann equation.

The spatially inhomogeneous Kac equation reads as the kinetic equation
\begin{equation}\label{w1}
\begin{cases}
\partial_tf+v\partial_{x}f=K(f,f),\\
f|_{t=0}=f_0,
\end{cases}
\end{equation}
for the density distribution of particles $f=f(t,x,v)$ at time $t$, having position $x \in \rr$ and velocity $v \in \rr$.
The Kac collision operator is defined as
$$K(g,f)=\int_{\val \theta\le \frac{\pi}{4}}\beta(\theta)\left(\int_{\RR} (g'_* f'-g_*f )dv_*\right)d\theta,$$
with the standard shorthand $f_*'=f(t,x,v_*')$, $f'=f(t,x,v')$, $f_*=f(t,x,v_*)$, $f=f(t,x,v)$, where the relations between pre and post collisional velocities
$$v'+iv'_{*}=e^{i\theta}(v+iv_{*}), \textrm{ i.e.,} \quad v'=v\cos\theta - v_*\sin\theta, \quad v'_* =v\sin\theta +v_*\cos\theta, \quad v,v_* \in \rr,$$
follow from the conservation of the kinetic energy in the binary collisions
$$v^2+v_{\ast}^2=v'^2+v_{\ast}'^2.$$
In this definition, the cross section is assumed to be an even non-negative function satisfying
$$\beta \geq 0, \quad \beta\in L^1_{\textrm{loc}}(]0,1[), \quad \beta(-\theta)=\beta(\theta),$$
with a non-integrable singularity for grazing collisions
$$\int_{-\frac{\pi}{4}}^{\frac{\pi}{4}}\beta(\theta)d\theta=+\infty.$$
This non-integrability plays a major role regarding the qualitative behaviour of the solutions of the Kac equation and this feature is essential for the smoothing effect to be present. Indeed, as first observed by Desvillettes~\cite{D95}, the grazing collisions accounting for the non-integrability of the cross section near $\theta=0$ do induce smoothing effects for the solutions of the non-cutoff Kac equation, or more generally for the solutions of the non-cutoff Boltzmann equation. On the other hand, these solutions are at most as regular as the initial data, see e.g.~\cite{36}, when the cross section is assumed to be integrable, or after removing the singularity by using a cutoff function (Grad's angular cutoff assumption).

We consider a cross section with a non-integrable singularity of the type
\begin{equation}\label{w5}
\beta(\theta) \ \substack{ \\ \dis\approx \\ \theta \to 0} |\theta|^{-1-2s},
\end{equation}
for\footnote{The notation $a\approx b$ means $a/b$
is bounded from above and below by fixed positive constants.} some given parameter $0 < s <1$. Under this assumption, the Kac collision operator may be defined as a finite part integral. We refer the reader to the appendix (Section~\ref{kacsection}) for details about this definition as a finite part integral.
Details on the physics background may be found in the extensive expositions~\cite{17,villani2} and the references therein.

We study the Kac equation in a close to equilibrium framework and consider the fluctuation
$$f=\mu+\sqrt{\mu}g,$$
around the normalized Maxwellian distribution
\begin{equation}\label{cv1}
\mu(v)=(2\pi)^{-\frac{1}{2}}e^{-\frac{v^2}{2}}, \quad v \in \rr.
\end{equation}
Since $K(\mu,\mu)=0$ by conservation of the kinetic energy, the Kac equation (\ref{w1}) for the fluctuation reads as
\begin{equation}\label{w7}
\begin{cases}
\partial_tg+v\partial_{x}g+\mathcal{K}g=\Gamma(g, g),\\
g|_{t=0}=g_0,
\end{cases}
\end{equation}
where $\mathcal{K}$ stands for the linearized Kac operator
$$\mathcal{K}g=-\mu^{-1/2}K(\mu,\mu^{1/2}g)-\mu^{-1/2}K(\mu^{1/2}g,\mu),$$
with
\begin{equation}\label{wr1}
\Gamma(f, g)=\mu^{-1/2}K(\mu^{1/2}f,\mu^{1/2}g).
\end{equation}
The linearized Kac operator was studied in the work~\cite{LMPX1}. We recall from this work that $\mathcal{K}$ is a non-negative unbounded operator on $L^2(\rr_v)$ with domain
$$\mathcal D=\Big\{u\in L^2({\rr_v}),\ \sum_{k\ge 0} k^{2s}\Vert{\mathbb P_{k}u}\Vert_{L^2}^2< +\infty\Big\}=\{u\in L^2({\rr_v}),\ \mathcal H^s u\in L^2(\rr_v)\},$$
where
\begin{align*}
\mathcal{H}=-\Delta_v+\frac{v^2}{4}  =
\sum_{k\ge 0} \Big( k + \frac{1}{2}\Big) \mathbb P_{k},
\end{align*}
stands for the harmonic oscillator and $\mathbb P_{k}$ denote the orthogonal projections
onto the Hermite basis described in Section \ref{6.sec.harmo}.
The fractional harmonic oscillator
$$\mathcal{H}^s=\sum_{k\ge 0} \Big( k + \frac{1}{2}\Big)^s \mathbb P_{k},$$
is defined through  functional calculus.
The linearized Kac operator
is diagonal in the Hermite basis
\begin{equation}\label{sal1}
\mathcal{K}=\sum_{k\ge 1}\lambda_{k}\mathbb P_{k},
\end{equation}
with a spectrum only composed by the non-negative eigenvalues
$$\lambda_{2k+1}=\int^{\frac{\pi}{4}}_{-\frac{\pi}{4}}\beta(\theta)
\big(1-(\cos\theta)^{2k+1}\big)d\theta \geq 0, \quad  k \geq 0,$$
$$\lambda_{2k}=\int^{\frac{\pi}{4}}_{-\frac{\pi}{4}}\beta(\theta)
\big(1-(\cos\theta)^{2k}-(\sin \theta)^{2k}\big)d\theta \geq 0, \quad  k \geq 1,$$
satisfying the asymptotic estimates
\begin{equation}\label{3.llkkb}
\lambda_{k} \approx k^s,
\end{equation}
when $k\rightarrow+\io$.
We notice that
$$0=\lambda_2 \leq \lambda_{2k} < \lambda_{2l}, \quad 0< \lambda_1 < \lambda_{2k+1} < \lambda_{2l+1},$$
when $1 \leq k <l$, and that $\lambda_1$ is the lowest positive eigenvalue.
The linearized Kac operator enjoys the coercive estimates
\begin{equation}\label{tr4}
\exists C>0, \forall f \in \mathscr{S}(\rr_v), \quad   \frac{1}{C}\|\mathcal{H}^{\frac{s}{2}}f\|_{L^2}^2 \leq  (\mathcal{K}f,f)_{L^2}+\|f\|_{L^2}^2  \leq C \|\mathcal{H}^{\frac{s}{2}}f\|_{L^2}^2,
\end{equation}
with a kernel given by
$$\textrm{Ker }\mathcal{K}=\textrm{Span}\{\psi_0,\psi_2\}.$$
The definition of the Hermite basis $(\psi_n)_{n \geq 0}$ is recalled in Section \ref{6.sec.harmo}.
We also recall the phase space properties of the linearized Kac operator established in~\cite{LMPX1}.
To that end, we make the following choice for the cross section
\begin{equation}\label{sing}
\beta(\theta)=\frac{|\cos\frac{\theta}{2}|}{|\sin\frac{\theta}{2}|^{1+2s}},\quad
\val \theta\leq\frac{\pi}{4}.
\end{equation}
This choice of cross section is made for simplicity in order to use directly the results of~\cite{LMPX1}. Notice that these results may be extended to a wider class of cross sections with the non-integrable singularity
$$\beta(\theta) \ \substack{ \\ \dis\sim \\ \theta \to 0} |\theta|^{-1-2s}.$$
With that choice, the eigenvalues satisfy the asymptotic equivalent
$$\lambda_{k}
\begin{matrix}{\scriptscriptstyle~}\\\sim\\{\scriptstyle k\rightarrow+\io} 
\end{matrix}\frac{2^{1+s}}{s}\mathbf{\Gamma}(1-s)k^s,$$ 
where $\mathbf{\Gamma}$ denotes the Gamma function. Furthermore, the linearized  Kac operator
\begin{equation}\label{cp4}
\mathcal K u=l^w(v,D_v) u =\frac{1}{2\pi}\int_{\rr^{2}}e^{i(v-y)  \eta}\, l\Big(\frac{v+y}{2},\eta\Big)u(y)dyd\eta,
\end{equation}
is a pseudodifferential operator whose Weyl symbol belongs to $\mathbf{S}^s(\rr^{2})$,
where for $m \in \rr$, the symbol class $\mathbf{S}^m(\rr^{2})$ is defined as  the set of smooth functions $a : \R^2 \rightarrow \C$ satisfying
$$\forall (\alpha,\beta) \in \nn^{2}, \exists C_{\alpha\beta}>0, \forall (v,\eta) \in \rr^{2},\quad
|\partial_v^{\alpha}\partial_{\eta}^{\beta}a(v,\eta)| \leq C_{\alpha,\beta} \langle (v,\eta) \rangle^{2m-|\alpha|-|\beta|},$$
with $\valjp{(v,\eta)}=\sqrt{1+\val v^2+\val \eta^2}$.
More specifically, the Weyl symbol $l(v,\eta)$ admits the following asymptotic expansion
\begin{multline}\label{cp5}
\forall N \geq 1, \quad  l(v,\eta)\equiv \frac{2^{1+s}}{s}\mathbf{\Gamma}(1-s) \Big(1+\eta^2+\frac{v^2}{4}\Big)^s-\frac{2^{1+s}(2+\sqrt{2})^s}{s} \\
+\sum_{k=1}^Nc_k\Big(1+\eta^2+\frac{v^2}{4}\Big)^{s-k} \textrm{ mod } \mathbf{S}^{s-N-1}(\rr^2),
\end{multline}
where $(c_k)_{k \geq 1}$ is a sequence of real numbers.
\vs
By using the above analysis, we established in~\cite{LMPX3} that the Cauchy problem associated to the spatially homogeneous Kac equation
$$
\begin{cases}
\partial_tg+\mathcal{K}g=\Gamma(g, g),\\
g|_{t=0}=g_0 \in L^2(\rr_v),
\end{cases}$$
enjoys exactly the same regularizing properties  
\begin{equation}\label{frc1}
\forall t>0, \exists C>1, \forall p,q \in \nn, \quad \sup_{v \in \rr}|v^p\partial_v^qg(t)| \leq C^{p+q+1}(p !)^{\frac{1}{2s}}(q !)^{\frac{1}{2s}},
\end{equation}
as the evolution equation~\eqref{fracsmooth}.
In this article, we consider the spatially inhomogeneous case and show that the Cauchy problem (\ref{w7}) is locally well-posed for sufficiently small initial data in the Sobolev space $H^{(1,0)}(\rr_{x,v}^2)$, where
$$H^{(r_1,r_2)}(\rr_{x,v}^2)=\{u \in L^2(\rr_{x,v}^2) : \langle D_x \rangle^{r_1}\langle D_v \rangle^{r_2} u \in L^2(\rr_{x,v}^2)\},$$
equipped with the dot product
$$(f,g)_{(r_1,r_2)}=\big(\langle D_x \rangle^{r_1}\langle D_v \rangle^{r_2}f,\langle D_x \rangle^{r_1}\langle D_v \rangle^{r_2}g\big)_{L^2(\rr_{x,v}^2)},$$
where $r_1,r_2 \in \rr$,  $\langle\, \cdot\, \rangle=\sqrt{1+|\cdot|^2}$, with $|\cdot|$ the Euclidean norm.
By taking advantage of the hypoelliptic properties of the linear operator
$$P=v\partial_x+\mathcal{K},$$
we prove that this Cauchy problem enjoys the same Gelfand-Shilov regularizing effect in the velocity variable and Gevrey regularizing effect in the position variable as the following evolution equation
$$\begin{cases}
\partial_tg+(\sqrt{\mathcal{H}}+\langle D_x\rangle)^{\frac{2s}{2s+1}}g=0,\\
g|_{t=0}=g_0 \in H^{(1,0)}(\rr_{x,v}^2).
\end{cases}$$
The following theorem is the main result contained in this article:

\medskip

\begin{theorem}\label{qw13.66ee}
Let $0<T<+\infty$. We assume that the collision cross section satisfies \eqref{sing} with $0<s<1$. Then,
there exist some positive constants $\eps_0>0$, $c_0 > 1$ such that for all $g_0 \in H^{(1,0)}(\rr_{x,v}^2)$ satisfying
$$\|g_0\|_{(1,0)} \leq \eps_0,$$
the Cauchy problem associated to the spatially inhomogeneous Kac equation
$$\begin{cases}
\partial_tg+v\partial_{x}g+\mathcal{K}g=\Gamma(g,g),\\
g|_{t=0}=g_0,
\end{cases}$$
admits a unique weak solution $g \in L^{\infty}([0,T],H^{(1,0)}(\rr_{x,v}^2))$
satisfying
$$\|g\|_{L^{\infty}([0,T],H^{(1,0)}(\rr_{x,v}^2))}+\|\mathcal{H}^{\frac{s}{2}}g\|_{L^{2}([0,T],H^{(1,0)}(\rr_{x,v}^2))} \leq c_0\|g_0\|_{(1,0)},$$
with $\mathcal{H}=-\triangle_v+\frac{v^2}{4}$.
Furthermore, this solution is smooth for all positive time $0<t \leq T$,
and satisfies  the Gelfand-Shilov and Gevrey type estimates:
$$\exists C>1, \forall 0<t \leq T, \forall k \geq 0, \quad \|(\sqrt{\mathcal{H}}+\langle D_x \rangle)^kg(t)\|_{(1,0)}\leq \frac{1}{t^{\frac{2s+1}{2s}k}}C^{k+1} (k!)^{\frac{2s+1}{2s}}\|g_0\|_{(1,0)},$$
in particular
\begin{multline*}
\forall \delta>0, \exists C>1, \forall 0<t \leq T, \forall k,l,p \geq 0, \\  \|v^k\partial_v^l\partial_x^pg(t)\|_{L^{\infty}(\rr_{x,v}^2)}
 \leq   \frac{C^{k+l+p+1}}{t^{\frac{2s+1}{2s}(k+l+p+3)+\delta}} (k!)^{\frac{2s+1}{2s}}(l!)^{\frac{2s+1}{2s}}(p!)^{\frac{2s+1}{2s}}\|g_0\|_{(1,0)}.
\end{multline*}
\end{theorem}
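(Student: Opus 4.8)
The plan is to prove the two assertions of the theorem — local well-posedness in $H^{(1,0)}(\rr_{x,v}^2)$ and the instantaneous Gelfand--Shilov/Gevrey smoothing — by combining weighted energy estimates with the hypoelliptic (subelliptic) properties of the linear operator $P=v\partial_x+\mathcal{K}$.

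\textbf{Step 1 (basic energy estimate and well-posedness).} First one sets up a regularized approximation of the Cauchy problem for \eqref{w7} (for instance truncating the singularity of the cross section, or adding a vanishing velocity friction) and derives a priori bounds that are uniform with respect to the regularization. The $H^{(1,0)}$ estimate comes from pairing the equation with $g$ in $H^{(1,0)}$: since $\langle D_x\rangle$ is a Fourier multiplier in $x$, it commutes with the multiplication by $v$ and with $\partial_x$, hence with the skew-adjoint transport operator $v\partial_x$, which therefore contributes nothing; likewise $\langle D_x\rangle$ commutes with $\mathcal{K}$, which is diagonal in the Hermite basis and acts only in $v$, so the coercive estimate \eqref{tr4} upgrades to $\tfrac1C\|\mathcal{H}^{s/2}g\|_{(1,0)}^2-\|g\|_{(1,0)}^2\le(\mathcal{K}g,g)_{(1,0)}$ on $\rr_{x,v}^2$. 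The nonlinear term is treated with the trilinear estimates for $\Gamma$ established in \cite{LMPX1,LMPX3}; here the choice of the \emph{integer} weight $r_1=1$ in the position variable is what makes the Leibniz rule $\partial_x\Gamma(g,g)=\Gamma(\partial_xg,g)+\Gamma(g,\partial_xg)$ available, leading to a bound of the form $|(\Gamma(g,g),g)_{(1,0)}|\lesssim\|g\|_{(1,0)}\,\|\mathcal{H}^{s/2}g\|_{(1,0)}^2$. For $\|g_0\|_{(1,0)}\le\eps_0$ small enough, a continuity/bootstrap argument closes the estimate on $[0,T]$ and yields the stated bound; uniqueness follows from the analogous estimate applied to the difference of two solutions, and the uniform bounds let one pass to the limit in the approximation.

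\textbf{Step 2 (subelliptic estimate for $P$).} The engine of the smoothing is a Kolmogorov-type hypoelliptic estimate for $P=v\partial_x+\mathcal{K}$. Using the pseudodifferential representation \eqref{cp4}--\eqref{cp5} of $\mathcal{K}$ (Weyl symbol in $\mathbf{S}^s(\rr^2)$, with principal part comparable to $\mathcal{H}^s$) together with the non-commutation of $v\partial_x$ and the velocity diffusion, one establishes an estimate of the type
$$\|\langle D_x\rangle^{\frac{2s}{2s+1}}u\|_{L^2(\rr^2)}^2+\|\mathcal{H}^{s/2}u\|_{L^2(\rr^2)}^2\ \lesssim\ \|Pu\|_{L^2(\rr^2)}^2+\|u\|_{L^2(\rr^2)}^2 ,$$
that is, the transport direction buys back $\tfrac{2s}{2s+1}$ derivatives in $x$; this is the fractional analogue, for $x$, of the classical regularization of $\partial_t+v\partial_x-\triangle_v$, and it is what produces the anisotropic exponent $2s+1$ appearing in \eqref{frc2}. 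I would prove it in a form that is localized in frequency and compatible with the weighted norms $\|\cdot\|_{(1,0)}$, together with its parabolic version with a gain of one power of $t$.

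\textbf{Step 3 (inductive weighted estimates and conclusion).} Set $\Lambda=\sqrt{\mathcal{H}}+\langle D_x\rangle$ and $a=\tfrac{2s+1}{2s}$. One proves by induction on $N\ge0$ the estimate
$$\sup_{0<t\le T}\Big\|\tfrac{t^{aN}}{(N!)^{a}}\Lambda^{N}g(t)\Big\|_{(1,0)}+\Big\|\tfrac{t^{aN}}{(N!)^{a}}\mathcal{H}^{s/2}\Lambda^{N}g\Big\|_{L^2([0,T],H^{(1,0)})}\ \le\ c_0^{N+1}\|g_0\|_{(1,0)},$$
which is exactly the asserted Gelfand--Shilov/Gevrey bound (the weight $t^{aN}$ compensates for the fact that $g_0$ has no extra regularity, the case $N=0$ being Step~1). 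This is obtained by running the energy estimate on $h_N=\tfrac{t^{aN}}{(N!)^a}\Lambda^Ng$: the time derivative produces the favorable gain $\tfrac{aN}{t}h_N$, the operator $\mathcal{K}$ commutes with $\Lambda$ and yields the dissipation via Step~1, and the transport term is split as $\Lambda^N(v\partial_xg)=v\partial_x\Lambda^Ng+[\Lambda^N,v\partial_x]g$, the first term being harmless by skew-adjointness. The nonlinear contribution is handled by the Leibniz rule in $x$, the commutator-type trilinear estimates for $\Gamma$, and the induction hypothesis applied to the factors. The delicate point, and the main obstacle, is the commutator $[\Lambda^N,v\partial_x]g$: since $[\Lambda,v\partial_x]=[\sqrt{\mathcal{H}},v]\,\partial_x$ is (order $0$ in $v$) times $\partial_x$, the expansion $[\Lambda^N,v\partial_x]=\sum_{j=0}^{N-1}\Lambda^{j}[\Lambda,v\partial_x]\Lambda^{N-1-j}$ produces $N$ terms each carrying one $x$-derivative, which must be absorbed using the subelliptic gain $\langle D_x\rangle^{2s/(2s+1)}$ of Step~2 together with the dissipation $\|\mathcal{H}^{s/2}\Lambda^Ng\|$, the combinatorial factor $N$ and the surplus power $\langle D_x\rangle^{1/(2s+1)}$ being controlled through the induction hypothesis; it is essential here that $a>1$, so that $(N!)^a$ dominates $N!$ and leaves enough room for the geometric series to converge. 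Finally, to pass from this $L^2$-type bound to the pointwise Gelfand--Shilov/Gevrey estimate, one applies the above with $N$ increased by a fixed amount in order to afford the Sobolev embedding $H^{(1+\eps,\eps)}(\rr^2)\hookrightarrow L^\infty(\rr^2)$ — this is where the extra $+3$ and the arbitrarily small $\delta>0$ enter the power of $t$ — and uses $\|v^k\partial_v^l\partial_x^pg\|_{L^2}\lesssim\|\mathcal{H}^{(k+l)/2}\langle D_x\rangle^pg\|_{L^2}\lesssim\|\Lambda^{k+l+p}g\|_{L^2}$ together with $(k+l+p)!\le3^{k+l+p}\,k!\,l!\,p!$.
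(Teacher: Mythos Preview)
Your Steps~1 and~2 are broadly correct in spirit, though the paper implements them differently (an iteration on linear problems rather than a direct bootstrap, and a multiplier inequality $\mathrm{Re}(Pu,(1-\eps G)u)\ge c\|\mathcal{H}^{s/2}u\|^2+c\|\langle D_x\rangle^{\frac{s}{2s+1}}u\|^2-C\|u\|^2$ rather than the two-sided estimate you state). The serious issue is Step~3.

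The term $\tfrac{aN}{t}h_N$ arising from $\partial_t(t^{aN})$ is \emph{not} favorable: in the energy identity for $h_N=\tfrac{t^{aN}}{(N!)^a}\Lambda^N g$ it appears as $+\tfrac{aN}{t}\|h_N\|^2$ on the right-hand side, and neither Gronwall (the coefficient $N/t$ is not integrable at $t=0$) nor the hypoelliptic dissipation (which is of fixed strength, independent of $N$) can absorb it. Your proposed induction therefore does not close as written. This is not a cosmetic point: the whole mechanism of the paper's proof is to replace the polynomial weights $t^{aN}\Lambda^N$ by the \emph{exponential} weight
\[
M_\delta(t)=\exp\!\big(\delta t(\sqrt{\mathcal{H}}+\langle D_x\rangle)^{\frac{2s}{2s+1}}\big),
\]
so that the time derivative of the weight produces $\delta\,\Lambda^{\frac{2s}{2s+1}}$ acting on $M_\delta(t)g$. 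This is exactly matched by the hypoelliptic dissipation of Step~2, and the small parameter~$\delta$ makes it absorbable; there is no $1/t$ singularity and no factor of~$N$. The commutator $[M_\delta(t),v]\partial_x$ is then shown (Lemmas~5.3--5.4 in the paper) to be $O(\delta t)$ times controlled quantities, again absorbable for small~$\delta$. Only \emph{after} obtaining the uniform bound $\|M_\delta(t)g(t)\|_{(1,0)}\lesssim\|g_0\|_{(1,0)}$ does one extract the polynomial estimates $\|\Lambda^k g(t)\|_{(1,0)}\le C^{k+1}(k!)^a t^{-ak}\|g_0\|_{(1,0)}$, via the elementary inequality $x^k e^{-c x^{1/a}}\le (k!)^a c^{-ak}$.

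A second gap concerns the nonlinear term: commuting $\Lambda^N$ with $\Gamma(g,g)$ is not a Leibniz rule in~$x$ alone, since $\Lambda$ involves $\sqrt{\mathcal{H}}$; the paper handles this by proving a trilinear estimate (Lemma~2.4) that is \emph{uniform in the exponential weight}, using the explicit Hermite-basis structure $\Gamma(\psi_k,\psi_l)=\alpha_{k,l}\psi_{k+l}$ and the sub-additivity $(a+b)^{\frac{2s}{2s+1}}\le a^{\frac{2s}{2s+1}}+b^{\frac{2s}{2s+1}}$. Your sketch does not indicate how the analogous commutation with $\Lambda^N$ would be controlled with the required $N$-dependence.
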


\medskip

This result establishes a Gelfand-Shilov smoothing effect in the velocity variable and a Gevrey smoothing effect in the position variable for the spatially inhomogeneous Kac equation 
$$\forall t>0,  \forall x\in\rr, \quad g(t, x,\,\cdot\,)\in S^{1+\frac{1}{2s}}_{1+\frac{1}{2s}}(\rr_v), \quad \forall t>0, \forall v\in\rr, \quad g(t,\,\cdot,\,v)\in G^{1+\frac{1}{2s}}(\rr_x).$$
We underline that in addition to unveiling this Gelfand-Shilov and Gevrey regularizing effects, the result of Theorem~\ref{qw13.66ee} also provides an explicit control of the Gelfand-Shilov and Gevrey semi-norms of the solutions for small times $t>0$.
The result of Theorem 1.1
is much more precise than the basic results controlling the moments for the solutions of the Kac equation since the Maxwellian solution belongs to the Gelfand-Shilov class $S^{\frac 12}_{\frac 12}$,
the regularity $S^{1+\frac{1}{2s}}_{1+\frac{1}{2s}}$ of the perturbation $g$ can be extended
to the solution itself.
In the result of Theorem~\ref{qw13.66ee}, we also notice that the Gelfand-Shilov and Gevrey regularity indices directly depend on the hypoelliptic properties of the linear operator
$$\|\mathcal{H}^sf\|_{L^2(\rr_{x,v}^2)}+\|\langle D_x \rangle^{\frac{2s}{2s+1}}f\|_{L^2(\rr_{x,v}^2)} \lesssim \|v\partial_{x}f+\mathcal{K}f\|_{L^2(\rr_{x,v}^2)},$$
with respect to the position variable. These a priori hypoelliptic estimates are known to be sharp. However, it is still open to determine whether, or not, the Gelfand-Shilov and Gevrey regularity indices in Theorem~\ref{qw13.66ee} are sharp. Indeed, we notice that the regularity with respect to the velocity variable in the spatially inhomogeneous case is weaker than the one obtained in the spatially homogeneous case (\ref{frc1}). Furthermore, the results for the simplified model given by the generalized Kolmogorov equation (\ref{frc2}) may indicate that stronger Gelfand-Shilov and Gevrey regularizing effects can possibly hold. It would be most interesting to understand further this optimality since the solutions to the Cauchy problem (\ref{w7}) would be analytic for cross sections with strong singularity $1/2 \leq s <1$, in the case when these stronger smoothing results hold.

Remark that it seems in principle possible to tackle similar questions
for the Boltzmann equation.
However several new difficulties are occurring in this case.
In the first place the linearized Boltzmann operator is more complicated than  Kac's linearization
(the latter is simply a fractional power of the harmonic oscillator). 
For Boltzmann equation, one should first introduce
the Landau operator, say in three dimensions,
$$
\mathscr L_{L}= 2\bigl(-\Delta_{v}+\frac{\val v^{2}}{4}-\frac32\bigr)-\Delta_{\mathbb S^{2}}+\text{finite rank operator.}
$$
The linearized Boltzmann operator $\mathscr L_{B}$ appears essentially as the $s$-power of  $\mathscr L_{L}$.
As a result, the preliminary study of $\mathscr L_{L}^{s}$ is more complicated to handle,
although optimal coercive estimates can be proven, e.g. \cite{MR3063534}.
Handling the non-linear perturbations and getting a global existence result
for initial data close enough to the Maxwellian 
along with regularization properties 
seems a quite realistic program,
but the complexity of the trilinear estimates
is dramatically increasing (see \cite{GLX1})  and 
some technical problems remain to be overcome.

The article is organized as follows. Section~\ref{preli} is devoted to preliminary results on the Kac collision operator. We first show that the bilinear operator (\ref{wr1}) may be computed explicitly along the Hermite basis (Lemma~\ref{proposition1a}). By taking advantage of these algebraic properties, we establish key trilinear estimates with exponential weights satisfied by the non-linear collision term (Lemma~\ref{proposition222}). In Section~\ref{fok} (Proposition~\ref{KFP2}), we make the explicit construction of a multiplier to derive the hypoelliptic properties of the linear operator
$$P=v\partial_{x}+\mathcal{K}.$$
This linear model has the specific structure 
$$\textrm{Transport part in the }(v,x) \textrm{ variables}\ + \textrm{ Elliptic part in the }v \textrm{ variable}.$$ 
The non-commutation of the transport part $v\partial_{x}$ with the diffusive part $\mathcal{K}$ accounts for the hypoelliptic properties of this linear operator.
Section~\ref{loun} is devoted to the proof of the local existence and uniqueness result (Theorems~\ref{qw13} and~\ref{qw13.66}) for the Cauchy problem (\ref{w7}), whereas Section~\ref{gelf} provides the proof of the Gelfand-Shilov and Gevrey smoothing effects. Finally, the last section is an appendix (Section~\ref{appendix}) providing instrumental estimates satisfied by Hermite functions (Section~\ref{6.sec.harmo}), some reminders about the Gelfand-Shilov regularity (Section~\ref{regularity}), the definition of the Kac collision operator as a finite part integral (Section~\ref{kacsection}) and properties of metrics on the phase space (Section~\ref{symbolic}).

\section{Some computations and estimates on the Kac collision operator}\label{preli}

\subsection{Computations of the Kac collision operator along the Hermite basis}
This section shows that the non-linear Kac collision operator enjoys specific algebraic features and that
the bilinear operator (\ref{wr1})
may be computed explicitly along the Hermite basis $(\psi_n)_{n \geq 0}$,
$$
\Gamma(\psi_{k}, \psi_{l})=\alpha_{k,l}\psi_{k+l}, \quad \alpha_{k,l} \in \rr.
$$
The following  lemma extends  the result of  \cite{LMPX3} (Lemma 3.3) to all Hermite functions with odd indices.


\begin{lemma}\label{proposition1a}
Let $(\psi_{n})_{n\geq 0}$ be the Hermite basis of $L^2(\R)$ described in Section~\ref{6.sec.harmo}.
We have
$$
\Gamma(\psi_{k}, \psi_{l})=\alpha_{k, l}\psi_{k+l}, \quad  k, l\geq 0,
$$
with
$$\alpha_{2n, m}=\sqrt{C_{2n+m}^{2n}}\int_{-\frac\pi{4}}^{\frac\pi{4}}\beta(\theta)(\sin \theta)^{2n} (\cos \theta)^md\theta, \quad n\geq 1, \ m \geq 0,$$
$$\alpha_{0, m}=\int_{-\frac\pi{4}}^{\frac\pi{4}}\beta(\theta)\big((\cos \theta)^m-1\big)d\theta, \quad m \geq 1; \quad \alpha_{0,0}=\alpha_{2n+1,m}=0, \quad n, m \geq 0,$$
where $C_n^k=\frac{n!}{k!(n-k)!}$ stands for the binomial coefficients.
\end{lemma}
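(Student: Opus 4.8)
The plan is to collapse the statement to a single generating-function identity for Hermite polynomials once all Gaussian weights have been stripped off. First I would record two elementary facts. Since $\psi_0$ is the $L^2$-normalized ground state of $\mathcal H=-\Delta_v+v^2/4$, one has $\psi_0(v)=(2\pi)^{-1/4}e^{-v^2/4}=\mu^{1/2}(v)$, so $\mu=\psi_0^2$; and the energy conservation law $v^2+v_*^2=v'^2+v_*'^2$ gives $\psi_0(v')\psi_0(v'_*)=\psi_0(v)\psi_0(v_*)$. Writing the Hermite functions as $\psi_n(v)=c_nH_n(v/\sqrt2)e^{-v^2/4}$ with $c_n=(2\pi)^{-1/4}(2^nn!)^{-1/2}$ (the normalization of Section~\ref{6.sec.harmo}) and inserting $\mu^{1/2}\psi_k=\psi_0\psi_k$, $\mu^{1/2}\psi_l=\psi_0\psi_l$ into \eqref{wr1}, the prefactor $\mu^{-1/2}(v)=\psi_0(v)^{-1}$ cancels, the exponential factors collapse to $c_0c_kc_l\,e^{-v^2/4}e^{-v_*^2/2}$ in both the gain and the loss term (using the two facts), and the rescaling $x=v/\sqrt2$, $x_*=v_*/\sqrt2$ reduces the inner $v_*$-integral of the gain term to $\pi^{-1/2}\int_{\mathbb R}H_k(x\sin\theta+x_*\cos\theta)H_l(x\cos\theta-x_*\sin\theta)e^{-x_*^2}dx_*$ and that of the loss term to $\pi^{-1/2}(\int_{\mathbb R}H_k(x_*)e^{-x_*^2}dx_*)H_l(x)=\delta_{k,0}H_{k+l}(x)$.

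The core of the argument is the rotation identity
\[
\pi^{-1/2}\int_{\mathbb R}H_k(x\sin\theta+x_*\cos\theta)\,H_l(x\cos\theta-x_*\sin\theta)\,e^{-x_*^2}\,dx_*=(\sin\theta)^k(\cos\theta)^l\,H_{k+l}(x),
\]
which I would derive from $e^{2zt-t^2}=\sum_{n\ge0}\frac{t^n}{n!}H_n(z)$: the product of the two generating functions evaluated at $z=x\sin\theta+x_*\cos\theta$ and $z=x\cos\theta-x_*\sin\theta$ has an exponent affine in $x_*$, so the $x_*$-Gaussian integral is explicit; after using $\cos^2\theta+\sin^2\theta=1$ the surviving exponent is $2x(s\sin\theta+t\cos\theta)-(s\sin\theta+t\cos\theta)^2$, i.e. $\sum_m\frac{(s\sin\theta+t\cos\theta)^m}{m!}H_m(x)$; expanding $(s\sin\theta+t\cos\theta)^m$ by the binomial theorem and matching the coefficient of $s^kt^l$ gives the identity (the term-by-term integration being justified by dominated convergence, locally uniformly in $(s,t)$).

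Assembling, for each fixed $\theta$ the inner $v_*$-integral of gain minus loss equals $\sqrt{2\pi}\,c_0c_kc_l\,e^{-v^2/4}\big[(\sin\theta)^k(\cos\theta)^l-\delta_{k,0}\big]H_{k+l}(v/\sqrt2)$, and since $e^{-v^2/4}H_{k+l}(v/\sqrt2)=\psi_{k+l}(v)/c_{k+l}$ and $\sqrt{2\pi}\,c_0c_kc_l/c_{k+l}=\sqrt{C_{k+l}^{k}}$, we obtain $\Gamma(\psi_k,\psi_l)=\alpha_{k,l}\psi_{k+l}$ with
\[
\alpha_{k,l}=\sqrt{C_{k+l}^{k}}\int_{-\frac\pi4}^{\frac\pi4}\beta(\theta)\big[(\sin\theta)^k(\cos\theta)^l-\delta_{k,0}\big]\,d\theta.
\]
Reading off the cases finishes the lemma: $k=2n$ with $n\ge1$ gives the stated $\alpha_{2n,m}$; $k=0$ gives $\alpha_{0,m}=\int\beta(\theta)((\cos\theta)^m-1)d\theta$ for $m\ge1$ and $\alpha_{0,0}=0$; and for $k=2n+1$ odd the integrand $\beta(\theta)(\sin\theta)^{2n+1}(\cos\theta)^m$ is odd in $\theta$ since $\beta$ is even, hence $\alpha_{2n+1,m}=0$.

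The point requiring care is the non-integrability of $\beta$ near $\theta=0$ and the fact that $K$ is only defined as a finite-part integral (Section~\ref{kacsection}): I would run the computation first for the truncated cross section $\beta\,\mathbf 1_{\{|\theta|\ge\eps\}}$, where Fubini is unproblematic, and then let $\eps\to0^+$. The limit exists because $\beta(\theta)(\sin\theta)^k=O(|\theta|^{k-1-2s})$ is Lebesgue integrable on $[-\tfrac\pi4,\tfrac\pi4]$ for $k\ge2$ (as $k>2s$, $s<1$), $\beta(\theta)((\cos\theta)^m-1)=O(|\theta|^{1-2s})$ is integrable for $k=0$, and for $k=1$ the truncated integral of the odd integrand is $0$ for every $\eps$; no divergent splitting of gain and loss is ever needed for $k\ge1$ (there the loss term integrates to zero in $v_*$ anyway), and for $k=0$ the combination of gain and loss is already absolutely convergent. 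I expect this bookkeeping, together with getting the completion of the square in the Hermite identity exactly right, to be the only delicate part; the rest is algebra.
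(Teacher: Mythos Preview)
Your argument is correct. The paper takes a different but essentially Fourier-dual route: it computes $\widehat{\mu^{1/2}\psi_n}(\xi)=\frac{(-1)^ni^n}{\sqrt{n!}}\xi^n e^{-\xi^2/2}$ and then applies the Bobylev formula \eqref{cl11}, so that the gain term on the Fourier side is simply a product of Gaussian-weighted monomials, $\widehat{\mu^{1/2}\psi_n}(\xi\sin\theta)\widehat{\mu^{1/2}\psi_m}(\xi\cos\theta)=\sqrt{C_{n+m}^n}(\sin\theta)^n(\cos\theta)^m\,\widehat{\mu^{1/2}\psi_{n+m}}(\xi)$; the odd-$k$ vanishing comes from $\widehat{\breve{\mu^{1/2}\psi_{2n+1}}}=0$ rather than from oddness of the $\theta$-integrand. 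Your velocity-side rotation identity for $H_k$ is the inverse Fourier transform of that same monomial identity, so the algebraic core is identical. What your route buys is self-containment: you never invoke the Bobylev formula (proved elsewhere), and the finite-part issue is handled transparently by the $\eps$-truncation you describe, with the three regimes $k\ge2$ (absolute convergence), $k=1$ (odd integrand, truncated integrals vanish identically --- important since $\beta(\theta)\sin\theta$ is not integrable when $s\ge\tfrac12$), and $k=0$ (gain minus loss absolutely convergent). The paper's route is shorter because the Bobylev formula has already absorbed the finite-part bookkeeping and the even-part reduction $K(g,f)=K(\breve g,f)$.
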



\begin{proof}
We deduce from (\ref{cv1}), (\ref{Ar1}) and (\ref{Ar2}) that for all $n \geq 0$,
$$\mu^{1/2}(v)\psi_n(v)=\frac{1}{(4\pi)^{\frac{1}{4}}}\phi_n\Big(\frac{v}{\sqrt{2}}\Big)e^{-\frac{v^2}{4}}=\frac{(-1)^n}{\sqrt{2^{n+1} n! \pi}} \Big[\frac{d^n}{dx^n}(e^{-x^2})\Big]\Big|_{x=\frac{v}{\sqrt{2}}}
=\frac{(-1)^ni^n}{\sqrt{2 n! \pi}}D_v^n\big(e^{-\frac{v^2}{2}}\big).$$
It follows that
$$\widehat{\mu^{1/2}\psi_n}(\xi)=\frac{(-1)^ni^n}{\sqrt{2 n! \pi}}\xi^n\widehat{e^{-\frac{v^2}{2}}}(\xi)=\frac{(-1)^ni^n}{\sqrt{n!}}\xi^ne^{-\frac{\xi^2}{2}},$$
since
$$\widehat{\big(e^{-\frac{\alpha}{2} \val v^2}\big)}(\xi)=\int_{\R^{d}}e^{-\frac{\alpha}{2} \val v^2}e^{-iv\cdot\xi}dv=\frac{(2\pi)^{\frac{d}{2}}}{\alpha^{\frac{d}{2}}}e^{-\frac{\val\xi^2}{2\alpha}}, \quad \alpha>0.$$
We have
\begin{equation}\label{yu1}
\widehat{\breve{\overbrace{\mu^{1/2}\psi_{2n}}}}(\xi)=\widehat{\mu^{1/2}\psi_{2n}}(\xi), \qquad \widehat{\breve{\overbrace{\mu^{1/2}\psi_{2n+1}}}}(\xi)=0,
\end{equation}
where $\breve{f}$ stands for the even part of the function $f$, 
$$\breve{f}(x)=\frac{1}{2}\big(f(x)+f(-x)\big),$$
since the function $\psi_{n}$ has the same parity than the integer $n$.
We notice that
\begin{align}\label{fh1}
\widehat{\mu^{1/2}\psi_n}(\xi \sin \theta)\widehat{\mu^{1/2}\psi_m}(\xi \cos \theta)= & \ \frac{(-1)^{n+m}i^{n+m}}{\sqrt{n!m!}}\xi^{n+m}(\sin \theta)^n (\cos \theta)^m e^{-\frac{\xi^2}{2}}\\
= & \ \sqrt{C_{n+m}^n}(\sin \theta)^n (\cos \theta)^m\widehat{\mu^{1/2}\psi_{n+m}}(\xi) \notag
\end{align}
and
\begin{equation}\label{fh2}
\widehat{\mu^{1/2}\psi_n}(0)=(\mu^{1/2},\psi_n)_{L^2}=(\psi_0,\psi_n)_{L^2}=\delta_{n,0},
\end{equation}
where $\delta_{i,j}$ stands for the Kronecker delta.
It follows from (\ref{yu1}), the Bobylev formula (\ref{cl11}) and the Fourier inversion formula that
\begin{multline*}
\Gamma(\psi_{2n},\psi_m)=\mu^{-1/2}K(\mu^{1/2}\psi_{2n},\mu^{1/2}\psi_m)\\
=\frac{\mu^{-1/2}}{2 \pi}
\int_{\rr}\int_{-\frac\pi{4}}^{\frac\pi{4}}
\beta(\theta) \left[\widehat{\mu^{1/2}\psi_{2n}}(\eta \sin{\theta})\widehat{\mu^{1/2}\psi_m}(\eta\cos{\theta})-\widehat{\mu^{1/2}\psi_{2n}}(0)\widehat{\mu^{1/2}\psi_m}(\eta)\right]e^{iv\eta}d\eta d\theta.
\end{multline*}
When $n=0$, we deduce from (\ref{fh1}) and (\ref{fh2}) that
\begin{align*}
\Gamma(\psi_0,\psi_m)= &  \frac{\mu^{-1/2}}{2 \pi}\int_{\rr}\int_{-\frac\pi{4}}^{\frac\pi{4}}\hspace{-3mm}\beta(\theta) \big((\cos{\theta})^m-1\big)\widehat{\mu^{1/2}\psi_m}(\eta)e^{iv\eta}d\eta d\theta\\
= & \Big(\int_{-\frac\pi{4}}^{\frac\pi{4}}\beta(\theta) \big((\cos{\theta})^m-1\big)d\theta\Big)\psi_m.
\end{align*}
When $n \geq 1$, we deduce from (\ref{fh1}) and (\ref{fh2}) that
$$\Gamma(\psi_{2n},\psi_m)=
\sqrt{C_{2n+m}^{2n}}\Big(\int_{-\frac\pi{4}}^{\frac\pi{4}}
\beta(\theta)(\sin \theta)^{2n} (\cos \theta)^md\theta\Big)\psi_{2n+m}.$$
On the other hand, it follows from (\ref{yu1}), (\ref{fh2}), the Bobylev formula (\ref{cl11}) and the Fourier inversion formula that
\begin{multline*}
\Gamma(\psi_{2n+1},\psi_m)=\mu^{-1/2}K(\mu^{1/2}\psi_{2n+1},\mu^{1/2}\psi_m)=\\
\frac{\mu^{-1/2}}{2 \pi}
\int_{\rr}\int_{-\frac\pi{4}}^{\frac\pi{4}}
\beta(\theta) \Big[\widehat{\breve{\overbrace{\mu^{1/2}\psi_{2n+1}}}}(\eta \sin{\theta})\widehat{\mu^{1/2}\psi_m}(\eta\cos{\theta})-\widehat{\mu^{1/2}\psi_{2n+1}}(0)\widehat{\mu^{1/2}\psi_m}(\eta)\Big]e^{iv\eta}d\eta d\theta=0,
\end{multline*}
when $n,m \geq 0$. This ends the proof of Lemma~\ref{proposition1a}.
\end{proof}

\subsection{Uniform weighted trilinear estimates for the Kac collision operator}

We begin by proving some instrumental estimates.
We define
$$\Lambda_{n,m}=\int_{-\frac{\pi}{4}}^{\frac{\pi}{4}}\beta(\theta)(\sin \theta)^{2n}(\cos \theta)^{m}d\theta, \quad n \geq 1,\ m \geq 0.$$
We notice that
\begin{equation}\label{ekkk1}
\forall n \geq 1, m \geq 0, \quad \frac{1}{\sqrt{2}}\Lambda_{n,2m} \leq \Lambda_{n,2m+1} \leq \Lambda_{n,2m}.
\end{equation}
The following lemma extends the estimates obtained in~\cite{LMPX3} (Lemma~3.4):


\begin{lemma}\label{lemkk0}
We assume that the cross section satisfies \eqref{sing} with $0<s<1$. Then, there exists a positive constant $C>0$ such that for all $n \geq 1$, $m \geq 0$,
$$0 \leq \alpha_{2n,m}=\sqrt{C_{2n+m}^{2n}}\int_{-\frac{\pi}{4}}^{\frac{\pi}{4}}\beta(\theta)(\sin \theta)^{2n}(\cos \theta)^{m}d\theta \leq  \frac{C}{n^{\frac{3}{4}}}\tilde{\mu}_{n,m},$$
where
$$\tilde{\mu}_{n,m}=\Big(1+\frac{m}{n}\Big)^{s}\Big(1+\frac{n}{m+1}\Big)^{\frac{1}{4}}.$$
\end{lemma}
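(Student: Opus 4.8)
The plan is to estimate the angular integral
$\Lambda_{n,m}=\int_{-\pi/4}^{\pi/4}\beta(\theta)(\sin\theta)^{2n}(\cos\theta)^m\,d\theta$
with the explicit cross section \eqref{sing}, and then multiply by $\sqrt{C_{2n+m}^{2n}}$. By evenness and the change of variable $u=\sin^2(\theta/2)\in[0,1-\tfrac{\sqrt2}{2}\cdot\tfrac12]$ (so $u\in[0,\tfrac{2-\sqrt2}{4}]$), one gets $\sin\theta=2\sqrt{u(1-u)}$, $\cos\theta=1-2u$, $d\theta=du/\sqrt{u(1-u)}$, and $\beta(\theta)\,d\theta=2\sqrt{1-u}\,u^{-1-s}\sqrt{1-u}^{-1}\cdots$; more simply $\beta(\theta)=\frac{|\cos(\theta/2)|}{|\sin(\theta/2)|^{1+2s}}$ gives, together with $(\sin\theta)^{2n}=2^{2n}(\sin(\theta/2)\cos(\theta/2))^{2n}$,
$$\Lambda_{n,m}=2^{2n}\int_0^{\pi/4}\frac{(\cos(\theta/2))^{2n+1}}{(\sin(\theta/2))^{1+2s-2n}}(\sin(\theta/2))^{0}\cdots,$$
so after substituting $t=\sin^2(\theta/2)$ the integral becomes a Beta-type integral $\int_0^{t_0}t^{n-1-s}(1-t)^{\,n+\frac{m-1}{2}}\,dt$ up to explicit constants. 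First I would make this reduction precise and isolate the (incomplete) Beta function $B_{t_0}(n-s,\,n+\tfrac{m+1}{2})$; since $t_0<1$ and $n-s>0$ for $n\ge1$, $0<s<1$, the integral converges and is comparable to the full Beta function $B(n-s,\,n+\tfrac{m+1}{2})=\frac{\mathbf\Gamma(n-s)\mathbf\Gamma(n+\frac{m+1}{2})}{\mathbf\Gamma(2n-s+\frac{m+1}{2})}$ up to a factor controlled uniformly (the tail beyond $t_0$ being of the same order because the integrand is increasing up to some point and then has at worst polynomial behaviour — this needs a short but genuine argument).

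The next step is to combine the Beta function with $\sqrt{C_{2n+m}^{2n}}=\sqrt{\frac{(2n+m)!}{(2n)!\,m!}}$ and invoke Stirling's formula. Writing everything in terms of $\mathbf\Gamma$ and applying $\mathbf\Gamma(z)\sim\sqrt{2\pi}z^{z-1/2}e^{-z}$, the factorials in $\sqrt{C_{2n+m}^{2n}}$ and the $\mathbf\Gamma$'s in the Beta function will largely cancel; what survives should be, up to universal constants, a power $(1+m/n)$-type factor raised to exponents determined by the bookkeeping, and a polynomial-in-$n$ prefactor. One expects the exponent $s$ to emerge precisely from the $\mathbf\Gamma(n-s)$ versus $\mathbf\Gamma(n)$ comparison (which contributes an $n^{-s}$ type gain, i.e. the factor $(1+m/n)^{s}$ after recombination), while the $n^{-3/4}$ prefactor and the $(1+\frac{n}{m+1})^{1/4}$ factor come from the square-root in the binomial coefficient interacting with the $\sqrt{2\pi z}$ Stirling prefactors. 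I would carry out this asymptotic matching carefully, tracking the constants only up to order, and check the two extreme regimes $m\lesssim n$ and $m\gg n$ separately to confirm the stated bound $\frac{C}{n^{3/4}}(1+\frac mn)^s(1+\frac{n}{m+1})^{1/4}$ holds uniformly; lower bound $\alpha_{2n,m}\ge0$ is immediate from nonnegativity of $\beta$ and the integrand on $[-\pi/4,\pi/4]$.

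The main obstacle I anticipate is the uniformity of the Stirling estimates in both parameters $n\ge1$ and $m\ge0$ simultaneously, particularly making sure no hidden factor that is bounded for fixed $n$ but grows with $m$ (or vice versa) slips through — Stirling has to be applied with explicit two-sided bounds valid for all arguments $\ge1$, not just asymptotically. A secondary technical point is handling small $n$ (say $n=1,2$) where asymptotics are weak: there one simply bounds $\Lambda_{n,m}$ directly by $\int_{-\pi/4}^{\pi/4}\beta(\theta)(\sin\theta)^{2}(\cos\theta)^m\,d\theta\le C\int_{-\pi/4}^{\pi/4}|\theta|^{1-2s}\,d\theta<\infty$ and $\sqrt{C_{2+m}^{2}}\le C(1+m)$, then checks these crude bounds are dominated by the claimed right-hand side. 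Finally, one should note that \eqref{ekkk1} lets us freely replace $(\cos\theta)^m$ by $(\cos\theta)^{2\lfloor m/2\rfloor}$ at the cost of a universal constant, which can simplify the parity bookkeeping in the Beta-integral reduction if convenient.
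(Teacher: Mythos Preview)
Your overall strategy---reduce $\Lambda_{n,m}$ to a Beta-type integral, bound by the full Beta function, and combine with Stirling on $\sqrt{C_{2n+m}^{2n}}$---is exactly the paper's approach, and the ``main obstacle'' you flag (uniform two-sided Stirling) is routine. However, two points need correction, one minor and one serious.

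The minor point: your substitution $u=\sin^2(\theta/2)$ produces the integrand $2^{2n}u^{n-1-s}(1-u)^n(1-2u)^m$, not a clean Beta integrand with second parameter $n+\tfrac{m+1}{2}$ as you wrote. The paper instead uses $\beta(\theta)\approx|\sin\theta|^{-1-2s}$ on $[-\tfrac\pi4,\tfrac\pi4]$ and the substitution $t=\sin^2\theta$, which gives directly $\Lambda_{n,2m}\approx\int_0^{1/2}t^{n-1-s}(1-t)^m\,dt\le B(n-s,m+1)$. This is cleaner and avoids the parenthetical ``short but genuine argument'' you allude to; only the upper bound by the full Beta is needed, and that is immediate.

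The serious gap is your treatment of small $n$. You propose, for $n=1$, to bound $\Lambda_{1,m}\le\int\beta(\theta)(\sin\theta)^2\,d\theta\le C$ (dropping $(\cos\theta)^m\le1$) and $\sqrt{C_{2+m}^2}\le C(1+m)$, giving $\alpha_{2,m}\lesssim 1+m$. But the target is $\tilde\mu_{1,m}\sim(1+m)^s$, and for $0<s<1$ the bound $1+m\lesssim(1+m)^s$ fails for large $m$. The point is that $\Lambda_{n,m}$ genuinely decays in $m$: via the Beta bound, $\Lambda_{n,2m}\lesssim\Gamma(n-s)\Gamma(m+1)/\Gamma(m+n+1-s)\sim m^{-n+s}$ for $m\to\infty$, and this $m^{-n+s}$ decay is precisely what compensates the growth $\sqrt{C_{2n+2m}^{2n}}\sim m^{n}$ to leave the correct $m^s$. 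You cannot discard the $(\cos\theta)^m$ factor; the Beta-function estimate must be used for \emph{all} $n\ge1$, $m\ge0$ (with Stirling replaced by elementary Gamma ratios when one of the arguments is bounded).
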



\begin{proof}
Lemma~\ref{lemkk0} is a direct consequence of the following estimates:
\begin{itemize}
\item[$(i)$] $ \displaystyle \alpha_{2n,2m} \lesssim  \frac{1}{n^{\frac{3}{4}}} \Big(1+\frac{m}{n}\Big)^{s},$
when $m \gg 1$, $n \gg 1$
\item[$(ii)$] $ \displaystyle  \alpha_{2n,2m+1} \lesssim  \frac{1}{n^{\frac{3}{4}}} \Big(1+\frac{m}{n}\Big)^{s}\Big(1+\frac{n}{m}\Big)^{\frac{1}{4}},$
when $m \gg 1$, $n \gg 1$
\item[$(iii)$] $\alpha_{2n,m} \lesssim  m^{s},$
when $m \gg 1$, $1 \leq n \leq n_0$
\item[$(iv)$] $\displaystyle \alpha_{2n,2m} \lesssim  \frac{1}{n}, \quad  \alpha_{2n,2m+1} \lesssim  \frac{1}{n^{\frac{1}{2}}},$
when $0 \leq m \leq m_0$, $n \gg 1$
\end{itemize}
In order to establish these estimates, we beginning by noticing from (\ref{sing}) that
$$\Lambda_{n,2m} \approx \int_{0}^{\frac{\pi}{4}}(\sin \theta)^{2n-1-2s}(\cos \theta)^{2m+1}d\theta.$$
By using the substitution rule with $t=\sin^2 \theta$, we obtain that
$$\int_{0}^{\frac{\pi}{4}}(\sin \theta)^{2n-1-2s}(\cos \theta)^{2m+1}d\theta=\frac{1}{2}\int_0^{\frac{1}{2}}t^{n-1-s}(1-t)^mdt.$$
This implies that
$$ \Lambda_{n,2m} \approx \int_0^{\frac{1}{2}}t^{n-1-s}(1-t)^mdt.$$
By recalling the identity satisfied by the beta function
$$B(x,y)=\int_0^1t^{x-1}(1-t)^{y-1}dt=\frac{\Gamma(x)\Gamma(y)}{\Gamma(x+y)}, \quad \textrm{Re }x>0, \ \textrm{Re }y>0,$$
we obtain that for all $n \geq 1$, $m \geq 0$,
$$\int_0^{\frac{1}{2}}t^{n-1-s}(1-t)^mdt \leq B(n-s,m+1)=\frac{\Gamma(n-s)\Gamma(m+1)}{\Gamma(m+n+1-s)}.$$
It follows that
\begin{equation}\label{Ar0}
\sqrt{C_{2n+2m}^{2n}}\Lambda_{n,2m} \lesssim \sqrt{\frac{(2n+2m)!}{(2n)!(2m)!}}\frac{\Gamma(n-s)\Gamma(m+1)}{\Gamma(m+n+1-s)}.
\end{equation}
By using the Stirling equivalent
\begin{equation}\label{Ar0.1}
\Gamma(x+1) 
\begin{matrix}{\scriptscriptstyle~}\\\sim\\{x \to +\infty} 
\end{matrix} \sqrt{2\pi x}\Big(\frac{x}{e}\Big)^x, \quad \Gamma(n+1)=n!,
\end{equation}
we deduce that
\begin{multline*}
\sqrt{C_{2n+2m}^{2n}}\Lambda_{n,2m} \lesssim \Big(\frac{n+m}{n m}\Big)^{\frac{1}{4}}\Big(\frac{2n+2m}{e}\Big)^{n+m}\Big(\frac{e}{2n}\Big)^{n}\Big(\frac{e}{2m}\Big)^{m}\\
\times \sqrt{\frac{n m}{n+m}}\Big(\frac{n-s-1}{e}\Big)^{n-s-1}\Big(\frac{m}{e}\Big)^{m}\Big(\frac{e}{m+n-s}\Big)^{m+n-s},
\end{multline*}
when $m \gg 1$, $n \gg 1$. It follows that
\begin{align}\label{Ar3}
\sqrt{C_{2n+2m}^{2n}}\Lambda_{n,2m} \lesssim & \  \Big(\frac{n m}{n+m}\Big)^{\frac{1}{4}}\frac{(n+m)^{n+m}(n-s-1)^{n-s-1}}{(m+n-s)^{m+n-s}n^n} \\ \notag
 \lesssim & \ \Big(\frac{n m}{n+m}\Big)^{\frac{1}{4}}\frac{(m+n)^{s}}{(n-s-1)^{s+1}}\Big(1-\frac{s+1}{n}\Big)^{n}\Big(1+\frac{s}{m+n-s}\Big)^{m+n-s}\\ \notag
  \lesssim & \ m^{\frac{1}{4}}\frac{(m+n)^{s-\frac{1}{4}}}{n^{s+\frac{3}{4}}}  \lesssim  \frac{1}{n^{\frac{3}{4}}} \Big(1+\frac{m}{n}\Big)^{s},
\end{align}
when $m \gg 1$, $n \gg 1$, since
\begin{equation}\label{ekkk0}
\forall r \geq 1, \forall x >-r, \quad \Big(1+\frac{x}{r}\Big)^r \leq e^x.
\end{equation}
By using that
\begin{equation}\label{ekkk2}
C_{2n+2m+1}^{2n}=\frac{2n+2m+1}{2m+1}C_{2n+2m}^{2n},
\end{equation}
we obtain from (\ref{ekkk1}), (\ref{Ar3}) and (\ref{ekkk2}) that
$$\sqrt{C_{2n+2m+1}^{2n}}\Lambda_{n,2m+1} \lesssim m^{\frac{1}{4}}\frac{(m+n)^{s-\frac{1}{4}}}{n^{s+\frac{3}{4}}}  \sqrt{\frac{n+m}{m}}
\lesssim  \frac{1}{n^{\frac{3}{4}}}\Big(1+\frac{m}{n}\Big)^{s} \Big(1+\frac{n}{m}\Big)^{\frac{1}{4}},$$
when $m \gg 1$, $n \gg 1$. When $m \gg 1$, $1 \leq n \leq n_0$, it follows from (\ref{Ar0}) that
$$\sqrt{C_{2n+2m}^{2n}}\Lambda_{n,2m} \lesssim \sqrt{\frac{(2n+2m)!}{(2m)!}}\frac{\Gamma(m+1)}{\Gamma(m+n+1-s)}.$$
By using the Stirling equivalent (\ref{Ar0.1}), this implies that
\begin{multline*}
\sqrt{C_{2n+2m}^{2n}}\Lambda_{n,2m} \lesssim \Big(\frac{n+m}{ m}\Big)^{\frac{1}{4}}\Big(\frac{2n+2m}{e}\Big)^{n+m}\Big(\frac{e}{2m}\Big)^{m}\\
\times \sqrt{\frac{m}{n+m-s}}\Big(\frac{m}{e}\Big)^{m}\Big(\frac{e}{m+n-s}\Big)^{m+n-s},
\end{multline*}
when $m \gg 1$, $1 \leq n \leq n_0$. We deduce from (\ref{ekkk0}) that
$$\sqrt{C_{2n+2m}^{2n}}\Lambda_{n,2m} \lesssim   \frac{(n+m)^{n+m}}{(n+m-s)^{m+n-s}}=\Big(1+\frac{s}{n+m-s}\Big)^{m+n-s}(n+m)^{s} \lesssim m^s,$$
when $m \gg 1$, $1 \leq n \leq n_0$. It follows from (\ref{ekkk1}) and (\ref{ekkk2}) that
$$\sqrt{C_{2n+2m+1}^{2n}}\Lambda_{n,2m+1} \lesssim  m^s,$$
when $m \gg 1$, $1 \leq n \leq n_0$.
When $0 \leq m \leq m_0$, $n \gg 1$, it follows from (\ref{Ar0}) that
$$\sqrt{C_{2n+2m}^{2n}}\Lambda_{n,2m} \lesssim \sqrt{\frac{(2n+2m)!}{(2n)!}}\frac{\Gamma(n-s)}{\Gamma(m+n+1-s)}.$$
By using the Stirling equivalent (\ref{Ar0.1}), this implies that
\begin{multline*}
\sqrt{C_{2n+2m}^{2n}}\Lambda_{n,2m} \lesssim \Big(\frac{n+m}{n}\Big)^{\frac{1}{4}}\Big(\frac{2n+2m}{e}\Big)^{n+m}\Big(\frac{e}{2n}\Big)^{n}\\
\times \sqrt{\frac{n-s-1}{n+m-s}}\Big(\frac{n-s-1}{e}\Big)^{n-s-1}\Big(\frac{e}{m+n-s}\Big)^{m+n-s},
\end{multline*}
when $0 \leq m \leq m_0$, $n \gg 1$. We deduce from (\ref{ekkk0}) that
\begin{multline*}
\sqrt{C_{2n+2m}^{2n}}\Lambda_{n,2m} \lesssim  \frac{(n+m)^{n+m}(n-s-1)^{n-s-1}}{n^n(m+n-s)^{m+n-s}} \\ \lesssim   \frac{(n+m)^{m}}{(n-s-1)^{m+1}}
\Big(1+\frac{m}{n}\Big)^{n}\Big(1-\frac{m+1}{m+n-s}\Big)^{m+n-s} \lesssim \frac{1}{n},
\end{multline*}
when $0 \leq m \leq m_0$, $n \gg 1$. It follows from (\ref{ekkk1}) and (\ref{ekkk2}) that
$$\sqrt{C_{2n+2m+1}^{2n}}\Lambda_{n,2m+1} \lesssim \frac{1}{n^{\frac{1}{2}}},$$
when $0 \leq m \leq m_0$, $n \gg 1$.
This ends the proof of Lemma~\ref{lemkk0}.
\end{proof}

In order to estimate the non-linear collision operator, we shall use the following lemma:

\begin{lemma}\label{L1}
Let $r>1/2$. Then, there exists a positive constant $C_r>0$ such that for all $f,g \in \mathscr{S}(\rr_x)$, $t \geq 0$, $0<\delta \leq 1$, $m,n \geq 0$,
$$\big\|\mathscr{M}_{\delta, m+n}(t)\big(\big[\big(\mathscr{M}_{\delta,m}(t)\big)^{-1}f\big]\big[\big(\mathscr{M}_{\delta,n}(t)\big)^{-1}g\big]\big)\big\|_{H^r} \leq C_r \|f\|_{H^r}\|g\|_{H^r},$$
with the Fourier multiplier
$$\mathscr{M}_{\delta,n}(t)=\frac{\exp\big(t\big(\sqrt{n+\frac{1}{2}}+\langle D_x \rangle\big)^{\frac{2s}{2s+1}}\big)}{1+\delta\exp\big(t\big(\sqrt{n+\frac{1}{2}}+\langle D_x \rangle\big)^{\frac{2s}{2s+1}}\big)},$$
where $\|\cdot\|_{H^r}$ stands for the Sobolev norm $H^r(\rr_x)$.
\end{lemma}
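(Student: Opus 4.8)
The plan is to exploit the near-multiplicativity of the Fourier multiplier $\mathscr{M}_{\delta,n}(t)$ in the variable $n$: when the profile is shifted by $m+n$ instead of $m$ and $n$ separately, the exponent $\big(\sqrt{m+n+\frac{1}{2}}+\langle D_x\rangle\big)^{\frac{2s}{2s+1}}$ evaluated on a sum of frequencies is controlled by the sum of the two individual exponents. Concretely, writing $a=\sqrt{m+\frac12}$, $b=\sqrt{n+\frac12}$, and $\xi,\eta$ for the Fourier variables carried respectively by $f$ and $g$, the kernel of the trilinear form is bounded pointwise by the ratio
$$\frac{\exp\big(t(\sqrt{m+n+\tfrac12}+\langle\xi+\eta\rangle)^{\frac{2s}{2s+1}}\big)}{\exp\big(t(a+\langle\xi\rangle)^{\frac{2s}{2s+1}}\big)\exp\big(t(b+\langle\eta\rangle)^{\frac{2s}{2s+1}}\big)}$$
up to the denominators $1+\delta\exp(\cdots)$, which only help (they are $\geq 1$ in the numerator's multiplier and $\leq$ the corresponding exponentials in the denominators, hence produce a factor $\leq 1$ after cancellation, uniformly in $\delta\in(0,1]$). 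So the first step is to reduce, via Plancherel and the triangle inequality in frequency, to proving that this ratio is bounded by an absolute constant, uniformly in $t\geq0$, $m,n\geq0$, $\xi,\eta\in\rr$.

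The second and main step is the elementary but slightly delicate inequality
$$\Big(\sqrt{m+n+\tfrac12}+\langle\xi+\eta\rangle\Big)^{\frac{2s}{2s+1}}\leq \Big(\sqrt{m+\tfrac12}+\langle\xi\rangle\Big)^{\frac{2s}{2s+1}}+\Big(\sqrt{n+\tfrac12}+\langle\eta\rangle\Big)^{\frac{2s}{2s+1}}.$$
I would prove this by combining two facts. First, subadditivity of $x\mapsto x^{\kappa}$ for $0<\kappa=\frac{2s}{2s+1}<1$: $(u+v)^{\kappa}\leq u^{\kappa}+v^{\kappa}$ for $u,v\geq0$. Second, the inner quantity satisfies
$$\sqrt{m+n+\tfrac12}+\langle\xi+\eta\rangle\leq \Big(\sqrt{m+\tfrac12}+\langle\xi\rangle\Big)+\Big(\sqrt{n+\tfrac12}+\langle\eta\rangle\Big),$$
which follows from $\langle\xi+\eta\rangle\leq\langle\xi\rangle+\langle\eta\rangle$ (subadditivity of the Japanese bracket, since $\langle\cdot\rangle$ is $1+|\cdot|$ up to equivalent constants, or more carefully $\langle\xi+\eta\rangle^2=1+|\xi+\eta|^2\leq 1+(|\xi|+|\eta|)^2\leq(\langle\xi\rangle+\langle\eta\rangle)^2$ using $1\leq 2\langle\xi\rangle\langle\eta\rangle$) together with $\sqrt{m+n+\frac12}\leq\sqrt{m+\frac12}+\sqrt{n+\frac12}$ (square both sides; the cross term $2\sqrt{(m+\frac12)(n+\frac12)}\geq\frac12$ absorbs the discrepancy). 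Applying subadditivity of $x\mapsto x^\kappa$ to $u=\sqrt{m+\frac12}+\langle\xi\rangle$ and $v=\sqrt{n+\frac12}+\langle\eta\rangle$ then gives the claim, so the exponential ratio above is $\leq 1$.

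The third step packages this into the $H^r$ estimate. With $\widehat{f}$, $\widehat{g}$ the Fourier transforms, the quantity to bound is $\big\|\langle\xi\rangle^r\,\mathscr{M}_{\delta,m+n}(t)(\xi)\,(F*G)(\xi)\big\|_{L^2_\xi}$ where $F(\xi)=\mathscr{M}_{\delta,m}(t)(\xi)^{-1}\widehat{f}(\xi)$ and similarly $G$. Using $\langle\xi\rangle^r\lesssim_r\langle\xi-\eta\rangle^r+\langle\eta\rangle^r$ and the pointwise bound $\mathscr{M}_{\delta,m+n}(t)(\xi)\leq \mathscr{M}_{\delta,m}(t)(\xi-\eta)\,\mathscr{M}_{\delta,n}(t)(\eta)$ (this is exactly Step 2 together with the denominator bookkeeping of Step 1, applied with $\xi-\eta$ and $\eta$ in place of $\xi,\eta$), the convolution is dominated by $(\langle\cdot\rangle^r|\widehat f|)*|\widehat g|+|\widehat f|*(\langle\cdot\rangle^r|\widehat g|)$, and Young's inequality $L^2*L^1\hookrightarrow L^2$ together with $L^1(\langle\cdot\rangle^r\,d\xi)\supset H^r$ for $r>1/2$ (since $\langle\xi\rangle^{-r}\in L^2$ exactly when $r>1/2$) yields the bound $C_r\|f\|_{H^r}\|g\|_{H^r}$. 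I expect Step 2 to be the only real point requiring care — specifically checking the two-term subadditivity estimates near the regime where $m,n$ are small and the constant $\tfrac12$ inside the square roots matters — while Steps 1 and 3 are routine Fourier-analytic bookkeeping, the uniformity in $\delta\in(0,1]$ being automatic because the regularized multiplier only ever makes the numerator smaller and the denominators larger.
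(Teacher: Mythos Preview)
Your Step~2 (the subadditivity inequality for the exponent) and Step~3 (reducing to the $H^r$ algebra property via Young's inequality) are correct and essentially match the paper. The gap is in Step~1: the claim that the $\delta$-denominators ``only help'' and contribute a factor $\leq 1$ is false.

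The kernel you must control is
\[
K(\xi,\eta)=\mathscr{M}_{\delta,m+n}(t)(\xi)\,\bigl(\mathscr{M}_{\delta,m}(t)(\eta)\bigr)^{-1}\bigl(\mathscr{M}_{\delta,n}(t)(\xi-\eta)\bigr)^{-1}
=\frac{e^{A_{m+n}(\xi)}}{e^{A_m(\eta)}e^{A_n(\xi-\eta)}}\cdot
\frac{(1+\delta e^{A_m(\eta)})(1+\delta e^{A_n(\xi-\eta)})}{1+\delta e^{A_{m+n}(\xi)}},
\]
with $A_k(\zeta)=t\bigl(\sqrt{k+\tfrac12}+\langle\zeta\rangle\bigr)^{\frac{2s}{2s+1}}$. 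Your Step~2 bounds the first factor by $1$. But the second factor is \emph{not} $\leq 1$: Step~2 gives $A_{m+n}(\xi)\leq A_m(\eta)+A_n(\xi-\eta)$, which makes the denominator of the second factor \emph{smaller}, not larger. Concretely, your asserted pointwise bound $\mathscr{M}_{\delta,m+n}(t)(\xi)\leq \mathscr{M}_{\delta,m}(t)(\eta)\,\mathscr{M}_{\delta,n}(t)(\xi-\eta)$ already fails for $F(x)=e^x/(1+\delta e^x)$ at $x=y=0$, where $F(0)/F(0)^2=1+\delta>1$. If instead you drop the denominator in $\mathscr{M}_{\delta,m+n}$ as you describe, you are left with $K\leq (1+\delta e^{A_m(\eta)})(1+\delta e^{A_n(\xi-\eta)})$, which is unbounded.

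What is actually needed, and what the paper proves, is the uniform submultiplicativity
\[
F(x+y)\leq 3\,F(x)F(y),\qquad x,y\geq 0,\quad 0<\delta\leq 1,
\]
obtained from the identity
\[
\frac{F(x+y)}{F(x)F(y)}=\frac{(1+\delta e^x)(1+\delta e^y)}{1+\delta e^{x+y}}
=\delta+\frac{1-\delta}{1+\delta e^{x+y}}+\frac{\delta(e^x+e^y)}{1+\delta e^{x+y}}
\leq 1+e^{-x}+e^{-y}\leq 3.
\]
Combined with your Step~2 and the monotonicity of $F$, this gives $K\leq 3$, after which your Step~3 goes through verbatim. So the repair is a single inequality, but it is a genuine cancellation in the function $F$ and not the monotonicity triviality you described; the uniformity in $\delta\to 0^+$ hinges on it.
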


\noindent
\begin{proof}
We begin by noticing that the operator $\mathscr{M}_{\delta,n}(t)$ is a bounded isomorphism of $L^2(\rr_x)$ such that
$$(\mathscr{M}_{\delta,n}(t))^{-1}=\frac{1+\delta\exp\big(t\big(\sqrt{n+\frac{1}{2}}+\langle D_x \rangle\big)^{\frac{2s}{2s+1}}\big)}{\exp\big(t\big(\sqrt{n+\frac{1}{2}}+\langle D_x \rangle\big)^{\frac{2s}{2s+1}}\big)}.$$
Setting
$$h=\mathscr{M}_{\delta, m+n}(t)\big(\big[\big(\mathscr{M}_{\delta,m}(t)\big)^{-1}f\big]\big[\big(\mathscr{M}_{\delta,n}(t)\big)^{-1}g\big]\big),$$
we have
\begin{multline}\label{lk0}
\widehat{h}(\xi)=  \frac{\exp\big(t\big(\sqrt{m+n+\frac{1}{2}}+\langle \xi \rangle\big)^{\frac{2s}{2s+1}}\big)}{1+\delta\exp\big(t\big(\sqrt{m+n+\frac{1}{2}}+\langle \xi \rangle\big)^{\frac{2s}{2s+1}}\big)}\mathcal{F}\big(
\big[\big(\mathscr{M}_{\delta,m}(t)\big)^{-1}f\big]\big[\big(\mathscr{M}_{\delta,n}(t)\big)^{-1}g\big]\big)\\
=  \frac{1}{2\pi} \frac{\exp\big(t\big(\sqrt{m+n+\frac{1}{2}}+\langle \xi \rangle\big)^{\frac{2s}{2s+1}}\big)}{1+\delta\exp\big(t\big(\sqrt{m+n+\frac{1}{2}}+\langle \xi \rangle\big)^{\frac{2s}{2s+1}}\big)}\mathcal{F}\big(\big(\mathscr{M}_{\delta,m}(t)\big)^{-1}f\big) \ast \mathcal{F}\big(\big(\mathscr{M}_{\delta,n}(t)\big)^{-1}g\big),
\end{multline}
where $\mathcal{F}$ denotes the Fourier transform.
For all $0<\sigma<1$,
we notice that
$$\forall 0 \leq \theta \leq 1, \quad \phi(\theta)=(1-\theta)^{\sigma}+\theta^{\sigma} \geq 1,$$
since
$$\phi(0)=\phi(1)=1, \quad \forall 0 < \theta < 1, \quad \phi''(\theta)=-\sigma(1-\sigma)\big((1-\theta)^{\sigma-2}+\theta^{\sigma-2}\big) \leq 0.$$
This implies that
\begin{equation}\label{cl3}
\forall 0<\sigma<1, \forall a,b \geq 0, \quad a^{\sigma}+b^{\sigma} \geq (a+b)^{\sigma}.
\end{equation}
We also notice that for all $x,y \in \rr^d$, with $d \geq 1$,
\begin{equation}\label{qa0}
\langle x+y\rangle=(1+|x+y|^2)^{\frac{1}{2}}=\|(1,x+y)\|_{2} \leq \|(1/2,x)\|_{2}+\|(1/2,y)\|_{2} \leq \langle x \rangle + \langle y \rangle,
\end{equation}
where $\|\cdot\|_2$ stands for the Euclidean norm on $\rr^{d+1}$.
It follows from (\ref{cl3}) and (\ref{qa0}) that for all $m,n \geq 0$, $\eta,\xi \in \rr$,
\begin{multline}\label{lk1}
\Big(\sqrt{m+n+\frac{1}{2}}+\langle \xi \rangle\Big)^{\frac{2s}{2s+1}} \leq \Big(\sqrt{m+\frac{1}{2}}+\sqrt{n+\frac{1}{2}}+\langle \eta \rangle+\langle \xi-\eta \rangle\Big)^{\frac{2s}{2s+1}} \\
\leq \Big(\sqrt{m+\frac{1}{2}}+\langle \eta \rangle\Big)^{\frac{2s}{2s+1}}+\Big(\sqrt{n+\frac{1}{2}}+\langle \xi-\eta \rangle\Big)^{\frac{2s}{2s+1}}.
\end{multline}
We notice that the increasing function
$$F(x)=\frac{e^x}{1+\delta e^x},$$
satisfies the inequality
\begin{equation}\label{lk2}
\forall x,y \geq 0, \quad F(x+y) \leq 3 F(x)F(y),
\end{equation}
since
$$\forall x,y \geq 0, \quad \frac{F(x+y)}{F(x)F(y)}=\delta+\frac{1-\delta}{1+\delta e^{x+y}}+\frac{\delta(e^x+e^y)}{1+\delta e^{x+y}} \leq 1-\delta+\delta+\frac{1}{e^x}+\frac{1}{e^y} \leq 3.$$
It follows from (\ref{lk1}) and (\ref{lk2}) that
\begin{align}\label{lk3}
& \ \frac{\exp\big(t\big(\sqrt{m+n+\frac{1}{2}}+\langle \xi \rangle\big)^{\frac{2s}{2s+1}}\big)}{1+\delta \exp\big(t\big(\sqrt{m+n+\frac{1}{2}}+\langle \xi \rangle\big)^{\frac{2s}{2s+1}}\big)} \\ \notag
\leq & \  \frac{ \exp\big(t\big(\sqrt{m+\frac{1}{2}}+\langle \eta \rangle\big)^{\frac{2s}{2s+1}}+t\big(\sqrt{n+\frac{1}{2}}+\langle \xi-\eta \rangle\big)^{\frac{2s}{2s+1}}\big)}{1+\delta  \exp\big(t\big(\sqrt{m+\frac{1}{2}}+\langle \eta \rangle\big)^{\frac{2s}{2s+1}}+t\big(\sqrt{n+\frac{1}{2}}+\langle \xi-\eta \rangle\big)^{\frac{2s}{2s+1}}\big)}\\ \notag
\leq & \ 3\frac{ \exp\big(t\big(\sqrt{m+\frac{1}{2}}+\langle \eta \rangle\big)^{\frac{2s}{2s+1}}\big)}{1+\delta  \exp\big(t\big(\sqrt{m+\frac{1}{2}}+\langle \eta \rangle\big)^{\frac{2s}{2s+1}}\big)}\times \frac{ \exp\big(t\big(\sqrt{n+\frac{1}{2}}+\langle \xi-\eta \rangle\big)^{\frac{2s}{2s+1}}\big)}{1+\delta  \exp\big(t\big(\sqrt{n+\frac{1}{2}}+\langle \xi-\eta \rangle\big)^{\frac{2s}{2s+1}}\big)}.
\end{align}
We deduce from (\ref{lk0}) and (\ref{lk3}) that
\begin{equation}\label{cl7}
|\widehat{h}(\xi)| \leq \frac{3}{2\pi}\int_{\rr}|\hat{f}(\eta)||\hat{g}(\xi-\eta)|d\eta= \frac{3}{2\pi}(|\hat{f}| \ast |\hat{g}|)(\xi).
\end{equation}
We notice that $|\hat{f}|, |\hat{g}| \in L^1(\rr_{\xi}) \cap L^2(\rr_{\xi})$, since $f, g \in \mathscr{S}(\rr_x)$. This implies that
$$F=|\hat{f}| \ast |\hat{g}| \in L^1(\rr_{\xi}) \cap L^2(\rr_{\xi}).$$
The Sobolev space $H^r(\rr)$, with $r>1/2$, is an algebra for the usual product
\begin{equation}\label{cl6}
\forall r>1/2, \exists C_r>0, \forall f,g \in H^r(\rr_x), \quad \|fg\|_{H^r} \leq C_r\|f\|_{H^r}\|g\|_{H^r}.
\end{equation}
We deduce from (\ref{cl7}) and (\ref{cl6}) that
\begin{multline*}
\|h\|_{H^r} \leq \frac{3}{(2\pi)^{\frac{3}{2}}}\|\langle \xi \rangle^r(|\hat{f}| \ast |\hat{g}|) \|_{L^2}=\frac{3}{(2\pi)^{\frac{3}{2}}}\|\langle \xi \rangle^r\mathcal{F}\mathcal{F}^{-1}(|\hat{f}| \ast |\hat{g}|) \|_{L^2}
=\frac{3}{2\pi}\|\mathcal{F}^{-1}(|\hat{f}| \ast |\hat{g}|) \|_{H^r}\\
=3\|\mathcal{F}^{-1}(|\hat{f}|)\mathcal{F}^{-1}(|\hat{g}|) \|_{H^r} \leq 3C_r\|\mathcal{F}^{-1}(|\hat{f}|)\|_{H^r}\|\mathcal{F}^{-1}(|\hat{g}|) \|_{H^r}=3C_r\|f\|_{H^r}\|g \|_{H^r},
\end{multline*}
since $\|\mathcal{F}^{-1}(\val{\hat{f}} )\|_{H^r}=\|\valjp{\xi}^{r}\val{\hat{f}(\xi)}\|_{L^{2}}=\|f\|_{H^{r}}$. This ends the proof of Lemma~\ref{L1}.
\end{proof}

By elaborating on the result of the previous lemma, we may adapt the proof of Lemma~3.5 in~\cite{LMPX3} to derive the following trilinear estimate on the non-linear term (\ref{wr1}):

\begin{lemma}\label{proposition222}
Let $r>1/2$. Then, there exists a positive constant $C_r>0$ such that for all $f,g,h \in \mathscr{S}(\rr_{x,v}^2)$, $t \geq 0$, $0 \leq \delta_1 \leq 1$, $0<\delta_2 \leq 1$, $j_1,j_2 \geq 0$ with $j_1+j_2 \leq 1$,
$$\big|(\Gamma(f,g),h)_{L^2(\rr_{x,v}^2)}\big| \leq C_r \|f\|_{(r,0)}\|\mathcal{H}^\frac{s}{2}g\|_{L^2(\rr_{x,v}^2)}\|\mathcal{H}^\frac{s}{2}h\|_{L^2(\rr_{x,v}^2)},$$
$$\big|(\Gamma(\langle D_x \rangle f,g),h)_{L^2(\rr_{x,v}^2)}\big| \leq C_r \|f\|_{(1,0)}\|\mathcal{H}^\frac{s}{2}g\|_{(r,0)}\|\mathcal{H}^\frac{s}{2}h\|_{L^2(\rr_{x,v}^2)},$$
\begin{multline*}
\Bigg|\Bigg(\big(1+\delta_1 \sqrt{\mathcal{H}}+\delta_1\langle D_x \rangle\big)^{-1} M_{\delta_2}(t)\Gamma\Big(\big(M_{\delta_2}(t)\big)^{-1}\big(1+\delta_1 \sqrt{\mathcal{H}}\big)^{j_1}f,\big(M_{\delta_2}(t)\big)^{-1}\big(1+\delta_1 \sqrt{\mathcal{H}}\big)^{j_2}g\Big),h\Bigg)_{(r,0)}\Bigg|  \\
\leq C_r \|f\|_{(r,0)}\|\mathcal{H}^\frac{s}{2}g\|_{(r,0)}\|\mathcal{H}^\frac{s}{2}h\|_{(r,0)},
\end{multline*}
with
\begin{equation}\label{wr3}
M_{\delta_2}(t)=\frac{\exp\big(t\big(\sqrt{\mathcal{H}}+\langle D_x \rangle\big)^{\frac{2s}{2s+1}}\big)}{1+\delta_2\exp\big(t\big(\sqrt{\mathcal{H}}+\langle D_x \rangle\big)^{\frac{2s}{2s+1}}\big)}=\sum_{n=0}^{+\infty} \mathscr{M}_{\delta_2,n}(t) \mathbb P_{n} ,
\end{equation}
where $\mathbb P_{n}$ denote the orthogonal projections onto the Hermite basis described in Section \ref{6.sec.harmo}. In particular, we also have for all $f,g,h \in \mathscr{S}(\rr_{x,v}^2)$,
$$\big|(\Gamma(f,g),h)_{(r,0)}\big| \leq C_r \|f\|_{(r,0)}\|\mathcal{H}^\frac{s}{2}g\|_{(r,0)}\|\mathcal{H}^\frac{s}{2}h\|_{(r,0)}.$$
\end{lemma}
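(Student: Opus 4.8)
The plan is to reduce all four inequalities to summation estimates on the Hermite coefficients in the velocity variable. Expanding $f=\sum_{k\geq0}f_k(x)\psi_k(v)$, $g=\sum_{l\geq0}g_l(x)\psi_l(v)$, $h=\sum_{m\geq0}h_m(x)\psi_m(v)$ with Schwartz coefficients $f_k,g_l,h_m\in\mathscr{S}(\rr_x)$ rapidly decreasing in the indices, Lemma~\ref{proposition1a} gives $\Gamma(f,g)=\sum_{k,l\geq0}\alpha_{k,l}f_k(x)g_l(x)\psi_{k+l}(v)$, and, since $\alpha_{k,l}=0$ for odd $k$ and $\alpha_{0,0}=0$,
\begin{equation*}
(\Gamma(f,g),h)_{L^2(\rr_{x,v}^2)}=\sum_{n\geq1,\,l\geq0}\alpha_{2n,l}\,(f_{2n}g_l,h_{2n+l})_{L^2(\rr_x)}+\sum_{l\geq1}\alpha_{0,l}\,(f_0g_l,h_l)_{L^2(\rr_x)},
\end{equation*}
with the analogous identity for the $(\,\cdot\,,\,\cdot\,)_{(r,0)}$ inner product after inserting $\langle D_x\rangle^{2r}$. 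The two inputs I would use are Lemma~\ref{lemkk0}, which gives $0\leq\alpha_{2n,l}\leq Cn^{-3/4}\tilde{\mu}_{n,l}$ with $\tilde{\mu}_{n,l}=(1+l/n)^s(1+n/(l+1))^{1/4}$, together with the bound $|\alpha_{0,l}|=\int_{-\pi/4}^{\pi/4}\beta(\theta)(1-(\cos\theta)^l)\,d\theta\lesssim(1+l)^s$, which follows from the estimates recalled in \eqref{3.llkkb} and from Lemma~\ref{lemkk0} with $m=0$.

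For the first two inequalities I would use the Sobolev embedding $H^r(\rr_x)\hookrightarrow L^\infty(\rr_x)$, valid for $r>1/2$, which gives $\|f_{2n}g_l\|_{L^2(\rr_x)}\lesssim\|f_{2n}\|_{H^r}\|g_l\|_{L^2}$ and $\|(\langle D_x\rangle f_{2n})g_l\|_{L^2(\rr_x)}\lesssim\|f_{2n}\|_{H^1}\|g_l\|_{H^r}$; together with the Cauchy--Schwarz inequality in $x$ this reduces both estimates to the abstract inequality
\begin{equation*}
\sum_{n\geq1,\,l\geq0}A_{n,l}\,a_{2n}\,b_l\,c_{2n+l}\ +\ \sum_{l\geq1}\frac{|\alpha_{0,l}|}{(l+1/2)^s}\,a_0\,b_l\,c_l\ \lesssim\ \|a\|_{\ell^2}\,\|b\|_{\ell^2}\,\|c\|_{\ell^2},
\end{equation*}
where $A_{n,l}=\alpha_{2n,l}\big((l+1/2)^{s/2}(2n+l+1/2)^{s/2}\big)^{-1}$, applied with $a_k=\|f_k\|_{H^\rho}$ ($\rho=r$, resp.\ $\rho=1$), $b_l=(l+1/2)^{s/2}\|g_l\|$ and $c_m=(m+1/2)^{s/2}\|h_m\|$ in the relevant norms. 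The diagonal sum is harmless since $|\alpha_{0,l}|(l+1/2)^{-s}\lesssim1$ and $\sum_lb_lc_l\leq\|b\|_{\ell^2}\|c\|_{\ell^2}$; for the first sum, Cauchy--Schwarz successively in $l$ and then in $n$, followed by the change of variables $M=2n+l$, reduces the claim to the uniform bound $\sup_M\sum_{1\leq n\leq M/2}A_{n,M-2n}^2<+\infty$.

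For the third inequality the new point is to dispose of the $\delta_1$--weights. On each Hermite mode $M=2n+l$ the symbol of $(1+\delta_1\sqrt{\mathcal H}+\delta_1\langle D_x\rangle)^{-1}$ is bounded by $(1+\delta_1\sqrt{M+1/2})^{-1}$, while, by the weighted arithmetic--geometric mean inequality and the elementary estimate $\sqrt{2n+1/2}+\sqrt{M-2n+1/2}\leq2\sqrt{M+1/2}$, one has $(1+\delta_1\sqrt{2n+1/2})^{j_1}(1+\delta_1\sqrt{M-2n+1/2})^{j_2}\leq2\,(1+\delta_1\sqrt{M+1/2})$ whenever $j_1,j_2\geq0$ and $j_1+j_2\leq1$; hence these $\delta_1$--dependent operators contribute only a bounded factor, uniformly in $\delta_1$. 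Expanding the remaining expression in the Hermite basis, the $M$-th mode is, up to this bounded factor, a sum over $M=2n+l$ of the terms $\alpha_{2n,l}\,\mathscr{M}_{\delta_2,M}(t)\big(\big[(\mathscr{M}_{\delta_2,2n}(t))^{-1}f_{2n}\big]\big[(\mathscr{M}_{\delta_2,l}(t))^{-1}g_l\big]\big)$ and the $n=0$ analogue, which are exactly the objects controlled by Lemma~\ref{L1}: their $H^r(\rr_x)$ norm is $\lesssim\|f_{2n}\|_{H^r}\|g_l\|_{H^r}$, uniformly in $t\geq0$ and $0<\delta_2\leq1$. Pairing with $h$ in $(\,\cdot\,,\,\cdot\,)_{(r,0)}$, using Cauchy--Schwarz in $x$, and invoking once more the abstract inequality above with $a_k=\|f_k\|_{H^r}$, $b_l=(l+1/2)^{s/2}\|g_l\|_{H^r}$, $c_m=(m+1/2)^{s/2}\|h_m\|_{H^r}$ yields the right-hand side $\|f\|_{(r,0)}\|\mathcal H^{s/2}g\|_{(r,0)}\|\mathcal H^{s/2}h\|_{(r,0)}$. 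The final ``in particular'' estimate is then the special case $\delta_1=0$, $t=0$, $j_1=j_2=0$ with any fixed $\delta_2\in(0,1]$, since then $M_{\delta_2}(0)$ is the scalar operator $(1+\delta_2)^{-1}\Id$ and $M_{\delta_2}(0)\Gamma((M_{\delta_2}(0))^{-1}f,(M_{\delta_2}(0))^{-1}g)=(1+\delta_2)\Gamma(f,g)$.

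The \emph{main obstacle} is the verification of $\sup_M\sum_{1\leq n\leq M/2}A_{n,M-2n}^2<+\infty$. Inserting the bound of Lemma~\ref{lemkk0} and simplifying the powers (using $(2n+l+1/2)^s\geq(n+l)^s$ and $l+1/2\asymp l+1$) I expect $A_{n,l}^2\lesssim n^{-3/2-2s}\,(1+n/(l+1))^{s+1/2}$, after which the anti-diagonal sum must be split according to the size of $n$ relative to $M$: for $1\leq n\leq M/4$ the last factor is $O(1)$ and $\sum_n n^{-3/2-2s}$ converges because $3/2+2s>1$, while for $M/4<n\leq M/2$ one has $n\approx M$ and, writing $j=M-2n$, the sum is $\lesssim M^{-1-s}\sum_{0\leq j<M/2}(j+1)^{-s-1/2}$, which stays bounded (in fact tends to $0$) in each of the regimes $s>1/2$, $s=1/2$, $0<s<1/2$. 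Everything else is bookkeeping with the Hermite expansion and the algebra property of $H^r(\rr_x)$ for $r>1/2$; the hypothesis $0<s<1$ intervenes only through these summability thresholds and through Lemma~\ref{lemkk0}.
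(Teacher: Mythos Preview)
Your proposal is correct and follows essentially the same route as the paper: expand in the Hermite basis, invoke Lemma~\ref{proposition1a} and Lemma~\ref{L1} to strip off the $M_{\delta_2}$-weights, bound the $\delta_1$-factors by a constant (the paper gets $1$ via Jensen/concavity where you get $2$, but either suffices), and reduce everything to the same anti-diagonal summation estimate $\sup_M\sum_{2n+l=M}\tilde\mu_{n,l}^2/(n^{3/2}(l+1/2)^s)\lesssim (M+1/2)^s$, which you verify by the same dichotomy $n\lesssim l$ versus $n\gtrsim l$. The only cosmetic difference is the order of the two Cauchy--Schwarz steps (you extract $b$ then $a$, the paper extracts $g$ then $f$), which leads to the same residual sum over $c_M^2$.
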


\begin{proof}
Let $r>1/2$ and $f,g,h \in \mathscr{S}(\rr_{x,v}^2)$. We decompose these functions into the Hermite basis in the velocity variable
$$f(x,v)=\sum_{n=0}^{+\infty}f_n(x) \psi_n(v), \quad g(x,v)=\sum_{n=0}^{+\infty}g_n(x) \psi_n(v), \quad h(x,v)=\sum_{n=0}^{+\infty}h_n(x) \psi_n(v),$$
with
$$f_n(x)=(f(x,\cdot),\psi_n)_{L^2(\rr_v)}, \quad g_n(x)=(g(x,\cdot),\psi_n)_{L^2(\rr_v)}, \quad h_n(x)=(h(x,\cdot),\psi_n)_{L^2(\rr_v)}.$$
We notice that
\begin{align}\label{sake-1}
\|f\|_{(r,0)}=\Big(\sum_{n=0}^{+\infty}\|f_n\|_{H^r(\rr_x)}^2\Big)^{\frac{1}{2}}, \quad \|\mathcal{H}^mf\|_{(r,0)}=\Big(\sum_{n=0}^{+\infty}\Big(n+\frac{1}{2}\Big)^{2m}\|f_n\|_{H^r(\rr_x)}^2\Big)^{\frac{1}{2}},
\end{align}
when $m \in \rr$.
We deduce from (\ref{wr3}) and Lemma~\ref{proposition1a} that for all $t \geq 0$, $0 \leq \delta_1 \leq 1$, $0<\delta_2 \leq 1$, $j_1,j_2 \geq 0$ with $j_1+j_2 \leq 1$,
\begin{align*}
& \big(\big(1+\delta_1 \sqrt{\mathcal{H}}+\delta_1\langle D_x \rangle\big)^{-1}M_{\delta_2}(t)\Gamma\big(\big(M_{\delta_2}(t)\big)^{-1}\big(1+\delta_1 \sqrt{\mathcal{H}}\big)^{j_1}f,\big(M_{\delta_2}(t)\big)^{-1}\big(1+\delta_1 \sqrt{\mathcal{H}}\big)^{j_2}g\big),h\big)_{(r,0)}\\ \notag
= &  \int_{\rr^2}\overline{\langle D_x \rangle^r h(x,v)}\langle D_x \rangle^r\frac{M_{\delta_2}(t)}{1+\delta_1 \sqrt{\mathcal{H}}+\delta_1\langle D_x \rangle}
   \Big(\sum_{n=0}^{+\infty}\Big(\sum_{\substack{k+l=n \\ k,l \geq 0}}\alpha_{k,l} \\
&  \qquad \quad \times   \Big[\big(\mathscr{M}_{\delta_2,k}(t)\big)^{-1}\Big(1+\delta_1 \sqrt{k+\frac{1}{2}}\Big)^{j_1}f_k\Big]\Big[\big(\mathscr{M}_{\delta_2,l}(t)\big)^{-1}\Big(1+\delta_1 \sqrt{l+\frac{1}{2}}\Big)^{j_2}g_l\Big]\Big)\psi_n(v)\Big)dxdv.
\end{align*}
This implies that
\begin{align*}
&  \big(\big(1+\delta_1 \sqrt{\mathcal{H}}+\delta_1\langle D_x \rangle\big)^{-1}M_{\delta_2}(t)\Gamma\big(\big(M_{\delta_2}(t)\big)^{-1}\big(1+\delta_1 \sqrt{\mathcal{H}}\big)^{j_1}f,\big(M_{\delta_2}(t)\big)^{-1}\big(1+\delta_1 \sqrt{\mathcal{H}}\big)^{j_2}g\big),h\big)_{(r,0)}\\ \notag
& =  \sum_{n=0}^{+\infty}\sum_{\substack{k+l=n \\ k,l \geq 0}}\alpha_{k,l} \int_{\rr}   \frac{(1+\delta_1 \sqrt{k+\frac{1}{2}})^{j_1}(1+\delta_1 \sqrt{l+\frac{1}{2}})^{j_2} }{1+\delta_1 \sqrt{n+\frac{1}{2}}+\delta_1\langle D_x \rangle}\\
&  \quad \qquad \qquad \qquad \times  \langle D_x \rangle^r\mathscr{M}_{\delta_2,n}(t)
\big(
\big[\big(\mathscr{M}_{\delta_2,k}(t)\big)^{-1}f_k\big]\big[\big(\mathscr{M}_{\delta_2,l}(t)\big)^{-1}g_l\big]\big)\overline{\langle D_x \rangle^r h_n}dx.
\end{align*}
By using that
$$\Big\|\Big(1+\delta_1 \sqrt{n+\frac{1}{2}}+\delta_1\langle D_x \rangle\Big)^{-1}\Big(1+\delta_1 \sqrt{k+\frac{1}{2}}\Big)^{j_1}\Big(1+\delta_1 \sqrt{l+\frac{1}{2}}\Big)^{j_2}\Big\|_{\mathcal{L}(L^2(\rr_{x}))} \leq 1,$$
since $k+l=n$ and $j_1,j_2 \geq 0$ with $j_1+j_2 \leq 1$, it follows that
\begin{align*}
&  \big|\big(\big(1+\delta_1 \sqrt{\mathcal{H}}+\delta_1\langle D_x \rangle\big)^{-1}M_{\delta_2}(t)\Gamma\big(\big(M_{\delta_2}(t)\big)^{-1}\big(1+\delta_1 \sqrt{\mathcal{H}}\big)^{j_1}f,\big(M_{\delta_2}(t)\big)^{-1}\big(1+\delta_1 \sqrt{\mathcal{H}}\big)^{j_2}g\big),h\big)_{(r,0)}\big|\\ \notag
&  \leq  \sum_{n=0}^{+\infty}\sum_{\substack{k+l=n \\ k,l \geq 0}}|\alpha_{k,l}|\big\|\mathscr{M}_{\delta_2,k+l}(t)\big(\big[
\big(\mathscr{M}_{\delta_2,k}(t)\big)^{-1}f_k\big]
\big[\big(\mathscr{M}_{\delta_2,l}(t)\big)^{-1}g_l\big]\big)\big\|_{H^r(\rr_x)}\|h_n\|_{H^r(\rr_x)}.
\end{align*}
We deduce from Lemma~\ref{L1} that for all $t \geq 0$, $0 \leq \delta_1 \leq 1$, $0<\delta_2 \leq 1$, $j_1,j_2 \geq 0$ with $j_1+j_2 \leq 1$,
\begin{align}\label{cp3.2}
\Delta_1=&\Big|\Big(\frac{M_{\delta_2}(t)}{1+\delta_1 \sqrt{\mathcal{H}}+\delta_1\langle D_x \rangle}\Gamma\Big(\frac{(1+\delta_1 \sqrt{\mathcal{H}})^{j_1}}{M_{\delta_2}(t)}f,\frac{(1+\delta_1 \sqrt{\mathcal{H}})^{j_2}}{M_{\delta_2}(t)}g\Big),h\Big)_{(r,0)}\Big|\\ \notag
\leq  & C_r\sum_{n=0}^{+\infty}\sum_{\substack{k+l=n \\ k,l \geq 0}}|\alpha_{k,l}|\|f_k\|_{H^r(\rr_x)}\|g_l\|_{H^r(\rr_x)}\|h_n\|_{H^r(\rr_x)}.
\end{align}
By noticing from Lemma~\ref{proposition1a} that
$$\alpha_{2n+1,m}=0, \quad n, m \geq 0,$$
we obtain that
\begin{equation}\label{cp3.2n}
\Delta_1 \leq C_r\sum_{n=0}^{+\infty}\sum_{\substack{2k+l=n \\ k,l \geq 0}}|\alpha_{2k,l}|\|f_{2k}\|_{H^r(\rr_x)}\|g_l\|_{H^r(\rr_x)}\|h_n\|_{H^r(\rr_x)}.
\end{equation}
Under the assumption (\ref{sing}), we recall from formula (A.17) in~\cite{LMPX1} (Section~A.4.2) that
$$\int_{-\frac\pi{4}}^{\frac\pi{4}}\beta(\theta) \big(1-(\cos{\theta})^n\big)d\theta \sim \frac{2^{1+s}}{s}\mathbf{\Gamma}(1-s) n^s,$$
when $n \to +\infty$, where $\mathbf{\Gamma}$ denotes the Gamma function. It follows from Lemma~\ref{proposition1a}, Lemma~\ref{lemkk0} and (\ref{cp3.2n}) that there exists a positive constant $c_1>0$ such that for all $f,g,h \in \mathscr{S}(\rr_{x,v}^2)$, $t \geq 0$, $0 \leq \delta_1 \leq 1$, $0 <\delta_2 \leq 1$, $j_1,j_2 \geq 0$ with $j_1+j_2 \leq 1$,
\begin{align*}
& \ \Big|\Big(\frac{M_{\delta_2}(t)}{1+\delta_1 \sqrt{\mathcal{H}}+\delta_1\langle D_x \rangle}\Gamma\Big(\frac{(1+\delta_1 \sqrt{\mathcal{H}})^{j_1}}{M_{\delta_2}(t)}f,\frac{(1+\delta_1 \sqrt{\mathcal{H}})^{j_2}}{M_{\delta_2}(t)}g\Big),h\Big)_{(r,0)}\Big|  \\  
& \leq   c_1\|f_0\|_{H^r(\rr_x)} \sum_{n=0}^{+\infty}\Big(n+\frac{1}{2}\Big)^s\|g_n\|_{H^r(\rr_x)}\|h_n\|_{H^r(\rr_x)} \\ 
& \qquad \qquad \qquad \qquad \qquad \qquad +   c_1\sum_{n=0}^{+\infty}\|h_n\|_{H^r(\rr_x)}\Big(\sum_{\substack{2k+l=n \\ k \geq 1, l \geq 0}}
\frac{\tilde{\mu}_{k,l}}{k^{\frac{3}{4}}}\|f_{2k}\|_{H^r(\rr_x)}\|g_l\|_{H^r(\rr_x)}\Big).
\end{align*}
By using \eqref{sake-1}, we obtain that
\begin{align*}
& \ \|f_0\|_{H^r(\rr_x)} \sum_{n=0}^{+\infty}\Big(n+\frac{1}{2}\Big)^s \|g_n\|_{H^r(\rr_x)}\|h_n\|_{H^r(\rr_x)} \\
 \leq & \  \|f_0\|_{H^r(\rr_x)} \Big(\sum_{n=0}^{+\infty}\Big(n+\frac{1}{2}\Big)^s\|g_n\|_{H^r(\rr_x)}^2\Big)^{\frac{1}{2}} \Big(\sum_{n=0}^{+\infty}\Big(n+\frac{1}{2}\Big)^s\|h_n\|_{H^r(\rr_x)}^2\Big)^{\frac{1}{2}} \\
\leq & \ \|f\|_{(r,0)}\|\mathcal{H}^\frac{s}{2}g\|_{(r,0)}\|\mathcal{H}^\frac{s}{2}h\|_{(r,0)}.
\end{align*}
Furthermore, we notice that
\begin{align*}
\Delta_2= & \ \sum_{n=0}^{+\infty}\|h_n\|_{H^r(\rr_x)}\Big(\sum_{\substack{2k+l=n \\ k \geq 1,l \geq 0}}\frac{\tilde{\mu}_{k,l}}{k^{\frac{3}{4}}} \|f_{2k}\|_{H^r(\rr_x)}\|g_l\|_{H^r(\rr_x)}\Big)\\
 =& \ \sum_{k \geq 1, l \geq 0}\frac{\tilde{\mu}_{k,l}}{k^{\frac{3}{4}}}\|f_{2k}\|_{H^r(\rr_x)}\|g_l\|_{H^r(\rr_x)}\|h_{2k+l}\|_{H^r(\rr_x)} \\
 =& \ \sum_{l=0}^{+\infty}\Big(l+\frac{1}{2}\Big)^{\frac{s}{2}}\|g_l\|_{H^r(\rr_x)} \Big(\sum_{k =1}^{+\infty}\frac{\tilde{\mu}_{k,l}}{k^{\frac{3}{4}}(l+\frac{1}{2})^{\frac{s}{2}}} \|f_{2k}\|_{H^r(\rr_x)}\|h_{2k+l}\|_{H^r(\rr_x)}\Big) .
\end{align*}
We deduce that
\begin{align*}
\Delta_2 \leq & \ \|\mathcal{H}^\frac{s}{2}g\|_{(r,0)} \Big[\sum_{l=0}^{+\infty}\Big(\sum_{k =1}^{+\infty}\frac{\tilde{\mu}_{k,l}}{k^{\frac{3}{4}}(l+\frac{1}{2})^{\frac{s}{2}}}
\|f_{2k}\|_{H^r(\rr_x)}\|h_{2k+l}\|_{H^r(\rr_x)}\Big)^2\Big]^{\frac{1}{2}}\\
 \leq & \ \|\mathcal{H}^\frac{s}{2}g\|_{(r,0)} \Big[\sum_{l=0}^{+\infty}\Big(\sum_{k =1}^{+\infty}\|f_{2k}\|_{H^r(\rr_x)}^2\Big)\Big(\sum_{k =1}^{+\infty}\frac{\tilde{\mu}_{k,l}^2}{k^{\frac{3}{2}}(l+\frac{1}{2})^{s}}\|h_{2k+l}\|_{H^r(\rr_x)}^2\Big)\Big]^{\frac{1}{2}}\\
 \leq & \   \|f\|_{(r,0)}\|\mathcal{H}^\frac{s}{2}g\|_{(r,0)} \Big(\sum_{l=0}^{+\infty}\sum_{k =1}^{+\infty}\frac{\tilde{\mu}_{k,l}^2}{k^{\frac{3}{2}}(l+\frac{1}{2})^{s}}\|h_{2k+l}\|_{H^r(\rr_x)}^2\Big)^{\frac{1}{2}}.
\end{align*}
We may write that
$$\Big(\sum_{l=0}^{+\infty}\sum_{k =1}^{+\infty}\frac{\tilde{\mu}_{k,l}^2}{k^{\frac{3}{2}}(l+\frac{1}{2})^{s}}\|h_{2k+l}\|_{H^r(\rr_x)}^2\Big)^{\frac{1}{2}}=\Big[\sum_{n=0}^{+\infty}\|h_{n}\|_{H^r(\rr_x)}^2\Big(\sum_{\substack{2k+l =n\\ k\geq 1,l \geq 0}}\frac{\tilde{\mu}_{k,l}^2}{k^{\frac{3}{2}}(l+\frac{1}{2})^{s}}\Big)\Big]^{\frac{1}{2}}.$$
On the other hand, we deduce from Lemma~\ref{lemkk0} that
\begin{multline*}
\sum_{\substack{2k+l =n\\ k\geq 1,l \geq 0}}\frac{\tilde{\mu}_{k,l}^2}{k^{\frac{3}{2}}(l+\frac{1}{2})^{s}} \lesssim \sum_{\substack{2k+l =n\\ k\geq 1,l \geq 0\\ k \geq l}}\frac{k^{\frac{1}{2}}}{k^{\frac{3}{2}}(l+\frac{1}{2})^{s}}+\sum_{\substack{2k+l =n\\ k\geq 1,l \geq 0 \\ k \leq l}}\frac{(l+\frac{1}{2})^{s}}{k^{\frac{3}{2}}}
\\ \lesssim \sum_{\substack{2k+l =n\\ k\geq 1,l \geq 0\\ k \geq l}}\frac{1}{k} +\Big(n+\frac{1}{2}\Big)^{s}\sum_{k= 1}^n\frac{1}{k^{\frac{3}{2}}}
\lesssim \Big(n+\frac{1}{2}\Big)^{s}\Big(\sum_{k= 1}^{n}\frac{1}{k^{1+s}}+\sum_{k= 1}^{n}\frac{1}{k^{\frac{3}{2}}}\Big)\lesssim \Big(n+\frac{1}{2}\Big)^{s},
\end{multline*}
since
$$\tilde{\mu}_{k,l} \lesssim k^{\frac{1}{4}} \textrm{ when } k \geq l,\ k \geq 1,\ l \geq 0; \quad \tilde{\mu}_{k,l} \lesssim \Big(l+\frac{1}{2}\Big)^{s} \textrm{ when } 1 \leq k \leq l.$$
This implies that
$$\Big(\sum_{l=0}^{+\infty}\sum_{k =1}^{+\infty}\frac{\tilde{\mu}_{k,l}^2}{k^{\frac{3}{2}}(l+\frac{1}{2})^{s}}\|h_{2k+l}\|_{H^r(\rr_x)}^2\Big)^{\frac{1}{2}} \lesssim \|\mathcal{H}^\frac{s}{2}h\|_{(r,0)}.$$
We conclude that there exists a positive constant $C_r>0$ such that for all $f,g,h \in \mathscr{S}(\rr_{x,v}^2)$, $t \geq 0$, $0 \leq \delta_1 \leq 1$, $0< \delta_2 \leq 1$, $j_1,j_2 \geq 0$ with $j_1+j_2 \leq 1$,
\begin{multline*}
\big|\big(\big(1+\delta_1 \sqrt{\mathcal{H}}+\delta_1\langle D_x \rangle\big)^{-1}M_{\delta_2}(t)\Gamma\big(\big(M_{\delta_2}(t)\big)^{-1}\big(1+\delta_1 \sqrt{\mathcal{H}}\big)^{j_1}f,\big(M_{\delta_2}(t)\big)^{-1}\big(1+\delta_1 \sqrt{\mathcal{H}}\big)^{j_2}g\big),h\big)_{(r,0)}\big| \\
\leq C_r \|f\|_{(r,0)}\|\mathcal{H}^\frac{s}{2}g\|_{(r,0)}\|\mathcal{H}^\frac{s}{2}h\|_{(r,0)}.
\end{multline*}
By taking $t=0$, $\delta_1=0$, $j_1=0$ and $j_2=0$, we obtain that
$$|(\Gamma(f,g),h)_{(r,0)}| \leq \frac{C_r}{1+\delta_2} \|f\|_{(r,0)}\|\mathcal{H}^\frac{s}{2}g\|_{(r,0)}\|\mathcal{H}^\frac{s}{2}h\|_{(r,0)} \leq C_r \|f\|_{(r,0)}\|\mathcal{H}^\frac{s}{2}g\|_{(r,0)}\|\mathcal{H}^\frac{s}{2}h\|_{(r,0)}.$$
On the other hand, we deduce from Lemma~\ref{proposition1a} that
\begin{multline*}
(\Gamma(f,g),h)_{L^2(\rr_{x,v}^2)}
=  \int_{\rr^2}\Big(\sum_{n=0}^{+\infty}\Big(\sum_{\substack{k+l=n \\ k,l \geq 0}}\alpha_{k,l}f_k(x) g_l(x)\Big)\psi_n(v)\Big)\overline{h(x,v)}dxdv\\
=  \sum_{n=0}^{+\infty}\sum_{\substack{k+l=n \\ k,l \geq 0}}\alpha_{k,l} \int_{\rr}f_k(x) g_l(x) \overline{h_n(x)}dx.
\end{multline*}
It follows from the Sobolev imbedding that there exists a positive constant $c_r>0$ such that
\begin{multline*}
|(\Gamma(f,g),h)_{L^2(\rr_{x,v}^2)}\big| \leq \sum_{n=0}^{+\infty}\sum_{\substack{k+l=n \\ k,l \geq 0}}|\alpha_{k,l}|\|f_k\|_{L^{\infty}(\rr_x)}\|g_l\|_{L^2(\rr_x)}\|h_n\|_{L^2(\rr_x)}\\
\leq c_r\sum_{n=0}^{+\infty}\sum_{\substack{k+l=n \\ k,l \geq 0}}|\alpha_{k,l}|\|f_k\|_{H^r(\rr_x)}\|g_l\|_{L^2(\rr_x)}\|h_n\|_{L^2(\rr_x)}.
\end{multline*}
By substituting respectively $\|g_l\|_{L^2(\rr_x)}$ to $\|g_l\|_{H^r(\rr_x)}$ and $\|h_n\|_{L^2(\rr_x)}$ to $\|h_n\|_{H^r(\rr_x)}$ in formula (\ref{cp3.2}), the very same previous estimates allow to obtain that there exists a positive constant $c_r>0$ such that
$$|(\Gamma(f,g),h)_{L^2(\rr_{x,v}^2)}\big| \leq c_r\|f\|_{(r,0)}\|\mathcal{H}^\frac{s}{2}g\|_{L^2(\rr_{x,v}^2)}\|\mathcal{H}^\frac{s}{2}h\|_{L^2(\rr_{x,v}^2)}.$$
Lastly, we deduce from Lemma~\ref{proposition1a} that
\begin{multline*}
(\Gamma\big(\langle D_x \rangle f,g),h)_{L^2(\rr_{x,v}^2)}
=  \int_{\rr^2}\Big(\sum_{n=0}^{+\infty}\Big(\sum_{\substack{k+l=n \\ k,l \geq 0}}\alpha_{k,l}(\langle D_x \rangle f_k)(x) g_l(x)\Big)\psi_n(v)\Big)\overline{h(x,v)}dxdv\\
=  \sum_{n=0}^{+\infty}\sum_{\substack{k+l=n \\ k,l \geq 0}}\alpha_{k,l} \int_{\rr}(\langle D_x \rangle f_k)(x) g_l(x) \overline{h_n(x)}dx.
\end{multline*}
It follows from the one-dimensional Sobolev imbedding theorem that there exists a positive constant $c_r>0$ such that
\begin{align*}
|(\Gamma(\langle D_x \rangle f,g),h)_{L^2(\rr_{x,v}^2)}| \leq & \ \sum_{n=0}^{+\infty}\sum_{\substack{k+l=n \\ k,l \geq 0}}|\alpha_{k,l}|\|\langle D_x \rangle f_k\|_{L^{2}(\rr_x)}\|g_l \overline{h_n}\|_{L^2(\rr_x)}\\
\leq & \ \sum_{n=0}^{+\infty}\sum_{\substack{k+l=n \\ k,l \geq 0}}|\alpha_{k,l}|\|f_k\|_{H^1(\rr_x)}\|g_l\|_{L^{\infty}(\rr_x)}\|h_n\|_{L^2(\rr_x)}\\
\leq & \ c_r\sum_{n=0}^{+\infty}\sum_{\substack{k+l=n \\ k,l \geq 0}}|\alpha_{k,l}|\|f_k\|_{H^1(\rr_x)}\|g_l\|_{H^r(\rr_x)}\|h_n\|_{L^2(\rr_x)}.
\end{align*}
By substituting respectively $\|f_k\|_{H^1(\rr_x)}$ to $\|f_k\|_{H^r(\rr_x)}$ and $\|h_n\|_{L^2(\rr_x)}$ to $\|h_n\|_{H^r(\rr_x)}$ in formula (\ref{cp3.2}), the very same previous estimates allow to obtain that there exists a positive constant $c_r>0$ such that
$$|(\Gamma(\langle D_x \rangle f,g),h)_{L^2(\rr_{x,v}^2)}| \leq c_r\|f\|_{(1,0)}\|\mathcal{H}^\frac{s}{2}g\|_{(r,0)}\|\mathcal{H}^\frac{s}{2}h\|_{L^2(\rr_{x,v}^2)}.$$
This ends the proof of Lemma~\ref{proposition222}.
\end{proof}
We shall also need the following a priori estimates:
\begin{lemma}\label{proposition2225}
Let $r>1/2$. Then, there exists a positive constant $c_r>0$ such that for all $f,g \in \mathscr{S}(\rr_{x,v}^2)$,
$$\|\mathcal{H}^{-s}\Gamma(f,g)\|_{(r,0)} \leq c_r \|f\|_{(r,0)}\|g\|_{(r,0)}.$$
\end{lemma}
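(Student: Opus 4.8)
The plan is to reduce this bilinear estimate to a scalar one on the sequences of $H^r(\rr_x)$-norms of the Hermite coefficients, along the lines of the proof of Lemma~\ref{proposition222}. Expanding $f=\sum_{n\ge0}f_n(x)\psi_n(v)$ and $g=\sum_{n\ge0}g_n(x)\psi_n(v)$ and using Lemma~\ref{proposition1a} (recall $\alpha_{2n+1,m}=0$ and $\alpha_{0,0}=0$), one has
\[
\mathcal{H}^{-s}\Gamma(f,g)=\sum_{n\ge0}\Big(n+\frac12\Big)^{-s}\Big(\sum_{\substack{2k+l=n\\k,l\ge0}}\alpha_{2k,l}\,f_{2k}\,g_l\Big)\psi_n .
\]
Since $H^r(\rr_x)$ is an algebra for $r>1/2$, see \eqref{cl6}, we have $\|f_{2k}g_l\|_{H^r}\le C_r\|f_{2k}\|_{H^r}\|g_l\|_{H^r}$; writing $a_k=\|f_k\|_{H^r(\rr_x)}$, $b_l=\|g_l\|_{H^r(\rr_x)}$ and using the norm identities \eqref{sake-1}, the claim is reduced to
\[
\sum_{n\ge0}\Big(\Big(n+\frac12\Big)^{-s}\sum_{\substack{2k+l=n\\k,l\ge0}}|\alpha_{2k,l}|\,a_{2k}\,b_l\Big)^2\ \lesssim\ \|a\|_{\ell^2}^2\,\|b\|_{\ell^2}^2 .
\]

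I would then split the inner sum into its $k=0$ and $k\ge1$ parts. The $k=0$ part contributes $(n+\frac12)^{-s}|\alpha_{0,n}|\,a_0\,b_n$ to the $n$-th summand; since, by the asymptotics from~\cite{LMPX1} recalled in the proof of Lemma~\ref{proposition222}, $|\alpha_{0,l}|\lesssim(l+\frac12)^s$ for $l\ge1$, this is $\lesssim a_0\,b_n$, whence its $\ell^2_n$-norm is $\lesssim a_0\|b\|_{\ell^2}\le\|a\|_{\ell^2}\|b\|_{\ell^2}$. For the $k\ge1$ part I would plug in Lemma~\ref{lemkk0}, namely $0\le\alpha_{2k,l}\le C\,k^{-3/4}\tilde\mu_{k,l}$ with $\tilde\mu_{k,l}=(1+l/k)^s(1+k/(l+1))^{1/4}$, together with the elementary facts
\[
\Big(1+\frac{l}{k}\Big)^s(2k+l)^{-s}=\Big(\frac{k+l}{k(2k+l)}\Big)^s\le k^{-s},\qquad \Big(1+\frac{k}{l+1}\Big)^{1/4}\le(1+k)^{1/4}\le 2^{1/4}k^{1/4}\quad(k\ge1),
\]
and $(n+\frac12)^{-s}\le(2k+l)^{-s}$ for $n=2k+l$. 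Multiplying these together gives the bound, uniform in $l$,
\[
\Big(n+\frac12\Big)^{-s}\,\alpha_{2k,l}\ \le\ C'\,k^{-s-\frac12}\qquad(k\ge1,\ n=2k+l).
\]

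With this, the $k\ge1$ part of the $n$-th summand is at most $C'\sum_{k\ge1,\,2k\le n}k^{-s-1/2}\,a_{2k}\,b_{n-2k}=C'(v\ast b)_n$, where $v$ is the sequence supported on even indices with $v_{2k}=k^{-s-1/2}a_{2k}$ and $b$ is extended by zero. By Young's inequality $\ell^1\ast\ell^2\hookrightarrow\ell^2$, the $\ell^2_n$-norm of this piece is $\le C'\|v\|_{\ell^1}\|b\|_{\ell^2}$, and by Cauchy--Schwarz $\|v\|_{\ell^1}=\sum_{k\ge1}k^{-s-1/2}a_{2k}\le\big(\sum_{k\ge1}k^{-2s-1}\big)^{1/2}\|a\|_{\ell^2}$, the series converging precisely because $s>0$. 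Adding the two contributions yields the displayed estimate, hence $\|\mathcal{H}^{-s}\Gamma(f,g)\|_{(r,0)}\le c_r\|f\|_{(r,0)}\|g\|_{(r,0)}$.

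The delicate point — and the reason a plain Cauchy--Schwarz over $\{k+l=n\}$ does not suffice — is that $(n+\frac12)^{-s}|\alpha_{k,l}|$ is merely bounded, never decaying: for $k=0$ the homogeneity $|\alpha_{0,l}|\sim l^s$ exactly cancels the weight, and since that contribution is essentially rank one in $f$, it has to be isolated by hand. Once it is, the decay $k^{-3/4}$ supplied by Lemma~\ref{lemkk0} in the even index, of which $k^{1/4}$ is consumed in absorbing the factor $(1+k/(l+1))^{1/4}$ of $\tilde\mu_{k,l}$, leaves a genuinely summable convolution kernel $k^{-s-1/2}$ in the even variable; this is the heart of the argument and it is what makes $s>0$ (strictly) essential.
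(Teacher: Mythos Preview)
Your proof is correct, but it takes a genuinely different route from the paper's. The paper argues by duality: writing $(\mathcal{H}^{-s}\Gamma(f,g),h)_{(r,0)}=(\mathcal{H}^{-s/2}\Gamma(f,g),\mathcal{H}^{-s/2}h)_{(r,0)}$ and using $(n+\tfrac12)^{-s/2}\le(l+\tfrac12)^{-s/2}$ on the inner sum, it lands on the expression $\sum_{n}\sum_{k+l=n}|\alpha_{k,l}|\,\|f_k\|_{H^r}\|\tilde g_l\|_{H^r}\|\tilde h_n\|_{H^r}$ with $\tilde g=\mathcal{H}^{-s/2}g$, $\tilde h=\mathcal{H}^{-s/2}h$, and then simply invokes the trilinear machinery already set up in the proof of Lemma~\ref{proposition222} (formula~\eqref{cp3.2} onward) to bound this by $\|f\|_{(r,0)}\|\mathcal{H}^{s/2}\tilde g\|_{(r,0)}\|\mathcal{H}^{s/2}\tilde h\|_{(r,0)}=\|f\|_{(r,0)}\|g\|_{(r,0)}\|h\|_{(r,0)}$. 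So the paper distributes the weight $(n+\tfrac12)^{-s}$ symmetrically over $g$ and $h$ and recycles the earlier lemma wholesale. Your argument instead keeps the full weight on the $n$-side, extracts from Lemma~\ref{lemkk0} the uniform pointwise bound $(n+\tfrac12)^{-s}\alpha_{2k,l}\lesssim k^{-s-1/2}$, and closes with Young's inequality $\ell^1\ast\ell^2\hookrightarrow\ell^2$. This is more hands-on but also more transparent about where the gain comes from, and it is self-contained rather than leaning on the somewhat intricate $\Delta_2$ estimate inside Lemma~\ref{proposition222}. The paper's approach is shorter given what precedes it; yours would stand on its own.
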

\begin{proof}
Let $r>1/2$ and $f,g, h \in \mathscr{S}(\rr_{x,v}^2)$. We decompose these functions into the Hermite basis in the velocity variable
$$f=\sum_{n=0}^{+\infty}f_n(x) \psi_n(v), \quad g=\sum_{n=0}^{+\infty}g_n(x) \psi_n(v), \quad h=\sum_{n=0}^{+\infty}h_n(x) \psi_n(v),$$
with
$$f_n(x)=(f(x,\cdot),\psi_n)_{L^2(\rr_v)}, \quad g_n(x)=(g(x,\cdot),\psi_n)_{L^2(\rr_v)}, \quad h_n(x)=(h(x,\cdot),\psi_n)_{L^2(\rr_v)}.$$
We also consider the functions
\begin{equation}\label{tg1}
\tilde{g}=\mathcal{H}^{-\frac{s}{2}}g=\sum_{n=0}^{+\infty}\tilde{g}_n(x) \psi_n(v), \quad \tilde{h}=\mathcal{H}^{-\frac{s}{2}}h=\sum_{n=0}^{+\infty}\tilde{h}_n(x) \psi_n(v),
\end{equation}
whose coefficients satisfy
$$\tilde{g}_n=\Big(n+\frac{1}{2}\Big)^{-\frac{s}{2}}g_n, \quad \tilde{h}_n=\Big(n+\frac{1}{2}\Big)^{-\frac{s}{2}}h_n.$$
We deduce from Lemma~\ref{proposition1a} and (\ref{tg1}) that
\begin{align*}
& \ (\mathcal{H}^{-s}\Gamma(f,g),h)_{(r,0)}=(\mathcal{H}^{-\frac{s}{2}}\Gamma(f,g),\tilde{h})_{(r,0)}\\ \notag
= & \ \int_{\rr^2}\langle D_x \rangle^r\mathcal{H}^{-\frac{s}{2}}\Big(\sum_{n=0}^{+\infty}\Big(\sum_{\substack{k+l=n \\ k,l \geq 0}}\alpha_{k,l}f_k g_l\Big)\psi_n(v)\Big)\overline{\langle D_x \rangle^r \tilde{h}(x,v)}dxdv\\ \notag
= & \ \sum_{n=0}^{+\infty}\sum_{\substack{k+l=n \\ k,l \geq 0}}\alpha_{k,l} \Big(n+\frac{1}{2}\Big)^{-\frac{s}{2}} \int_{\rr}  \langle D_x \rangle^r \big(f_k g_l \big)\overline{\langle D_x \rangle^r \tilde{h}_n}dx.
\end{align*}
It follows from (\ref{cl6}) that
\begin{align*}
|(\mathcal{H}^{-s}\Gamma(f,g),h)_{(r,0)}| \leq & \ \sum_{n=0}^{+\infty}\sum_{\substack{k+l=n \\ k,l \geq 0}}|\alpha_{k,l}| \Big(l+\frac{1}{2}\Big)^{-\frac{s}{2}} \|f_kg_l\|_{H^r(\rr_x)}\|\tilde{h}_n\|_{H^r(\rr_x)}\\
\leq & \ C_r\sum_{n=0}^{+\infty}\sum_{\substack{k+l=n \\ k,l \geq 0}}|\alpha_{k,l}| \Big(l+\frac{1}{2}\Big)^{-\frac{s}{2}} \|f_k\|_{H^r(\rr_x)}\|g_l\|_{H^r(\rr_x)}\|\tilde{h}_n\|_{H^r(\rr_x)}\\
 \leq & \ C_r \sum_{n=0}^{+\infty}\sum_{\substack{k+l=n \\ k,l \geq 0}}|\alpha_{k,l}|\|f_k\|_{H^r(\rr_x)}\|\tilde{g}_l\|_{H^r(\rr_x)}\|\tilde{h}_n\|_{H^r(\rr_x)}.
\end{align*}
By using the very same previous estimates as in (\ref{cp3.2}) in the proof of Lemma~\ref{proposition222}, we deduce that
$$|(\mathcal{H}^{-s}\Gamma(f,g),h)_{(r,0)}| \lesssim \|f\|_{(r,0)}\|\mathcal{H}^\frac{s}{2}\tilde{g}\|_{(r,0)}\|\mathcal{H}^\frac{s}{2}\tilde{h}\|_{(r,0)},$$
that is
$$|(\mathcal{H}^{-s}\Gamma(f,g),h)_{(r,0)}| \lesssim \|f\|_{(r,0)}\|g\|_{(r,0)}\|h\|_{(r,0)}.$$
This implies that
$$\|\mathcal{H}^{-s}\Gamma(f,g)\|_{(r,0)} \lesssim \|f\|_{(r,0)}\|g\|_{(r,0)}.$$
This ends the proof of Lemma~\ref{proposition2225}.
\end{proof}

\section{Hypoelliptic estimate for  the principal part of the linear  inhomogeneous Kac operator}\label{fok}

We consider the operator acting in the velocity variable
\begin{equation}\label{ind10}
P=iv \xi+a_0^w(v,D_v),
\end{equation}
with parameter $\xi \in \rr$, where the operator $A=a_0^w(v,D_v)$ stands for the pseudodifferential operator
$$a_0^w(v,D_v)u=\frac{1}{2\pi}\int_{\rr^2}e^{i(v-w)\eta}a_0\Big(\frac{v+w}{2},\eta\Big)u(w)dwd\eta,$$
defined by the Weyl quantization of the symbol
\begin{equation}\label{symba0}
a_0(v,\eta)=c_0\Big(1+\eta^2+\frac{v^2}{4}\Big)^s,
\end{equation}
with some constants $c_0>0$, $0<s<1$. This operator corresponds to the principal part of the linear  inhomogeneous Kac operator $$v\partial_x+\mathcal{K},$$
on the Fourier side in the position variable.

Let $\psi$ be a $C_0^{\infty}(\rr,[0,1])$ function satisfying
$$\psi=1 \textrm{ on } [-1,1], \quad \textrm{supp }\psi \subset [-2,2].$$
We define the real-valued symbol
\begin{equation}\label{m1fp}
g=-\frac{\xi \eta}{\lambda^{\frac{2s+2}{2s+1}}}
\psi\left(\frac{\eta^2+v^2}{\lambda^{\frac{2}{2s+1}}}\right),
\end{equation}
with
\begin{equation}\label{tg2.7}
\lambda=(1+v^2+\eta^2+\xi^2)^{\frac{1}{2}}.
\end{equation}
The variable $\eta$ stands for the Fourier dual variable of the velocity variable $v$, whereas the variable~$\xi$ denotes the Fourier dual variable of the position variable $x$.
We aim at establishing the following result:

 \begin{lemma} \label{KFP2}
Let $P$ be the operator defined in \emph{(\ref{ind10})} and $G=g^w$ the selfadjoint operator defined by the Weyl quantization of the symbol \emph{(\ref{m1fp})}. Then,
the operator $G$ is uniformly bounded on $L^2(\rr_v)$ with respect to the parameter $\xi \in \rr$, and there exist some positive constants $0<\eps_0 \leq 1$, $c_1,c_2>0$ such that for all $0<\eps \leq \eps_0$, $u \in \mathscr{S}(\rr_v)$, $\xi \in \rr$,
$$\emph{\textrm{Re}}(Pu,(1-\eps G)u)_{L^2(\rr_v)} \geq c_{1}\|\mathcal{H}^{\frac{s}{2}}u\|_{L^2(\rr_v)}^2+c_1\eps \langle \xi \rangle^{\frac{2s}{2s+1}}\|u\|_{L^2(\rr_v)}^2-c_{2}\|u\|_{L^2(\rr_v)}^2,$$
where $\mathcal{H}=-\Delta_v+\frac{v^2}{4}$ stands for the harmonic oscillator.
\end{lemma}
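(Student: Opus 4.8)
The strategy is a multiplier (a.k.a. Hörmander–Kohn) argument: we test the equation with $(1-\eps G)u$ and estimate the resulting commutator from below. First I would record the $L^2$-boundedness of $G=g^w$. By \eqref{m1fp}–\eqref{tg2.7} the symbol $g$ is supported where $\eta^2+v^2 \leq 2\lambda^{2/(2s+1)}$, so on this support $|\xi\eta| \lesssim \langle\xi\rangle \lambda^{1/(2s+1)} \lesssim \lambda^{(2s+2)/(2s+1)}$, whence $|g|\lesssim 1$; together with the analogous bounds on derivatives (differentiating the cutoff costs $\lambda^{-2/(2s+1)}$, the prefactor has the right homogeneity) one checks $g \in S(1, \Gamma)$ for the metric $\Gamma$ from Section~\ref{symbolic}, and the Calderón–Vaillancourt theorem gives $\|G\|_{\mathcal L(L^2(\rr_v))} \leq C$ uniformly in $\xi$. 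Consequently $1-\eps G$ is a uniformly bounded perturbation of the identity, which also makes the eventual estimate an honest a priori inequality.

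Next, split $\textrm{Re}(Pu,(1-\eps G)u) = \textrm{Re}(Pu,u) - \eps\,\textrm{Re}(Pu,Gu)$. For the first term, $\textrm{Re}(iv\xi u,u)_{L^2}=0$ since $v\xi$ is real, and by \eqref{symba0} the symbol $a_0 = c_0(1+\eta^2+v^2/4)^s$ is a classical elliptic symbol in $\mathbf S^s(\rr^2)$ comparable to $\mathcal H^s$; the sharp Gårding inequality (or the fact, recalled via \eqref{tr4}, that $\textrm{Re}\, a_0^w \geq \tfrac1C \mathcal H^s - C$) yields $\textrm{Re}(Au,u)_{L^2} \geq c\|\mathcal H^{s/2}u\|_{L^2}^2 - C\|u\|_{L^2}^2$. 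For the second, more delicate term, write $\textrm{Re}(Pu,Gu) = \tfrac12 ([P,G]^* \text{-type bracket})$; precisely $-\eps\,\textrm{Re}(Pu,Gu) = \tfrac{\eps}{2}\big(\tfrac1i[iv\xi + A, \, iG]u,u\big)_{L^2} + (\text{bounded})$, so the gain comes from the principal symbol of the commutator. The Poisson bracket with the transport symbol is the key positive contribution:
$$
\{v\xi,\,g\} = \xi\,\partial_\eta g = -\frac{\xi^2}{\lambda^{\frac{2s+2}{2s+1}}}\,\psi\Big(\frac{\eta^2+v^2}{\lambda^{\frac{2}{2s+1}}}\Big) + (\text{terms with } \psi'),
$$
which on the region $\{\eta^2+v^2 \leq \lambda^{2/(2s+1)}\}$ (where $\psi=1$ and, crucially, $\lambda \approx \langle\xi\rangle$) is $\approx -\langle\xi\rangle^{-\frac{2s+2}{2s+1}}\xi^2 \approx -\langle\xi\rangle^{\frac{2s}{2s+1}}$; the Weyl calculus then turns $\tfrac{\eps}{i}[iv\xi, iG]$ into $\eps\{v\xi,g\}^w$ plus lower order, giving the desired $c_1\eps\langle\xi\rangle^{\frac{2s}{2s+1}}\|u\|_{L^2}^2$ after a Gårding argument.

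The main obstacle is controlling the error terms, and they are of three kinds. (i) The $\psi'$-contributions in $\{v\xi,g\}$ are supported in the annulus $\lambda^{2/(2s+1)} \lesssim \eta^2+v^2 \lesssim \lambda^{2/(2s+1)}$; there $\eta^2+v^2 \approx \lambda^{2/(2s+1)}$, hence $\mathcal H \approx \lambda^{2/(2s+1)} \gtrsim \langle\xi\rangle^{2/(2s+1)}$ on that support, so these terms are dominated by $\eps$ times a small multiple of $\|\mathcal H^{s/2}u\|_{L^2}^2$, absorbed into the first positive term for $\eps$ small. (ii) The cross bracket $\tfrac{\eps}{i}[A,iG]=\eps\{a_0,g\}^w + O(\eps)$-lower order: by the same homogeneity count $\{a_0,g\}$ is a symbol of order $2s - 1 + \tfrac{2s}{2s+1} < 2s$ in the relevant calculus and is again controlled by $\eps$ times the $\mathcal H^{s/2}$-norm squared plus a bounded term. (iii) All genuinely lower-order remainders from the Weyl/symbol calculus (the $\tfrac12 S(m,\Gamma)$ remainder terms, plus the self-adjointness corrections from writing $\textrm{Re}(Au,Gu)$) are $O(\|u\|_{L^2}^2)$ uniformly in $\xi$ and $\eps\leq\eps_0$, and go into the $-c_2\|u\|_{L^2}^2$. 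Carefully tracking that every error carries either a spare power of $\eps$ (absorbed into $c_1\|\mathcal H^{s/2}u\|^2$) or is genuinely of order zero (absorbed into $-c_2\|u\|^2$), and that the constants are uniform in $\xi\in\rr$, is the technical heart of the argument; fixing $\eps_0$ small enough at the end closes the estimate.
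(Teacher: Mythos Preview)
Your plan follows the paper's proof closely: the same multiplier $1-\eps G$, the same three-term split $\textrm{Re}(Au,u)-\eps\,\textrm{Re}(iv\xi u,Gu)-\eps\,\textrm{Re}(Au,Gu)$, and the same identification of the Poisson bracket $\{v\xi,g\}$ as the source of the gain in $\xi$. The boundedness of $G$ via $g\in S(1,\Gamma_1)$, the lower bound on $(Au,u)$ by $\|\mathcal H^{s/2}u\|^2$, and the treatment of $\textrm{Re}(Au,Gu)$ as $O(\eps\|\mathcal H^{s/2}u\|^2)$ are all as in the paper (the paper handles the last by writing it as $(G\,\mathcal H^{s/2}u,\mathcal H^{s/2}u)+(\text{commutator in }S(\langle(v,\eta)\rangle^{s-2},\Gamma_0))$ rather than estimating $\{a_0,g\}$ directly, but your route works too). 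Two minor sign slips: $\{v\xi,g\}=-\xi\,\partial_\eta g$ (not $+\xi\,\partial_\eta g$), so the principal term is $+\frac{\xi^2}{\lambda^{(2s+2)/(2s+1)}}\psi$, which is the positive contribution you want.

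There is, however, one genuine gap. You write that the principal symbol $\Psi=\frac{\xi^2}{\lambda^{(2s+2)/(2s+1)}}\psi$ is $\approx\langle\xi\rangle^{\frac{2s}{2s+1}}$ on the region $\{\eta^2+v^2\le\lambda^{2/(2s+1)}\}$ and then invoke G{\aa}rding to obtain $c_1\eps\langle\xi\rangle^{\frac{2s}{2s+1}}\|u\|^2$. But $\Psi$ \emph{vanishes} where $\psi=0$, so the pointwise inequality $\Psi\ge c\langle\xi\rangle^{\frac{2s}{2s+1}}$ fails globally, and G{\aa}rding on $\Psi$ alone only gives $(\Psi^w u,u)\ge -C\|u\|^2$. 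The paper closes this by the key symbolic estimate
\[
\Psi\;+\;\Big(1-\psi\Big(\tfrac{\eta^2+v^2}{\lambda^{2/(2s+1)}}\Big)\Big)\Big(1+\eta^2+\tfrac{v^2}{4}\Big)^{s}\;\ge\;c_1\,\lambda^{\frac{2s}{2s+1}}\;\ge\;c_1\,\langle\xi\rangle^{\frac{2s}{2s+1}},
\]
valid everywhere (the second summand takes over where $\psi$ switches off, since there $(1+\eta^2+v^2/4)^s\gtrsim\lambda^{2s/(2s+1)}$); G{\aa}rding in the class $S(M,\Gamma_1)$ then gives the global lower bound, and the added term is controlled back by $c\|\mathcal H^{s/2}u\|^2$ (another G{\aa}rding, since $(1-\psi)(1+\eta^2+v^2/4)^s\le(1+\eta^2+v^2/4)^s$) and absorbed because it carries an $\eps$. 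Your error item (i) touches the annulus where $\psi'$ lives but not the exterior region where $\psi=0$; you need this complementary-region argument to make the G{\aa}rding step legitimate.
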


This lemma is an adaptation to the fractional diffusion case of the hypoelliptic estimate proven in \cite{Herau-Starov} (Proposition 2.1) for the Kramers-Fokker-Planck operator.
Following standard notations~\cite{Hor85,Le}, we consider the following metrics on the phase space~$\rr_{v,\eta}^2$,
\begin{equation}\label{f1}
\Gamma_0 = \frac{dv^2+d\eta^2}{\langle (v,\eta) \rangle^2}, \qquad \Gamma_1 = \frac{dv^2+d\eta^2}{M(v,\eta,\xi)},
\end{equation}
with
$$\langle (v,\eta) \rangle^2=1+v^2+\eta^2,$$
\begin{equation}\label{ya2}
M(v,\eta,\xi) = 1+v^2 + \eta^2 + \lambda^{\frac{2}{2s+1}}=1+v^2 + \eta^2+(1+v^2+\eta^2+\xi^2)^{\frac{1}{2s+1}}.
\end{equation}
Notice that the second metric depends on the parameter $\xi \in \rr$.

For a positive function $\mu \geq1$, we define the space $S(\mu,\Gamma_0)$ as the set of functions $a\in C^{\infty}(\real_{v,\eta}^{2},\cc)$ possibly depending on the parameter $\xi$ satisfying
$$\forall \alpha \in \nn^{2}, \exists C_{\alpha}>0, \forall (v,\eta,\xi) \in \rr^{3}, \quad  |\partial_{v,\eta}^{\alpha} a(v,\eta,\xi)| \leq C_{\alpha} \mu(v,\eta,\xi)\langle (v,\eta) \rangle^{-|\alpha|},$$
whereas the space $S(\mu,\Gamma_1)$ corresponds to $C^{\infty}(\real_{v,\eta}^{2},\cc)$ functions depending on the parameter $\xi$ satisfying
$$\forall \alpha \in \nn^{2}, \exists C_{\alpha}>0, \forall (v,\eta,\xi) \in \rr^{3}, \quad  |\partial_{v,\eta}^{\alpha} a(v,\eta,\xi)| \leq C_{\alpha} \mu(v,\eta,\xi) M(v,\eta,\xi)^{-\frac{|\alpha|}{2}}.$$
The metrics $\Gamma_0$ and $\Gamma_1$ are admissible (slowly varying, satisfying the uncertainty principle and temperate), see Appendix (Section~\ref{symbolic}). 
In addition, we need to verify some properties for the weight $\mu \geq 1$ with respect to the metric $\Gamma_j$, namely the slowly varying property of $\mu$ with respect to $\Gamma_j$, for the function space $S(\mu,\Gamma_j)$ to enjoy nice symbolic calculus properties. In the present work, we shall work in the symbol classes
$$S(\langle (v,\eta)\rangle^m,\Gamma_0), \quad S(M^m,\Gamma_1), $$
with $m \in \rr$, which enjoy nice symbolic calculus since the function $\langle (v,\eta)\rangle^m$ is a $\Gamma_0$-slowly varying weight and that the function $M$ is a $\Gamma_1$-slowly varying weight uniformly with respect to the parameter $\xi \in \rr$, see Appendix (Section~\ref{symbolic}).
The gain functions in the symbolic calculus associated to these two symbol classes $S(\langle (v,\eta)\rangle^m,\Gamma_0)$ and $S(M^m,\Gamma_1)$ are respectively given by
$$\Lambda_{\Gamma_0}=\langle (v,\eta)\rangle^2, \quad \Lambda_{\Gamma_1}=M(v,\eta,\xi).$$
On the other hand, let us notice that the following inclusion holds
\begin{equation}\label{hm1}
S(m,\Gamma_1) \subset S(m,\Gamma_0),
\end{equation}
because
\begin{equation}\label{hm1.2}
\langle (v,\eta) \rangle^2 \leq M(v,\eta,\xi).
\end{equation}

In the following, we shall frequently use the equivalence of norms
\begin{equation}\label{hm4}
\forall r \in \rr, \exists C_r>0, \ \frac{1}{C_r}\|\mathcal{H}^ru\|_{L^2} \leq  \Big\|\textrm{Op}^w\Big(\Big(1+\eta^2+\frac{v^2}{4}\Big)^{r}\Big)u\Big\|_{L^2} \leq C_r\|\mathcal{H}^ru\|_{L^2},
\end{equation}
where $\mathcal{H}=-\Delta_v+\frac{v^2}{4}$ stands for the harmonic oscillator.
This natural link between pseudodifferential calculus and functional calculus may be readily deduced from~\cite{HelNie05HES} (Proposition~4.5).
We begin by proving the following symbolic estimates:
\begin{lemma} \label{2.2fp}
For all $m \in \R$, the following symbols belong to their respective function spaces
\begin{align*}
i) & \ \langle \xi \rangle^m \in S( \lambda^m, \Gamma_1) \quad ii)\ \lambda^m \in S( \lambda^m, \Gamma_1)\quad iii) \ \psi\left(\frac{\eta^2+v^2}{\lambda^{\frac{2}{2s+1}}}\right) \in S(1,\Gamma_1) \quad
iv) \  g \in S(1,\Gamma_1) \\
v) & \  \frac{\xi^2}{\lambda^{\frac{2s+2}{2s+1}}}\psi\Big(\frac{\eta^2+v^2}{\lambda^{\frac{2}{2s+1}}}\Big) \in S(M,\Gamma_1)
\quad vi) \ \Big(1-\psi\Big(\frac{\eta^2+v^2}{\lambda^{\frac{2}{2s+1}}}\Big)\Big)\Big(1+\eta^2+\frac{v^2}{4}\Big)^{s} \in S(M,\Gamma_1)
\end{align*}
uniformly with respect to the parameter $\xi \in \R$.
\end{lemma}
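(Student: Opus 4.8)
The plan is to reduce the whole lemma to two elementary geometric observations about the support of the cutoff, combined with the chain rule and the single inequality $M\le 2\lambda^2$. Write $\phi=\phi(v,\eta,\xi)=(\eta^2+v^2)\lambda^{-\frac{2}{2s+1}}$, so that the cutoff appearing in $g$ and in $v)$, $vi)$ is $\psi(\phi)$. First I would record that on the set $\{\phi\le 2\}$ — which contains $\operatorname{supp}\psi(\phi)$ and $\operatorname{supp}\psi^{(j)}(\phi)$ for every $j\ge 0$ — one has $v^2+\eta^2\le 2\lambda^{\frac{2}{2s+1}}$, hence $\lambda^2=1+v^2+\eta^2+\xi^2\le 1+2\lambda^{\frac{2}{2s+1}}+\xi^2$; since $\frac{2}{2s+1}<2$, this forces $\lambda\lesssim\langle\xi\rangle$, and combined with the trivial bound $\langle\xi\rangle\le\lambda$ it gives $\lambda\approx\langle\xi\rangle$, $\langle(v,\eta)\rangle\lesssim\lambda^{\frac{1}{2s+1}}$ and $M\approx\lambda^{\frac{2}{2s+1}}\approx\langle\xi\rangle^{\frac{2}{2s+1}}$ there, uniformly in $\xi$. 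Dually, on $\operatorname{supp}(1-\psi(\phi))$ one has $\phi\ge 1$, i.e. $\lambda^{\frac{2}{2s+1}}\le v^2+\eta^2$, so $M=1+v^2+\eta^2+\lambda^{\frac{2}{2s+1}}\le 1+2(v^2+\eta^2)$, i.e. $M\approx\langle(v,\eta)\rangle^2$ and the metric $\Gamma_1$ is comparable to $\Gamma_0$ on that region. The global inequality $M\le 2\lambda^2$ (equivalently $\lambda^{-1}\lesssim M^{-1/2}$) comes from $1+v^2+\eta^2\le\lambda^2$ and $\lambda^{\frac{2}{2s+1}}\le\lambda^2$.

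Granting this, $i)$ and $ii)$ are immediate: the $(v,\eta)$-derivatives of $\langle\xi\rangle^m$ vanish and $\langle\xi\rangle\le\lambda$ yield $i)$ for $m\ge 0$ (in the applications negative powers of $\langle\xi\rangle$ only occur multiplied by $\psi(\phi)$, on whose support $\lambda\approx\langle\xi\rangle$); the chain rule gives $|\partial_{v,\eta}^\alpha\lambda^m|\lesssim\lambda^{m-|\alpha|}\lesssim\lambda^m M^{-|\alpha|/2}$, which is $ii)$; and the same computation shows $(1+\eta^2+\tfrac{v^2}{4})^s\in S(\langle(v,\eta)\rangle^{2s},\Gamma_0)$, as needed for $vi)$. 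For $iii)$ I would first prove the pointwise estimate $|\partial_{v,\eta}^\alpha\phi|\lesssim M^{-|\alpha|/2}$ on $\{\phi\le 2\}$: Leibniz applied to $\phi=(v^2+\eta^2)\lambda^{-\frac{2}{2s+1}}$, using $|\partial^\beta(v^2+\eta^2)|\lesssim\langle(v,\eta)\rangle^{2-|\beta|}$, $|\partial^\gamma\lambda^{-\frac{2}{2s+1}}|\lesssim\lambda^{-\frac{2}{2s+1}-|\gamma|}$ and $\langle(v,\eta)\rangle\lesssim\lambda^{\frac{1}{2s+1}}$ (valid there) gives $|\partial^\alpha\phi|\lesssim\lambda^{-\frac{|\alpha|}{2s+1}}\approx M^{-|\alpha|/2}$, using $\frac{1}{2s+1}\le 1$ and $M\approx\lambda^{\frac{2}{2s+1}}$. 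Then $\psi(\phi)\in S(1,\Gamma_1)$ — and likewise $1-\psi(\phi)\in S(1,\Gamma_1)$ — follows from the Faà di Bruno formula, each term of $\partial^\alpha[\psi(\phi)]$ being $\psi^{(j)}(\phi)$ (bounded, supported in $\{\phi\le 2\}$) times a product of factors $\partial^{\alpha_i}\phi$ with $\sum_i\alpha_i=\alpha$, hence $\lesssim M^{-|\alpha|/2}$.

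For $v)$ the generous weight $M$ lets me argue purely through the product rule for symbols: $\xi^2\in S(\lambda^2,\Gamma_1)$ (constant in $(v,\eta)$, bounded by $\lambda^2$), $\lambda^{-\frac{2s+2}{2s+1}}\in S(\lambda^{-\frac{2s+2}{2s+1}},\Gamma_1)$ by $ii)$ and $\psi(\phi)\in S(1,\Gamma_1)$ by $iii)$, so their product lies in $S(\lambda^{2-\frac{2s+2}{2s+1}},\Gamma_1)=S(\lambda^{\frac{2s}{2s+1}},\Gamma_1)\subset S(M,\Gamma_1)$, the last inclusion because $\lambda^{\frac{2s}{2s+1}}\le\lambda^{\frac{2}{2s+1}}\le M$ (using $s\le 1$). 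For $vi)$ I would invoke the dual observation: the symbol vanishes off $\operatorname{supp}(1-\psi(\phi))$, where $\Gamma_1\approx\Gamma_0$ and $M\approx\langle(v,\eta)\rangle^2$; expanding $\partial^\alpha[(1-\psi(\phi))(1+\eta^2+\tfrac{v^2}{4})^s]$ by Leibniz and combining $1-\psi(\phi)\in S(1,\Gamma_1)$ with the bound $|\partial^\gamma(1+\eta^2+\tfrac{v^2}{4})^s|\lesssim\langle(v,\eta)\rangle^{2s-|\gamma|}\approx M^{s-|\gamma|/2}$ valid there, every term is $\lesssim M^{s-|\alpha|/2}\le M^{1-|\alpha|/2}$. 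Finally $iv)$ is the one place where the product rule alone is too lossy and the localization must really be used: on $\{\phi\le 2\}$ the support bounds $|\xi|\lesssim\lambda$, $|v|,|\eta|\lesssim\lambda^{\frac{1}{2s+1}}$ together with the chain rule give $\big|\partial_{v,\eta}^\beta\big(-\xi\eta\lambda^{-\frac{2s+2}{2s+1}}\big)\big|\lesssim M^{-|\beta|/2}$ there — in particular the prefactor itself is $\lesssim\lambda\cdot\lambda^{\frac{1}{2s+1}}\cdot\lambda^{-\frac{2s+2}{2s+1}}=1$ — and since $\psi(\phi)\in S(1,\Gamma_1)$ localizes to $\{\phi\le 2\}$, Leibniz yields $g=\big(-\xi\eta\lambda^{-\frac{2s+2}{2s+1}}\big)\psi(\phi)\in S(1,\Gamma_1)$.

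I expect the only genuine difficulty to be keeping the two regimes apart: on $\operatorname{supp}\psi(\phi)$ the weight $M$ is comparable to $\langle\xi\rangle^{\frac{2}{2s+1}}$ and the individually large factors $\xi$ and $\lambda^{-\frac{2s+2}{2s+1}}$ collude into a bounded quantity, whereas on $\operatorname{supp}(1-\psi(\phi))$ one has $M\approx\langle(v,\eta)\rangle^2$ and $\Gamma_1\approx\Gamma_0$. Deciding, for each term produced by the Leibniz and Faà di Bruno expansions, which of these two pictures it belongs to — so that precisely the correct power of $M$ can be extracted — is the substance of the argument; everything else is the chain rule and the single inequality $M\le 2\lambda^2$, all uniformly in $\xi$.
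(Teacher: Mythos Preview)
Your proposal is correct and follows essentially the same approach as the paper: both arguments rest on the two support observations that $M\approx\lambda^{\frac{2}{2s+1}}$ on $\{\phi\le 2\}$ while $M\approx\langle(v,\eta)\rangle^2$ on $\{\phi\ge 1\}$, together with the global bound $M^{1/2}\lesssim\lambda$, and then push these through Leibniz/Fa\`a di Bruno. Your organization---isolating the two regimes up front and then reading off each item---is slightly cleaner than the paper's item-by-item treatment, and your caveat about $i)$ for $m<0$ is in fact more careful than the paper, which glosses over that the bound $\langle\xi\rangle^m\lesssim\lambda^m$ fails for negative $m$ (harmless, since only nonnegative powers are used).
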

\begin{proof}
The assertion $i)$ is trivial since the term $\langle \xi \rangle^m$ is independent of the variables $(v,\eta)$.
On the other hand, we easily derive from (\ref{tg2.7}) and (\ref{ya2}) that
$$\forall \alpha  \in \N^{2}, \quad |\D_{v,\eta}^\alpha(\lambda^m)| \lesssim \lambda^{m -|\alpha|} \lesssim \lambda^m M^{-\frac{|\alpha|}{2}},$$
uniformly \wrt the parameter $\xi \in \R$, since the estimate
$M^{\frac{1}{2}} \lesssim \lambda$
holds uniformly with respect to~$\xi \in \rr$. This proves the assertion $ii)$. Regarding the assertion $iii)$, we first notice that on the support of the function
$$\psi\left(\frac{\eta^2+v^2}{\lambda^{\frac{2}{2s+1}}}\right),$$
the estimate $\eta^2+v^2 \leq  2\lambda^{\frac{2}{2s+1}}$ implies that
$$M^{\frac{1}{2}} \sim \lambda^{\frac{1}{2s+1}}$$
and
$$|\D^\alpha_{v,\eta} (\eta^2 + v^2)|  \lesssim \left\{ \begin{array}{ll} \lambda^{\frac{2}{2s+1}} & \textrm{ when } |\alpha| = 0,\\  \lambda^{\frac{1}{2s+1}} & \textrm{ when } |\alpha| = 1,\\ 1 & \textrm{ when } |\alpha| = 2, \\ 0 & \textrm{ when } |\alpha| \geq 3. \end{array} \right.$$
The assertion $iii)$ then directly follows from assertion $ii)$. Next, we notice that on the support of the function
$$\psi\left(\frac{\eta^2+v^2}{\lambda^{\frac{2}{2s+1}}}\right),$$
the estimate $|\xi \eta| \lesssim \lambda^{\frac{2s+2}{2s+1}}$ implies that
$$|\D^\alpha_{v,\eta} (\xi \eta)|  \lesssim \left\{ \begin{array}{ll} \lambda^{\frac{2s+2}{2s+1}} & \textrm{ when } |\alpha| = 0,\\  \lambda & \textrm{ when } |\alpha| = 1,\\ 0 & \textrm{ when } |\alpha| \geq 2. \end{array} \right.$$
Recalling that in this region $M^{\frac{1}{2}} \sim \lambda^{\frac{1}{2s+1}}$, the assertion $iv)$ is then a direct consequence of (\ref{m1fp}) and the assertions $ii)$ and $iii)$.
Recalling that $0<s<1$, we deduce from (\ref{ya2}), $i)$, $ii)$, $iii)$ that
$$\frac{\xi^2}{\lambda^{\frac{2s+2}{2s+1}}}\psi\Big(\frac{\eta^2+v^2}{\lambda^{\frac{2}{2s+1}}}\Big) \in S(\lambda^{\frac{2s}{2s+1}},\Gamma_1) \subset S(M,\Gamma_1),$$
uniformly with respect to the parameter $\xi \in \R$. This proves the assertion $v)$. Regarding the last assertion, we first notice that
\begin{equation}\label{q1}
M^{\frac{1}{2}} \sim \langle (v,\eta) \rangle,
\end{equation}
on the support of the function
$$1-\psi\Big(\frac{\eta^2+v^2}{\lambda^{\frac{2}{2s+1}}}\Big),$$
where $\eta^2+v^2 \geq  \lambda^{\frac{2}{2s+1}}$. By using that
$$\partial_{v,\eta}^{\alpha}\Big(\Big(1+\eta^2+\frac{v^2}{4}\Big)^{s}\Big) \lesssim \langle (v,\eta) \rangle^{2s-|\alpha|},$$
we deduce from $iii)$ and (\ref{q1}) that
$$\Big(1-\psi\Big(\frac{\eta^2+v^2}{\lambda^{\frac{2}{2s+1}}}\Big)\Big)\Big(1+\eta^2+\frac{v^2}{4}\Big)^{s} \in S(M^s,\Gamma_1) \subset S(M,\Gamma_1).$$
This proves the assertion $vi)$. This ends the proof of Lemma~\ref{2.2fp}.
\end{proof}

The following lemma shows that up to controlled terms and a weight factor $\lambda^{\frac{2s+2}{2s+1}}$, the Poisson bracket
$$H_{p_1} g=\{p_1, g\}=\frac{\partial p_1}{\partial \eta}\frac{\partial g}{\partial v}-\frac{\partial p_1}{\partial v}\frac{\partial g}{\partial \eta},$$
with $p_1=v\xi$ the symbol associated to the transport part of $P$, lets appear the elliptic symbol
$$\{p_1,-\xi \eta\}=\xi^2,$$
in the region of the phase space where $\eta^2+v^2 \lesssim \lambda^{\frac{2}{2s+1}}$.

\begin{lemma} \label{mainterm}
With $p_1=v\xi$, we have
$$
H_{p_1} g =\{v\xi,g\}= \frac{\xi^2}{\lambda^{\frac{2s+2}{2s+1}}}
\psi\left(\frac{\eta^2+v^2}{\lambda^{\frac{2}{2s+1}}}\right) + r,
$$
with a remainder $r$ belonging to the symbol class $S(\langle (v,\eta) \rangle^{2s},\Gamma_0)$ uniformly with respect to the parameter $\xi \in \R$.
\end{lemma}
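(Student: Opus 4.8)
The plan is to turn the Poisson bracket into an ordinary $\eta$-derivative, peel off the leading term with the Leibniz rule, and then control the remainder by the same scaling analysis already carried out in the proof of Lemma~\ref{2.2fp}.

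First, since $p_1=v\xi$ does not depend on $\eta$, one has $\partial_\eta p_1=0$ and $\partial_v p_1=\xi$, so that $H_{p_1}g=\{v\xi,g\}=-\xi\,\partial_\eta g$. Writing $g=(-\xi\eta)\,\lambda^{-\frac{2s+2}{2s+1}}\,\psi(w)$ with $w=(\eta^2+v^2)\lambda^{-\frac{2}{2s+1}}$, recalling $\partial_\eta\lambda=\eta/\lambda$ and $\partial_\eta w=2\eta\lambda^{-\frac{2}{2s+1}}-\frac{2}{2s+1}\,\eta(\eta^2+v^2)\lambda^{-\frac{2}{2s+1}-2}$, and expanding $\partial_\eta g$ by Leibniz into its three terms, the contribution in which $\partial_\eta$ hits the factor $-\xi\eta$ equals $-\xi\lambda^{-\frac{2s+2}{2s+1}}\psi(w)$, which multiplied by $-\xi$ reproduces exactly the announced main term. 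The remainder is then the sum of the two contributions in which $\partial_\eta$ hits $\lambda^{-\frac{2s+2}{2s+1}}$ or $\psi(w)$:
$$r=-\frac{2s+2}{2s+1}\,\xi^2\eta^2\,\lambda^{-\frac{2s+2}{2s+1}-2}\,\psi(w)\ +\ \xi^2\eta\,\lambda^{-\frac{2s+2}{2s+1}}\,\psi'(w)\,\partial_\eta w,$$
and the structural point is that each summand carries an extra factor of $\eta$ (from $\partial_\eta\lambda$, respectively from $\partial_\eta w$).

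It remains to check $r\in S(\langle(v,\eta)\rangle^{2s},\Gamma_0)$, uniformly in $\xi$. For the summand with $\psi(w)$ I would use $|\xi|\leq\lambda$ and $|\eta|\leq\langle(v,\eta)\rangle\leq\lambda$ to get $|r_1|\lesssim\langle(v,\eta)\rangle^2\lambda^{-\frac{2s+2}{2s+1}}\leq\langle(v,\eta)\rangle^{2-\frac{2s+2}{2s+1}}=\langle(v,\eta)\rangle^{\frac{2s}{2s+1}}\leq\langle(v,\eta)\rangle^{2s}$. The summand with $\psi'(w)$ is supported in the shell $\{\lambda^{\frac{2}{2s+1}}\leq\eta^2+v^2\leq 2\lambda^{\frac{2}{2s+1}}\}$, where $\langle(v,\eta)\rangle\sim\lambda^{\frac{1}{2s+1}}$ and $|\partial_\eta w|\lesssim\lambda^{-\frac{1}{2s+1}}\sim\langle(v,\eta)\rangle^{-1}$, whence $|r_2|\lesssim\lambda^2\cdot\langle(v,\eta)\rangle\cdot\lambda^{-\frac{2s+2}{2s+1}}\cdot\langle(v,\eta)\rangle^{-1}=\lambda^{\frac{2s}{2s+1}}\sim\langle(v,\eta)\rangle^{2s}$ there. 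For the derivative estimates one runs the scaling computation of the proof of Lemma~\ref{2.2fp}: every $\partial_v$ or $\partial_\eta$ falling on a power of $\lambda$ gains a factor $\lesssim\lambda^{-1}\lesssim\langle(v,\eta)\rangle^{-1}$; every derivative falling on $\psi^{(j)}(w)$ produces $\psi^{(j+1)}(w)$ times derivatives of $w$, which remain supported in the shell and satisfy there $|\partial_{v,\eta}^\alpha w|\lesssim\lambda^{-\frac{|\alpha|}{2s+1}}\sim\langle(v,\eta)\rangle^{-|\alpha|}$; and every derivative falling on the monomial $\xi^2\eta^2$ or $\xi^2\eta$ lowers the count by the required $\langle(v,\eta)\rangle^{-1}$; outside the relevant supports $r$ and all its derivatives vanish. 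This yields $|\partial_{v,\eta}^\alpha r|\lesssim\langle(v,\eta)\rangle^{2s-|\alpha|}$ with $\xi$-independent constants.

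I expect the only real obstacle to be this last bookkeeping. The temptation is to deduce $r\in S(\langle(v,\eta)\rangle^{2s},\Gamma_0)$ directly from the symbolic product rule applied to the classes of Lemma~\ref{2.2fp}, but that over-counts: the a priori $\lambda$-weights of $\xi^2$, $\eta^2$ and $\partial_\eta w$ are too large, and the gain must be extracted by hand from the two elementary localizations $|\eta|\leq\langle(v,\eta)\rangle\leq\lambda$ (on the support of $\psi(w)$) and $\eta^2+v^2\sim\lambda^{\frac{2}{2s+1}}$, i.e. $\langle(v,\eta)\rangle\sim\lambda^{\frac{1}{2s+1}}$ (on the support of $\psi'(w)$), exactly in the spirit of the case analysis in the proof of Lemma~\ref{2.2fp}.
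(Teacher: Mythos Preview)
Your proposal is correct and follows essentially the same route as the paper's proof: compute $\{v\xi,g\}=-\xi\partial_\eta g$, isolate the main term by Leibniz, and control the two remainder pieces by the localizations $\eta^2+v^2\le 2\lambda^{\frac{2}{2s+1}}$ (support of $\psi$) and $\eta^2+v^2\sim\lambda^{\frac{2}{2s+1}}$ (support of $\psi'$). The only cosmetic difference is that the paper places the remainder first in $S(\langle(v,\eta)\rangle^{2s},\Gamma_1)$ and then invokes the inclusion $S(m,\Gamma_1)\subset S(m,\Gamma_0)$, whereas you estimate directly in $\Gamma_0$; both bookkeepings are equivalent here since on the relevant supports $M^{1/2}\sim\lambda^{\frac{1}{2s+1}}\gtrsim\langle(v,\eta)\rangle$.
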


\begin{proof}
Recalling the definition (\ref{m1fp}), an explicit computation of the Poisson bracket
$$H_{p_1} g =\{p_1,g\}=\{v\xi,g\},$$
gives that
$$\{v\xi,g\}=-\xi\frac{\partial g}{\partial \eta}=
\frac{\xi^2}{\lambda^{\frac{2s+2}{2s+1}}}\psi\left(\frac{\eta^2+v^2}{\lambda^{\frac{2}{2s+1}}}\right)+r,$$
with
$$r=\xi^2 \eta\partial_{\eta}\big(\lambda^{-\frac{2s+2}{2s+1}}\big)\psi\left(\frac{\eta^2+v^2}{\lambda^{\frac{2}{2s+1}}}\right)
+\frac{\xi^2 \eta}{\lambda^{\frac{2s+2}{2s+1}}}\partial_{\eta}\left[\psi\left(\frac{\eta^2+v^2}{\lambda^{\frac{2}{2s+1}}}\right)\right].$$
Recalling that $M^{\frac{1}{2}} \sim \lambda^{\frac{1}{2s+1}}$ on the support of the function
$$\psi\left(\frac{\eta^2+v^2}{\lambda^{\frac{2}{2s+1}}}\right),$$
we notice that in this region
\begin{equation}\label{fili1b}
|\D^\alpha_{v,\eta} (\xi^2 \eta)|  \lesssim \left\{ \begin{array}{ll} \lambda^{\frac{4s+3}{2s+1}} & \textrm{when } |\alpha| = 0,\\
\lambda^{2} & \textrm{when } |\alpha| = 1, \\
0 & \textrm{when } |\alpha| \geq 2, \end{array} \right. \quad |\D^\alpha_{v,\eta} (\xi^2 \eta^2)|  \lesssim \left\{ \begin{array}{ll} \lambda^{\frac{4s+4}{2s+1}} & \textrm{when } |\alpha| = 0,\\
\lambda^{\frac{4s+3}{2s+1}} & \textrm{when } |\alpha| = 1, \\
\lambda^2 & \textrm{when } |\alpha| = 2,\\
0 & \textrm{when } |\alpha| \geq 3, \end{array} \right.
\end{equation}
because $\eta^2+v^2 \leq 2\lambda^{\frac{2}{2s+1}}$, and we therefore deduce from Lemma~\ref{2.2fp} and (\ref{fili1b}) that
$$\xi^2 \eta\partial_{\eta}\big(\lambda^{-\frac{2s+2}{2s+1}}\big)\psi\left(\frac{\eta^2+v^2}{\lambda^{\frac{2}{2s+1}}}\right) = -\frac{2s+2}{2s+1} \xi^2\eta^2 \lambda^{-\frac{6s+4}{2s+1}}\psi\left(\frac{\eta^2+v^2}{\lambda^{\frac{2}{2s+1}}}\right) \in S(\lambda^{-\frac{2s}{2s+1}},\Gamma_1) \subset S(1,\Gamma_1)$$
and
$$\frac{\xi^2 \eta}{\lambda^{\frac{2s+2}{2s+1}}}\partial_{\eta}\left[\psi\left(\frac{\eta^2+v^2}{\lambda^{\frac{2}{2s+1}}}\right)\right] \in S(\lambda^{\frac{2s}{2s+1}},\Gamma_1).$$
By using now that
$$\eta^2+v^2 \sim \lambda^{\frac{2}{2s+1}} \enskip \mbox{
on the support of } \enskip
\psi'\left(\frac{\eta^2+v^2}{\lambda^{\frac{2}{2s+1}}}\right),$$
it follows that
$$\frac{\xi^2 \eta}{\lambda^{\frac{2s+2}{2s+1}}}\partial_{\eta}\left[\psi\left(\frac{\eta^2+v^2}{\lambda^{\frac{2}{2s+1}}}\right)\right] \in S(\langle (v,\eta)\rangle^{2s},\Gamma_1),$$
uniformly with respect to the parameter $\xi \in \R$. This implies that the remainder $r$ belongs to the symbol class $S(\langle (v,\eta)\rangle^{2s},\Gamma_1)$ uniformly with respect to the parameter $\xi \in \R$. Finally, we deduce from (\ref{hm1}) and (\ref{hm1.2}) that the remainder $r$ belongs to the symbol class $S(\langle (v,\eta)\rangle^{2s},\Gamma_0)$ uniformly with respect to the parameter $\xi \in \R$. The proof of Lemma~\ref{mainterm} is complete.
\end{proof}

We shall now prove Lemma~\ref{KFP2}.
Let $\eps$ be a positive parameter satisfying $0<\eps \leq 1$.
We consider the multiplier $G = g^w$ defined by the Weyl quantization of the symbol (\ref{m1fp}). It follows from (\ref{ind10}) that
\begin{multline}\label{eq7fp}
\textrm{Re}(Pu,(1-\eps G)u)_{L^2(\rr_v)}=\textrm{Re}(a_0^w(v,D_v)u,u)_{L^2(\rr_v)}-\eps \textrm{Re}(iv\xi u,Gu)_{L^2(\rr_v)}\\
-\eps \textrm{Re}(a_0^w(v,D_v) u,Gu)_{L^2(\rr_v)}.
\end{multline}
We have
\begin{equation}\label{w1b}
\textrm{Re}(a_0^w(v,D_v)u,u)_{L^2(\rr_v)}=(a_0^w(v,D_v)u,u)_{L^2(\rr_v)},
\end{equation}
since the symbol $a_0$ is real-valued.
In order to estimate the first term, we use some symbolic calculus in the class $S(\langle (v,\eta)\rangle^s,\Gamma_0)$, and notice that
\begin{equation}\label{hm6}
a_0-c_0\Big(1+\eta^2+\frac{v^2}{4}\Big)^{\frac{s}{2}} \sharp^w \Big(1+\eta^2+\frac{v^2}{4}\Big)^{\frac{s}{2}} \in S(\langle (v,\eta)\rangle^{2s-2},\Gamma_0) \subset S(1,\Gamma_0),
\end{equation}
since $0<s<1$.
Then, it follows from (\ref{hm4}) that there exist some positive constants $C, \tilde{c}_0>0$ such that for all $u \in \mathscr{S}(\rr_v)$,
\begin{equation}\label{hm3}
(a_0^w(v,D_v)u,u)_{L^2} \geq c_0\Big\|\textrm{Op}^w\Big(\Big(1+\eta^2+\frac{v^2}{4}\Big)^{\frac{s}{2}}\Big)u\Big\|_{L^2}^2-C\|u\|_{L^2}^2
\geq  \tilde{c}_0\|\mathcal{H}^{\frac{s}{2}}u\|_{L^2}^2-C\|u\|_{L^2}^2.
\end{equation}
Now, it remains to estimate the two last terms appearing in (\ref{eq7fp}). We begin by noticing from Lemma~\ref{2.2fp} that the operator $G=g^w$ is uniformly bounded on $L^2(\rr_v)$ with respect to the parameter $\xi \in \rr$. By using (\ref{hm4}) and (\ref{hm6}), it follows that
\begin{multline}\label{hm7.1}
\textrm{Re}(a_0^w(v,D_v) u,Gu)_{L^2}\\
= c_0\textrm{Re}\Big(\textrm{Op}^w\Big(\Big(1+\eta^2+\frac{v^2}{4}\Big)^{\frac{s}{2}}\Big)u,\textrm{Op}^w\Big(\Big(1+\eta^2+\frac{v^2}{4}\Big)^{\frac{s}{2}}\Big)Gu\Big)_{L^2}
+\mathcal{O}(\|u\|_{L^2}^2)
=  R_1+R_2+\mathcal{O}(\|u\|_{L^2}^2),
\end{multline}
with
\begin{equation}\label{hm7}
R_1=c_0\textrm{Re}\Big(\textrm{Op}^w\Big(\Big(1+\eta^2+\frac{v^2}{4}\Big)^{\frac{s}{2}}\Big)u,G\ \textrm{Op}^w\Big(\Big(1+\eta^2+\frac{v^2}{4}\Big)^{\frac{s}{2}}\Big)u\Big)_{L^2} =\mathcal{O}(\|\mathcal{H}^{\frac{s}{2}}u\|_{L^2}^2),
\end{equation}
\begin{equation}\label{hm7.2}
R_2=c_0\textrm{Re}\Big(\textrm{Op}^w\Big(\Big(1+\eta^2+\frac{v^2}{4}\Big)^{\frac{s}{2}}\Big)u,\Big[\textrm{Op}^w\Big(\Big(1+\eta^2+\frac{v^2}{4}\Big)^{\frac{s}{2}}\Big),g^w\Big]u\Big)_{L^2}.
\end{equation}
We notice from Lemma~\ref{2.2fp}, (\ref{hm1}) and (\ref{hm1.2}) that
$$g \in S(1,\Gamma_0),$$
uniformly with respect to the parameter $\xi \in \rr$.
Then, some symbolic calculus shows that
\begin{equation}\label{hm7.3}
\Big[\textrm{Op}^w\Big(\Big(1+\eta^2+\frac{v^2}{4}\Big)^{\frac{s}{2}}\Big),g^w\Big] \in \textrm{Op}^w(S(\langle (v,\eta) \rangle^{s-2},\Gamma_0)) \subset  \textrm{Op}^w(S(1,\Gamma_0)),
\end{equation}
since $0<s<1$.
It therefore follows from (\ref{hm4}), (\ref{hm7.1}), (\ref{hm7}), (\ref{hm7.2}) and (\ref{hm7.3}) that
\begin{equation}\label{hm7.4}
\textrm{Re}(a_0^w(v,D_v) u,Gu)_{L^2}=\mathcal{O}(\|\mathcal{H}^{\frac{s}{2}}u\|_{L^2}^2),
\end{equation}
uniformly with respect to the parameter $\xi \in \rr$.
Regarding the last term, we may write
\begin{equation}\label{p1}
-\textrm{Re}(iv\xi u,Gu)_{L^2}= \frac{1}{2}([i v\xi ,G]u,u)_{L^2},
\end{equation}
since the operators $G$ and $iv\xi$ are respectively formally selfadjoint and skew-selfadjoint on~$L^2$.
Some symbolic calculus shows that the Weyl symbol of the commutator
\begin{equation}\label{p2}
2^{-1}[i v\xi ,G],
\end{equation}
is exactly given by
\begin{equation}\label{p3}
2^{-1}\{v\xi, g\}.
\end{equation}
Then, Lemma~\ref{mainterm} shows that the symbol of this commutator may be written as
\begin{equation}\label{p4}
\frac{1}{2}\{v\xi, g\}=\frac{\xi^2}{2\lambda^{\frac{2s+2}{2s+1}}}\psi\left(\frac{\eta^2+v^2}{\lambda^{\frac{2}{2s+1}}}\right) +r,
\end{equation}
with a remainder term $r \in S(\langle (v,\eta)\rangle^{2s},\Gamma_0) \subset S(\langle (v,\eta)\rangle^{2},\Gamma_0)$ uniformly with respect to the parameter $\xi \in \R$.
The symbol $r$ is therefore a first order symbol for the symbolic calculus associated to the class $S(\langle (v,\eta)\rangle^{2},\Gamma_0)$.
By noticing that
$$|r| \lesssim \Big(1+\eta^2+\frac{v^2}{4}\Big)^{s},$$
we deduce from the G\aa rding inequality (see e.g.~\cite{Le}, Theorem~2.5.4) applied in the class
$$S(\langle (v,\eta)\rangle^{2},\Gamma_0),$$
that
\begin{equation}\label{hm8.1}
|(r^w u, u)_{L^2}|\leq \Big(\textrm{Op}^w\Big(\Big(1+\eta^2+\frac{v^2}{4}\Big)^{s}\Big)u,u\Big)_{L^2}+\mathcal{O}(\|u\|_{L^2}^2),
\end{equation}
uniformly with respect to the parameter $\xi \in \rr$.
We deduce from (\ref{symba0}), (\ref{hm4}), (\ref{hm6}) and (\ref{hm8.1}) that
\begin{equation}\label{hm8}
|(r^w u, u)_{L^2}| \lesssim \Big\|\textrm{Op}^w\Big(\Big(1+\eta^2+\frac{v^2}{4}\Big)^{\frac{s}{2}}\Big)u\Big\|_{L^2}^2+\|u\|_{L^2}^2=\mathcal{O}(\|\mathcal{H}^{\frac{s}{2}}u\|_{L^2}^2),
\end{equation}
uniformly with respect to the parameter $\xi \in \rr$.
Setting
\begin{equation}\label{fili5}
\Psi=\frac{\xi^2}{2\lambda^{\frac{2s+2}{2s+1}}}\psi\left(\frac{\eta^2+v^2}{\lambda^{\frac{2}{2s+1}}}\right) ,
\end{equation}
it follows from (\ref{p1}), (\ref{p2}), (\ref{p3}), (\ref{p4}) and (\ref{hm8}) that there exists a positive constant $c>0$ such that for all $u \in \mathscr{S}(\rr_v)$, $\xi \in \rr$,
\begin{equation} \label{333}
-\textrm{Re}(iv\xi u,Gu)_{L^2} \geq (\Psi^w u,u)_{L^2}-c\|\mathcal{H}^{\frac{s}{2}}u\|_{L^2}^2.
\end{equation}
Then, we deduce from (\ref{eq7fp}), (\ref{w1b}), (\ref{hm3}), (\ref{hm7.4}) and (\ref{333}) that
$$\textrm{Re}(Pu,(1-\eps G)u)_{L^2} \geq \tilde{c}_0\|\mathcal{H}^{\frac{s}{2}}u\|_{L^2}^2+\eps(\Psi^wu,u)_{L^2}-C\|u\|_{L^2}^2-\eps\mathcal{O}(\|\mathcal{H}^{\frac{s}{2}}u\|_{L^2}^2),$$
uniformly with respect to the parameter $\xi \in \rr$.
We can therefore find a value of the parameter $0 <\eps_0 \leq 1$ and a new positive constant $C>0$ such that for all $u \in \mathscr{S}(\rr_v)$, $0<\eps \leq \eps_0$, $\xi \in \rr$,
\begin{equation}\label{eq7fp2}
\textrm{Re}(Pu,(1-\eps G)u)_{L^2} \geq \frac{1}{2}\tilde{c}_0\|\mathcal{H}^{\frac{s}{2}}u\|_{L^2}^2+\eps(\Psi^wu,u)_{L^2}-C\|u\|_{L^2}^2.
\end{equation}
By considering separately the two regions of the phase space where
$$\eta^2+v^2 \lesssim \lambda^{\frac{2}{2s+1}}, \qquad  \eta^2+v^2 \gtrsim \lambda^{\frac{2}{2s+1}},$$
according to the support of the function
$$\psi\Big(\frac{\eta^2+v^2}{\lambda^{\frac{2}{2s+1}}}\Big),$$
we notice that one can find a positive constant $c_1 >0$ such that for all $(v,\eta,\xi) \in \rr^{3}$,
\begin{equation} \label{crucial}
\frac{\xi^2}{2\lambda^{\frac{2s+2}{2s+1}}}\psi\Big(\frac{\eta^2+v^2}{\lambda^{\frac{2}{2s+1}}}\Big) +\Big(1-\psi\Big(\frac{\eta^2+v^2}{\lambda^{\frac{2}{2s+1}}}\Big)\Big)\Big(1+\eta^2+\frac{v^2}{4}\Big)^{s}
\geq c_1 \lambda^{\frac{2s}{2s+1}} \geq c_1 \langle \xi \rangle^{\frac{2s}{2s+1}}.
\end{equation}
We notice from Lemma~\ref{2.2fp} and (\ref{ya2}) that
$$\frac{\xi^2}{2\lambda^{\frac{2s+2}{2s+1}}}\psi\Big(\frac{\eta^2+v^2}{\lambda^{\frac{2}{2s+1}}}\Big), \quad \Big(1-\psi\Big(\frac{\eta^2+v^2}{\lambda^{\frac{2}{2s+1}}}\Big)\Big)\Big(1+\eta^2+\frac{v^2}{4}\Big)^{s}, \quad  \langle \xi \rangle^{\frac{2s}{2s+1}},$$
are all first order symbols in the class $S(M,\Gamma_1)$ uniformly with respect to the parameter $\xi \in \rr$.
It follows from (\ref{fili5}), (\ref{crucial}) and the G\aa rding inequality that there exists a positive constant $c_2>0$ such that for all $\xi \in \rr$, $u \in \mathscr{S}(\rr_v)$,
\begin{multline}\label{q2}
(\Psi^wu,u)_{L^2}+\Big(\textrm{Op}^w\Big(\Big(1-\psi\Big(\frac{\eta^2+v^2}{\lambda^{\frac{2}{2s+1}}}\Big)\Big)\Big(1+\eta^2+\frac{v^2}{4}\Big)^{s}\Big)u,u\Big)_{L^2} \\ \geq c_1 \langle \xi \rangle^{\frac{2s}{2s+1}}\|u\|_{L^2}^2-c_2\|u\|_{L^2}^2.
\end{multline}
On the other hand, we notice from Lemma~\ref{2.2fp}, (\ref{hm1}) and (\ref{hm1.2}) that
$$1-\psi\Big(\frac{\eta^2+v^2}{\lambda^{\frac{2}{2s+1}}}\Big) \in S(1,\Gamma_1) \subset S(1,\Gamma_0),$$
uniformly with respect to the parameter $\xi \in \rr$.
By using that
$$\Big(1+\eta^2+\frac{v^2}{4}\Big)^{s}  \in S(\langle (v,\eta) \rangle^{2s},\Gamma_0)\subset S(\langle (v,\eta) \rangle^{2},\Gamma_0),$$
since $0<s<1$, it follows that
$$\Big(1-\psi\Big(\frac{\eta^2+v^2}{\lambda^{\frac{2}{2s+1}}}\Big)\Big)\Big(1+\eta^2+\frac{v^2}{4}\Big)^{s}  \in  S(\langle (v,\eta) \rangle^{2},\Gamma_0),$$
uniformly with respect to the parameter $\xi \in \rr$.
The two symbols
$$\Big(1+\eta^2+\frac{v^2}{4}\Big)^{s}, \quad \Big(1-\psi\Big(\frac{\eta^2+v^2}{\lambda^{\frac{2}{2s+1}}}\Big)\Big)\Big(1+\eta^2+\frac{v^2}{4}\Big)^{s},$$
are therefore first order symbols in the class $S(\langle (v,\eta) \rangle^{2},\Gamma_0)$.
Starting from the estimate
$$\Big(1+\eta^2+\frac{v^2}{4}\Big)^{s} \geq \Big(1-\psi\Big(\frac{\eta^2+v^2}{\lambda^{\frac{2}{2s+1}}}\Big)\Big)\Big(1+\eta^2+\frac{v^2}{4}\Big)^{s},$$
another use of the G\aa rding inequality  shows that there exists a positive constant $c_3>0$ such that for all $\xi \in \rr$, $u \in \mathscr{S}(\rr_v)$,
$$\Big(\textrm{Op}^w\Big(\Big(1+\eta^2+\frac{v^2}{4}\Big)^{s}\Big)u,u\Big)_{L^2} \geq \Big(\textrm{Op}^w\Big(\Big(1-\psi\Big(\frac{\eta^2+v^2}{\lambda^{\frac{2}{2s+1}}}\Big)\Big)\Big(1+\eta^2+\frac{v^2}{4}\Big)^{s}\Big)u,u\Big)_{L^2} -c_3\|u\|_{L^2}^2.$$
By proceeding as in (\ref{hm8.1}) and (\ref{hm8}), we obtain that there exists a positive constant $c_4>0$ such that for all $\xi \in \rr$, $u \in \mathscr{S}(\rr_v)$,
\begin{equation}\label{q6}
\Big(\textrm{Op}^w\Big(\Big(1-\psi\Big(\frac{\eta^2+v^2}{\lambda^{\frac{2}{2s+1}}}\Big)\Big)\Big(1+\eta^2+\frac{v^2}{4}\Big)^{s}\Big)u,u\Big)_{L^2} \leq c_4\|\mathcal{H}^{\frac{s}{2}}u\|_{L^2}^2.
\end{equation}
We deduce from (\ref{eq7fp2}), (\ref{q2}) and (\ref{q6}) that for all $u \in \mathscr{S}(\rr_v)$, $0<\eps \leq \eps_0$, $\xi \in \rr$,
$$\textrm{Re}(Pu,(1-\eps G)u)_{L^2} \geq (2^{-1}\tilde{c}_0-c_4\eps)\|\mathcal{H}^{\frac{s}{2}}u\|_{L^2}^2+c_1\eps \langle \xi \rangle^{\frac{2s}{2s+1}}\|u\|_{L^2}^2-(C+c_2 \eps)\|u\|_{L^2}^2.$$
We can therefore find some new positive constants $0<\eps_0 \leq 1$, $c_1, c_2>0$ such that for all $0<\eps \leq \eps_0$, $u \in \mathscr{S}(\rr_v)$, $\xi \in \rr$,
$$\textrm{Re}(Pu,(1-\eps G)u)_{L^2} \geq c_1\|\mathcal{H}^{\frac{s}{2}}u\|_{L^2}^2+c_1\eps \langle \xi \rangle^{\frac{2s}{2s+1}}\|u\|_{L^2}^2-c_2\|u\|_{L^2}^2.$$
This ends the proof of Lemma~\ref{KFP2}.
\section{Local existence and uniqueness result}\label{loun}
Following~\cite{AM}, we aim at establishing the local existence and the uniqueness for the Cauchy problem (\ref{w7}) with small initial $H^{(1,0)}(\rr_{x,v}^2)$-fluctuations. To that end, we
begin by considering a linear equation with a source.
\subsection{Local existence for a linear equation}
We begin by proving the following existence result:
\begin{lemma}\label{tr11}
There exist some positive constants $c_0 > 1$, $\eps_0>0$ such that for all $T>0$, $g_0 \in H^{(1,0)}(\rr_{x,v}^2)$, $f \in L^{\infty}([0,T],H^{(1,0)}(\rr_{x,v}^2))$ satisfying
$$\|f\|_{L^{\infty}([0,T],H^{(1,0)}(\rr_{x,v}^2))} \leq \eps_0,$$
the Cauchy problem
$$\begin{cases}
\partial_tg+v\partial_{x}g+\mathcal{K}g=\Gamma(f,g),\\
g|_{t=0}=g_0,
\end{cases}$$
admits a weak solution $g \in L^{\infty}([0,T],H^{(1,0)}(\rr_{x,v}^2))$ satisfying
$$\|g\|_{L^{\infty}([0,T],H^{(1,0)}(\rr_{x,v}^2))}+\|\mathcal{H}^{\frac{s}{2}}g\|_{L^{2}([0,T],H^{(1,0)}(\rr_{x,v}^2))} \leq c_0e^{3T}\|g_0\|_{(1,0)}.$$
\end{lemma}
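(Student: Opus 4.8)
The plan is to derive an $H^{(1,0)}$-energy estimate providing uniform a priori bounds, and then to construct the solution by a Galerkin approximation in the velocity variable combined with a frequency truncation in the position variable. First, I would set up the energy estimate. Since the Fourier multiplier $\langle D_x\rangle$ commutes with the transport operator $v\partial_x$ (because $\langle D_x\rangle$ acts only on $x$ while $v$ is a parameter) and with the linearized Kac operator $\mathcal{K}$ (which acts only on $v$), applying $\langle D_x\rangle$ to the equation and taking the $L^2(\rr_{x,v}^2)$ inner product with $\langle D_x\rangle g$ gives
$$\frac{1}{2}\frac{d}{dt}\|g\|_{(1,0)}^2 + (\mathcal{K}g,g)_{(1,0)} = (\Gamma(f,g),g)_{(1,0)},$$
because $v\partial_x$ is skew-adjoint on $L^2(\rr_{x,v}^2)$. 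The coercivity estimate \eqref{tr4}, applied for fixed $x$ and integrated in $x$, gives $(\mathcal{K}g,g)_{(1,0)} \geq \frac{1}{C}\|\mathcal{H}^{s/2}g\|_{(1,0)}^2 - \|g\|_{(1,0)}^2$, while the last inequality of Lemma~\ref{proposition222} with $r=1$ yields $|(\Gamma(f,g),g)_{(1,0)}| \leq C_1\|f\|_{(1,0)}\|\mathcal{H}^{s/2}g\|_{(1,0)}^2$. Choosing $\eps_0>0$ small enough that $C_1\eps_0 \leq \frac{1}{2C}$, the right-hand side is absorbed into the coercive term, leaving $\frac{d}{dt}\|g\|_{(1,0)}^2 + \frac{1}{C}\|\mathcal{H}^{s/2}g\|_{(1,0)}^2 \leq 2\|g\|_{(1,0)}^2$.

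By Gr\"onwall's lemma this gives $\|g(t)\|_{(1,0)}^2 \leq e^{2t}\|g_0\|_{(1,0)}^2$ and, after integrating in time, $\int_0^T\|\mathcal{H}^{s/2}g\|_{(1,0)}^2\,dt \leq Ce^{2T}\|g_0\|_{(1,0)}^2$, which combine into the stated bound (the factor $e^{3T}$ being generous). To turn this a priori estimate into an existence statement, I would run a Galerkin scheme: let $\Pi_N$ be the orthogonal projection onto $\operatorname{Span}\{\psi_0,\dots,\psi_N\}$ in $v$ composed with the cutoff $\mathbf{1}_{\{|\xi|\leq N\}}$ in $x$, so that $\Pi_N$ commutes with $\langle D_x\rangle$, $\mathcal{H}$, $\mathcal{K}$ and is self-adjoint for both $(\cdot,\cdot)_{L^2}$ and $(\cdot,\cdot)_{(1,0)}$, and solve $\partial_tg_N + \Pi_N(v\partial_xg_N) + \mathcal{K}g_N = \Pi_N\Gamma(f,g_N)$ with $g_N|_{t=0}=\Pi_Ng_0$. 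This is a linear non-autonomous ODE on the Banach space $L^2(\{|\xi|\leq N\})\otimes\operatorname{Span}\{\psi_0,\dots,\psi_N\}$ with coefficients bounded and measurable in $t$ — using that $\mathcal{K}$ and $\Pi_N(v\partial_x\cdot)$ are bounded there and that $\Gamma(f(t),\cdot)$ is bounded by Lemma~\ref{proposition2225} — so Carath\'eodory's theorem provides a solution $g_N$ on $[0,T]$. Repeating the energy computation for $g_N$ gives the same bound uniformly in $N$ (the self-adjointness of $\Pi_N$ removes the projectors against $g_N\in\operatorname{ran}\Pi_N$, and the skew-adjointness of $v\partial_x$ together with $[\langle D_x\rangle,v\partial_x]=0$ kills the transport term). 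The uniform bounds yield a subsequence with $g_N \rightharpoonup g$ weakly-$*$ in $L^{\infty}([0,T],H^{(1,0)}(\rr_{x,v}^2))$ and $\mathcal{H}^{s/2}g_N \rightharpoonup \mathcal{H}^{s/2}g$ weakly in $L^2([0,T],H^{(1,0)}(\rr_{x,v}^2))$; $g$ inherits the norm bound by weak lower semicontinuity, and passing to the limit in the weak formulation against test functions is routine — the linear terms by weak continuity, and for $\Pi_N\Gamma(f,g_N)$ one writes it as $\Gamma(f,g_N) + (\Pi_N-I)\Gamma(f,g_N)$, uses that $g\mapsto\Gamma(f,g)$ is a fixed bounded linear map (Lemma~\ref{proposition2225}), hence weak-weak continuous, while $(\Pi_N-I)\to 0$ strongly.

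The main obstacle is structural rather than computational: the dissipation $\mathcal{K}$ acts only on the velocity variable, so there is no direct gain of regularity to control the transport term $v\partial_xg$; the energy estimate survives only because $v\partial_x$ is skew-adjoint on $L^2(\rr_{x,v}^2)$ and commutes with $\langle D_x\rangle$. Accordingly the approximation scheme must be chosen so as to preserve both this skew-adjointness and the coercivity \eqref{tr4}, which is why a Galerkin truncation by $\Pi_N$ is the right choice — $\Pi_N$ is self-adjoint in the $H^{(1,0)}$ inner product and commutes with $\mathcal{H}$. Finally, the smallness of $f$ is indispensable: the term $\Gamma(f,g)$ is controlled only with a loss of velocity regularity (Lemma~\ref{proposition222}), and that loss can be absorbed into $\frac{1}{C}\|\mathcal{H}^{s/2}g\|_{(1,0)}^2$ only when $\|f\|_{(1,0)}$ is small.
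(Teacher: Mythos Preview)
Your approach is correct but differs substantively from the paper's. The paper proceeds by a Lions-type duality argument: it proves an a priori estimate for the \emph{adjoint} operator $\mathcal{Q}=-\partial_t+(v\partial_x+\mathcal{K}-\Gamma(f,\cdot))^*$ acting on test functions $h$ with $h(T)=0$, namely $\|h\|_{L^\infty H^{(r,0)}}\leq 2e^{2T}\|\mathcal{Q}h\|_{L^1 H^{(r,0)}}$, then uses the Hahn-Banach theorem to extend the linear form $\mathcal{Q}h\mapsto (g_0,h(0))_{(r,0)}$ and represent the extension by some $g\in L^\infty([0,T],H^{(r,0)})$. This yields existence in $L^\infty H^{(1,0)}$ only; to obtain the $\mathcal{H}^{s/2}$-in-$L^2$ bound the paper must run the energy computation \emph{a posteriori} on the already-constructed solution, and since $g$ has no a priori time-differentiability it introduces the regularization $g_\delta=(1+\delta\sqrt{\mathcal{H}}+\delta\langle D_x\rangle)^{-1}g$, controls the commutator $[(1+\delta\sqrt{\mathcal{H}}+\delta\langle D_x\rangle)^{-1},v]$ explicitly, and passes $\delta\to 0_+$ by monotone convergence.

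Your Galerkin route is more direct: the energy identity is justified trivially at the approximate level, so both the $L^\infty H^{(1,0)}$ and $L^2(\mathcal{H}^{s/2}H^{(1,0)})$ bounds come out simultaneously and uniformly in $N$, and no separate regularization step is needed. The price is the (straightforward) verification that $\Pi_N$ preserves the skew-adjointness of the transport term and the structure of the trilinear estimate, which you handle correctly. The paper's duality approach, on the other hand, avoids checking that a specific approximation scheme is compatible with the equation's structure, and is somewhat more flexible if one wanted to work in lower-regularity or non-Hilbert settings. Either method is standard; yours is arguably the more elementary of the two here.
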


\begin{proof}
Let $r>1/2$, $T>0$.
We consider
$$\mathcal{Q}=-\partial_t+(v\partial_{x}+\mathcal{K}-\Gamma(f,\cdot))^*,$$
where the adjoint operator is taken with respect to the scalar product in $H^{(r,0)}(\rr_{x,v}^2)$. We deduce from (\ref{tr4}) and Lemma~\ref{proposition222} that for all $h \in C^{\infty}([0,T],\mathscr{S}(\rr_{x,v}^2))$, with $h(T)=0$ and~$0 \leq t \leq T$,
\begin{align*}
& \ \textrm{Re}\big(h(t),\mathcal{Q}h(t)\big)_{(r,0)}\\ \notag
=& \ -\frac{1}{2}\frac{d}{dt}(\|h\|_{(r,0)}^2)+\textrm{Re}(v\partial_{x}h,h)_{(r,0)}+\textrm{Re}(\mathcal{K}h,h)_{(r,0)}-\textrm{Re}(\Gamma(f,h),h)_{(r,0)} \\ \notag
\geq& \ -\frac{1}{2}\frac{d}{dt}\big(\|h(t)\|_{(r,0)}^2\big)+\frac{1}{C}\|\mathcal{H}^{\frac{s}{2}}h(t)\|_{(r,0)}^2-\|h(t)\|_{(r,0)}^2-C_r\|f(t)\|_{(r,0)}\|\mathcal{H}^{\frac{s}{2}}h(t)\|_{(r,0)}^2,
\end{align*}
since $\mathcal{K}$ is a selfadjoint operator and $\textrm{Re}(v\partial_{x}h,h)_{(r,0)}=0$.
When
\begin{equation}\label{tr5}
\|f\|_{L^{\infty}([0,T],H^{(r,0)}(\rr_{x,v}^2))} \leq \frac{1}{4CC_r},
\end{equation}
it follows from the Cauchy-Schwarz inequality that for all $0 \leq t \leq T$,
$$-\frac{d}{dt}\big(\|h(t)\|_{(r,0)}^2\big)+\frac{3}{2C}\|\mathcal{H}^{\frac{s}{2}}h(t)\|_{(r,0)}^2 \leq 2\|h(t)\|_{(r,0)}\|\mathcal{Q}h(t)\|_{(r,0)}+2\|h(t)\|_{(r,0)}^2,$$
that is
$$-\frac{d}{dt}\big(e^{2t}\|h(t)\|_{(r,0)}^2\big)+\frac{3}{2C}e^{2t}\|\mathcal{H}^{\frac{s}{2}}h(t)\|_{(r,0)}^2 \leq 2e^{2t}\|h(t)\|_{(r,0)}\|\mathcal{Q}h(t)\|_{(r,0)}.$$
We obtain that for all $0 \leq t \leq T$,
\begin{align*}
& \ \|h(t)\|_{(r,0)}^2+\frac{3}{2C}\|\mathcal{H}^{\frac{s}{2}}h\|_{L^{2}([t,T],H^{(r,0)}(\rr_{x,v}^2))}^2 \leq  \|h(t)\|_{(r,0)}^2+\frac{3}{2C}\int_{t}^Te^{2(\tau-t)}\|\mathcal{H}^{\frac{s}{2}}h(\tau)\|_{(r,0)}^2d\tau \\
\leq & \  2\int_t^Te^{2(\tau-t)}\|h(\tau)\|_{(r,0)}\|\mathcal{Q}h(\tau)\|_{(r,0)}d\tau \leq 2e^{2T}\|h\|_{L^{\infty}([0,T],H^{(r,0)}(\rr_{x,v}^2))}\|\mathcal{Q}h\|_{L^{1}([0,T],H^{(r,0)}(\rr_{x,v}^2))},
\end{align*}
since $h(T)=0$.
We deduce that for all $h \in C^{\infty}([0,T],\mathscr{S}(\rr_{x,v}^2))$, with $h(T)=0$,
\begin{equation}\label{tr7}
\|h\|_{L^{\infty}([0,T],H^{(r,0)}(\rr_{x,v}^2))} \leq 2e^{2T}\|\mathcal{Q}h\|_{L^1([0,T],H^{(r,0)}(\rr_{x,v}^2))}.
\end{equation}
We consider the vector subspace
$$\mathcal{W}=\{w=\mathcal{Q}h : h \in C^{\infty}([0,T],\mathscr{S}(\rr_{x,v}^2)), \ h(T)=0\} \subset L^{1}([0,T],H^{(r,0)}(\rr_{x,v}^2)).$$
This inclusion holds since according to Lemma~\ref{proposition2225},
\begin{multline*}
|(\Gamma(f,\cdot)^*h,g)_{(r,0)}|=|(h,\Gamma(f,g))_{(r,0)}|=|(\mathcal{H}^{s}h,\mathcal{H}^{-s}\Gamma(f,g))_{(r,0)}|\\ \leq \|\mathcal{H}^{-s}\Gamma(f,g)\|_{(r,0)}\|\mathcal{H}^{s}h\|_{(r,0)}
 \leq c_r\|f\|_{(r,0)} \|g\|_{(r,0)}\|\mathcal{H}^{s}h\|_{(r,0)},
\end{multline*}
we have
$$\|\Gamma(f,\cdot)^*h\|_{(r,0)}  \leq c_r\|f\|_{(r,0)}\|\mathcal{H}^{s}h\|_{(r,0)}.$$
Let $g_0 \in H^{(r,0)}(\rr_{x,v}^2)$. We define the linear functional
\begin{align*}
\mathcal{G} \ : \qquad &\mathcal{W} \longrightarrow \cc\\ \notag
w=&\mathcal{Q}h \mapsto (g_0,h(0))_{(r,0)},
\end{align*}
where $h \in C^{\infty}([0,T],\mathscr{S}(\rr_{x,v}^2))$, with $h(T)=0$.
According to (\ref{tr7}), the operator $\mathcal{Q}$ is injective. The linear functional $\mathcal{G}$ is therefore well-defined. It follows from (\ref{tr7}) that $\mathcal{G}$ is a continuous linear form on $(\mathcal{W},\|\cdot\|_{L^{1}([0,T],H^{(r,0)}(\rr_{x,v}^2))})$,
\begin{multline*}
|\mathcal{G}(w)| \leq \|g_0\|_{(r,0)}\|h(0)\|_{(r,0)} \leq \|g_0\|_{(r,0)}\|h\|_{L^{\infty}([0,T],H^{(r,0)}(\rr_{x,v}^2))} \\
\leq 2e^{2T}\|g_0\|_{(r,0)}\|\mathcal{Q}h\|_{L^{1}([0,T],H^{(r,0)}(\rr_{x,v}^2))}= 2e^{2T}\|g_0\|_{(r,0)}\|w\|_{L^{1}([0,T],H^{(r,0)}(\rr_{x,v}^2))}.
\end{multline*}
By using the Hahn-Banach theorem, $\mathcal{G}$ may be extended as a continuous linear form on
$$L^{1}([0,T],H^{(r,0)}(\rr_{x,v}^2)),$$
with a norm smaller than $2e^{2T}\|g_0\|_{(r,0)}$. It follows that there exists $g \in L^{\infty}([0,T],H^{(r,0)}(\rr_{x,v}^2))$ satisfying
$$\|g\|_{L^{\infty}([0,T],H^{(r,0)}(\rr_{x,v}^2))} \leq 2e^{2T}\|g_0\|_{(r,0)},$$
such that
$$\forall w \in L^{1}([0,T],H^{(r,0)}(\rr_{x,v}^2)), \quad \mathcal{G}(w)=\int_0^T(g(t),w(t))_{(r,0)}dt.$$
This implies in particular that for all $h \in \mathscr{F}=C_0^{\infty}((-\infty,T),\mathscr S(\rr_{x,v}^2))$,
$$\mathcal{G}(\mathcal{Q}h)=\int_0^T(g(t),\mathcal{Q}h(t))_{(r,0)}dt=(g_0,h(0))_{(r,0)}.$$
This shows that $g \in L^{\infty}([0,T],H^{(r,0)}(\rr_{x,v}^2))$ is a weak solution of the Cauchy problem
\begin{equation}\label{cl1ff1}
\begin{cases}
\partial_tg+v\partial_{x}g+\mathcal{K}g=\Gamma(f,g),\\
g|_{t=0}=g_0.
\end{cases}
\end{equation}
We deduce from (\ref{sal1}), (\ref{3.llkkb}), (\ref{tr5}) and Lemma~\ref{proposition2225} that
\begin{equation}\label{li1}
\mathcal{H}^{-s}\mathcal{K}g \in L^{\infty}([0,T],H^{(r,0)}(\rr_{x,v}^2)), \quad \mathcal{H}^{-s}\Gamma(f,g) \in L^{\infty}([0,T],H^{(r,0)}(\rr_{x,v}^2)),
\end{equation}
since $f \in L^{\infty}([0,T],H^{(r,0)}(\rr_{x,v}^2))$. We define
\begin{equation}\label{li2}
g_{\delta}=(1+\delta \sqrt{\mathcal{H}}+\delta \langle D_x \rangle)^{-1}g, \quad 0<\delta \leq 1.
\end{equation}
We notice that
$$(1+\delta \sqrt{\mathcal{H}}+\delta \langle D_x \rangle)g_{\delta} \in L^{\infty}([0,T],H^{(r,0)}(\rr_{x,v}^2)) \subset L^{2}([0,T],H^{(r,0)}(\rr_{x,v}^2)).$$
On the other hand, we deduce from (\ref{cl1ff1}) that
\begin{multline*}
(1+\delta \sqrt{\mathcal{H}}+\delta \langle D_x \rangle)^{-1}\partial_tg_{\delta}=(1+\delta \sqrt{\mathcal{H}}+\delta \langle D_x \rangle)^{-2}\mathcal{H}^{s}\mathcal{H}^{-s}\Gamma(f,g)\\ -(1+\delta \sqrt{\mathcal{H}}+\delta \langle D_x \rangle)^{-2}(v\partial_{x}g) -(1+\delta \sqrt{\mathcal{H}}+\delta \langle D_x \rangle)^{-2}\mathcal{H}^{s}\mathcal{H}^{-s}\mathcal{K}g.
\end{multline*}
It follows from (\ref{li1}) that
$$(1+\delta \sqrt{\mathcal{H}}+\delta \langle D_x \rangle)^{-1}\partial_tg_{\delta} \in  L^{\infty}([0,T],H^{(r,0)}(\rr_{x,v}^2)) \subset L^{2}([0,T],H^{(r,0)}(\rr_{x,v}^2)),$$
since $0<s<1$. A direct adaptation of Theorem~3 in~\cite{evans} (Section 5.9) shows that the mapping
$$t \mapsto \|g_{\delta}(t)\|_{(r,0)}^2,$$
is absolutely continuous with
\begin{equation}\label{li6}
\frac{d}{dt}\|g_{\delta}(t)\|_{(r,0)}^2=2\textrm{Re}(\partial_tg_{\delta}(t),g_{\delta}(t))_{(r,0)}.
\end{equation}
By using the multiplier in $H^{(r,0)}(\rr_{x,v}^2)$,
$$h_{\delta}=(1+\delta \sqrt{\mathcal{H}}+\delta \langle D_x \rangle)^{-2}g,$$
we deduce from (\ref{sal1}), (\ref{li2}) and (\ref{li6}) that
\begin{align*}
& \ \frac{1}{2}\frac{d}{dt}(\|g_{\delta}(t)\|_{(r,0)}^2)+\textrm{Re}(\mathcal{K}g_{\delta},g_{\delta})_{(r,0)}+\textrm{Re}(v\partial_{x}g_{\delta},g_{\delta})_{(r,0)} \\
& \ +\textrm{Re}\big([(1+\delta \sqrt{\mathcal{H}}+\delta \langle D_x \rangle)^{-1},v](1+\delta \sqrt{\mathcal{H}}+\delta \langle D_x \rangle)\partial_{x}g_{\delta},g_{\delta}\big)_{(r,0)}\\
= & \ \textrm{Re}\big((1+\delta \sqrt{\mathcal{H}}+\delta \langle D_x \rangle)^{-1}\Gamma\big(f,(1+\delta \sqrt{\mathcal{H}}+\delta \langle D_x \rangle)g_{\delta}\big),g_{\delta}\big)_{(r,0)},
\end{align*}
since $[(1+\delta \sqrt{\mathcal{H}}+\delta \langle D_x \rangle)^{-1},\mathcal{K}]=0$.
We deduce from (\ref{tr4}) that for all $0 \leq t \leq T$, $0<\delta \leq 1$,
\begin{align}\label{li7}
& \ \frac{1}{2}\frac{d}{dt}(\|g_{\delta}(t)\|_{(r,0)}^2)+\frac{1}{C}\|\mathcal{H}^{\frac{s}{2}}g_{\delta}(t)\|_{(r,0)}^2-\|g_{\delta}(t)\|_{(r,0)}^2 \\ \notag
\leq & \ \big|\big([(1+\delta \sqrt{\mathcal{H}}+\delta \langle D_x \rangle)^{-1},v](1+\delta \sqrt{\mathcal{H}}+\delta \langle D_x \rangle)\partial_{x}g_{\delta},g_{\delta}\big)_{(r,0)}\big|\\ \notag
+ & \ \big|\big((1+\delta \sqrt{\mathcal{H}}+\delta \langle D_x \rangle)^{-1}\Gamma\big(f,(1+\delta \sqrt{\mathcal{H}})g_{\delta}\big),g_{\delta}\big)_{(r,0)}\big|\\ \notag
+ & \ \big|\big((1+\delta \sqrt{\mathcal{H}}+\delta \langle D_x \rangle)^{-1}\Gamma\big(f,\delta \langle D_x \rangle g_{\delta}\big),g_{\delta}\big)_{(r,0)}\big|,
\end{align}
since $\mathcal{K}$ is a selfadjoint operator and $\textrm{Re}(v\partial_{x}g_{\delta},g_{\delta})_{(r,0)}=0$.
We deduce from Lemma~\ref{proposition222} with $t=0$, $\delta_1=\delta$, $j_1=0$ and $j_2=1$ that for all $0<\delta \leq 1$,
\begin{equation}\label{li8}
 \big|\big((1+\delta \sqrt{\mathcal{H}}+\delta \langle D_x \rangle)^{-1}\Gamma\big(f,(1+\delta \sqrt{\mathcal{H}})g_{\delta}\big),g_{\delta}\big)_{(r,0)}\big| \leq C_r\|f\|_{(r,0)}\|\mathcal{H}^\frac{s}{2}g_{\delta}\|_{(r,0)}^2.
\end{equation}
On the other hand, it follows from Lemma~\ref{proposition222} that for all $0<\delta \leq 1$,
\begin{align}\label{li9}
& \ \big|\big((1+\delta \sqrt{\mathcal{H}}+\delta \langle D_x \rangle)^{-1}\Gamma\big(f, \delta \langle D_x \rangle g_{\delta}\big),g_{\delta}\big)_{(r,0)}\big|\\ \notag
= & \ \big|\big(\Gamma\big(f,\langle D_x \rangle g_{\delta}\big),\delta\langle D_x \rangle^{2r}(1+\delta \sqrt{\mathcal{H}}+\delta \langle D_x \rangle)^{-1}g_{\delta}\big)_{L^2(\rr_{x,v}^2)}\big| \\ \notag
\leq & \ C_r\|f\|_{(r,0)}\|\langle D_x \rangle \mathcal{H}^\frac{s}{2}g_{\delta}\|_{L^2(\rr_{x,v}^2)}\|\delta\langle D_x \rangle^{2r}(1+\delta \sqrt{\mathcal{H}}+\delta \langle D_x \rangle)^{-1}\mathcal{H}^\frac{s}{2}g_{\delta}\|_{L^2(\rr_{x,v}^2)} \\ \notag
\leq & \ C_r\|f\|_{(r,0)}\|\mathcal{H}^\frac{s}{2}g_{\delta}\|_{(1,0)}\|\mathcal{H}^\frac{s}{2}g_{\delta}\|_{(2r-1,0)},
\end{align}
since
$$\|\delta\langle D_x \rangle(1+\delta \sqrt{\mathcal{H}}+\delta \langle D_x \rangle)^{-1}\|_{\mathcal{L}(L^2(\rr_{x,v}^2))} \leq 1.$$
Next, we check that the operator $[(1+\delta \sqrt{\mathcal{H}}+\delta \langle D_x \rangle)^{-1},v](1+\delta \sqrt{\mathcal{H}}+\delta \langle D_x \rangle)\partial_{x}$ is uniformly bounded on $L^2(\rr_{x,v}^2)$ with respect to the parameter $0<\delta \leq 1$. Let $f \in \mathscr{S}(\rr_{x,v}^2)$. We decompose this function into the Hermite basis in the velocity variable
$$f(x,v)=\sum_{n=0}^{+\infty}f_n(x) \psi_n(v), \quad f_n(x)=(f(x,\cdot),\psi_n)_{L^2(\rr_v)}.$$
By using the creation and annihilation operators $v=A_++A_-,$
we deduce from (\ref{ge1}) and (\ref{ge2}) that
\begin{align*}
& \ [(1+\delta \sqrt{\mathcal{H}}+\delta \langle D_x \rangle)^{-1},v](1+\delta \sqrt{\mathcal{H}}+\delta \langle D_x \rangle)\partial_{x}f\\
= &\ \sum_{n=0}^{+\infty}\sqrt{n+1}\frac{\delta(\sqrt{n+\frac{1}{2}}-\sqrt{n+\frac{3}{2}})}{1+\delta \sqrt{n+\frac{3}{2}}+\delta \langle D_x \rangle}\partial_{x}f_n(x) \psi_{n+1}(v)\\
+ & \ \sum_{n=0}^{+\infty}\sqrt{n}\frac{\delta(\sqrt{n+\frac{1}{2}}-\sqrt{n-\frac{1}{2}})}{1+\delta \sqrt{n-\frac{1}{2}}+\delta \langle D_x \rangle}\partial_{x}f_n(x) \psi_{n-1}(v).
\end{align*}
It follows that
\begin{align*}
& \ \|[(1+\delta \sqrt{\mathcal{H}}+\delta \langle D_x \rangle)^{-1},v](1+\delta \sqrt{\mathcal{H}}+\delta \langle D_x \rangle)\partial_{x}f\|_{L^2(\rr_{x,v}^2)}^2 \\
\leq & \ \sum_{n=0}^{+\infty}\frac{2n+2}{(\sqrt{n+\frac{1}{2}}+\sqrt{n+\frac{3}{2}})^2}\Big\|\Big(1+\delta \sqrt{n+\frac{3}{2}}+\delta \langle D_x \rangle\Big)^{-1}\delta \partial_{x}f_n\Big\|_{L^2(\rr_x)}^2\\
+ & \ \sum_{n=1}^{+\infty}\frac{2n}{(\sqrt{n+\frac{1}{2}}+\sqrt{n-\frac{1}{2}})^2}\Big\|\Big(1+\delta \sqrt{n-\frac{1}{2}}+\delta \langle D_x \rangle\Big)^{-1}\delta \partial_{x}f_n\Big\|_{L^2(\rr_x)}^2.
\end{align*}
This implies that
\begin{equation}\label{li10}
\|[(1+\delta \sqrt{\mathcal{H}}+\delta \langle D_x \rangle)^{-1},v](1+\delta \sqrt{\mathcal{H}}+\delta \langle D_x \rangle)\partial_{x}f\|_{L^2(\rr_{x,v}^2)}^2 \leq 4\|f\|_{L^2(\rr_{x,v}^2)}^2.
\end{equation}
By taking $r=1$, we deduce from (\ref{li7}), (\ref{li8}), (\ref{li9}) and (\ref{li10}) that for all $0 \leq t \leq T$, $0<\delta \leq 1$,
$$\frac{1}{2}\frac{d}{dt}(\|g_{\delta}(t)\|_{(1,0)}^2)+\frac{1}{C}\|\mathcal{H}^{\frac{s}{2}}g_{\delta}(t)\|_{(1,0)}^2 \leq  3\|g_{\delta}(t)\|_{(1,0)}^2+2C_1\|f(t)\|_{(1,0)}\|\mathcal{H}^\frac{s}{2}g_{\delta}(t)\|_{(1,0)}^2.$$
When the source satisfies (\ref{tr5}) with $r=1$, we obtain that for all $0 \leq t \leq T$, $0<\delta \leq 1$,
$$\frac{d}{dt}(\|g_{\delta}(t)\|_{(1,0)}^2)+\frac{1}{C}\|\mathcal{H}^{\frac{s}{2}}g_{\delta}(t)\|_{(1,0)}^2 \leq 6\|g_{\delta}(t)\|_{(1,0)}^2.$$
It follows that for all $0 \leq t \leq T$, $0<\delta \leq 1$,
$$\|g_{\delta}(t)\|_{(1,0)}^2+\frac{1}{C}\int_0^te^{6(t-\tau)}\|\mathcal{H}^{\frac{s}{2}}g_{\delta}(\tau)\|_{(1,0)}^2d\tau \leq e^{6t}\|(1+\delta \sqrt{\mathcal{H}}+\delta \langle D_x \rangle)^{-1}g_0\|_{(1,0)}^2.$$
This implies that for all $0<\delta \leq 1$,
$$\|g_{\delta}\|_{L^{\infty}([0,T],H^{(1,0)}(\rr_{x,v}^2))}^2+\|\mathcal{H}^{\frac{s}{2}}g_{\delta}\|_{L^{2}([0,T],H^{(1,0)}(\rr_{x,v}^2))}^2 \leq (C+1)e^{6T}\|g_0\|_{(1,0)}^2.$$
Finally, by writing
$$\|g_{\delta}(t)\|_{(1,0)}^2=\frac{1}{2\pi}\sum_{n=0}^{+\infty}\int_{\rr}\langle \xi \rangle^2 \Big(1+\delta \sqrt{n+\frac{1}{2}}+\delta \langle \xi \rangle\Big)^{-2}|\mathcal{F}_xg_n(t,\xi)|^2d\xi,$$
$$\|\mathcal{H}^{\frac{s}{2}}g_{\delta}\|_{L^{2}([0,T],H^{(1,0)}(\rr_{x,v}^2))}^2=\int_0^T\sum_{n=0}^{+\infty}\Big(n+\frac{1}{2}\Big)^s
\int_{\rr}\langle \xi \rangle^2 \Big(1+\delta \sqrt{n+\frac{1}{2}}+\delta \langle \xi \rangle\Big)^{-2}|\mathcal{F}_xg_n(t,\xi)|^2\frac{d\xi dt}{2\pi},$$
with $g_n=(g(t,x,\cdot),\psi_n)_{L^2(\rr_v)}$, where $\mathcal{F}_x$ denotes the partial Fourier transform in the position variable,
we deduce from the monotone convergence theorem by passing to the limit when $\delta \to 0_+$ that
$$\|g\|_{L^{\infty}([0,T],H^{(1,0)}(\rr_{x,v}^2))}^2+\|\mathcal{H}^{\frac{s}{2}}g\|_{L^{2}([0,T],H^{(1,0)}(\rr_{x,v}^2))}^2 \leq (C+1)e^{6T}\|g_0\|_{(1,0)}^2.$$
This ends the proof of Lemma~\ref{tr11}.
\end{proof}

\subsection{Convergence of approximate solutions}

We prove the existence of a local solution for the Cauchy problem associated to the spatially inhomogeneous Kac equation with small initial fluctuations belonging to $H^{(1,0)}(\rr_{x,v}^2)$ as the limit of a sequence of approximate solutions:

\begin{theorem}\label{qw13}
Let $T>0$. Then, there exist some positive constants $c_0 > 1$, $\eps_0>0$ such that for all $g_0 \in H^{(1,0)}(\rr_{x,v}^2)$ satisfying
$$\|g_0\|_{(1,0)} \leq \eps_0,$$
the Cauchy problem associated to the spatially inhomogeneous Kac equation
\begin{equation}\label{qw13.1}
\begin{cases}
\partial_tg+v\partial_{x}g+\mathcal{K}g=\Gamma(g,g),\\
g|_{t=0}=g_0,
\end{cases}
\end{equation}
admits a weak solution $g \in L^{\infty}([0,T],H^{(1,0)}(\rr_{x,v}^2)) $ satisfying
$$\|g\|_{L^{\infty}([0,T],H^{(1,0)}(\rr_{x,v}^2))}+\|\mathcal{H}^{\frac{s}{2}}g\|_{L^{2}([0,T],H^{(1,0)}(\rr_{x,v}^2))} \leq c_0\|g_0\|_{(1,0)}.$$
\end{theorem}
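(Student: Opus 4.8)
The plan is to realize $g$ as the limit of the Picard-type iteration
$$
\begin{cases}
\partial_tg^{n+1}+v\partial_{x}g^{n+1}+\mathcal{K}g^{n+1}=\Gamma(g^n,g^{n+1}),\\
g^{n+1}|_{t=0}=g_0,
\end{cases}
$$
started from $g^0=0$, where each step is solved by Lemma~\ref{tr11}. First I would set up an induction on $n$: if $\|g^n\|_{L^\infty([0,T],H^{(1,0)}(\rr_{x,v}^2))}\leq\eps_0$, where $\eps_0$ is the threshold furnished by Lemma~\ref{tr11}, then the latter produces $g^{n+1}\in L^\infty([0,T],H^{(1,0)}(\rr_{x,v}^2))$ with
$$
\|g^{n+1}\|_{L^\infty([0,T],H^{(1,0)}(\rr_{x,v}^2))}+\|\mathcal{H}^{\frac{s}{2}}g^{n+1}\|_{L^2([0,T],H^{(1,0)}(\rr_{x,v}^2))}\leq c_0e^{3T}\|g_0\|_{(1,0)}.
$$
Choosing the smallness threshold in the statement to be $\eps_0/(c_0e^{3T})$ closes the induction and gives a uniform bound on the whole sequence; renaming $c_0e^{3T}$ as $c_0$ (which is allowed to depend on $T$) then yields the announced control.

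Next I would prove that $(g^n)_{n\geq 0}$ is a Cauchy sequence in $L^\infty([0,T],H^{(1,0)}(\rr_{x,v}^2))$. Writing $w^{n+1}=g^{n+1}-g^n$ and using the bilinearity of $\Gamma$, one gets
$$
\partial_tw^{n+1}+v\partial_{x}w^{n+1}+\mathcal{K}w^{n+1}=\Gamma(w^n,g^{n+1})+\Gamma(g^{n-1},w^{n+1}),\qquad w^{n+1}|_{t=0}=0.
$$
I would run the same energy estimate as in the proof of Lemma~\ref{tr11} (with the regularization $(1+\delta\sqrt{\mathcal{H}}+\delta\langle D_x\rangle)^{-1}w^{n+1}$ used to justify differentiating $t\mapsto\|w^{n+1}(t)\|_{(1,0)}^2$), invoking the coercivity \eqref{tr4} of $\mathcal{K}$ and the trilinear estimates of Lemma~\ref{proposition222} with $r=1$: the contribution of $\Gamma(g^{n-1},w^{n+1})$ is bounded by $C\eps_0\|\mathcal{H}^{\frac{s}{2}}w^{n+1}\|_{(1,0)}^2$ and absorbed into the coercive term for $\eps_0$ small, while the contribution of $\Gamma(w^n,g^{n+1})$ is split off by Young's inequality onto $w^n$. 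A Gronwall argument on $[0,t]$ with $w^{n+1}(0)=0$ then gives, for some constant $C_T>0$ depending on $T$,
$$
\|w^{n+1}\|_{L^\infty([0,T],H^{(1,0)})}^2\leq C_T\Big(\int_0^T\|\mathcal{H}^{\frac{s}{2}}g^{n+1}(\tau)\|_{(1,0)}^2\,d\tau\Big)\|w^n\|_{L^\infty([0,T],H^{(1,0)})}^2\leq C_Tc_0^2\|g_0\|_{(1,0)}^2\,\|w^n\|_{L^\infty([0,T],H^{(1,0)})}^2.
$$
Shrinking $\eps_0$ once more so that $C_Tc_0^2\eps_0^2\leq 1/4$, the iteration is a contraction, hence $g^n\to g$ strongly in $L^\infty([0,T],H^{(1,0)}(\rr_{x,v}^2))$. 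The uniform $L^2$-bound on $\mathcal{H}^{\frac{s}{2}}g^n$ provides, along a subsequence, $\mathcal{H}^{\frac{s}{2}}g^n\rightharpoonup\mathcal{H}^{\frac{s}{2}}g$ weakly in $L^2([0,T],H^{(1,0)}(\rr_{x,v}^2))$, and weak lower semicontinuity of the norms transfers the uniform bound to the limit $g$.

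Finally I would pass to the limit in the weak formulation. Each $g^{n+1}$ satisfies $\int_0^T(g^{n+1}(t),\mathcal{Q}_nh(t))_{(1,0)}\,dt=(g_0,h(0))_{(1,0)}$ for all $h\in C_0^\infty((-\infty,T),\mathscr{S}(\rr_{x,v}^2))$, with $\mathcal{Q}_n=-\partial_t+(v\partial_{x}+\mathcal{K}-\Gamma(g^n,\cdot))^*$. The linear contributions converge by the strong (a fortiori weak) convergence of $g^n$, and for the nonlinear one I would write $\Gamma(g^n,g^{n+1})-\Gamma(g,g)=\Gamma(g^n-g,g^{n+1})+\Gamma(g,g^{n+1}-g)$ and use Lemma~\ref{proposition2225} in the form
$$
|(\Gamma(f_1,f_2),h)_{(1,0)}|=|(\mathcal{H}^{-s}\Gamma(f_1,f_2),\mathcal{H}^{s}h)_{(1,0)}|\leq c_r\|f_1\|_{(1,0)}\|f_2\|_{(1,0)}\|\mathcal{H}^{s}h\|_{(1,0)},
$$
so that $\int_0^T(\Gamma(g^n,g^{n+1}),h)_{(1,0)}\,dt\to\int_0^T(\Gamma(g,g),h)_{(1,0)}\,dt$ by dominated convergence. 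This identifies $g$ as a weak solution of \eqref{qw13.1} with the required bounds.

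The main obstacle is the contraction estimate for the differences $w^{n+1}$: it requires combining the sharp coercivity \eqref{tr4} with the trilinear estimates of Lemma~\ref{proposition222} in just the right way so that all the problematic $\mathcal{H}^{\frac{s}{2}}w^{n+1}$ factors (in particular the term coming from $\Gamma(g^{n-1},w^{n+1})$) are either absorbed by the coercive term using the smallness of the data or transferred onto $w^n$ via Young's inequality, together with the mollification argument needed to differentiate the energy rigorously, uniformly in the regularization parameter $\delta$ — exactly as in the proof of Lemma~\ref{tr11}.
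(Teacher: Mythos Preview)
Your proposal is correct and follows essentially the same Picard-iteration scheme as the paper: construct the sequence via Lemma~\ref{tr11}, obtain a contraction on the differences $w^{n+1}=g^{n+1}-g^n$ through the regularized energy estimate combined with the coercivity~\eqref{tr4} and the trilinear bounds of Lemma~\ref{proposition222}, and pass to the limit in the weak formulation. The only cosmetic differences are the starting iterate ($g^0=0$ versus the paper's $\tilde g_0$), the particular bilinear splitting of the difference equation, and your use of Lemma~\ref{proposition2225} rather than Lemma~\ref{proposition222} to pass to the limit in the nonlinear term---a slight simplification since it only requires the $L^\infty_tH^{(1,0)}$ convergence already established.
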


\begin{proof}
Let $0<\lambda<1$, $T>0$ and $g_0  \in H^{(1,0)}(\rr_{x,v}^2)$ be an initial fluctuation satisfying
\begin{equation}\label{tr13}
\|g_0\|_{(1,0)} \leq \tilde{\eps}_0, \quad \textrm{with } \quad 0<\tilde{\eps}_0=\inf\Big(\frac{\eps_0}{c_0e^{3T}},\frac{1}{4CC_1c_0e^{3T}},\frac{\lambda}{2\sqrt{2C}C_1c_0e^{6T}}\Big) \leq \eps_0,
\end{equation}
where $c_0 > 1$, $\eps_0, C_1, C>0$ are the constants defined in (\ref{tr4}) and Lemmas~\ref{proposition222}, \ref{tr11}. We define
\begin{equation}\label{tr14}
\tilde{g}_0(t)=\exp\big(-\delta t\big(\sqrt{\mathcal{H}}+\langle D_x \rangle\big)^{\frac{2s}{2s+1}}\big)g_0, \quad 0 \leq t \leq T,
\end{equation}
with $0 \leq \delta \leq 1$.
We notice that
\begin{equation}\label{tr15}
\|\tilde{g}_0\|_{L^{\infty}([0,T],H^{(1,0)}(\rr_{x,v}^2))} \leq \|g_0\|_{(1,0)} \leq \eps_0.
\end{equation}
We deduce from (\ref{tr15}) and Lemma~\ref{tr11} that we can construct  a sequence of solutions $(\tilde{g}_n)_{n \geq 0}$ belonging to
$L^{\infty}([0,T],H^{(1,0)}(\rr_{x,v}^2))$ and satisfying the Cauchy problem
\begin{equation}\label{tr16}
\begin{cases}
\partial_t\tilde{g}_{n+1}+v\partial_{x}\tilde{g}_{n+1}+\mathcal{K}\tilde{g}_{n+1}=\Gamma(\tilde{g}_{n},\tilde{g}_{n+1}), \quad n \geq 0,\\
\tilde{g}_{n+1}|_{t=0}=g_0,
\end{cases}
\end{equation}
together with the estimates
\begin{equation}\label{tr16.1}
\|\tilde{g}_{n}\|_{L^{\infty}([0,T],H^{(1,0)}(\rr_{x,v}^2))}+\|\mathcal{H}^{\frac{s}{2}}\tilde{g}_{n}\|_{L^{2}([0,T],H^{(1,0)}(\rr_{x,v}^2))} \leq c_0e^{3T}\|g_0\|_{(1,0)} \leq \eps_0,
\end{equation}
for all $n \geq 1$.
Indeed, if we assume that for some $n \geq 0$,
$$\|\tilde{g}_n\|_{L^{\infty}([0,T],H^{(1,0)}(\rr_{x,v}^2))} \leq \eps_0,$$
we deduce from (\ref{tr13}) and Lemma~\ref{tr11} that there exists a solution
$$\tilde{g}_{n+1} \in L^{\infty}([0,T],H^{(1,0)}(\rr_{x,v}^2)),$$
to the Cauchy problem (\ref{tr16}) satisfying
$$\|\tilde{g}_{n+1}\|_{L^{\infty}([0,T],H^{(1,0)}(\rr_{x,v}^2))}+\|\mathcal{H}^{\frac{s}{2}}\tilde{g}_{n+1}\|_{L^{2}([0,T],H^{(1,0)}(\rr_{x,v}^2))} \leq c_0e^{3T}\|g_0\|_{(1,0)} \leq \eps_0.$$
Then, we consider the difference
\begin{equation}\label{tr17}
w_n=\tilde{g}_{n+1}-\tilde{g}_n,
\end{equation}
for any $n \geq 0$.
We deduce from (\ref{tr16}) that for all $n \geq 1$,
\begin{equation}\label{tr18}
\begin{cases}
\partial_tw_n+v\partial_{x}w_n+\mathcal{K}w_n=\Gamma(\tilde{g}_{n},w_{n})+\Gamma(w_{n-1},\tilde{g}_{n}),\\
w_n|_{t=0}=0.
\end{cases}
\end{equation}
We define
\begin{equation}\label{li2.1}
w_{n,\delta}=(1+\delta \sqrt{\mathcal{H}}+\delta \langle D_x \rangle)^{-1}w_n, \quad 0<\delta \leq 1.
\end{equation}
By using the multiplier $(1+\delta \sqrt{\mathcal{H}}+\delta \langle D_x \rangle)^{-2}w_n$ in $H^{(1,0)}(\rr_{x,v}^2)$,
we deduce from (\ref{sal1}) and (\ref{li2.1}) that
\begin{align*}
& \ \frac{1}{2}\frac{d}{dt}(\|w_{n,\delta}\|_{(1,0)}^2)+\textrm{Re}(\mathcal{K}w_{n,\delta},w_{n,\delta})_{(1,0)}+\textrm{Re}(v\partial_{x}w_{n,\delta},w_{n,\delta})_{(1,0)}
\\
& \ +\textrm{Re}\big([(1+\delta \sqrt{\mathcal{H}}+\delta \langle D_x \rangle)^{-1},v](1+\delta \sqrt{\mathcal{H}}+\delta \langle D_x \rangle)\partial_{x}w_{n,\delta},w_{n,\delta}\big)_{(1,0)}\\
 = & \  \textrm{Re}\big((1+\delta \sqrt{\mathcal{H}}+\delta \langle D_x \rangle)^{-1}\Gamma\big(\tilde{g}_{n},(1+\delta \sqrt{\mathcal{H}}+\delta \langle D_x \rangle)w_{n,\delta}\big),w_{n,\delta}\big)_{(1,0)}\\
 & \ + \textrm{Re}\big((1+\delta \sqrt{\mathcal{H}}+\delta \langle D_x \rangle)^{-1}\Gamma\big((1+\delta \sqrt{\mathcal{H}}+\delta \langle D_x \rangle)w_{n-1,\delta},\tilde{g}_{n}\big),w_{n,\delta}\big)_{(1,0)},
\end{align*}
since $[(1+\delta \sqrt{\mathcal{H}}+\delta \langle D_x \rangle)^{-1},\mathcal{K}]=0$.
We deduce from (\ref{tr4}) and (\ref{li10}) that for all $0 \leq t \leq T$, $0<\delta \leq 1$,
\begin{align}\label{li7.1}
& \ \frac{1}{2}\frac{d}{dt}(\|w_{n,\delta}(t)\|_{(1,0)}^2)+\frac{1}{C}\|\mathcal{H}^{\frac{s}{2}}w_{n,\delta}(t)\|_{(1,0)}^2-\|w_{n,\delta}(t)\|_{(1,0)}^2 \\ \notag
\leq & \ 2\|w_{n,\delta}(t)\|_{(1,0)}^2+ \big|\big((1+\delta \sqrt{\mathcal{H}}+\delta \langle D_x \rangle)^{-1}\Gamma\big(\tilde{g}_{n},(1+\delta \sqrt{\mathcal{H}})w_{n,\delta}\big),w_{n,\delta}\big)_{(1,0)}\big|\\ \notag
+ & \ \big|\big((1+\delta \sqrt{\mathcal{H}}+\delta \langle D_x \rangle)^{-1}\Gamma\big(\tilde{g}_{n},\delta \langle D_x \rangle w_{n,\delta}\big),w_{n,\delta}\big)_{(1,0)}\big| \\ \notag
+ & \ \big|\big((1+\delta \sqrt{\mathcal{H}}+\delta \langle D_x \rangle)^{-1}\Gamma\big((1+\delta \sqrt{\mathcal{H}})w_{n-1,\delta},\tilde{g}_{n}\big),w_{n,\delta}\big)_{(1,0)}\big|\\ \notag
+ & \ \big|\big((1+\delta \sqrt{\mathcal{H}}+\delta \langle D_x \rangle)^{-1}\Gamma\big(\delta \langle D_x \rangle w_{n-1,\delta},\tilde{g}_{n}\big),w_{n,\delta}\big)_{(1,0)}\big|,
\end{align}
since $\mathcal{K}$ is a selfadjoint operator and $\textrm{Re}(v\partial_{x}w_{n,\delta},w_{n,\delta})_{(1,0)}=0$.
We deduce from Lemma~\ref{proposition222} with $t=0$, $\delta_1=\delta$ and $(j_1,j_2)=(0,1)$ that for all $0<\delta \leq 1$,
\begin{equation}\label{li8.1}
\big|\big((1+\delta \sqrt{\mathcal{H}}+\delta \langle D_x \rangle)^{-1}\Gamma\big(\tilde{g}_{n},(1+\delta \sqrt{\mathcal{H}})w_{n,\delta}\big),w_{n,\delta}\big)_{(1,0)}\big|
 \leq C_1\|\tilde{g}_{n}\|_{(1,0)}\|\mathcal{H}^\frac{s}{2}w_{n,\delta}\|_{(1,0)}^2.
\end{equation}
We also deduce from Lemma~\ref{proposition222} with $t=0$, $\delta_1=\delta$ and $(j_1,j_2)=(1,0)$ that for all $0<\delta \leq 1$,
\begin{multline}\label{li8.1b}
\big|\big((1+\delta \sqrt{\mathcal{H}}+\delta \langle D_x \rangle)^{-1}\Gamma\big((1+\delta \sqrt{\mathcal{H}})w_{n-1,\delta},\tilde{g}_{n}\big),w_{n,\delta}\big)_{(1,0)}\big| \\
\leq C_1\|w_{n-1,\delta}\|_{(1,0)}\|\mathcal{H}^\frac{s}{2}\tilde{g}_{n}\|_{(1,0)}\|\mathcal{H}^\frac{s}{2}w_{n,\delta}\|_{(1,0)}.
\end{multline}
On the other hand, it follows from Lemma~\ref{proposition222} with $r=1$ that for all $0<\delta \leq 1$,
\begin{align}\label{li9.1}
& \ \big|\big((1+\delta \sqrt{\mathcal{H}}+\delta \langle D_x \rangle)^{-1}\Gamma\big(\tilde{g}_{n},\delta \langle D_x \rangle w_{n,\delta}\big),w_{n,\delta}\big)_{(1,0)}\big|\\ \notag
= & \ \big|\big(\Gamma\big(\tilde{g}_{n},\langle D_x \rangle w_{n,\delta}\big),\delta\langle D_x \rangle^{2}(1+\delta \sqrt{\mathcal{H}}+\delta \langle D_x \rangle)^{-1}w_{n,\delta}\big)_{L^2(\rr_{x,v}^2)}\big| \\ \notag
\leq & \ C_1\|\tilde{g}_{n}\|_{(1,0)}\|\langle D_x \rangle \mathcal{H}^\frac{s}{2}w_{n,\delta}\|_{L^2(\rr_{x,v}^2)}\|
\delta\langle D_x \rangle^{2}(1+\delta \sqrt{\mathcal{H}}+\delta \langle D_x \rangle)^{-1}\mathcal{H}^\frac{s}{2}w_{n,\delta}\|_{L^2(\rr_{x,v}^2)} \\ \notag
\leq & \ C_1\|\tilde{g}_{n}\|_{(1,0)}\|\mathcal{H}^\frac{s}{2}w_{n,\delta}\|_{(1,0)}^2,
\end{align}
since
\begin{equation}\label{li10.1}
\|\delta\langle D_x \rangle(1+\delta \sqrt{\mathcal{H}}+\delta \langle D_x \rangle)^{-1}\|_{\mathcal{L}(L^2(\rr_{x,v}^2))} \leq 1.
\end{equation}
We also deduce from (\ref{li10.1}) and Lemma~\ref{proposition222} with $r=1$ that for all $0<\delta \leq 1$,
\begin{align}\label{li11.1}
& \ \big|\big((1+\delta \sqrt{\mathcal{H}}+\delta \langle D_x \rangle)^{-1}\Gamma\big(\delta \langle D_x \rangle w_{n-1,\delta},\tilde{g}_{n}\big),w_{n,\delta}\big)_{(1,0)}\big|\\ \notag
= & \ \big|\big(\Gamma\big(\langle D_x \rangle w_{n-1,\delta},\tilde{g}_{n}\big),\delta \langle D_x \rangle^2(1+\delta \sqrt{\mathcal{H}}+\delta \langle D_x \rangle)^{-1}w_{n,\delta}\big)_{L^2(\rr_{x,v}^2)}\big| \\ \notag
\leq & \ C_1\|w_{n-1,\delta}\|_{(1,0)}\|\mathcal{H}^\frac{s}{2}\tilde{g}_{n}\|_{(1,0)}\|\delta \langle D_x \rangle^2(1+\delta \sqrt{\mathcal{H}}+\delta \langle D_x \rangle)^{-1}\mathcal{H}^\frac{s}{2}w_{n,\delta}\|_{L^2(\rr_{x,v}^2)}\\ \notag
\leq & \ C_1\|w_{n-1,\delta}\|_{(1,0)}\|\mathcal{H}^\frac{s}{2}\tilde{g}_{n}\|_{(1,0)}\|\mathcal{H}^\frac{s}{2}w_{n,\delta}\|_{(1,0)}.
\end{align}
It follows from (\ref{li7.1}), (\ref{li8.1}), (\ref{li8.1b}), (\ref{li9.1}) and (\ref{li11.1}) that
\begin{multline}\label{li12.1}
\frac{1}{2}\frac{d}{dt}(\|w_{n,\delta}(t)\|_{(1,0)}^2)+\frac{1}{C}\|\mathcal{H}^{\frac{s}{2}}w_{n,\delta}(t)\|_{(1,0)}^2  \leq  3\|w_{n,\delta}(t)\|_{(1,0)}^2
\\ +2C_1\|\tilde{g}_{n}(t)\|_{(1,0)}\|\mathcal{H}^\frac{s}{2}w_{n,\delta}(t)\|_{(1,0)}^2
+ 2C_1\|w_{n-1,\delta}(t)\|_{(1,0)}\|\mathcal{H}^\frac{s}{2}\tilde{g}_{n}(t)\|_{(1,0)}\|\mathcal{H}^\frac{s}{2}w_{n,\delta}(t)\|_{(1,0)}.
\end{multline}
We deduce from (\ref{tr16.1}) that
\begin{multline*}
\frac{1}{2}\frac{d}{dt}(\|w_{n,\delta}(t)\|_{(1,0)}^2)+\frac{3}{4C}\|\mathcal{H}^{\frac{s}{2}}w_{n,\delta}(t)\|_{(1,0)}^2
\leq 3\|w_{n,\delta}(t)\|_{(1,0)}^2 \\ + 2e^{3T}c_0C_1\|g_0\|_{(1,0)}\|\mathcal{H}^{\frac{s}{2}}w_{n,\delta}(t)\|_{(1,0)}^2
+4CC_1^2\|w_{n-1,\delta}(t)\|_{(1,0)}^2\|\mathcal{H}^{\frac{s}{2}}\tilde{g}_{n}(t)\|_{(1,0)}^2.
\end{multline*}
It follows from (\ref{tr13}) that
$$\frac{d}{dt}(\|w_{n,\delta}(t)\|_{(1,0)}^2)+\frac{1}{2C}\|\mathcal{H}^{\frac{s}{2}}w_{n,\delta}(t)\|_{(1,0)}^2
\leq 6\|w_{n,\delta}(t)\|_{(1,0)}^2
+8CC_1^2\|w_{n-1,\delta}(t)\|_{(1,0)}^2\|\mathcal{H}^{\frac{s}{2}}\tilde{g}_{n}(t)\|_{(1,0)}^2.$$
We deduce from (\ref{tr13}), (\ref{tr16.1}), (\ref{tr18}) and (\ref{li2.1}) that for all $0 \leq t \leq T$, $0<\delta \leq 1$,
\begin{align*}
& \ \|w_{n,\delta}(t)\|_{(1,0)}^2+\frac{1}{2C}\int_{0}^te^{6(t-\tau)}\|\mathcal{H}^{\frac{s}{2}}w_{n,\delta}(\tau)\|_{(1,0)}^2d\tau\\
\leq  & \ 8CC_1^2\int_{0}^te^{6(t-\tau)}\|w_{n-1,\delta}(\tau)\|_{(1,0)}^2\|\mathcal{H}^{\frac{s}{2}}\tilde{g}_{n}(\tau)\|_{(1,0)}^2d\tau\\
\leq  & \ 8CC_1^2e^{6T}\|w_{n-1,\delta}\|_{L^{\infty}([0,T],H^{(1,0)}(\rr_{x,v}^2))}^2\|\mathcal{H}^{\frac{s}{2}}\tilde{g}_{n}\|_{L^{2}([0,T],H^{(1,0)}(\rr_{x,v}^2))}^2\\
\leq & \ \lambda^2 \|w_{n-1,\delta}\|_{L^{\infty}([0,T],H^{(1,0)}(\rr_{x,v}^2))}^2.
\end{align*}
It follows that for all $n \geq 1$, $0<\delta \leq 1$,
$$\|w_{n,\delta}\|_{L^{\infty}([0,T],H^{(1,0)}(\rr_{x,v}^2))} \leq \lambda \|w_{n-1,\delta}\|_{L^{\infty}([0,T],H^{(1,0)}(\rr_{x,v}^2))},$$
$$\|\mathcal{H}^{\frac{s}{2}}w_{n,\delta}\|_{L^{2}([0,T],H^{(1,0)}(\rr_{x,v}^2))} \leq \sqrt{2C}\lambda \|w_{n-1,\delta}\|_{L^{\infty}([0,T],H^{(1,0)}(\rr_{x,v}^2))}.$$
Recalling (\ref{li2.1}), we obtain that for all $n \geq 1$, $0<\delta \leq 1$,
\begin{multline*}
\|w_{n,\delta}\|_{L^{\infty}([0,T],H^{(1,0)}(\rr_{x,v}^2))} \leq \lambda^n \|(1+\delta \sqrt{\mathcal{H}}+\delta \langle D_x \rangle)^{-1}w_{0}\|_{L^{\infty}([0,T],H^{(1,0)}(\rr_{x,v}^2))}\\
\leq \lambda^n \|w_{0}\|_{L^{\infty}([0,T],H^{(1,0)}(\rr_{x,v}^2))},
\end{multline*}
\begin{multline*}
\|\mathcal{H}^{\frac{s}{2}}w_{n,\delta}\|_{L^{2}([0,T],H^{(1,0)}(\rr_{x,v}^2))} \leq \sqrt{2C}\lambda^n \|(1+\delta \sqrt{\mathcal{H}}+\delta \langle D_x \rangle)^{-1}w_{0}\|_{L^{\infty}([0,T],H^{(1,0)}(\rr_{x,v}^2))}\\
\leq \sqrt{2C}\lambda^n \|w_{0}\|_{L^{\infty}([0,T],H^{(1,0)}(\rr_{x,v}^2))}.
\end{multline*}
By passing to the limit when $\delta \to 0_+$, it follows from (\ref{li2.1}) and the monotone convergence theorem that for all $n \geq 1$,
\begin{equation}\label{wwe1}
\|w_{n}\|_{L^{\infty}([0,T],H^{(1,0)}(\rr_{x,v}^2))} \leq \lambda^n \|w_{0}\|_{L^{\infty}([0,T],H^{(1,0)}(\rr_{x,v}^2))},
\end{equation}
\begin{equation}\label{wwe2}
\|\mathcal{H}^{\frac{s}{2}}w_{n}\|_{L^{2}([0,T],H^{(1,0)}(\rr_{x,v}^2))} \leq \sqrt{2C}\lambda^n \|w_{0}\|_{L^{\infty}([0,T],H^{(1,0)}(\rr_{x,v}^2))}.
\end{equation}
We deduce from (\ref{tr17}), (\ref{wwe1}) and (\ref{wwe2}) the convergence of the sequences
\begin{equation}\label{qw0.0}
g=\lim_{n \to +\infty}\tilde{g}_n \quad \textrm{ in } L^{\infty}([0,T],H^{(1,0)}(\rr_{x,v}^2)),
\end{equation}
\begin{equation}\label{qw0.1}
G=\lim_{n \to +\infty}\mathcal{H}^{\frac{s}{2}}\tilde{g}_n \quad \textrm{ in } L^{2}([0,T],H^{(1,0)}(\rr_{x,v}^2)).
\end{equation}
Let $H=\un_{\R_+}$ be the Heaviside function.
The convergences
$$H(t)g=\lim_{n \to +\infty}H(t)\tilde{g}_n \quad \textrm{ in } L^{\infty}((-\infty,T],H^{(1,0)}(\rr_{x,v}^2)),$$
$$H(t)G=\lim_{n \to +\infty}H(t)\mathcal{H}^{\frac{s}{2}}\tilde{g}_n \quad \textrm{ in } L^{2}((-\infty,T],H^{(1,0)}(\rr_{x,v}^2)),$$
imply
\begin{equation}\label{qw2}
H(t)g=\lim_{n \to +\infty}H(t)\tilde{g}_n, \qquad H(t)G=\lim_{n \to +\infty}H(t)\mathcal{H}^{\frac{s}{2}}\tilde{g}_n \quad \textrm{ in }  \mathscr{D}'((-\infty,T),\mathscr S'(\rr_{x,v}^2)).
\end{equation}
We obtain that for all $\varphi \in \mathscr{F}=C_0^{\infty}((-\infty,T),\mathscr S(\rr_{x,v}^2))$,
\begin{multline*}
\langle  H(t)G, \varphi \rangle_{\mathscr{F}^*,\mathscr{F}}=\lim_{n \to +\infty}\langle H(t)\mathcal{H}^{\frac{s}{2}}\tilde{g}_n, \varphi \rangle_{\mathscr{F}^*,\mathscr{F}}=\lim_{n \to +\infty}\langle H(t)\tilde{g}_n, \mathcal{H}^{\frac{s}{2}}\varphi \rangle_{\mathscr{F}^*,\mathscr{F}}\\ =\langle H(t)g, \mathcal{H}^{\frac{s}{2}}\varphi \rangle_{\mathscr{F}^*,\mathscr{F}}
=\langle H(t) \mathcal{H}^{\frac{s}{2}}g,\varphi \rangle_{\mathscr{F}^*,\mathscr{F}},
\end{multline*}
where $\mathscr{F}^*$ stands for the anti-dual (anti-linear forms) of $\mathscr{F}$ and where $\langle\cdot, \cdot \rangle_{\mathscr{F}^*,\mathscr{F}}$ denotes the duality bracket.
It follows that
\begin{equation}\label{qqw1}
G=\mathcal{H}^{\frac{s}{2}}g.
\end{equation}
By passing to the limit in the estimate (\ref{tr16.1}), we deduce from (\ref{qw0.0}), (\ref{qw0.1}) and (\ref{qqw1}) that
$$\|g\|_{L^{\infty}([0,T],H^{(1,0)}(\rr_{x,v}^2))}+\|\mathcal{H}^{\frac{s}{2}}g\|_{L^{2}([0,T],H^{(1,0)}(\rr_{x,v}^2))} \leq c_0e^{3T}\|g_0\|_{(1,0)}.$$
We find as well from (\ref{qw2}) that
\begin{equation}\label{qw3}
\partial_t(H(t)g)
=\lim_{n \to +\infty}\partial_t(H(t)\tilde{g}_n),
\qquad
H(t)v\partial_xg=\lim_{n \to +\infty}H(t)v\partial_x\tilde{g}_n,
\end{equation}
with limits in $\mathscr{F}^*$.
It follows from (\ref{sal1}), (\ref{3.llkkb}) and (\ref{qw2}) that
\begin{equation}\label{qw5}
H(t) \mathcal K g
=\lim_{n \to +\infty} H(t) \mathcal{K}\tilde{g}_n,
\end{equation}
with limit in $\mathscr{F}^*$.
On the other hand, we obtain that for all $\varphi \in \mathscr{F}$,
\begin{multline}\label{qw7}
|\langle  H(t) \Gamma(\tilde{g}_n,\tilde{g}_{n+1}),\varphi\rangle_{\mathscr{F}^*,\mathscr{F}}-\langle  H(t) \Gamma(g,g),\varphi \rangle_{ \mathscr{F}^*, \mathscr{F}}| \\ \leq |\langle  H(t) \Gamma(\tilde{g}_n-g,\tilde{g}_{n+1}),\varphi\rangle_{ \mathscr{F}^*, \mathscr{F}}|+|\langle  H(t) \Gamma(g,\tilde{g}_{n+1}-g),\varphi \rangle_{ \mathscr{F}^*, \mathscr{F}}|.
\end{multline}
We deduce from Lemma~\ref{proposition222} with $r=1$ that
\begin{multline*}
|\langle  H(t) \Gamma(\tilde{g}_n-g,\tilde{g}_{n+1}),\varphi\rangle_{\mathscr{F}^*,\mathscr{F}}| \leq \int_{\rr} H(t)|( \Gamma(\tilde{g}_n-g,\tilde{g}_{n+1}),\varphi)_{L^2(\rr_{x,v}^2)}|dt \\
\leq  C_1\int_0^T\|\tilde{g}_n(t)-g(t)\|_{(1,0)}\|\mathcal{H}^{\frac{s}{2}}\tilde{g}_{n+1}(t)\|_{(1,0)}\|\mathcal{H}^{\frac{s}{2}}\varphi(t)\|_{L^2(\rr_{x,v}^2)}dt.
\end{multline*}
It follows from (\ref{tr16.1}) that
\begin{align}\label{qw8}
& \  |\langle H(t) \Gamma(\tilde{g}_n-g,\tilde{g}_{n+1}),\varphi\rangle_{\mathscr{F}^*,\mathscr{F}}| \\ \notag
 \leq & \  C_1\Vert \tilde{g}_n-g\|_{L^{\infty}([0,T],H^{(1,0)}(\rr_{x,v}^2))}  \|\mathcal{H}^{\frac{s}{2}}\tilde{g}_{n+1}\|_{L^{2}([0,T],H^{(1,0)}(\rr_{x,v}^2))} \|\mathcal{H}^{\frac{s}{2}}\varphi\|_{L^{2}([0,T],L^2(\rr_{x,v}^2))} \\ \notag
 \leq & \ \eps_0 C_1\|\mathcal{H}^{\frac{s}{2}}\varphi\|_{L^{2}([0,T],L^2(\rr_{x,v}^2))}
 \|\tilde{g}_n-g\|_{L^{\infty}([0,T],H^{(1,0)}(\rr_{x,v}^2))}.
\end{align}
We deduce from (\ref{qw0.0}) and (\ref{qw8}) that
\begin{equation}\label{qw9}
\lim_{n \to +\infty}\langle H(t) \Gamma(\tilde{g}_n-g,\tilde{g}_{n+1}),\varphi\rangle_{\mathscr{F}^*,\mathscr{F}}=0.
\end{equation}
We deduce from Lemma~\ref{proposition222} with $r=1$ that
\begin{multline*}
|\langle H(t) \Gamma(g,\tilde{g}_{n+1}-g),\varphi \rangle_{\mathscr{F}^*,\mathscr{F}}| \leq \int_{\rr}H(t)
|( \Gamma(g,\tilde{g}_{n+1}-g),\varphi)_{L^2(\rr_{x,v}^2)}|dt \\
\leq  C_1\int_0^T\|g(t)\|_{(1,0)}\|\mathcal{H}^{\frac{s}{2}}\tilde{g}_{n+1}(t)-\mathcal{H}^{\frac{s}{2}}g(t)\|_{(1,0)}\|\mathcal{H}^{\frac{s}{2}}\varphi(t)\|_{L^2(\rr_{x,v}^2)}dt.
\end{multline*}
It follows that
\begin{multline}\label{qw10}
|\langle H(t) \Gamma(g,\tilde{g}_{n+1}-g),\varphi \rangle_{\mathscr{F}^*,\mathscr{F}}|\\
 \leq   C_1\|g\|_{L^{\infty}([0,T],H^{(1,0)}(\rr_{x,v}^2))} \|\mathcal{H}^{\frac{s}{2}}\varphi\|_{L^{2}([0,T],L^2(\rr_{x,v}^2))} \|\mathcal{H}^{\frac{s}{2}}\tilde{g}_{n+1}-\mathcal{H}^{\frac{s}{2}}g\|_{L^{2}([0,T],H^{(1,0)}(\rr_{x,v}^2))}.
\end{multline}
We deduce from (\ref{qw0.1}) and (\ref{qw10}) that
\begin{equation}\label{qw11}
\lim_{n \to +\infty}H(t) \Gamma(g,\tilde{g}_{n+1}-g)=0,
\end{equation}
with limit in $\mathscr F^*$.
It follows from (\ref{qw7}), (\ref{qw9}) and (\ref{qw11}) that
\begin{equation}\label{qw12}
\lim_{n \to +\infty}H(t) \Gamma(\tilde{g}_n,\tilde{g}_{n+1})=H(t) \Gamma(g,g),
\end{equation}
with limit in $\mathscr F^*$.
We deduce from (\ref{tr16}), (\ref{qw3}), (\ref{qw5})
and (\ref{qw12}) that
$$\partial_t(H(t)g)+v\partial_{x}H(t)g+\mathcal{K}H(t)g=H(t)\Gamma(g,g)+\delta_0(t)\otimes g_0,$$
that is,
$$\begin{cases}\partial_tg+v\partial_{x}g+\mathcal{K}g=\Gamma(g,g),\\
g_{\vert t=0}=g_0.
\end{cases}$$
This ends the proof of Theorem~\ref{qw13}.
\end{proof}

\subsection{Uniqueness}

The following result provides the uniqueness for the Cauchy problem (\ref{qw13.1}) in Theorem~\ref{qw13} when the initial fluctuation is sufficiently small
$\|g_0\|_{(1,0)} \ll 1$.

\begin{theorem}\label{qw13.66}
Let $T>0$. Then, there exists a positive constant $\tilde{\eps}_0>0$ such that if
$$g_1, g_2 \in L^{\infty}([0,T],H^{(1,0)}(\rr_{x,v}^2)) ,$$
are two solutions of the Cauchy problem associated to the spatially inhomogeneous Kac equation
\begin{equation}\label{qw13.16}
\begin{cases}
\partial_tg+v\partial_{x}g+\mathcal{K}g=\Gamma(g,g),\\
g|_{t=0}=g_0,
\end{cases}
\end{equation}
with the same initial fluctuation
$$g_0 \in H^{(1,0)}(\rr_{x,v}^2), \quad \|g_0\|_{(1,0)} \leq \tilde{\eps}_0,$$
satisfying
$$\|g_j\|_{L^{\infty}([0,T],H^{(1,0)}(\rr_{x,v}^2))} \leq \tilde{\eps}_0, \quad   \mathcal{H}^{\frac{s}{2}}g_j \in L^{2}([0,T],H^{(1,0)}(\rr_{x,v}^2)), \quad j=1,2.$$
Then, the two solutions are identical
$$\forall 0 \leq t \leq T, \quad g_1(t)=g_2(t).$$
\end{theorem}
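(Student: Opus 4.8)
The plan is to run an energy estimate on the difference $w=g_1-g_2$, following almost verbatim the scheme used in the proof of Theorem~\ref{qw13} for the increments $w_n=\tilde g_{n+1}-\tilde g_n$. By bilinearity of $\Gamma$ one has $\Gamma(g_1,g_1)-\Gamma(g_2,g_2)=\Gamma(g_1,w)+\Gamma(w,g_2)$, so that $w$ solves
$$\begin{cases}\partial_tw+v\partial_{x}w+\mathcal{K}w=\Gamma(g_1,w)+\Gamma(w,g_2),\\ w|_{t=0}=0,\end{cases}$$
since $w(0)=g_1(0)-g_2(0)=g_0-g_0=0$. This is structurally the very same equation as (\ref{tr18}), with $g_1$ and $g_2$ now playing the roles of $\tilde g_n$, the only difference being that the unknown $w$ occupies a slot in \emph{both} collision terms; this will produce a Gr\"onwall inequality instead of a contraction estimate.

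As in the proof of Lemma~\ref{tr11}, I would first regularize by setting $w_\delta=(1+\delta\sqrt{\mathcal{H}}+\delta\langle D_x\rangle)^{-1}w$ for $0<\delta\le1$. Using the equation satisfied by $w$, together with Lemma~\ref{proposition2225} (to place $\mathcal{H}^{-s}\Gamma(g_1,w)$ and $\mathcal{H}^{-s}\Gamma(w,g_2)$ in $L^\infty([0,T],H^{(1,0)}(\rr_{x,v}^2))$), the spectral asymptotics (\ref{3.llkkb}) (for $\mathcal{H}^{-s}\mathcal{K}w$) and the fact that $(1+\delta\sqrt{\mathcal{H}}+\delta\langle D_x\rangle)^{-2}v\partial_{x}$ is bounded on $L^2(\rr_{x,v}^2)$ for each fixed $\delta>0$, one checks exactly as there that $(1+\delta\sqrt{\mathcal{H}}+\delta\langle D_x\rangle)^{-1}\partial_tw_\delta\in L^2([0,T],H^{(1,0)}(\rr_{x,v}^2))$, so that a direct adaptation of Theorem~3 in~\cite{evans} (Section~5.9) shows that $t\mapsto\|w_\delta(t)\|_{(1,0)}^2$ is absolutely continuous with $\frac{d}{dt}\|w_\delta(t)\|_{(1,0)}^2=2\,\textrm{Re}(\partial_tw_\delta(t),w_\delta(t))_{(1,0)}$. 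Testing the equation for $w_\delta$ against the multiplier $(1+\delta\sqrt{\mathcal{H}}+\delta\langle D_x\rangle)^{-2}w$ in $H^{(1,0)}(\rr_{x,v}^2)$, using that $\mathcal{K}$ is selfadjoint and commutes with $(1+\delta\sqrt{\mathcal{H}}+\delta\langle D_x\rangle)^{-1}$, that $\textrm{Re}(v\partial_{x}w_\delta,w_\delta)_{(1,0)}=0$, the commutator bound (\ref{li10}) and the coercive estimate (\ref{tr4}), I obtain an inequality of the type (\ref{li7.1}), whose right-hand side contains the four collision contributions coming from $\Gamma(g_1,(1+\delta\sqrt{\mathcal{H}})w_\delta)$, $\Gamma(g_1,\delta\langle D_x\rangle w_\delta)$, $\Gamma((1+\delta\sqrt{\mathcal{H}})w_\delta,g_2)$ and $\Gamma(\delta\langle D_x\rangle w_\delta,g_2)$.

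Each of these four terms is estimated precisely as in (\ref{li8.1}), (\ref{li8.1b}), (\ref{li9.1}) and (\ref{li11.1}): the terms carrying $(1+\delta\sqrt{\mathcal{H}})$ are handled by Lemma~\ref{proposition222} with $t=0$, $\delta_1=\delta$ and $(j_1,j_2)=(0,1)$ resp. $(1,0)$, while in the two $\delta\langle D_x\rangle$ terms one first moves $\langle D_x\rangle$ out of the $(1,0)$ inner product (using $\|\delta\langle D_x\rangle(1+\delta\sqrt{\mathcal{H}}+\delta\langle D_x\rangle)^{-1}\|_{\mathcal{L}(L^2)}\le1$, i.e. (\ref{li10.1})) and then applies the first and second inequalities of Lemma~\ref{proposition222} with $r=1$. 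With $C$ the constant of (\ref{tr4}) and $C_1$ the constant of Lemma~\ref{proposition222} for $r=1$, this yields for all $0\le t\le T$, $0<\delta\le1$,
\begin{multline*}
\tfrac12\tfrac{d}{dt}\big(\|w_\delta(t)\|_{(1,0)}^2\big)+\tfrac1C\|\mathcal{H}^{\frac s2}w_\delta(t)\|_{(1,0)}^2\le 3\|w_\delta(t)\|_{(1,0)}^2+2C_1\|g_1(t)\|_{(1,0)}\|\mathcal{H}^{\frac s2}w_\delta(t)\|_{(1,0)}^2\\
+2C_1\|w_\delta(t)\|_{(1,0)}\|\mathcal{H}^{\frac s2}g_2(t)\|_{(1,0)}\|\mathcal{H}^{\frac s2}w_\delta(t)\|_{(1,0)}.
\end{multline*}
Choosing $\tilde{\eps}_0$ so small that $2C_1\tilde{\eps}_0\le\frac1{4C}$ (which absorbs the second right-hand term into the coercive term, since $\|g_1\|_{(1,0)}\le\tilde{\eps}_0$) and applying Cauchy--Schwarz to the last term, one is left with $\frac{d}{dt}\|w_\delta(t)\|_{(1,0)}^2\le\big(6+8CC_1^2\|\mathcal{H}^{\frac s2}g_2(t)\|_{(1,0)}^2\big)\|w_\delta(t)\|_{(1,0)}^2$. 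Since $t\mapsto\|\mathcal{H}^{\frac s2}g_2(t)\|_{(1,0)}^2$ lies in $L^1([0,T])$ by hypothesis and $w_\delta(0)=0$, Gr\"onwall's lemma gives $w_\delta(t)=0$ for all $0\le t\le T$; letting $\delta\to0_+$ and invoking the monotone convergence theorem, exactly as at the end of the proof of Theorem~\ref{qw13}, one concludes $w=g_1-g_2=0$.

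The one genuinely delicate point — as always in this type of argument — is the justification of the energy identity through the $\delta$-regularization, namely verifying that $\partial_tw_\delta$ has the regularity needed for the absolute continuity statement and the subsequent integration; this is precisely where Lemma~\ref{proposition2225} and the diagonal structure of $\mathcal{K}$ in the Hermite basis are used, and it is carried out word for word as in Lemma~\ref{tr11}. Everything else is a routine reprise of the trilinear estimates already established for the approximate solutions.
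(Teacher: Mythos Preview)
Your proposal is correct and follows essentially the same route as the paper's proof: the paper likewise sets $f=g_1-g_2$, regularizes via $f_\delta=(1+\delta\sqrt{\mathcal{H}}+\delta\langle D_x\rangle)^{-1}f$, invokes the very same trilinear estimates (\ref{li8.1}), (\ref{li8.1b}), (\ref{li9.1}), (\ref{li11.1}) and the commutator bound (\ref{li10}) to reach an inequality of the type (\ref{li12.1}), absorbs via smallness of $\|g_1\|_{L^\infty_tH^{(1,0)}}$, and closes with Gr\"onwall using $\mathcal{H}^{s/2}g_2\in L^2_tH^{(1,0)}$. The only cosmetic difference is in the choice of the absorption constant (the paper takes $\tilde\eps_0=\frac{3}{8CC_1}$), and your final appeal to monotone convergence is unnecessary since $w_\delta\equiv0$ already gives $w\equiv0$ by injectivity of the regularizer.
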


\begin{proof}
Let $g_1, g_2 \in L^{\infty}([0,T],H^{(1,0)}(\rr_{x,v}^2)) $ be two solutions of the Cauchy problem
$$\begin{cases}
\partial_tg+v\partial_{x}g+\mathcal{K}g=\Gamma(g,g),\\
g|_{t=0}=g_0,
\end{cases}$$
associated to the same initial datum
$$g_0\in H^{(1,0)}(\rr_{x,v}^2), \quad \|g_0\|_{(1,0)} \leq \eps_0,$$
satisfying
\begin{equation}\label{qw13.2e}
\|g_j\|_{L^{\infty}([0,T],H^{(1,0)}(\rr_{x,v}^2))} \leq \frac{3}{8CC_1}, \quad   \mathcal{H}^{\frac{s}{2}}g_j \in L^{2}([0,T],H^{(1,0)}(\rr_{x,v}^2)), \quad j=1,2,
\end{equation}
where $C, C_1,\eps_0>0$ are the constants defined in (\ref{tr4}), Lemma~\ref{proposition222} and Theorem~\ref{qw13}.
We consider the difference $f=g_1-g_2$. This function satisfies the Cauchy problem
\begin{equation}\label{po1}
\begin{cases}
\partial_tf+v\partial_{x}f+\mathcal{K}f=\Gamma(g_1,f)+\Gamma(f,g_2),\\
f|_{t=0}=0.
\end{cases}
\end{equation}
We define
\begin{equation}\label{li2.2}
f_{\delta}=(1+\delta \sqrt{\mathcal{H}}+\delta \langle D_x \rangle)^{-1}f, \quad 0<\delta \leq 1.
\end{equation}
By proceeding as in (\ref{tr18}) and (\ref{li2.1}), we use the multiplier $(1+\delta \sqrt{\mathcal{H}}+\delta \langle D_x \rangle)^{-2}f$ in $H^{(1,0)}(\rr_{x,v}^2)$. The very same arguments allow to prove as in (\ref{li12.1}) that for all $0<\delta \leq 1$,
\begin{multline*}
\frac{1}{2}\frac{d}{dt}\|f_{\delta}(t)\|_{(1,0)}^2+\frac{1}{C}\|\mathcal{H}^{\frac{s}{2}}f_{\delta}(t)\|_{(1,0)}^2  \leq  3\|f_{\delta}(t)\|_{(1,0)}^2
\\ +2C_1\|g_{1}(t)\|_{(1,0)}\|\mathcal{H}^\frac{s}{2}f_{\delta}(t)\|_{(1,0)}^2
+ 2C_1\|f_{\delta}(t)\|_{(1,0)}\|\mathcal{H}^\frac{s}{2}g_{2}(t)\|_{(1,0)}\|\mathcal{H}^\frac{s}{2}f_{\delta}(t)\|_{(1,0)}.
\end{multline*}
It follows from (\ref{qw13.2e}) that for all $0<\delta \leq 1$,
\begin{align*}
& \ \frac{1}{2}\frac{d}{dt}\|f_{\delta}(t)\|^2_{(1, 0)}+\frac{3}{4C}\|{\mathcal{H}}^{\frac{s}{2}}f_{\delta}(t)\|^2_{(1, 0)}\\
\leq & \ 3\|f_{\delta}(t)\|^2_{(1, 0)}+2C_1 \|g_1\|_{L^{\infty}([0,T],H^{(1,0)}(\rr_{x,v}^2))}\|{\mathcal{H}}^{\frac{s}{2}}f_{\delta}(t)\|^2_{(1, 0)}
+4CC_1^2\|f_{\delta}(t)\|_{(1, 0)}^2\|{\mathcal{H}}^{\frac{s}{2}}g_2(t)\|_{(1, 0)}^2\\
\leq & \ 3\|f_{\delta}(t)\|^2_{(1, 0)}+\frac{3}{4C}\|{\mathcal{H}}^{\frac{s}{2}}f_{\delta}(t)\|^2_{(1, 0)}
+4CC_1^2\|f_{\delta}(t)\|_{(1, 0)}^2\|{\mathcal{H}}^{\frac{s}{2}}g_2(t)\|_{(1, 0)}^2.
\end{align*}
We deduce that for all $0<\delta \leq 1$,
$$\frac{d}{dt}\|f_{\delta}(t)\|^2_{(1, 0)}\leq 8\|f_{\delta}(t)\|^2_{(1, 0)}(1+CC_1^2\|{\mathcal{H}}^{\frac{s}{2}}g_2(t)\|_{(1, 0)}^2),$$
that is,
$$\frac{d}{dt}\Big(\|f_{\delta}(t)\|^2_{(1, 0)}\exp\Big(-8t -8CC_1^2\int_0^t\|{\mathcal{H}}^{\frac{s}{2}}g_2(\tau)\|_{(1, 0)}^2d\tau\Big)\Big) \leq 0.$$
It follows from (\ref{qw13.2e}), (\ref{po1}) and (\ref{li2.2}) that for all $0<\delta \leq 1$,
\begin{align*}
\|f_{\delta}(t)\|^2_{(1, 0)}\leq & \ \|(1+\delta \sqrt{\mathcal{H}}+\delta \langle D_x \rangle)^{-1}f(0)\|^2_{(1, 0)}\exp\Big(8t+8CC_1^2\int_0^t\|{\mathcal{H}}^{\frac{s}{2}}g_2(\tau)\|_{(1, 0)}^2d\tau\Big) \\
\leq & \  \|f(0)\|^2_{(1, 0)}\exp\big(8T+8CC_1^2\|{\mathcal{H}}^{\frac{s}{2}}g_2\|_{L^{2}([0,T],H^{(1,0)}(\rr_{x,v}^2))}^2\big)=0,
\end{align*}
since $f(0)=0$. This proves that $f_{\delta}(t)=0$ for all $0\leq t \leq T$. According to (\ref{li2.2}), this ends the proof of Theorem~\ref{qw13.66}.
\end{proof}

\section{Gelfand-Shilov and Gevrey regularizing effect}\label{gelf}
We aim at establishing that the Cauchy problem (\ref{qw13.16}) enjoys some
Gelfand-Shilov regularizing properties with respect to the velocity variable and Gevrey regularizing properties with respect to the position variable.

\subsection{A priori estimates with exponential weights}

We begin by establishing some a priori estimates with exponential weights satisfied by the sequence of approximate solutions $(\tilde{g}_n)_{n \geq 0}$ defined in \eqref{tr16} for sufficiently small initial data:

\begin{lemma}\label{sake2}
Let $T>0$. Then, there exist some positive constants $c, \eps_1>0$, $0<\delta_0 \leq 1$ such that for all initial data $\|g_0\|_{(1,0)} \leq \eps_1$, the sequence of approximate solutions $(\tilde{g}_n)_{n \geq 0}$ defined in \eqref{tr16} satisfies for all $0<\delta_1 \leq 1$, $0 \leq \delta \leq \delta_0$, $n \geq 1$,
\begin{multline}\label{sake6}
\|M_{\delta_1}(\delta t)\tilde{g}_n\|_{L^{\infty}([0,T],H^{(1,0)}(\rr_{x,v}^2))}^2+\|\mathcal{H}^{\frac{s}{2}}M_{\delta_1}(\delta t)\tilde{g}_n\|_{L^{2}([0,T],H^{(1,0)}(\rr_{x,v}^2))}^2\\
+\|\langle D_x \rangle^{\frac{s}{2s+1}}M_{\delta_1}(\delta t)\tilde{g}_n\|_{L^{2}([0,T],H^{(1,0)}(\rr_{x,v}^2))}^2 \leq ce^{cT}\|g_0\|_{(1,0)}^2,
\end{multline}
with
$$M_{\delta_1}(t)=\frac{\exp\big(t(\sqrt{\mathcal{H}}+\langle D_x \rangle)^{\frac{2s}{2s+1}}\big)}{1+\delta_1 \exp\big(t(\sqrt{\mathcal{H}}+\langle D_x \rangle)^{\frac{2s}{2s+1}}\big)}.$$
\end{lemma}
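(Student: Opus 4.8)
## Proof plan for Lemma~\ref{sake2}

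The plan is to run an energy estimate on the approximate solutions $\tilde g_n$ conjugated by the regularized exponential multiplier $M_{\delta_1}(\delta t)$, combining the hypoelliptic estimate of Lemma~\ref{KFP2} (which supplies the extra $\langle D_x\rangle^{\frac{s}{2s+1}}$-gain) with the weighted trilinear estimate of Lemma~\ref{proposition222} (whose whole point is that it is uniform in $t\ge 0$, $0\le\delta_1\le 1$ and in the parameters $j_1,j_2\ge 0$, $j_1+j_2\le 1$). I would argue by induction on $n$, as in the proof of Theorem~\ref{qw13}: assuming the bound \eqref{sake6} for $\tilde g_n$ (and, at the base step, $\tilde g_0(t)=\exp(-\delta t(\sqrt{\mathcal H}+\langle D_x\rangle)^{\frac{2s}{2s+1}})g_0$, for which $M_{\delta_1}(\delta t)\tilde g_0$ is controlled by $\|g_0\|_{(1,0)}$ uniformly since $M_{\delta_1}(\delta t)e^{-\delta t(\cdots)}\le 1$), one proves it for $\tilde g_{n+1}$.

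First I would set $v_n=M_{\delta_1}(\delta t)\tilde g_n$ and, to stay in $L^2$-based spaces throughout, introduce the further regularization $v_{n,\delta'}=(1+\delta'\sqrt{\mathcal H}+\delta'\langle D_x\rangle)^{-1}v_n$ exactly as in \eqref{li2} and \eqref{li2.1}, compute $\frac{d}{dt}\|v_{n+1,\delta'}\|_{(1,0)}^2$ using the absolute-continuity lemma (\cite{evans}, Section 5.9) as in \eqref{li6}, and then let $\delta'\to 0_+$ by monotone convergence at the very end. Applying $M_{\delta_1}(\delta t)$ to the equation \eqref{tr16} produces, besides the transport and linearized Kac terms, the crucial \emph{commutator with the time derivative}: $\partial_t\big(M_{\delta_1}(\delta t)\big)=\delta\big(\sqrt{\mathcal H}+\langle D_x\rangle\big)^{\frac{2s}{2s+1}}F'(\delta t(\cdots))$ where $F(x)=e^x/(1+\delta_1 e^x)$, and since $0\le F'\le F$ this extra term is bounded (after taking real parts and using selfadjointness of $\mathcal K$, $\sqrt{\mathcal H}$, $\langle D_x\rangle$) by $\delta\|(\sqrt{\mathcal H}+\langle D_x\rangle)^{\frac{s}{2s+1}}v_{n+1}\|_{(1,0)}^2$, i.e. precisely half of the hypoelliptic gain I am trying to produce. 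The linear part $v\partial_x+\mathcal K$, handled on the Fourier side in $x$ via Lemma~\ref{KFP2} with the multiplier $1-\eps G$ (so one actually differentiates $\|v_{n+1,\delta'}\|_{(1,0)}^2-\eps\,\mathrm{Re}(Gv_{n+1,\delta'},v_{n+1,\delta'})$, which is equivalent to $\|\cdot\|_{(1,0)}^2$ since $G$ is bounded and $\eps$ small), yields $c_1\|\mathcal H^{s/2}v_{n+1,\delta'}\|_{(1,0)}^2+c_1\eps\|\langle D_x\rangle^{\frac{s}{2s+1}}v_{n+1,\delta'}\|_{(1,0)}^2-c_2\|v_{n+1,\delta'}\|_{(1,0)}^2$ on the left. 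For the nonlinear term $M_{\delta_1}(\delta t)\Gamma(\tilde g_n,\tilde g_{n+1})$ one writes $\tilde g_n=(M_{\delta_1}(\delta t))^{-1}v_n$, $\tilde g_{n+1}=(M_{\delta_1}(\delta t))^{-1}v_{n+1}$ and feeds this, together with the regularizing factors $(1+\delta'\sqrt{\mathcal H}+\delta'\langle D_x\rangle)^{\pm 1}$ distributed as in \eqref{li8}--\eqref{li9} (choices $(j_1,j_2)=(0,1)$ and the $\langle D_x\rangle$-trick), into the third inequality of Lemma~\ref{proposition222} with $\delta_2=\delta_1$, getting $\lesssim \|v_n\|_{(1,0)}\|\mathcal H^{s/2}v_{n+1}\|_{(1,0)}^2$; by the induction hypothesis $\|v_n\|_{(1,0)}\le (ce^{cT})^{1/2}\|g_0\|_{(1,0)}$, so choosing $\eps_1$ small makes this term absorbable into the $c_1\|\mathcal H^{s/2}v_{n+1,\delta'}\|_{(1,0)}^2$ coercivity.

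Assembling, one obtains a differential inequality of the form $\frac{d}{dt}\|v_{n+1,\delta'}\|_{(1,0)}^2+c'\big(\|\mathcal H^{s/2}v_{n+1,\delta'}\|_{(1,0)}^2+\eps\|\langle D_x\rangle^{\frac{s}{2s+1}}v_{n+1,\delta'}\|_{(1,0)}^2\big)\le C'\|v_{n+1,\delta'}\|_{(1,0)}^2$, valid uniformly in $\delta'$, in $0\le\delta\le\delta_0$ and $0<\delta_1\le 1$, provided $\delta_0$ is chosen small enough that the time-derivative commutator term is dominated by the hypoelliptic gain. Integrating via Grönwall from the initial value $\|v_{n+1,\delta'}(0)\|_{(1,0)}^2\le\|g_0\|_{(1,0)}^2$ (note $M_{\delta_1}(0)=\frac{1}{1+\delta_1}\le 1$), then letting $\delta'\to 0_+$ by monotone convergence on the Fourier–Hermite side exactly as in the passage from \eqref{li2} to the end of the proof of Lemma~\ref{tr11}, closes the induction with $c$ absorbing $c',C',\eps$ and $e^{cT}$ the Grönwall factor.

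The main obstacle is the time-derivative commutator term $\delta(\sqrt{\mathcal H}+\langle D_x\rangle)^{\frac{2s}{2s+1}}F'(\delta t(\cdots))$: one must verify that it genuinely lands in the span of the hypoelliptic gain $\|\mathcal H^{s/2}\cdot\|^2+\eps\|\langle D_x\rangle^{\frac{s}{2s+1}}\cdot\|^2$ rather than being a full order-$\frac{2s}{2s+1}$ loss — this works because $(\sqrt{\mathcal H}+\langle D_x\rangle)^{\frac{2s}{2s+1}}\lesssim \mathcal H^{s/2}+\langle D_x\rangle^{\frac{s}{2s+1}}$ by \eqref{cl3} applied with $a=\sqrt{\mathcal H}$, $b=\langle D_x\rangle$ and exponent $\frac{2s}{2s+1}<1$, after which the factor $\delta\le\delta_0$ makes it absorbable. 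The secondary technical point, as always in this paper, is keeping every estimate uniform in the two regularization parameters $\delta'$ and $\delta_1$ so that the final monotone-convergence passage is legitimate; this is exactly why Lemma~\ref{proposition222} was stated with the $M_{\delta_2}$-conjugation built in.
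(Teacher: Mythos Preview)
Your overall strategy is correct and matches the paper's: conjugate by $M_{\delta_1}(\delta t)$, use the hypoelliptic multiplier $Q=1-\eps G$ from Lemma~\ref{KFP2}, regularize by $(1+\delta'\sqrt{\mathcal H}+\delta'\langle D_x\rangle)^{-1}$, feed the nonlinear term into Lemma~\ref{proposition222}, apply Gr\"onwall, and pass to the limit $\delta'\to 0_+$. Your handling of the time-derivative term $\partial_t M_{\delta_1}(\delta t)$ and your absorption argument via \eqref{cl3} are also correct and are precisely what the paper does (its Lemma~\ref{tt1}).

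There is, however, a genuine gap. You treat ``the linear part $v\partial_x+\mathcal K$'' as if, after conjugation by $M_{\delta_1}(\delta t)$, it is still $v\partial_x+\mathcal K$ acting on $v_{n+1}$. It is not: the multiplier $M_{\delta_1}(\delta t)$ depends on $\sqrt{\mathcal H}$, which involves the velocity variable, so it fails to commute with $v$. The conjugated transport term reads
\[
M_{\delta_1}(\delta t)\,v\,\bigl(M_{\delta_1}(\delta t)\bigr)^{-1}\partial_x
= v\partial_x + \bigl[M_{\delta_1}(\delta t)v\bigl(M_{\delta_1}(\delta t)\bigr)^{-1}-v\bigr]\partial_x,
\]
and the commutator on the right carries a full $\partial_x$, i.e.\ an apparent order-one loss in $\xi$ that your hypoelliptic gain $\|\langle D_x\rangle^{\frac{s}{2s+1}}\cdot\|^2$ cannot absorb as written. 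The paper resolves this with two dedicated lemmas (Lemmas~\ref{tt5} and~\ref{tt5.7}): computing the commutator explicitly along the Hermite basis and using the mean-value estimate \eqref{ff1}, one shows that $\bigl[M_{\delta_1}(\delta t)v\bigl(M_{\delta_1}(\delta t)\bigr)^{-1}-v\bigr]$ actually gains a factor $\delta t\,\langle D_x\rangle^{-\frac{1}{2s+1}}$ (plus a harmless $\mathcal H$-bounded piece from $[v,\sqrt{\mathcal H}]$), so that the full commutator term is bounded by $c\,\delta T e^{cT}\|\langle D_x\rangle^{\frac{s}{2s+1}}v_{n+1}\|_{(1,0)}^2 + c\|v_{n+1}\|_{(1,0)}^2$; the factor $\delta T e^{cT}$ is then what forces the smallness of $\delta_0$. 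Without this step your differential inequality does not close. A secondary omission is that $Q$ does not commute with $\mathcal H$, so one also needs the paper's Lemma~\ref{tt2} to pass from $\|\mathcal H^{s/2}Qv_{n+1}\|$ back to $\|\mathcal H^{s/2}v_{n+1}\|$ when estimating the cross terms; this is routine symbolic calculus but must be said.
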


Let $0 \leq \delta \leq 1$ and $0<\delta_1 \leq 1$. We define \begin{equation}\label{as3}
h_{n,\delta,\delta_1}=M_{\delta_1}(\delta t)\tilde{g}_n, \quad n \geq 0.
\end{equation}
The functions $h_{n,\delta,\delta_1}$ depend on the parameters $0 \leq \delta \leq 1$ and $0<\delta_1 \leq 1$. For simplicity, we omit this dependence in the notation and write $h_n$ for $h_{n,\delta,\delta_1}$.
We notice from (\ref{tr14}) that
$$h_0(t)=\big(1+\delta_1 \exp\big(\delta t(\sqrt{\mathcal{H}}+\langle D_x \rangle)^{\frac{2s}{2s+1}}\big)\big)^{-1}g_0, \quad 0\leq t \leq T,$$
satisfies
\begin{equation}\label{as1}
\|h_0\|_{L^{\infty}([0,T],H^{(1,0)}(\rr_{x,v}^2))} \leq  \|g_0\|_{(1,0)}.
\end{equation}
By using that
$$\tilde{g}_n=\big(M_{\delta_1}(\delta t)\big)^{-1}h_n=\big(\delta_1+ \exp\big(-\delta t(\sqrt{\mathcal{H}}+\langle D_x \rangle)^{\frac{2s}{2s+1}}\big)\big)h_n,$$
the equation
$$\partial_t\tilde{g}_{n+1}+v\partial_{x}\tilde{g}_{n+1}+\mathcal{K}\tilde{g}_{n+1}=\Gamma(\tilde{g}_{n},\tilde{g}_{n+1}),$$
reads as
\begin{multline*}
\big(M_{\delta_1}(\delta t)\big)^{-1}\partial_th_{n+1}-\delta (\sqrt{\mathcal{H}}+\langle D_x \rangle)^{\frac{2s}{2s+1}}\exp\big(-\delta t(\sqrt{\mathcal{H}}+\langle D_x \rangle)^{\frac{2s}{2s+1}}\big)h_{n+1}+v\partial_{x}
\big(M_{\delta_1}(\delta t)\big)^{-1}h_{n+1}\\
+\big(M_{\delta_1}(\delta t)\big)^{-1}\mathcal{K}h_{n+1}=\Gamma\big(\big(M_{\delta_1}(\delta t)\big)^{-1}h_n,\big(M_{\delta_1}(\delta t)\big)^{-1}h_{n+1}\big),
\end{multline*}
since according to (\ref{sal1}), the linearized Kac operator $\mathcal{K}=f(\mathcal{H})$ is a function of the harmonic oscillator acting only in the velocity variable, which therefore commutes with the exponential weight $\big(M_{\delta_1}(\delta t)\big)^{-1}$. It follows that
\begin{multline}\label{cl2}
\partial_th_{n+1}-\frac{\delta (\sqrt{\mathcal{H}}+\langle D_x \rangle)^{\frac{2s}{2s+1}}}{1+\delta_1 \exp\big(\delta t(\sqrt{\mathcal{H}}+\langle D_x \rangle)^{\frac{2s}{2s+1}}\big)}h_{n+1}+
M_{\delta_1}(\delta t)v\big(M_{\delta_1}(\delta t)\big)^{-1}\partial_{x}h_{n+1}\\  +\mathcal{K}h_{n+1}=
M_{\delta_1}(\delta t)\Gamma\big( \big(M_{\delta_1}(\delta t)\big)^{-1}h_n,\big(M_{\delta_1}(\delta t)\big)^{-1}h_{n+1}\big).
\end{multline}
By integrating with respect to the $\xi$-variable and coming back to the direct side, we deduce from Lemma~\ref{KFP2} and (\ref{m1fp}) that we can choose the positive parameter $0<\eps \leq \eps_0$ in order to ensure that the multiplier
\begin{equation}\label{qsym}
Q=Q(v,D_v,D_x)=1-\eps g^w(v,D_v,D_x),
\end{equation}
is a positive bounded isomorphism on $L^2(\rr_{x,v}^2)$ such that for all $u \in \mathscr{S}(\rr_{x,v}^2)$,
\begin{multline}\label{cp3}
\textrm{Re}((v\partial_x+a_0^w(v,D_v))u,Qu)_{L^2(\rr_{x,v}^2)} \geq  \\ c_{1}\|\mathcal{H}^{\frac{s}{2}}u\|_{L^2(\rr_{x,v}^2)}^2  +c_1\eps \|\langle D_x \rangle^{\frac{s}{2s+1}}u\|_{L^2(\rr_{x,v}^2)}^2 -c_{2}\|u\|_{L^2(\rr_{x,v}^2)}^2.
\end{multline}
Furthermore, we notice from Lemma~\ref{2.2fp}, (\ref{hm1}) and (\ref{hm1.2}) that the symbol $Q(\cdot,\xi)$ belongs to the symbol class $S(1,\Gamma_0)$ uniformly with respect to the parameter $\xi \in \rr$. The operator $Q$ is therefore commuting with any operator of the type $f(D_x)$.
It follows from (\ref{cp4}), (\ref{cp5}) and (\ref{cp3}) that there exist some positive constants $c_3,c_4>0$ such that for all $u \in \mathscr{S}(\rr_{x,v}^2)$,
\begin{equation}\label{cp3.1}
\textrm{Re}((v\partial_x+\mathcal{K})u,Qu)_{L^2(\rr_{x,v}^2)} \geq  c_3\|\mathcal{H}^{\frac{s}{2}}u\|_{L^2(\rr_{x,v}^2)}^2+c_3 \|\langle D_x \rangle^{\frac{s}{2s+1}}u\|_{L^2(\rr_{x,v}^2)}^2 -c_4\|u\|_{L^2(\rr_{x,v}^2)}^2.
\end{equation}
Applying the estimate (\ref{cp3.1}) to the function $\langle D_x \rangle u$, we obtain that  for all $u \in \mathscr{S}(\rr_{x,v}^2)$,
\begin{equation}\label{cp6}
\textrm{Re}((v\partial_x+\mathcal{K})u,\langle D_x \rangle^{2}Qu)_{L^2(\rr_{x,v}^2)} \geq  c_3\|\mathcal{H}^{\frac{s}{2}}u\|_{(1,0)}^2+c_3 \|\langle D_x \rangle^{\frac{s}{2s+1}}u\|_{(1,0)}^2 -c_4\|u\|_{(1,0)}^2,
\end{equation}
since the operator $\mathcal{K}$ only acts in the velocity variable.
We define
\begin{equation}\label{li2.3}
h_{n,\delta_2}=(1+\delta_2 \sqrt{\mathcal{H}}+\delta_2 \langle D_x \rangle)^{-1}h_n, \quad 0<\delta_2 \leq 1.
\end{equation}
By using the multiplier
$$\langle D_x \rangle^{2}(1+\delta_2 \sqrt{\mathcal{H}}+\delta_2 \langle D_x \rangle)^{-1}Q(1+\delta_2 \sqrt{\mathcal{H}}+\delta_2 \langle D_x \rangle)^{-1}h_{n+1},$$ in $L^2(\rr_{x,v}^2),$
we deduce from (\ref{cl2}) that
\begin{align*}
& \ \frac{1}{2}\frac{d}{dt}\|Q^{1/2}h_{n+1,\delta_2}\|_{(1,0)}^2+  \textrm{Re}(\mathcal{K}h_{n+1,\delta_2},\langle D_x \rangle^{2}Qh_{n+1,\delta_2})_{L^2(\rr_{x,v}^2)} \\ 
+ & \   \textrm{Re}\big((1+\delta_2 \sqrt{\mathcal{H}}+\delta_2 \langle D_x \rangle)^{-1}M_{\delta_1}(\delta t)v\big(M_{\delta_1}(\delta t)\big)^{-1}\partial_{x}h_{n+1},\langle D_x \rangle^{2}Qh_{n+1,\delta_2}\big)_{L^2(\rr_{x,v}^2)}\\ 
- & \   \textrm{Re}\Big(\frac{ \delta(\sqrt{\mathcal{H}}+\langle D_x \rangle)^{\frac{2s}{2s+1}}}{1+\delta_1 \exp\big(\delta t(\sqrt{\mathcal{H}}+\langle D_x \rangle)^{\frac{2s}{2s+1}}\big)}h_{n+1,\delta_2},\langle D_x \rangle^{2}Qh_{n+1,\delta_2}\Big)_{L^2(\rr_{x,v}^2)}\\ 
=& \ \textrm{Re}\big((1+\delta_2 \sqrt{\mathcal{H}}+\delta_2 \langle D_x \rangle)^{-1}M_{\delta_1}(\delta t)\Gamma\big(\big(M_{\delta_1}(\delta t)\big)^{-1}h_n,\big(M_{\delta_1}(\delta t)\big)^{-1}h_{n+1}\big),Qh_{n+1,\delta_2}\big)_{(1,0)},
\end{align*}
since $[\mathcal{K},(1+\delta_2 \sqrt{\mathcal{H}}+\delta_2 \langle D_x \rangle)^{-1}]=0$.
We obtain that
\begin{align}\label{cl10.3}
& \ \frac{1}{2}\frac{d}{dt}\|Q^{1/2}h_{n+1,\delta_2}\|_{(1,0)}^2+  \textrm{Re}\big((v\partial_x+\mathcal{K})h_{n+1,\delta_2},\langle D_x \rangle^{2}Qh_{n+1,\delta_2}\big)_{L^2(\rr_{x,v}^2)} \\ \notag
+ & \   \textrm{Re}\big(\big[A_{\delta_1,\delta_2}(\delta t)v\big(A_{\delta_1,\delta_2}(\delta t)\big)^{-1}-v\big]\partial_{x}h_{n+1,\delta_2},\langle D_x \rangle^{2}Qh_{n+1,\delta_2}\big)_{L^2(\rr_{x,v}^2)}\\ \notag
\leq & \ \delta\|(\sqrt{\mathcal{H}}+\langle D_x \rangle)^{\frac{s}{2s+1}}h_{n+1,\delta_2}\|_{(1,0)}\|(\sqrt{\mathcal{H}}+\langle D_x \rangle)^{\frac{s}{2s+1}}Qh_{n+1,\delta_2}\|_{(1,0)}\\ \notag
+ & \ \big|\big((1+\delta_2 \sqrt{\mathcal{H}}+\delta_2 \langle D_x \rangle)^{-1}M_{\delta_1}(\delta t)\Gamma\big(\big(M_{\delta_1}(\delta t)\big)^{-1}h_n,\big(M_{\delta_1}(\delta t)\big)^{-1}h_{n+1}\big),Qh_{n+1,\delta_2}\big)_{(1,0)}\big|,
\end{align}
with
\begin{equation}\label{ro1}
A_{\delta_1,\delta_2}(\delta t)=\frac{M_{\delta_1}(\delta t)}{1+\delta_2 \sqrt{\mathcal{H}}+\delta_2 \langle D_x \rangle }=\frac{e^{\delta t(\sqrt{\mathcal{H}}+\langle D_x \rangle)^{\frac{2s}{2s+1}}}}{(1+\delta_2 \sqrt{\mathcal{H}}+\delta_2 \langle D_x \rangle)(1+\delta_1 e^{\delta t(\sqrt{\mathcal{H}}+\langle D_x \rangle)^{\frac{2s}{2s+1}}})}.
\end{equation}
It follows from Lemma~\ref{proposition222} with $r=1$, (\ref{cp6}) and (\ref{cl10.3}) that for all $0 \leq \delta \leq 1$, $0<\delta_1 \leq 1$, $0<\delta_2 \leq 1$, $0 \leq t \leq T$,
\begin{align}\label{cl11b}
& \ \frac{1}{2}\frac{d}{dt}\|Q^{1/2}h_{n+1,\delta_2}\|_{(1,0)}^2+c_3\|\mathcal{H}^{\frac{s}{2}}h_{n+1,\delta_2}\|_{(1,0)}^2+c_3\|\langle D_x \rangle^{\frac{s}{2s+1}}h_{n+1,\delta_2}\|_{(1,0)}^2 \\ \notag
 \leq & \ C_1\|h_n\|_{(1,0)}\|\mathcal{H}^{\frac{s}{2}}h_{n+1}\|_{(1,0)}\|\mathcal{H}^{\frac{s}{2}}Qh_{n+1,\delta_2}\|_{(1,0)}+c_4\|h_{n+1,\delta_2}\|_{(1,0)}^2\\ \notag
 + & \ \delta\|(\sqrt{\mathcal{H}}+\langle D_x \rangle)^{\frac{s}{2s+1}}h_{n+1,\delta_2}\|_{(1,0)}\|(\sqrt{\mathcal{H}}+\langle D_x \rangle)^{\frac{s}{2s+1}}Qh_{n+1,\delta_2}\|_{(1,0)}\\ \notag
 + & \ \big|\big(\big[A_{\delta_1,\delta_2}(\delta t)v\big(A_{\delta_1,\delta_2}(\delta t)\big)^{-1}-v\big]\partial_{x}h_{n+1,\delta_2},\langle D_x \rangle^{2}Qh_{n+1,\delta_2}\big)_{L^2(\rr_{x,v}^2)}\big|.
\end{align}
We use the following instrumental lemmas:

\begin{lemma}\label{tt1}
There exists a positive constant $c_5>0$ such that for all $f \in \mathscr{S}(\rr_{x,v}^2)$,
$$\|(\sqrt{\mathcal{H}}+\langle D_x \rangle)^{\frac{s}{2s+1}}f\|_{(1,0)} \leq c_5\|\mathcal{H}^{\frac{s}{2}}f\|_{(1,0)}+
c_5\|\langle D_x \rangle^{\frac{s}{2s+1}}f\|_{(1,0)}.$$
\end{lemma}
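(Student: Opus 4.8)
The plan is to reduce Lemma~\ref{tt1} to an elementary pointwise comparison of Fourier--Hermite multipliers. First I would expand $f=\sum_{n\geq0}f_n(x)\psi_n(v)$ in the Hermite basis in the velocity variable, with $f_n(x)=(f(x,\cdot),\psi_n)_{L^2(\rr_v)}$, and take the partial Fourier transform in the position variable. The three operators appearing in the statement are then simultaneously diagonalized: the harmonic oscillator $\mathcal{H}$ acts on the $n$-th Hermite mode by the factor $n+\frac12$, so $\sqrt{\mathcal{H}}$ acts by $\sqrt{n+\frac12}$ and $\mathcal{H}^{s/2}$ by $(n+\frac12)^{s/2}$, while $\langle D_x\rangle$ acts on the Fourier side by $\langle\xi\rangle$; consequently $(\sqrt{\mathcal{H}}+\langle D_x\rangle)^{\frac{s}{2s+1}}$ acts on the $n$-th mode as the Fourier multiplier $(\sqrt{n+\frac12}+\langle\xi\rangle)^{\frac{s}{2s+1}}$ and $\langle D_x\rangle^{\frac{s}{2s+1}}$ as $\langle\xi\rangle^{\frac{s}{2s+1}}$. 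Using the identity \eqref{sake-1} (in the form that incorporates the extra weight $\langle\xi\rangle^2$ coming from the $H^{(1,0)}$-norm), each of the three squared norms becomes $\frac{1}{2\pi}\sum_{n\geq0}\int_{\rr}\langle\xi\rangle^2\,m(n,\xi)^2\,|\mathcal{F}_xf_n(\xi)|^2\,d\xi$ with $m$ equal respectively to $\big(\sqrt{n+\frac12}+\langle\xi\rangle\big)^{\frac{s}{2s+1}}$, $(n+\frac12)^{s/2}$ and $\langle\xi\rangle^{\frac{s}{2s+1}}$.

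After this reduction it suffices to prove the uniform pointwise bound $\big(\sqrt{n+\frac12}+\langle\xi\rangle\big)^{\frac{s}{2s+1}}\lesssim \big(n+\frac12\big)^{s/2}+\langle\xi\rangle^{\frac{s}{2s+1}}$ for all $n\geq0$ and $\xi\in\rr$. Setting $a=\sqrt{n+\frac12}\geq\frac{1}{\sqrt2}$ and $b=\langle\xi\rangle\geq1$, the subadditivity of $t\mapsto t^{\sigma}$ for $0<\sigma<1$, which is precisely \eqref{cl3} with $\sigma=\frac{s}{2s+1}$, gives $(a+b)^{\frac{s}{2s+1}}\leq a^{\frac{s}{2s+1}}+b^{\frac{s}{2s+1}}$. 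Since $0<s<1$ we have $0<\frac{s}{2s+1}\leq s$, and because $a$ is bounded below by $\frac{1}{\sqrt2}$ this yields $a^{\frac{s}{2s+1}}\lesssim a^{s}$, hence the claimed bound. To conclude I would square, use $(\alpha+\beta)^2\leq 2\alpha^2+2\beta^2$, sum over $n$ and integrate against $\langle\xi\rangle^2\,d\xi$, and finally take square roots, which produces the inequality of the lemma with some positive constant $c_5$.

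I do not anticipate any real difficulty here: the statement is a soft spectral estimate. The only points that require attention are the strict inequalities $0<\frac{s}{2s+1}<1$ and $\frac{s}{2s+1}\leq s$ (both guaranteed by $0<s<1$), needed to apply subadditivity and to compare the two powers of $a$, together with the elementary but essential fact that $\sqrt{n+\frac12}$ is bounded away from $0$, so that $a^{\frac{s}{2s+1}}\lesssim a^{s}$ holds uniformly rather than failing near $a=0$.
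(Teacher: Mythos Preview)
Your proposal is correct and follows essentially the same route as the paper: Hermite--Fourier decomposition to reduce to a pointwise multiplier bound, then the subadditivity inequality \eqref{cl3} together with the observation that $\sqrt{n+\tfrac12}$ is bounded below so that the smaller power $\tfrac{s}{2s+1}$ of it is controlled by the larger power $s$. The only cosmetic difference is that the paper applies subadditivity at the level of the squared multiplier (exponent $\tfrac{2s}{2s+1}$) and passes through the intermediate norm $\|\mathcal{H}^{\frac{s}{4s+2}}f\|_{(1,0)}$, whereas you apply it directly to the multiplier (exponent $\tfrac{s}{2s+1}$); both are equivalent.
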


\begin{proof}
Let $f \in \mathscr{S}(\rr_{x,v}^2)$. We decompose this function into the Hermite basis in the velocity variable
$$f(x,v)=\sum_{n=0}^{+\infty}f_n(x) \psi_n(v),$$
with
$$f_n(x)=(f(x,\cdot),\psi_n)_{L^2(\rr_v)}.$$
We deduce from (\ref{cl3}) that
\begin{align*}
& \ \|(\sqrt{\mathcal{H}}+\langle D_x \rangle)^{\frac{s}{2s+1}}f\|_{(1,0)}=\Big(\frac{1}{2\pi}\sum_{n=0}^{+\infty}\int_{\rr}\langle \xi \rangle^{2}\Big(\sqrt{n+\frac{1}{2}}+\langle \xi \rangle\Big)^{\frac{2s}{2s+1}}|\hat{f}_n(\xi)|^2d\xi\Big)^{\frac{1}{2}}\\
\leq & \ \Big(\sum_{n=0}^{+\infty}\int_{\rr}\langle \xi \rangle^{2}\Big[\Big(n+\frac{1}{2}\Big)^{\frac{s}{2s+1}}+\langle \xi \rangle^{\frac{2s}{2s+1}}\Big]|\hat{f}_n(\xi)|^2\frac{d\xi}{2\pi}\Big)^{\frac{1}{2}}=(\|\mathcal{H}^{\frac{s}{4s+2}}f\|_{(1,0)}^2+\|\langle D_x \rangle^{\frac{s}{2s+1}}f\|_{(1,0)}^2)^{\frac{1}{2}}\\
\lesssim & \ \|\mathcal{H}^{\frac{s}{4s+2}}f\|_{(1,0)}+\|\langle D_x \rangle^{\frac{s}{2s+1}}f\|_{(1,0)} \lesssim \|\mathcal{H}^{\frac{s}{2}}f\|_{(r,0)}+\|\langle D_x \rangle^{\frac{s}{2s+1}}f\|_{(1,0)}.
\end{align*}
This ends the proof of Lemma~\ref{tt1}.
\end{proof}

\begin{lemma}\label{tt2}
For all $0 \leq m\leq 1$, there exists a positive constant $\tilde{c}_m>0$ such that for all $f \in \mathscr{S}(\rr_{x,v}^2)$,
$$\|\mathcal{H}^{m}Qf\|_{(1,0)} \leq \tilde{c}_m\|\mathcal{H}^{m}f\|_{(1,0)}.$$
\end{lemma}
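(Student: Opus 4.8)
The plan is to prove the operator inequality $\|\mathcal{H}^m Q f\|_{(1,0)} \leq \tilde c_m \|\mathcal{H}^m f\|_{(1,0)}$ for $0 \leq m \leq 1$ by writing $\mathcal{H}^m Q = \mathcal{H}^m Q \mathcal{H}^{-m} \cdot \mathcal{H}^m$ and showing that $\mathcal{H}^m Q \mathcal{H}^{-m}$ is bounded on $H^{(1,0)}(\rr_{x,v}^2)$ uniformly with respect to the relevant parameters. Recall from \eqref{qsym} that $Q = 1 - \eps g^w(v,D_v,D_x)$, where by Lemma~\ref{2.2fp}, \eqref{hm1} and \eqref{hm1.2} the symbol $g$ belongs to $S(1,\Gamma_0)$ uniformly in $\xi \in \rr$, and moreover $Q$ commutes with $\langle D_x \rangle$. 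Hence it suffices to show that $\mathcal{H}^m g^w \mathcal{H}^{-m}$ is bounded on $L^2(\rr_{x,v}^2)$ for $0 \leq m \leq 1$, uniformly in $\xi$; the factor $\langle D_x \rangle$ in the $(1,0)$-norm simply commutes through.

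First I would reduce to the velocity variable. After a partial Fourier transform in $x$, the operator $g^w$ becomes, for each fixed $\xi$, the Weyl quantization in $(v,\eta)$ of the symbol $g(v,\eta,\xi) \in S(1,\Gamma_0)$ with uniform seminorm bounds. So the task is: for a symbol $b \in S(\langle(v,\eta)\rangle^0,\Gamma_0)$ (with seminorms bounded uniformly in a parameter), the operator $\mathcal{H}^m b^w \mathcal{H}^{-m}$ is bounded on $L^2(\rr_v)$, uniformly, for $0 \leq m \leq 1$. By the equivalence of norms \eqref{hm4}, we may replace $\mathcal{H}^m$ by $\textrm{Op}^w\big((1+\eta^2+\tfrac{v^2}{4})^m\big)$ up to bounded errors. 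Then I would invoke the symbolic calculus in the class $S(\langle(v,\eta)\rangle^m,\Gamma_0)$: the composition $\textrm{Op}^w((1+\eta^2+\tfrac{v^2}{4})^m)\, b^w\, \textrm{Op}^w((1+\eta^2+\tfrac{v^2}{4})^{-m})$ has a Weyl symbol in $S(\langle(v,\eta)\rangle^0,\Gamma_0) = S(1,\Gamma_0)$, because the two weight factors cancel at leading order and the metric $\Gamma_0$ is admissible with gain function $\Lambda_{\Gamma_0} = \langle(v,\eta)\rangle^2$, so every term in the asymptotic expansion improves. The Calderón--Vaillancourt theorem then gives the $L^2$-boundedness, with norm controlled by finitely many seminorms of $b$, hence uniformly.

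The main obstacle is handling the fact that $\mathcal{H}^m$ is not a pseudodifferential operator with a symbol in a nice Hörmander class in the classical sense for non-integer $m$ — this is precisely why \eqref{hm4} is quoted. I would address this carefully: first replace $\mathcal{H}^m$ and $\mathcal{H}^{-m}$ by $\textrm{Op}^w((1+\eta^2+\tfrac{v^2}{4})^m)$ and its parametrix, which lies in $\textrm{Op}^w(S(\langle(v,\eta)\rangle^{-m},\Gamma_0))$ and equals $\textrm{Op}^w((1+\eta^2+\tfrac{v^2}{4})^{-m})$ modulo $\textrm{Op}^w(S(\langle(v,\eta)\rangle^{-m-2},\Gamma_0))$; the correction terms, when sandwiched with the bounded operator $b^w$ and $\mathcal{H}^m$, remain bounded on $L^2$ because $0 \leq m \leq 1$ keeps all exponents in the correct range. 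Then the symbolic calculus conclusion applies, and reassembling yields $\|\mathcal{H}^m Q f\|_{(1,0)} \leq \tilde c_m \|\mathcal{H}^m f\|_{(1,0)}$ as desired, with $\tilde c_m$ independent of the parameter $\xi$ (equivalently, $\mathcal{H}^m Q \mathcal{H}^{-m}$ extends to a bounded operator on $H^{(1,0)}$).
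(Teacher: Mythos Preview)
Your proposal is correct and uses essentially the same ingredients as the paper --- the equivalence \eqref{hm4}, the fact that $Q$ commutes with $\langle D_x\rangle$, and symbolic calculus in the class $S(\langle(v,\eta)\rangle^m,\Gamma_0)$ applied to $g\in S(1,\Gamma_0)$. The only difference is organizational: you conjugate, bounding $A_m\,g^w\,A_{-m}$ (with $A_m=\textrm{Op}^w((1+\eta^2+v^2/4)^m)$) via the composition formula and then patching up with a parametrix for $A_{-m}$; the paper instead writes $A_m Q = Q A_m + [A_m,Q]$ and observes that the commutator lies in $\textrm{Op}^w(S(\langle(v,\eta)\rangle^{2m-2},\Gamma_0))\subset \textrm{Op}^w(S(1,\Gamma_0))$ since $0\le m\le 1$, which is bounded on $L^2$. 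The commutator route is a touch cleaner because it sidesteps the parametrix bookkeeping you flagged as the ``main obstacle,'' but both arguments are short and rest on the same symbolic calculus.
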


\begin{proof}
We notice from (\ref{hm4}) that
\begin{multline}\label{ip1}
\|\mathcal{H}^mQf\|_{(1,0)} \lesssim \Big\|\textrm{Op}^w\Big(\Big(1+\eta^2+\frac{v^2}{4}\Big)^{m}\Big)Q \langle D_x \rangle f\Big\|_{L^2(\rr_{x,v}^2)} \\ \leq
\Big\|Q \textrm{Op}^w\Big(\Big(1+\eta^2+\frac{v^2}{4}\Big)^{m}\Big)\langle D_x \rangle f\Big\|_{L^2(\rr_{x,v}^2)}+
\Big\|\Big[\textrm{Op}^w\Big(\Big(1+\eta^2+\frac{v^2}{4}\Big)^{m}\Big),Q\Big] \langle D_x \rangle f\Big\|_{L^2(\rr_{x,v}^2)},
\end{multline}
since $[Q,\langle D_x \rangle]=0$.
By using again (\ref{hm4}), it follows from the fact that the multiplier $Q$ is a bounded operator on $L^2(\rr_{x,v}^2)$ that
\begin{multline}\label{ip2}
\Big\|Q \textrm{Op}^w\Big(\Big(1+\eta^2+\frac{v^2}{4}\Big)^{m}\Big)\langle D_x \rangle f\Big\|_{L^2(\rr_{x,v}^2)}  \lesssim
\Big\|\textrm{Op}^w\Big(\Big(1+\eta^2+\frac{v^2}{4}\Big)^{m}\Big)\langle D_x \rangle f\Big\|_{L^2(\rr_{x,v}^2)}\\ \lesssim \|\mathcal{H}^{m}\langle D_x \rangle f\|_{L^2(\rr_{x,v}^2)}= \|\mathcal{H}^{m}f\|_{(1,0)}.
\end{multline}
On the other hand, we notice from (\ref{hm1}), (\ref{hm1.2}), (\ref{qsym}) and Lemma~\ref{2.2fp} that
$$\Big[\textrm{Op}^w\Big(\Big(1+\eta^2+\frac{v^2}{4}\Big)^{m}\Big),Q\Big] \in \textrm{Op}^w\big(S(\langle (v,\eta) \rangle^{2m-2},\Gamma_0)\big) \subset \textrm{Op}^w\big(S(1,\Gamma_0)\big),$$
uniformly with respect to the parameter $\xi \in \rr$, because $0 \leq m \leq 1$. This implies that
\begin{equation}\label{ip3}
\Big\|\Big[\textrm{Op}^w\Big(\Big(1+\eta^2+\frac{v^2}{4}\Big)^{m}\Big),Q\Big] \langle D_x \rangle f\Big\|_{L^2(\rr_{x,v}^2)} \lesssim \|\langle D_x \rangle f\|_{L^2(\rr_{x,v}^2)}=\|f\|_{(1,0)}.
\end{equation}
We deduce from (\ref{ip1}), (\ref{ip2}) and (\ref{ip3}) that
$$\|\mathcal{H}^mQf\|_{(1,0)} \lesssim \|\mathcal{H}^mf\|_{(1,0)}.$$
This ends the proof of Lemma~\ref{tt2}.
\end{proof}

We deduce from Lemmas~\ref{tt1} and~\ref{tt2} that
\begin{align*}
& \ \|(\sqrt{\mathcal{H}}+\langle D_x \rangle)^{\frac{s}{2s+1}}f\|_{(1,0)}\|(\sqrt{\mathcal{H}}+\langle D_x \rangle)^{\frac{s}{2s+1}}Qf\|_{(1,0)} \\ 
\lesssim & \ (\|\mathcal{H}^{\frac{s}{2}}f\|_{(1,0)}+\|\langle D_x \rangle^{\frac{s}{2s+1}}f\|_{(1,0)})(\|\mathcal{H}^{\frac{s}{2}}Qf\|_{(1,0)}+\|\langle D_x \rangle^{\frac{s}{2s+1}}Qf\|_{(1,0)}) \\ 
\lesssim & \ \|\mathcal{H}^{\frac{s}{2}}f\|_{(1,0)}^2+\|\langle D_x \rangle^{\frac{s}{2s+1}}f\|_{(1,0)}^2,
\end{align*}
since the operator $Q$ is bounded on $L^2(\rr_{x,v}^2)$ and commutes with the operator $\langle D_x \rangle^{\frac{s}{2s+1}+1}$.
It follows from (\ref{cl11b}) and Lemma~\ref{tt2} that there exist some positive constants $0<\delta_0 \leq 1$, $c_6, c_7>0$ such that for all $0\leq \delta \leq \delta_0$, $0<\delta_1\leq 1$, $0<\delta_2\leq 1$, $0 \leq t \leq T$,
\begin{align}\label{tt4}
& \ \frac{1}{2}\frac{d}{dt}\|Q^{1/2}h_{n+1,\delta_2}\|_{(1,0)}^2+c_6\|\mathcal{H}^{\frac{s}{2}}h_{n+1,\delta_2}\|_{(1,0)}^2+c_6\|\langle D_x \rangle^{\frac{s}{2s+1}}h_{n+1,\delta_2}\|_{(1,0)}^2  \\ \notag
\leq   & \ c_7\|h_n\|_{(1,0)}\|\mathcal{H}^{\frac{s}{2}}h_{n+1}\|_{(1,0)}\|\mathcal{H}^{\frac{s}{2}}h_{n+1,\delta_2}\|_{(1,0)}  +c_7\|h_{n+1,\delta_2}\|_{(1,0)}^2\\ \notag
& \quad +   \big|\big(\big[A_{\delta_1,\delta_2}(\delta t)v\big(A_{\delta_1,\delta_2}(\delta t)\big)^{-1}-v\big]\partial_{x}h_{n+1,\delta_2},\langle D_x \rangle^{2}Qh_{n+1,\delta_2}\big)_{L^2(\rr_{x,v}^2)}\big|.
\end{align}
We use the two following instrumental lemmas:

\begin{lemma}\label{tt5}
There exists a positive constant $\tilde{c}_{1}>0$ such that for all $f \in \mathscr{S}(\rr_{x,v}^2)$, $0 \leq \delta \leq 1$, $0 < \delta_1 \leq 1$, $t \geq 0$,
$$\big\|\langle D_x \rangle^{\frac{s+1}{2s+1}}\big(M_{\delta_1}(\delta t)v\big(M_{\delta_1}(\delta t)\big)^{-1}-v\big)f\big\|_{(1,0)} \leq \tilde{c}_{1}\delta  t e^{\tilde{c}_{1} t}\|\langle D_x\rangle^{\frac{s}{2s+1}}f\|_{(1,0)}.$$

\end{lemma}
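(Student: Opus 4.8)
The plan is to diagonalize the operator $M_{\delta_1}(\delta t)v\big(M_{\delta_1}(\delta t)\big)^{-1}-v$ in the Hermite basis in the velocity variable and thereby reduce the bound to a pointwise estimate on a one‑parameter family of Fourier multipliers in the position variable. Writing $f(x,v)=\sum_{n\geq0}f_n(x)\psi_n(v)$ and recalling from (\ref{ge1})--(\ref{ge2}) that $v=A_++A_-$ with $A_+\psi_n=\sqrt{n+1}\,\psi_{n+1}$ and $A_-\psi_n=\sqrt{n}\,\psi_{n-1}$, while $M_{\delta_1}(\delta t)=\sum_{n\geq0}\mathscr{M}_{\delta_1,n}(\delta t)\,\mathbb{P}_n$ acts on the $n$‑th Hermite component as the Fourier multiplier $\mathscr{M}_{\delta_1,n}(\delta t)$ (which commutes with every operator acting only in $x$), a direct computation would give
\begin{equation*}
\big(M_{\delta_1}(\delta t)v\big(M_{\delta_1}(\delta t)\big)^{-1}-v\big)f=\sum_{m\geq0}\Big[\sqrt{m}\,c_m^-(\delta t,D_x)f_{m-1}+\sqrt{m+1}\,c_m^+(\delta t,D_x)f_{m+1}\Big]\psi_m,
\end{equation*}
where $c_m^{\pm}(\delta t,\xi)=\mathscr{M}_{\delta_1,m}(\delta t)(\xi)\,\big(\mathscr{M}_{\delta_1,m\pm1}(\delta t)(\xi)\big)^{-1}-1$. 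Since the $\psi_m$ are orthonormal and $\|\langle D_x\rangle^{\sigma}g\|_{(1,0)}^2=\sum_m\|\langle D_x\rangle^{\sigma+1}g_m\|_{L^2(\rr_x)}^2$, and since $\frac{s+1}{2s+1}-\frac{1}{2s+1}=\frac{s}{2s+1}$, the lemma will follow once one proves the uniform multiplier bound
\begin{equation*}
\sqrt{m}\,|c_m^-(\delta t,\xi)|+\sqrt{m+1}\,|c_m^+(\delta t,\xi)|\leq C\,\delta t\,e^{Ct}\,\langle\xi\rangle^{-\frac{1}{2s+1}},
\end{equation*}
uniformly in $m\geq0$, $\xi\in\rr$, $0\leq\delta\leq1$ and $0<\delta_1\leq1$ (the $m=0$ term of $c_m^-$ being harmless thanks to its $\sqrt m$ prefactor).

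To establish this multiplier bound I would set $F(x)=e^x/(1+\delta_1 e^x)$ and $\mu_m(\xi)=\big(\sqrt{m+\frac{1}{2}}+\langle\xi\rangle\big)^{\frac{2s}{2s+1}}$, so that $\mathscr{M}_{\delta_1,m}(\delta t)(\xi)=F\big(\delta t\,\mu_m(\xi)\big)$, and first record the elementary inequalities $1\leq F(a+b)/F(a)\leq e^{b}$ for $a\in\rr$, $b\geq0$ (in the same spirit as (\ref{lk2})). Using $\mu_{m-1}\leq\mu_m\leq\mu_{m+1}$, these give $0\leq c_m^-(\delta t,\xi)\leq e^{\delta t(\mu_m(\xi)-\mu_{m-1}(\xi))}-1$ and $|c_m^+(\delta t,\xi)|\leq 1-e^{-\delta t(\mu_{m+1}(\xi)-\mu_m(\xi))}\leq\delta t(\mu_{m+1}(\xi)-\mu_m(\xi))$. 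The remaining point is the gain coming from the spacing of the eigenvalues of $\sqrt{\mathcal{H}}$: by the mean value theorem applied to $x\mapsto x^{\frac{2s}{2s+1}}$, together with $\sqrt{m+\frac{1}{2}}-\sqrt{m-\frac{1}{2}}=\big(\sqrt{m+\frac{1}{2}}+\sqrt{m-\frac{1}{2}}\big)^{-1}$ and $0<\frac{2s}{2s+1}<1$, one gets for $m\geq1$
\begin{equation*}
\sqrt{m}\,\big(\mu_m(\xi)-\mu_{m-1}(\xi)\big)\lesssim\frac{\sqrt{m}}{\sqrt{m+\frac{1}{2}}+\sqrt{m-\frac{1}{2}}}\big(\sqrt{m-\frac{1}{2}}+\langle\xi\rangle\big)^{-\frac{1}{2s+1}}\lesssim\langle\xi\rangle^{-\frac{1}{2s+1}},
\end{equation*}
and likewise $\sqrt{m+1}\,\big(\mu_{m+1}(\xi)-\mu_m(\xi)\big)\lesssim\langle\xi\rangle^{-\frac{1}{2s+1}}$. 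In particular $\mu_m(\xi)-\mu_{m-1}(\xi)\lesssim1$, so $\delta t(\mu_m-\mu_{m-1})\leq Ct$ and $e^{\delta t(\mu_m-\mu_{m-1})}-1\leq\delta t(\mu_m-\mu_{m-1})\,e^{Ct}$; inserting these bounds yields the desired multiplier estimate.

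Finally I would square, sum over $m$, use $\|a+b\|_{L^2}^2\leq2\|a\|_{L^2}^2+2\|b\|_{L^2}^2$, and reindex ($m\mapsto m\mp1$) so that the right‑hand side collapses to $\sum_n\|\langle D_x\rangle^{\frac{s}{2s+1}+1}f_n\|_{L^2(\rr_x)}^2=\|\langle D_x\rangle^{\frac{s}{2s+1}}f\|_{(1,0)}^2$, which gives the claim with a suitable constant $\tilde c_1$. The algebraic Hermite expansion and the $L^2$ bookkeeping are routine; the only delicate point, and the one I expect to require care, is the exponent accounting in the multiplier estimate: the factor $\langle\xi\rangle^{-\frac{1}{2s+1}}$ gained from the eigenvalue spacing $\sqrt{m+\frac{1}{2}}-\sqrt{m-\frac{1}{2}}$ combined with the concavity of $x\mapsto x^{\frac{2s}{2s+1}}$ is exactly — with no margin to spare — what is needed to trade $\langle D_x\rangle^{\frac{s+1}{2s+1}}$ for $\langle D_x\rangle^{\frac{s}{2s+1}}$, which is precisely why the weight $\langle D_x\rangle^{\frac{s+1}{2s+1}}$ (and not a larger power) appears on the left‑hand side of the statement.
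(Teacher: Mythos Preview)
Your proposal is correct and follows essentially the same approach as the paper: Hermite expansion in $v$, the identity $v=A_++A_-$ to reduce the commutator to Fourier multipliers indexed by the Hermite level, and the mean-value/concavity estimate for $x\mapsto(\sqrt{x}+\langle\xi\rangle)^{\frac{2s}{2s+1}}$ to extract the crucial $\langle\xi\rangle^{-\frac{1}{2s+1}}$ gain (this is exactly the paper's inequalities (\ref{ff1}), (\ref{qs2}), (\ref{qs3})). Your abstraction via $F(x)=e^x/(1+\delta_1e^x)$ and the bound $1\leq F(a+b)/F(a)\leq e^b$ is a clean way to absorb the $\delta_1$-dependence; the paper does the equivalent thing by writing $c_m^\pm$ explicitly as $(e^{\delta t(\mu_m-\mu_{m\mp1})}-1)/(1+\delta_1e^{\delta t\mu_m})$ and dropping the denominator.
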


\begin{proof}
Let $f \in \mathscr{S}(\rr_{x,v}^2)$. We decompose this function into the Hermite basis in the velocity variable
$$f(x,v)=\sum_{n=0}^{+\infty}f_n(x) \psi_n(v), \quad f_n(x)=(f(x,\cdot),\psi_n)_{L^2(\rr_v)}.$$
By using the identities (\ref{ge1}) satisfied by the creation and annihilation operators
$$A_+\psi_n=\Big(\frac{v}{2}-\partial_v\Big)\psi_n=\sqrt{n+1}\psi_{n+1}, \quad A_-\psi_n=\Big(\frac{v}{2}+\partial_v\Big)\psi_n=\sqrt{n}\psi_{n-1}, \quad v=A_++A_-,$$
we notice that
\begin{align}\label{lk-1}
 & \ \left(\frac{\exp\big(\delta t(\sqrt{\mathcal{H}}+\langle \xi \rangle)^{\frac{2s}{2s+1}}\big)}{1+\delta_1 \exp\big(\delta t(\sqrt{\mathcal{H}}+\langle \xi \rangle)^{\frac{2s}{2s+1}}\big)}v\frac{1+\delta_1\exp\big(\delta t(\sqrt{\mathcal{H}}+\langle \xi \rangle)^{\frac{2s}{2s+1}}\big)}{\exp\big(\delta t(\sqrt{\mathcal{H}}+\langle \xi \rangle)^{\frac{2s}{2s+1}}\big)}-v\right)\mathcal{F}_xf \\ \notag
 \notag
=  & \ \sum_{n=0}^{+\infty}\hat{f}_n(\xi)\sqrt{n+1}
\frac{\exp\Big(\delta t\Big(\sqrt{n+\frac{3}{2}}+\langle \xi \rangle\Big)^{\frac{2s}{2s+1}}-\delta t\Big(\sqrt{n+\frac{1}{2}}+\langle \xi \rangle\Big)^{\frac{2s}{2s+1}}\Big)-1}{\Big(1+\delta_1\exp\Big(\delta t(\sqrt{n+\frac{3}{2}}+\langle \xi \rangle)^{\frac{2s}{2s+1}}\Big)\Big)}\psi_{n+1}\\  \notag
+ & \  \sum_{n=0}^{+\infty}\hat{f}_n(\xi)\sqrt{n}
\frac{\exp\Big(\delta t\Big(\sqrt{n-\frac{1}{2}}+\langle \xi \rangle\Big)^{\frac{2s}{2s+1}}-\delta t\Big(\sqrt{n+\frac{1}{2}}+\langle \xi \rangle\Big)^{\frac{2s}{2s+1}}\Big)-1}{\Big(1+\delta_1\exp\Big(\delta t\Big(\sqrt{n-\frac{1}{2}}+\langle \xi \rangle\Big)^{\frac{2s}{2s+1}}\Big)\Big)}\psi_{n-1},
\end{align}
where $\mathcal{F}_x$ stands for the partial Fourier transform with respect to the position variable.
We notice that for all $x>0$,
\begin{multline}\label{ff1}
0 \leq (\sqrt{x+1}+\langle \xi \rangle)^{\frac{2s}{2s+1}}-(\sqrt{x}+\langle \xi \rangle)^{\frac{2s}{2s+1}}=\frac{s}{2s+1}\int_{x}^{x+1}\frac{dy}{\sqrt{y}(\sqrt{y}+\langle \xi \rangle)^{\frac{1}{2s+1}}} \\ \leq \frac{s}{(2s+1)\sqrt{x}(\sqrt{x}+\langle \xi \rangle)^{\frac{1}{2s+1}}}.
\end{multline}
It follows from (\ref{ff1}) for all $n \geq 0$, $t \geq 0$, $0 \leq \delta \leq 1$,
\begin{multline*}
0 \leq  \exp\Big(\delta t\Big(\sqrt{n+\frac{3}{2}}+\langle \xi \rangle\Big)^{\frac{2s}{2s+1}}-\delta t\Big(\sqrt{n+
\frac{1}{2}}+\langle \xi \rangle\Big)^{\frac{2s}{2s+1}}\Big)-1=  \int_0^{A_1}e^ydy \leq  \int_0^{A_2}e^ydy,
\end{multline*}
with
$$A_1=\delta t\Big[\Big(\sqrt{n+\frac{3}{2}}+\langle \xi \rangle\Big)^{\frac{2s}{2s+1}}-\Big(\sqrt{n+
\frac{1}{2}}+\langle \xi \rangle\Big)^{\frac{2s}{2s+1}}\Big],$$
$$A_2=\frac{s \delta t}{(2s+1)\sqrt{n+\frac{1}{2}}\Big(\sqrt{n+\frac{1}{2}}+\langle \xi \rangle\Big)^{\frac{1}{2s+1}}}.$$
This implies that there exists a positive constant $c_8>0$ such that for all $n \geq 0$, $t \geq 0$, $0 \leq \delta \leq 1$,
\begin{multline}\label{qs2}
0 \leq  \exp\Big( \delta t\Big(\sqrt{n+\frac{3}{2}}+\langle \xi \rangle\Big)^{\frac{2s}{2s+1}}-\delta t\Big(\sqrt{n+
\frac{1}{2}}+\langle \xi \rangle\Big)^{\frac{2s}{2s+1}}\Big)-1\\
\leq  \frac{s \delta t}{(2s+1)\sqrt{n+\frac{1}{2}}\langle \xi \rangle^{\frac{1}{2s+1}}}\exp\Big({\frac{2^{\frac{1}{2}}s t}{2s+1}}\Big)
\leq c_8 \delta t e^{c_8t}\langle n \rangle^{-\frac{1}{2}}\langle \xi \rangle^{-\frac{1}{2s+1}}.
\end{multline}
On the other hand, we have for all $n \geq 1$, $t \geq 0$, $0 \leq \delta \leq 1$,
$$0 \leq  1-\exp\Big(-\delta t\Big[\Big(\sqrt{n+\frac{1}{2}}+\langle \xi \rangle\Big)^{\frac{2s}{2s+1}}-\Big(\sqrt{n-
\frac{1}{2}}+\langle \xi \rangle\Big)^{\frac{2s}{2s+1}}\Big]\Big)=  \int_{-A_3}^{0}e^ydy,$$
with
$$A_3= \delta t\Big[\Big(\sqrt{n+\frac{1}{2}}+\langle \xi \rangle\Big)^{\frac{2s}{2s+1}}-\Big(\sqrt{n-
\frac{1}{2}}+\langle \xi \rangle\Big)^{\frac{2s}{2s+1}}\Big].$$
It follows from (\ref{ff1}) that
\begin{multline*}
0 \leq  1-\exp\Big(- \delta t\Big[\Big(\sqrt{n+\frac{1}{2}}+\langle \xi \rangle\Big)^{\frac{2s}{2s+1}}-\Big(\sqrt{n-
\frac{1}{2}}+\langle \xi \rangle\Big)^{\frac{2s}{2s+1}}\Big]\Big)\\
\leq  \delta t\Big[\Big(\sqrt{n+\frac{1}{2}}+\langle \xi \rangle\Big)^{\frac{2s}{2s+1}}-\Big(\sqrt{n-
\frac{1}{2}}+\langle \xi \rangle\Big)^{\frac{2s}{2s+1}}\Big]
\leq  \frac{s \delta t}{(2s+1)\sqrt{n-\frac{1}{2}}\Big(\sqrt{n-\frac{1}{2}}+\langle \xi \rangle\Big)^{\frac{1}{2s+1}}}.
\end{multline*}
This implies that there exists a positive constant $c_9>0$ such that for all $n \geq 1$, $t \geq 0$, $0 \leq \delta \leq 1$,
\begin{equation}\label{qs3}
0 \leq  1-\exp\Big(- \delta t\Big[\Big(\sqrt{n+\frac{1}{2}}+\langle \xi \rangle\Big)^{\frac{2s}{2s+1}}-\Big(\sqrt{n-
\frac{1}{2}}+\langle \xi \rangle\Big)^{\frac{2s}{2s+1}}\Big]\Big)\\
\leq  c_9 \delta  t \langle n \rangle^{-\frac{1}{2}}\langle \xi \rangle^{-\frac{1}{2s+1}}.
\end{equation}
By noticing that
\begin{multline*}
\sqrt{2\pi}\big\|\langle D_x \rangle^{\frac{s+1}{2s+1}} \big(M_{\delta_1}(\delta t)v\big(M_{\delta_1}(\delta t)\big)^{-1}-v\big)f\big\|_{(1,0)}=\\
\left\|\langle \xi \rangle^{\frac{s+1}{2s+1}+1}\left(\frac{\exp\big(\delta t(\sqrt{\mathcal{H}}+\langle \xi \rangle)^{\frac{2s}{2s+1}}\big)}{1+\delta_1 \exp\big(\delta t(\sqrt{\mathcal{H}}+\langle \xi \rangle)^{\frac{2s}{2s+1}}\big)}v
\frac{1+\delta_1 \exp\big(\delta t(\sqrt{\mathcal{H}}+\langle \xi \rangle)^{\frac{2s}{2s+1}}\big)}{\exp\big(\delta t(\sqrt{\mathcal{H}}+\langle \xi \rangle)^{\frac{2s}{2s+1}}\big)}-v\right)\mathcal{F}_xf\right\|_{L^2(\rr_{\xi,v}^2)},
\end{multline*}
we deduce from (\ref{lk-1}), (\ref{qs2}) and (\ref{qs3}) that for all $0 \leq \delta \leq 1$, $0<\delta_1 \leq 1$, $t \geq 0$,
\begin{multline*}
\sqrt{2\pi}\big\|\langle D_x \rangle^{\frac{s+1}{2s+1}}\big(M_{\delta_1}(\delta t)v\big(M_{\delta_1}(\delta t)\big)^{-1}-v\big)f\big\|_{(1,0)}\\
\leq c_8 \delta t e^{c_8t}\Big(\sum_{n=0}^{+\infty}\|f_n\|_{H^{\frac{s}{2s+1}+1}(\rr_x)}^2\frac{n+1}{\langle n \rangle}\Big)^{\frac{1}{2}}+
c_9\delta  t \Big(\sum_{n=0}^{+\infty}\|f_n\|_{H^{\frac{s}{2s+1}+1}(\rr_x)}^2\frac{n}{\langle n \rangle}\Big)^{\frac{1}{2}}.
\end{multline*}
It follows that there exists a positive constant $c_{10}>0$ such that for all $0 \leq \delta \leq 1$, $0<\delta_1 \leq 1$, $t \geq 0$,
\begin{multline*}
\big\|\langle D_x \rangle^{\frac{s+1}{2s+1}}\big(M_{\delta_1}(\delta t)v\big(M_{\delta_1}(\delta t)\big)^{-1}-v\big)f\big\|_{(1,0)}
\leq c_{10} \delta t e^{c_{10}t}\Big(\sum_{n=0}^{+\infty}\|f_n\|_{H^{\frac{s}{2s+1}+1}(\rr_x)}^2\Big)^{\frac{1}{2}}\\
\leq c_{10}\delta t e^{c_{10}t}\|\langle D_x\rangle^{\frac{s}{2s+1}}f\|_{(1,0)}.
\end{multline*}
\end{proof}

\begin{lemma}\label{tt5.7}
There exists a positive constant $\tilde{c}_{2}>0$ such that for all $f \in \mathscr{S}(\rr_{x,v}^2)$, $0 \leq \delta \leq 1$, $0 < \delta_1 \leq 1$, $0 \leq t \leq T$,
$$\big\|M_{\delta_1}(\delta t)[v,\sqrt{\mathcal{H}}]\big(M_{\delta_1}(\delta t)\big)^{-1}f\big\|_{(1,0)} \leq \tilde{c}_{2}(\delta Te^{\tilde{c}_2T}+1)\|f\|_{(1,0)}.$$

\end{lemma}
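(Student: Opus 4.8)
The plan is to diagonalize everything in the Hermite basis and to reduce the claimed bound to a uniform control of ratios of the scalar Fourier multipliers of $M_{\delta_1}(\delta t)$, proceeding in close analogy with the proof of Lemma~\ref{tt5}.

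First I would decompose $f=\sum_{n\geq 0}f_n(x)\psi_n(v)$ with $f_n(x)=(f(x,\cdot),\psi_n)_{L^2(\rr_v)}$, and compute the commutator explicitly. Using the creation and annihilation identities (\ref{ge1}), $v=A_++A_-$, together with $\sqrt{\mathcal{H}}\psi_n=\sqrt{n+\frac{1}{2}}\,\psi_n$, one gets
$$[v,\sqrt{\mathcal{H}}]\psi_n=\sqrt{n+1}\Big(\sqrt{n+\tfrac{1}{2}}-\sqrt{n+\tfrac{3}{2}}\Big)\psi_{n+1}+\sqrt{n}\Big(\sqrt{n+\tfrac{1}{2}}-\sqrt{n-\tfrac{1}{2}}\Big)\psi_{n-1},$$
so that $[v,\sqrt{\mathcal{H}}]$ splits into a raising part and a lowering part. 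Since $M_{\delta_1}(\delta t)$ is a function of $\sqrt{\mathcal{H}}+\langle D_x\rangle$, it acts on the $n$-th Hermite component as the Fourier multiplier $\mathscr{M}_{\delta_1,n}(\delta t)(\xi)=F\big(\delta t(\sqrt{n+\frac{1}{2}}+\langle\xi\rangle)^{\frac{2s}{2s+1}}\big)$, with $F(x)=e^x/(1+\delta_1 e^x)$ the increasing function already used in the proof of Lemma~\ref{L1}, which satisfies $F(x+y)\leq e^y F(x)$ for $x,y\geq 0$. Hence, exactly as in (\ref{lk-1}), on the partial Fourier side with respect to the position variable the operator $M_{\delta_1}(\delta t)[v,\sqrt{\mathcal{H}}]\big(M_{\delta_1}(\delta t)\big)^{-1}$ acts on $\sum_n\hat{f}_n(\xi)\psi_n$ by multiplying the $\psi_{n+1}$-coefficient issued from index $n$ by $\sqrt{n+1}\big(\sqrt{n+\frac{1}{2}}-\sqrt{n+\frac{3}{2}}\big)\,\mathscr{M}_{\delta_1,n+1}(\delta t)(\xi)/\mathscr{M}_{\delta_1,n}(\delta t)(\xi)$, and the $\psi_{n-1}$-coefficient issued from index $n$ by $\sqrt{n}\big(\sqrt{n+\frac{1}{2}}-\sqrt{n-\frac{1}{2}}\big)\,\mathscr{M}_{\delta_1,n-1}(\delta t)(\xi)/\mathscr{M}_{\delta_1,n}(\delta t)(\xi)$.

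Next I would record the two elementary inputs. The algebraic weights are uniformly bounded: $(n+1)\big(\sqrt{n+\frac{1}{2}}-\sqrt{n+\frac{3}{2}}\big)^2=(n+1)/\big(\sqrt{n+\frac{1}{2}}+\sqrt{n+\frac{3}{2}}\big)^2\leq 1$ and likewise $n/\big(\sqrt{n+\frac{1}{2}}+\sqrt{n-\frac{1}{2}}\big)^2\leq 1$. The multiplier ratios are controlled through the monotonicity of $F$: for the raising part the relevant exponent difference $\delta t\big[(\sqrt{n+\frac{3}{2}}+\langle\xi\rangle)^{\frac{2s}{2s+1}}-(\sqrt{n+\frac{1}{2}}+\langle\xi\rangle)^{\frac{2s}{2s+1}}\big]$ is, by (\ref{ff1}) applied with $x=n+\frac{1}{2}$ and using $\langle\xi\rangle\geq 1$, at most $\frac{s\sqrt{2}}{2s+1}\delta T$ for $0\leq t\leq T$, $0\leq\delta\leq 1$, whence $\mathscr{M}_{\delta_1,n+1}(\delta t)(\xi)/\mathscr{M}_{\delta_1,n}(\delta t)(\xi)\leq \exp\big(\frac{s\sqrt{2}}{2s+1}\delta T\big)\leq 1+\frac{s\sqrt{2}}{2s+1}\delta T\,e^{\frac{s\sqrt{2}}{2s+1}T}$ by the inequality $e^u-1\leq ue^u$; for the lowering part, $\sqrt{n-\frac{1}{2}}<\sqrt{n+\frac{1}{2}}$ and monotonicity of $F$ give directly $\mathscr{M}_{\delta_1,n-1}(\delta t)(\xi)/\mathscr{M}_{\delta_1,n}(\delta t)(\xi)\leq 1$.

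Finally I would conclude by Plancherel in the position variable and orthonormality of $(\psi_n)_{n\geq 0}$: within the raising part each index $n$ feeds the distinct target index $n+1$ and within the lowering part each index $n$ feeds the distinct target index $n-1$, so no cross terms arise inside either part, and the $H^{(1,0)}(\rr_{x,v}^2)$-norm of the raising part is at most $\big(1+\frac{s\sqrt{2}}{2s+1}\delta T\,e^{\frac{s\sqrt{2}}{2s+1}T}\big)\|f\|_{(1,0)}$ while that of the lowering part is at most $\|f\|_{(1,0)}$; the triangle inequality together with a choice of $\tilde{c}_2$ depending only on $s$ then gives the statement. The step I expect to be the main point — rather than a genuine obstacle — is the uniform-in-$(\delta_1,n,\xi)$ control of the multiplier ratios, which is precisely where the monotonicity of $F$ and the convexity estimate (\ref{ff1}) are used; everything else is bookkeeping with the Hermite expansion, entirely parallel to the proof of Lemma~\ref{tt5}.
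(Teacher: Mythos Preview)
Your proposal is correct and follows essentially the same approach as the paper: Hermite decomposition, splitting $v=A_++A_-$, computing $[v,\sqrt{\mathcal{H}}]$ explicitly on each Hermite mode, and controlling the resulting multiplier ratios $\mathscr{M}_{\delta_1,n\pm 1}/\mathscr{M}_{\delta_1,n}$. The only minor difference is that the paper reuses the sharper quantitative bounds (\ref{qs2})--(\ref{qs3}) (with the extra $\langle n\rangle^{-1/2}\langle\xi\rangle^{-\frac{1}{2s+1}}$ decay) from the proof of Lemma~\ref{tt5}, whereas you bound the ratios more directly via the monotonicity of $F$ and $F(x+y)\leq e^{y}F(x)$; this is a harmless simplification since the extra decay is not needed here.
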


\begin{proof}
Recalling that $v=A_++A_-$, we may write
\begin{multline}\label{ro2}
M_{\delta_1}(\delta t)[v,\sqrt{\mathcal{H}}]\big(M_{\delta_1}(\delta t)\big)^{-1}=M_{\delta_1}(\delta t)[A_+,\sqrt{\mathcal{H}}]\big(M_{\delta_1}(\delta t)\big)^{-1}\\
+M_{\delta_1}(\delta t)[A_-,\sqrt{\mathcal{H}}]\big(M_{\delta_1}(\delta t)\big)^{-1}.
\end{multline}
Let $f \in \mathscr{S}(\rr_{x,v}^2)$. We decompose this function into the Hermite basis in the velocity variable
$$f(x,v)=\sum_{n=0}^{+\infty}f_n(x) \psi_n(v), \quad f_n(x)=(f(x,\cdot),\psi_n)_{L^2(\rr_v)}.$$
By proceeding as in the proof of Lemma~\ref{tt5}, we notice that
\begin{multline*}
\mathcal{F}_x\big( M_{\delta_1}(\delta t)[A_+,\sqrt{\mathcal{H}}]\big(M_{\delta_1}(\delta t)\big)^{-1}f\big) \\
=  \sum_{n=0}^{+\infty}\hat{f}_n(\xi)\sqrt{n+1}\Big(\sqrt{n+\frac{1}{2}}-\sqrt{n+\frac{3}{2}}\Big)
\Big(\frac{e^{\delta t(\sqrt{n+\frac{3}{2}}+\langle \xi \rangle)^{\frac{2s}{2s+1}}-\delta t(\sqrt{n+\frac{1}{2}}+\langle \xi \rangle)^{\frac{2s}{2s+1}}}-1}{1+\delta_1e^{\delta t(\sqrt{n+\frac{3}{2}}+\langle \xi \rangle)^{\frac{2s}{2s+1}}}}+1\Big)\psi_{n+1},
\end{multline*}
\begin{multline*}
\mathcal{F}_x\big(M_{\delta_1}(\delta t)[A_-,\sqrt{\mathcal{H}}]\big(M_{\delta_1}(\delta t)\big)^{-1}f\big) \\
=  \sum_{n=0}^{+\infty}\hat{f}_n(\xi)\sqrt{n}\Big(\sqrt{n+\frac{1}{2}}-\sqrt{n-\frac{1}{2}}\Big)
\Big(\frac{e^{\delta t(\sqrt{n-\frac{1}{2}}+\langle \xi \rangle)^{\frac{2s}{2s+1}}-\delta t(\sqrt{n+\frac{1}{2}}+\langle \xi \rangle)^{\frac{2s}{2s+1}}}-1}{1+\delta_1e^{\delta t(\sqrt{n-\frac{1}{2}}+\langle \xi \rangle)^{\frac{2s}{2s+1}}}}+1\Big)\psi_{n-1},
\end{multline*}
where $\mathcal{F}_x$ stands for the partial Fourier transform with respect to the position variable.
We deduce from (\ref{qs2}) and (\ref{qs3}) that
\begin{multline}\label{ro5}
\big\| M_{\delta_1}(\delta t)[A_+,\sqrt{\mathcal{H}}]\big(M_{\delta_1}(\delta t)\big)^{-1}f\big\|_{(1,0)}^2 \\
\leq  \sum_{n=0}^{+\infty}\frac{2n+2}{4n+2}
(c_8^2 \delta^2 t^2 e^{2c_8t}\langle n \rangle^{-1}\langle \xi \rangle^{-\frac{2}{2s+1}}+1)\|f_n\|_{H^1(\rr_x)}^2
\leq (c_8^2 \delta^2 t^2 e^{2c_8t}+1)\|f\|_{(1,0)}^2,
\end{multline}
\begin{multline}\label{ro6}
\big\|M_{\delta_1}(\delta t)[A_-,\sqrt{\mathcal{H}}]\big(M_{\delta_1}(\delta t)\big)^{-1}f\big\|_{(1,0)}^2 \\
\leq  \sum_{n=1}^{+\infty} \frac{2n}{4n-2}
(c_9^2 \delta^2  t^2 \langle n \rangle^{-1}\langle \xi \rangle^{-\frac{2}{2s+1}}+1)\|f_n\|_{H^1(\rr_x)}^2 \leq (c_9^2 \delta^2  t^2+1)\|f\|_{(1,0)}^2.
\end{multline}
It follows from (\ref{ro2}), (\ref{ro5}) and (\ref{ro6}) that there exists a positive constant $\tilde{c}_{2}>0$ such that for all $f \in \mathscr{S}(\rr_{x,v}^2)$, $0 \leq \delta \leq 1$, $0 < \delta_1 \leq 1$, $0 \leq t \leq T$,
$$\big\|M_{\delta_1}(\delta t)[v,\sqrt{\mathcal{H}}]\big(M_{\delta_1}(\delta t)\big)^{-1}f\big\|_{(1,0)} \leq \tilde{c}_{2}(\delta Te^{\tilde{c}_2T}+1)\|f\|_{(1,0)}.$$
This ends the proof of Lemma~\ref{tt5.7}.
\end{proof}

\noindent
We notice from (\ref{ro1}) that
\begin{multline*}
A_{\delta_1,\delta_2}(\delta t)v\big(A_{\delta_1,\delta_2}(\delta t)\big)^{-1}-v=M_{\delta_1}(\delta t)v\big(M_{\delta_1}(\delta t)\big)^{-1}-v\\
+\frac{\delta_2 \langle D_x \rangle}{1+\delta_2\sqrt{\mathcal{H}}+\delta_2 \langle D_x \rangle}M_{\delta_1}(\delta t)[v,\sqrt{\mathcal{H}}]\big(M_{\delta_1}(\delta t) \big)^{-1}\langle D_x \rangle^{-1}.
\end{multline*}
It follows that
\begin{align*}
& \ \big|\big(\big[A_{\delta_1,\delta_2}(\delta t)v\big(A_{\delta_1,\delta_2}(\delta t)\big)^{-1}-v\big]\partial_{x}h_{n+1,\delta_2},\langle D_x \rangle^{2}Qh_{n+1,\delta_2}\big)_{L^2(\rr_{x,v}^2)}\big|\\
\leq & \ \big|\big(\big[M_{\delta_1}(\delta t)v\big(M_{\delta_1}(\delta t)\big)^{-1}-v\big]\partial_{x}h_{n+1,\delta_2},Qh_{n+1,\delta_2}\big)_{(1,0)}\big|\\
+& \  \Big|\Big(\frac{\delta_2 \langle D_x \rangle}{1+\delta_2\sqrt{\mathcal{H}}+\delta_2 \langle D_x \rangle}M_{\delta_1}(\delta t)[v,\sqrt{\mathcal{H}}]\big(M_{\delta_1}(\delta t) \big)^{-1}\langle D_x \rangle^{-1}\partial_{x}h_{n+1,\delta_2},Qh_{n+1,\delta_2}\Big)_{(1,0)}\Big|,
\end{align*}
that is
\begin{align*}
& \ \big|\big(\big[A_{\delta_1,\delta_2}(\delta t)v\big(A_{\delta_1,\delta_2}(\delta t)\big)^{-1}-v\big]\partial_{x}h_{n+1,\delta_2},\langle D_x \rangle^{2}Qh_{n+1,\delta_2}\big)_{L^2(\rr_{x,v}^2)}\big|\\
\leq & \ \big|\big(\langle D_x \rangle^{\frac{s+1}{2s+1}}\big[M_{\delta_1}(\delta t)v\big(M_{\delta_1}(\delta t)\big)^{-1}-v\big]h_{n+1,\delta_2},Q\langle D_x \rangle^{-\frac{s+1}{2s+1}}\partial_{x}h_{n+1,\delta_2}\big)_{(1,0)}\big|\\
+& \  \Big|\Big(\frac{\delta_2 \langle D_x \rangle}{1+\delta_2\sqrt{\mathcal{H}}+\delta_2 \langle D_x \rangle}M_{\delta_1}(\delta t)[v,\sqrt{\mathcal{H}}]\big(M_{\delta_1}(\delta t) \big)^{-1}\langle D_x \rangle^{-1}\partial_{x}h_{n+1,\delta_2},Qh_{n+1,\delta_2}\Big)_{(1,0)}\Big|,
\end{align*}
since according to (\ref{qsym}), $Q$ is commuting with any function of the operator~$D_x$.
We deduce from Lemmas~\ref{tt5} and~\ref{tt5.7} that there exists a positive constant $c_{11}>0$ such that for all $0 \leq \delta \leq \delta_0$, $0<\delta_1 \leq 1$, $0<\delta_2 \leq 1$, $0 \leq t \leq T$,
\begin{multline}\label{tt10}
\big|\big(\big[A_{\delta_1,\delta_2}(\delta t)v\big(A_{\delta_1,\delta_2}(\delta t)\big)^{-1}-v\big]\partial_{x}h_{n+1,\delta_2},\langle D_x \rangle^{2}Qh_{n+1,\delta_2}\big)_{L^2(\rr_{x,v}^2)}\big|\\
\leq  c_{11}\delta T e^{c_{11} T}\|\langle D_x\rangle^{\frac{s}{2s+1}}h_{n+1,\delta_2}\|_{(1,0)}^2
+c_{11}\|h_{n+1,\delta_2}\|_{(1,0)}^2,
\end{multline}
since $Q$ is a bounded operator on $L^2(\rr_{x,v}^2)$ and
$$\Big\|\frac{\delta_2 \langle D_x \rangle}{1+\delta_2\sqrt{\mathcal{H}}+\delta_2 \langle D_x \rangle}\Big\|_{\mathcal{L}(L^2(\rr_{x,v}^2))} \leq 1.$$
It follows from (\ref{tt4}) and (\ref{tt10}) that for all $0\leq \delta \leq \delta_0$, $0<\delta_1\leq 1$, $0<\delta_2\leq 1$, $0 \leq t \leq T$,
\begin{align*}
& \ \frac{1}{2}\frac{d}{dt}\|Q^{1/2}h_{n+1,\delta_2}\|_{(1,0)}^2+c_6\|\mathcal{H}^{\frac{s}{2}}h_{n+1,\delta_2}\|_{(1,0)}^2+(c_6-c_{11}\delta T e^{c_{11}T})\|\langle D_x \rangle^{\frac{s}{2s+1}}h_{n+1,\delta_2}\|_{(1,0)}^2  \\ 
\leq   & \ c_7\|h_n\|_{{L^{\infty}([0,T],H^{(1,0)}(\rr_{x,v}^2))}}\|\mathcal{H}^{\frac{s}{2}}h_{n+1}\|_{(1,0)}^2  +(c_7+c_{11})\|h_{n+1,\delta_2}\|_{(1,0)}^2,
\end{align*}
since according to (\ref{li2.3}), we have
$$\|\mathcal{H}^{\frac{s}{2}}h_{n+1,\delta_2}\|_{(1,0)} \leq \|\mathcal{H}^{\frac{s}{2}}h_{n+1}\|_{(1,0)}.$$
We may assume that the constant $0<\delta_0 \leq 1$ is chosen sufficiently small so that
$$c_{11} \delta_0 T e^{c_{11}T} \leq \frac{c_6}{2}.$$
Under the assumption
$$\|h_n\|_{L^{\infty}([0,T],H^{(1,0)}(\rr_{x,v}^2))} \leq \frac{c_6}{2c_7},$$
we obtain that for all $0 \leq \delta \leq \delta_0$, $0<\delta_1 \leq 1$, $0<\delta_2 \leq 1$,  $0 \leq t \leq T$,
\begin{multline*}
\frac{d}{dt}\|Q^{1/2}h_{n+1,\delta_2}\|_{(1,0)}^2+2c_6\|\mathcal{H}^{\frac{s}{2}}h_{n+1,\delta_2}\|_{(1,0)}^2+c_6 \|\langle D_x \rangle^{\frac{s}{2s+1}}h_{n+1,\delta_2}\|_{(1,0)}^2 \\
\leq c_6\|\mathcal{H}^{\frac{s}{2}}h_{n+1}\|_{(1,0)}^2+ 2c_{12}\|Q^{1/2}h_{n+1,\delta_2}\|_{(r,0)}^2,
\end{multline*}
with $c_{12}=(c_7+c_{11})\|(Q^{1/2})^{-1}\|_{\mathcal{L}(L^2)}>0.$
We deduce from (\ref{tr16}), (\ref{as3}) and (\ref{li2.3}) that for all $0 \leq \delta \leq \delta_0$, $0<\delta_1 \leq 1$, $0<\delta_2 \leq 1$, $0 \leq t \leq T$,
\begin{align*}
& \ \|Q^{1/2}h_{n+1,\delta_2}(t)\|_{(1,0)}^2+c_6\int_0^te^{2c_{12}(t-\tau)}(2\|\mathcal{H}^{\frac{s}{2}}h_{n+1,\delta_2}(\tau)\|_{(1,0)}^2+\|\langle D_x \rangle^{\frac{s}{2s+1}}h_{n+1,\delta_2}(\tau)\|_{(1,0)}^2)d\tau \\
\leq & \ e^{2c_{12}t}\|Q^{1/2}(1+\delta_2\sqrt{\mathcal{H}}+\delta_2 \langle D_x \rangle)^{-1}g_0\|_{(1,0)}^2+c_6\int_0^te^{2c_{12}(t-\tau)}\|\mathcal{H}^{\frac{s}{2}}h_{n+1}(\tau)\|_{(1,0)}^2d\tau\\
\leq & \ e^{2c_{12}t}\|Q^{1/2}\|_{\mathcal{L}(L^2)}^2\|g_0\|_{(1,0)}^2+c_6\int_0^te^{2c_{12}(t-\tau)}\|\mathcal{H}^{\frac{s}{2}}h_{n+1}(\tau)\|_{(1,0)}^2d\tau,
\end{align*}
since
$$h_{n+1,\delta_2}(0)=(1+\delta_2\sqrt{\mathcal{H}}+\delta_2 \langle D_x \rangle)^{-1}(1+\delta_1)^{-1}g_0.$$
By passing to the limit when $\delta_2 \to 0_+$, it follows from (\ref{li2.3}) and the convergence monotone theorem that for all $0 \leq \delta \leq \delta_0$, $0<\delta_1 \leq 1$, $0 \leq t \leq T$,
\begin{align*}
& \ \|Q^{1/2}h_{n+1}(t)\|_{(1,0)}^2+c_6\int_0^te^{2c_{12}(t-\tau)}(2\|\mathcal{H}^{\frac{s}{2}}h_{n+1}(\tau)\|_{(1,0)}^2+\|\langle D_x \rangle^{\frac{s}{2s+1}}h_{n+1}(\tau)\|_{(1,0)}^2)d\tau \\
\leq & \ e^{2c_{12}t}\|Q^{1/2}\|_{\mathcal{L}(L^2)}^2\|g_0\|_{(1,0)}^2+c_6\int_0^te^{2c_{12}(t-\tau)}\|\mathcal{H}^{\frac{s}{2}}h_{n+1}(\tau)\|_{(1,0)}^2d\tau.
\end{align*}
We obtain that for all $0 \leq \delta \leq \delta_0$, $0<\delta_1 \leq 1$,
\begin{multline}\label{ip8.4}
\|h_{n+1}\|_{L^{\infty}([0,T],H^{(1,0)}(\rr_{x,v}^2))}^2+\|\mathcal{H}^{\frac{s}{2}}h_{n+1}\|_{L^{2}([0,T],H^{(1,0)}(\rr_{x,v}^2))}^2\\
+\|\langle D_x \rangle^{\frac{s}{2s+1}}h_{n+1}\|_{L^{2}([0,T],H^{(1,0)}(\rr_{x,v}^2))}^2 \leq c_{13}e^{2c_{12}T}\|g_0\|_{(1,0)}^2,
\end{multline}
with
$$c_{13}=\|Q^{1/2}\|_{\mathcal{L}(L^2)}^2\|(Q^{1/2})^{-1}\|_{\mathcal{L}(L^2)}^2+\frac{\|Q^{1/2}\|_{\mathcal{L}(L^2)}^2}{c_6}>0.$$
Under the assumption
\begin{equation}\label{ty1}
\|g_0\|_{(1,0)} \leq \eps_1, \textrm{ with } 0<\eps_1=\inf\Big(\tilde{\eps}_0,\frac{c_6}{2c_7},\frac{c_6}{2c_7\sqrt{c_{13}}e^{c_{12}T}}\Big) \leq \tilde{\eps}_0,
\end{equation}
where the positive parameter $\tilde{\eps}_0>0$ is defined in (\ref{tr13}), the sequence of approximate solutions $(\tilde{g}_n)_{n \geq 0}$ is well-defined.
Then, we consider the sequence $(h_n)_{n \geq 0}$ defined in (\ref{as3}) and we notice from (\ref{as1}) that
$$\|h_0\|_{L^{\infty}([0,T],H^{(1,0)}(\rr_{x,v}^2))}\leq \|g_0\|_{(1,0)} \leq \frac{c_6}{2c_7}.$$
On the other hand, we deduce from (\ref{ip8.4}) and (\ref{ty1}) that the condition
$$\|h_n\|_{L^{\infty}([0,T],H^{(1,0)}(\rr_{x,v}^2))} \leq \frac{c_6}{2c_7},$$
implies that
$$\|h_{n+1}\|_{L^{\infty}([0,T],H^{(1,0)}(\rr_{x,v}^2))} \leq \frac{c_6}{2c_7}.$$
We may therefore deduce from (\ref{ip8.4}) that for all $0 \leq \delta \leq \delta_0$, $0<\delta_1 \leq 1$, $n \geq 1$,
\begin{multline*}
\|h_{n}\|_{L^{\infty}([0,T],H^{(1,0)}(\rr_{x,v}^2))}^2+\|\mathcal{H}^{\frac{s}{2}}h_{n}\|_{L^{2}([0,T],H^{(1,0)}(\rr_{x,v}^2))}^2\\
+\|\langle D_x \rangle^{\frac{s}{2s+1}}h_{n}\|_{L^{2}([0,T],H^{(1,0)}(\rr_{x,v}^2))}^2  \leq c_{13}e^{2c_{12}T}\|g_0\|_{(1,0)}^2.
\end{multline*}
This ends the proof of Lemma~\ref{sake2}. By passing to the limit when $\delta_1 \to 0_+$ in the estimate (\ref{sake6}), we deduce from the monotone convergence theorem the following result:

\begin{lemma}\label{sake2.p}
Let $T>0$. Then, there exist some positive constants $c, \eps_1>0$, $0< \delta_0 \leq 1$ such that for all initial data $\|g_0\|_{(1,0)} \leq \eps_1$, the sequence of approximate solutions $(\tilde{g}_n)_{n \geq 0}$ defined in \eqref{tr16} satisfies for all $0 \leq \delta \leq \delta_0$, $n \geq 1$,
\begin{multline*}
\|M_{0}(\delta t)\tilde{g}_n\|_{L^{\infty}([0,T],H^{(1,0)}(\rr_{x,v}^2))}^2+\|\mathcal{H}^{\frac{s}{2}}M_{0}(\delta t)\tilde{g}_n\|_{L^{2}([0,T],H^{(1,0)}(\rr_{x,v}^2))}^2\\
+\|\langle D_x \rangle^{\frac{s}{2s+1}}M_{0}(\delta t)\tilde{g}_n\|_{L^{2}([0,T],H^{(1,0)}(\rr_{x,v}^2))}^2  \leq ce^{cT}\|g_0\|_{(1,0)}^2,
\end{multline*}
with
$$M_{0}(t)=\exp\big(t(\sqrt{\mathcal{H}}+\langle D_x \rangle)^{\frac{2s}{2s+1}}\big).$$
\end{lemma}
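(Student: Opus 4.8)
The plan is to deduce this statement from Lemma~\ref{sake2} by letting the regularization parameter $\delta_1$ tend to $0_+$, the only genuine point being to justify the passage to the limit, which will be done by monotone convergence rather than dominated convergence. Fix $n \geq 1$ and recall that the approximate solution $\tilde{g}_n \in L^{\infty}([0,T],H^{(1,0)}(\rr_{x,v}^2))$ does not depend on $\delta_1$. I would decompose it into the Hermite basis in the velocity variable, $\tilde{g}_n(t,x,v)=\sum_{k \geq 0}(\tilde{g}_n)_k(t,x)\psi_k(v)$ with $(\tilde{g}_n)_k(t,x)=(\tilde{g}_n(t,x,\cdot),\psi_k)_{L^2(\rr_v)}$, and take the partial Fourier transform $\mathcal{F}_x$ in the position variable. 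Since $M_{\delta_1}(\delta t)$ is then a diagonal Fourier multiplier, we get for every $0 \leq t \leq T$,
$$\|M_{\delta_1}(\delta t)\tilde{g}_n(t)\|_{(1,0)}^2=\frac{1}{2\pi}\sum_{k=0}^{+\infty}\int_{\rr}\langle \xi \rangle^2\left(\frac{e^{\delta t(\sqrt{k+\frac{1}{2}}+\langle \xi \rangle)^{\frac{2s}{2s+1}}}}{1+\delta_1e^{\delta t(\sqrt{k+\frac{1}{2}}+\langle \xi \rangle)^{\frac{2s}{2s+1}}}}\right)^2|\mathcal{F}_x(\tilde{g}_n)_k(t,\xi)|^2d\xi,$$
together with the analogous formulas for $\|\mathcal{H}^{\frac{s}{2}}M_{\delta_1}(\delta t)\tilde{g}_n(t)\|_{(1,0)}^2$ and $\|\langle D_x \rangle^{\frac{s}{2s+1}}M_{\delta_1}(\delta t)\tilde{g}_n(t)\|_{(1,0)}^2$, obtained by inserting the extra non-negative factors $(k+\frac{1}{2})^s$ and $\langle \xi \rangle^{\frac{2s}{2s+1}}$ respectively.

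Next I would use that for each fixed $x \geq 0$ the map $\delta_1 \mapsto \frac{e^x}{1+\delta_1 e^x}$ is non-increasing on $(0,1]$ and increases to $e^x$ as $\delta_1 \to 0_+$; consequently each integrand above is a non-negative quantity which increases monotonically as $\delta_1$ decreases to $0$, with pointwise limit the corresponding integrand with $M_{\delta_1}(\delta t)$ replaced by $M_{0}(\delta t)=\exp\big(\delta t(\sqrt{\mathcal{H}}+\langle D_x \rangle)^{\frac{2s}{2s+1}}\big)$. Applying the monotone convergence theorem to the sum over $k \in \nn$, the integral over $\xi \in \rr$ and, for the $L^2$-in-time norms, also the integral over $t \in [0,T]$ (via Tonelli), yields $\lim_{\delta_1 \to 0_+}\|M_{\delta_1}(\delta t)\tilde{g}_n(t)\|_{(1,0)}^2=\|M_{0}(\delta t)\tilde{g}_n(t)\|_{(1,0)}^2$ in $[0,+\infty]$, and likewise for the two other quantities and their time integrals. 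For the $L^{\infty}$-in-time term, since $t \mapsto \|M_{\delta_1}(\delta t)\tilde{g}_n(t)\|_{(1,0)}^2$ increases pointwise to $t \mapsto \|M_{0}(\delta t)\tilde{g}_n(t)\|_{(1,0)}^2$ as $\delta_1 \to 0_+$, taking the supremum over $0 \leq t \leq T$ and exchanging the two suprema (over $t$ and over $\delta_1$, which is licit for non-negative quantities) gives $\|M_{0}(\delta t)\tilde{g}_n\|_{L^{\infty}([0,T],H^{(1,0)}(\rr_{x,v}^2))}^2=\sup_{0<\delta_1 \leq 1}\|M_{\delta_1}(\delta t)\tilde{g}_n\|_{L^{\infty}([0,T],H^{(1,0)}(\rr_{x,v}^2))}^2$. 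Combining these identities with the estimate \eqref{sake6} of Lemma~\ref{sake2}, which holds uniformly with respect to $0<\delta_1 \leq 1$, $0 \leq \delta \leq \delta_0$ and $n \geq 1$, one concludes the announced bound $ce^{cT}\|g_0\|_{(1,0)}^2$ with the same constants $c, \eps_1>0$, $0<\delta_0 \leq 1$; in particular all these quantities are finite.

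The argument is essentially routine; the only mild subtlety — the "hard part", such as it is — is that before taking the limit the three quantities involving $M_0(\delta t)$ are a priori only known to take values in $[0,+\infty]$, so one must phrase everything through the monotone convergence theorem (and the interchange of suprema) rather than attempting to dominate $M_{\delta_1}(\delta t)$ by $M_0(\delta t)$, and it is precisely the uniform-in-$\delta_1$ bound of Lemma~\ref{sake2} that then forces these limits to be finite.
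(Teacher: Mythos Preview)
Your proposal is correct and follows precisely the paper's approach: the paper derives Lemma~\ref{sake2.p} from Lemma~\ref{sake2} in a single sentence, ``By passing to the limit when $\delta_1 \to 0_+$ in the estimate (\ref{sake6}), we deduce from the monotone convergence theorem the following result,'' and you have simply fleshed out the details of that monotone convergence argument (Hermite/Fourier diagonalization, monotonicity in $\delta_1$, handling of the $L^\infty_t$ supremum).
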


\subsection{Gelfand-Shilov and Gevrey regularities}

We begin by noticing from the Cauchy-Schwarz inequality and (\ref{wr3}) that for all $0 \leq \delta \leq \delta_0$, $0<\delta_1 \leq 1$,
\begin{multline*}
\|M_{\delta_1}(\delta t)f(t)\|_{(1,0)}^2=\Big(\frac{\exp\big(2\delta t(\sqrt{\mathcal{H}}+\langle D_x \rangle)^{\frac{2s}{2s+1}}\big)}{\big(1+\delta_1\exp\big(\delta t(\sqrt{\mathcal{H}}+\langle D_x \rangle)^{\frac{2s}{2s+1}}\big)\big)^2}f(t),f(t)\Big)_{(1,0)}\\
\leq \|M_{0}(2\delta t)f(t)\|_{(1,0)}\|f(t)\|_{(1,0)}.
\end{multline*}
By passing to the limit when $\delta_1 \to 0_+$ in this estimate, we deduce from the monotone convergence theorem that for all $0 \leq \delta \leq \delta_0$,
$$\|M_{0}(\delta t)f(t)\|_{(1,0)}^2\leq \|M_{0}(2\delta t)f(t)\|_{(1,0)}\|f(t)\|_{(1,0)}.$$
This implies that for all $0 \leq \delta \leq \delta_0$,
\begin{equation}\label{sake3}
\|M_{0}(\delta t)f\|_{L^{\infty}([0,T],H^{(1,0)}(\rr_{x,v}^2))}^2 \leq \|M_{0}(2\delta t)f\|_{L^{\infty}([0,T],H^{(1,0)}(\rr_{x,v}^2))}\|f\|_{L^{\infty}([0,T],H^{(1,0)}(\rr_{x,v}^2))}.
\end{equation}
By using that $(\tilde{g}_n)_{n \geq 1}$ is a Cauchy sequence in $L^{\infty}([0,T],H^{(1,0)}(\rr_{x,v}^2))$,
we deduce from Lemma~\ref{sake2.p} and (\ref{sake3}) that  $(M_{0}(\delta t)\tilde{g}_n)_{n \geq 1}$ is a Cauchy sequence in $L^{\infty}([0,T],H^{(1,0)}(\rr_{x,v}^2))$,
$$\|M_{0}(\delta t)\tilde{g}_{n+p}-M_{0}(\delta t)\tilde{g}_{n}\|_{L^{\infty}([0,T],H^{(1,0)}(\rr_{x,v}^2))}^2 \leq 2\sqrt{c}e^{\frac{cT}{2}}\|g_0\|_{(1,0)}\|\tilde{g}_{n+p}-\tilde{g}_{n}\|_{L^{\infty}([0,T],H^{(1,0)}(\rr_{x,v}^2))},$$
for all $0 \leq \delta \leq \frac{\delta_0}{2}$.
Let $h$ be the limit of the Cauchy sequence $\big(M_{0}\big(\frac{\delta_0}{2}t\big)\tilde{g}_n\big)_{n \geq 1}$ in the space $L^{\infty}([0,T],H^{(1,0)}(\rr_{x,v}^2))$. By noticing that
\begin{align*}
& \ \Big\|\tilde{g}_n-\Big(M_{0}\Big(\frac{\delta_0}{2}t\Big)\Big)^{-1}h\Big\|_{L^{\infty}([0,T],H^{(1,0)}(\rr_{x,v}^2))}\\
=& \ \Big\|\Big(M_{0}\Big(\frac{\delta_0}{2}t\Big)\Big)^{-1}\Big(M_{0}\Big(\frac{\delta_0}{2}t\Big)\tilde{g}_n-h\Big)\Big\|_{L^{\infty}([0,T],H^{(1,0)}(\rr_{x,v}^2))}\\
\leq & \ \Big\|M_{0}\Big(\frac{\delta_0}{2}t\Big)\tilde{g}_n-h\Big\|_{L^{\infty}([0,T],H^{(1,0)}(\rr_{x,v}^2))},
\end{align*}
we deduce from (\ref{qw0.0}) and the uniqueness of the limit that  $g \in L^{\infty}([0,T],H^{(1,0)}(\rr_{x,v}^2))$
the solution to the Cauchy problem (\ref{qw13.1}) is equal to
\begin{equation}\label{ui1}
g=\Big(M_{0}\Big(\frac{\delta_0}{2}t\Big)\Big)^{-1}h=\exp\Big(-\frac{\delta_0}{2} t(\sqrt{\mathcal{H}}+\langle D_x \rangle)^{\frac{2s}{2s+1}}\Big)h.
\end{equation}
On the other hand, it follows from Lemma~\ref{sake2.p} that for all $n \geq 1$,
\begin{equation}\label{kh1}
\Big\|M_{0}\Big(\frac{\delta_0}{2} t\Big)\tilde{g}_n\Big\|_{L^{\infty}([0,T],H^{(1,0)}(\rr_{x,v}^2))} \leq \sqrt{c}e^{\frac{cT}{2}}\|g_0\|_{(1,0)}.
\end{equation}
By passing to the limit in the estimate (\ref{kh1}) when $n \to +\infty$, we deduce from (\ref{ui1}) that
\begin{multline}\label{ty32}
\|h\|_{L^{\infty}([0,T],H^{(1,0)}(\rr_{x,v}^2))}=\big\|\exp\big(\delta_1 t(\sqrt{\mathcal{H}}+\langle D_x \rangle)^{\frac{2s}{2s+1}}\big)g\big\|_{L^{\infty}([0,T],H^{(1,0)}(\rr_{x,v}^2))}\\ \leq \sqrt{c}e^{\frac{cT}{2}}\|g_0\|_{(1,0)},
\end{multline}
with $\delta_1=\frac{\delta_0}{2}>0$.
Next, we notice that
\begin{multline}\label{ty35}
\forall x,c>0, \quad x^k\exp\Big(-\frac{2s+1}{s}cx^{\frac{s}{2s+1}}\Big)=\Big(\frac{(cx^{\frac{s}{2s+1}})^k}{c^k}e^{-cx^{\frac{s}{2s+1}}}\Big)^{\frac{2s+1}{s}}\\
=\frac{(k!)^{\frac{2s+1}{s}}}{c^{\frac{2s+1}{s}k}}\Big(\frac{(cx^{\frac{s}{2s+1}})^k}{k!}e^{-cx^{\frac{s}{2s+1}}}\Big)^{\frac{2s+1}{s}}
\leq \frac{(k!)^{\frac{2s+1}{s}}}{c^{\frac{2s+1}{s}k}},
\end{multline}
since
$$\forall k \geq 0, \quad \frac{(cx^{\frac{s}{2s+1}})^k}{k!} \leq e^{cx^{\frac{s}{2s+1}}}.$$
Let $f \in \mathscr{S}(\rr_{x,v}^2)$ be a Schwartz function. We decompose this function into the Hermite basis in the velocity variable
$$f(x,v)=\sum_{n=0}^{+\infty}f_n(x) \psi_n(v), \quad f_n(x)=(f(x,\cdot),\psi_n)_{L^2(\rr_v)}.$$
We deduce from (\ref{ty35}) that for all $k \geq 0$,
\begin{align}\label{ty36}
& \ \big\|(\sqrt{\mathcal{H}}+\langle D_x \rangle)^k\exp\big(-\delta_1 t(\sqrt{\mathcal{H}}+\langle D_x \rangle)^{\frac{2s}{2s+1}}\big)f\big\|_{(1,0)}^2\\
\notag
=& \ \frac{1}{2\pi}\sum_{n=0}^{+\infty}\int_{\rr}\langle \xi \rangle^{2}\Big(\sqrt{n+\frac{1}{2}}+\langle \xi \rangle\Big)^{2k}\exp\Big(-2\delta_1 t\Big(\sqrt{n+\frac{1}{2}}+\langle \xi \rangle\Big)^{\frac{2s}{2s+1}}\Big)|\widehat{f}_n(\xi)|^2d\xi\\ \notag
\leq & \ \Big(\frac{2s+1}{2s\delta_1}\Big)^{\frac{2s+1}{s}k} \frac{(k!)^{\frac{2s+1}{s}}}{t^{\frac{2s+1}{s}k}}\|f\|_{(1,0)}^2.
\end{align}
It follows from (\ref{ty32}) and (\ref{ty36}) that the solution to the Cauchy problem (\ref{qw13.1}) satisfies for all $0<t \leq T$, $k \geq 0$,
\begin{align*}
& \ \|(\sqrt{\mathcal{H}}+\langle D_x \rangle)^kg(t)\|_{(1,0)}\\
= & \ \big\|(\sqrt{\mathcal{H}}+\langle D_x \rangle)^k\exp\big(-\delta_1 t(\sqrt{\mathcal{H}}+\langle D_x \rangle)^{\frac{2s}{2s+1}}\big)\exp\big(\delta_1 t(\sqrt{\mathcal{H}}+\langle D_x \rangle)^{\frac{2s}{2s+1}}\big)g(t)\big\|_{(1,0)}\\
\leq & \ \Big(\frac{2s+1}{2s\delta_1}\Big)^{\frac{2s+1}{2s}k} \frac{(k!)^{\frac{2s+1}{2s}}}{t^{\frac{2s+1}{2s}k}}\sqrt{c}e^{\frac{cT}{2}}\|g_0\|_{(1,0)}.
\end{align*}
This implies that there exists a positive constant $C>1$ such that
\begin{equation}\label{ty37}
\forall 0<t \leq T, \forall k \geq 0, \quad \|(\sqrt{\mathcal{H}}+\langle D_x \rangle)^kg(t)\|_{(1,0)}\leq \frac{1}{t^{\frac{2s+1}{2s}k}}C^{k+1} (k!)^{\frac{2s+1}{2s}}\|g_0\|_{(1,0)}.
\end{equation}
It proves the Gelfand-Shilov property in Theorem~\ref{qw13.66ee}.
We may therefore notice from (\ref{ty37}) that for all $0<t \leq T$, $k \geq 0$,
$$\|g(t)\|_{H^k(\rr_{x,v}^2)} \lesssim \|(\sqrt{\mathcal{H}}+\langle D_x \rangle)^kg(t)\|_{(1,0)} <+\infty.$$
This implies in particular that
$$\forall 0<t \leq T, \quad g(t) \in C^{\infty}(\rr_{x,v}^2).$$
On the other hand, we notice that for all $p \geq 0$,
\begin{equation}\label{ge11}
\partial_x^pg(t,x,v)=\sum_{n=0}^{+\infty}\partial_x^pg_n(t,x)\psi_n(v), \quad \textrm{ with } \quad g_n(t,x)=(g(t,x,\cdot),\psi_n)_{L^2(\rr_v)},
\end{equation}
since
$$\partial_x^pg_n(t,x)=(\partial_x^pg(t,x,\cdot),\psi_n)_{L^2(\rr_v)}.$$
It follows from (\ref{ge11}), Lemma~\ref{ge3} and the Sobolev imbedding that there exist some positive constants $C_1,C_2, C_3>0$ such that for all $k,l,p \geq 0$, $\eps >0$,
\begin{align}\label{ge12}
& \ \|v^k\partial_v^l\partial_x^pg(t)\|_{L^{\infty}(\rr_x,L^2(\rr_v))} \leq \sum_{n=0}^{+\infty}\|\partial_x^pg_n(t)\|_{L^{\infty}(\rr_x)}\|v^k\partial_v^l\psi_n\|_{L^2(\rr_v)} \\ \notag
\leq & \ C_1\Big(\frac{C_2}{\inf(\eps^{\frac{2s+1}{2s}},1)}\Big)^{k+l}(k!)^{\frac{2s+1}{2s}}(l!)^{\frac{2s+1}{2s}}\sum_{n=0}^{+\infty}\|\partial_x^pg_n(t)\|_{L^{\infty}(\rr_x)}\big((1-\delta_{n,0})e^{\eps {\frac{2s+1}{2s}}n^{\frac{s}{2s+1}}}+\delta_{n,0}\big)\\ \notag
\leq & \ C_3\Big(\frac{C_2}{\inf(\eps^{\frac{2s+1}{2s}},1)}\Big)^{k+l}(k!)^{\frac{2s+1}{2s}}(l!)^{\frac{2s+1}{2s}}\sum_{n=0}^{+\infty}\|\partial_x^pg_n(t)\|_{H^{1}(\rr_x)}\big((1-\delta_{n,0})e^{\eps {\frac{2s+1}{2s}}n^{\frac{s}{2s+1}}}+\delta_{n,0}\big),
\end{align}
where $\delta_{n,0}$ stands for the Kronecker delta, i.e., $\delta_{n,0}=1$ if $n=0$, $\delta_{n,0}=0$ if $n \neq 0$.
We notice that the estimate (\ref{ty32})
\begin{multline*}
\forall 0 \leq t \leq T, \quad \big\|\exp\big(\delta_1 t(\sqrt{\mathcal{H}}+\langle D_x \rangle)^{\frac{2s}{2s+1}}\big)g(t)\big\|_{(1,0)}^2\\
=\sum_{n=0}^{+\infty}\Big\|\exp\Big(\delta_1 t\Big(\sqrt{n+\frac{1}{2}}+\langle D_x \rangle\Big)^{\frac{2s}{2s+1}}\Big)g_n(t)\Big\|_{H^{1}(\rr_x)}^2
 \leq ce^{cT}\|g_0\|_{(1,0)}^2 <+\infty,
\end{multline*}
implies that
\begin{equation}\label{ge13}
\forall 0 \leq t \leq T, \quad \sup_{n \geq 0}\Big\|\exp\Big(\delta_1 t\Big(\sqrt{n+\frac{1}{2}}+\langle D_x \rangle\Big)^{\frac{2s}{2s+1}}\Big)g_n(t)\Big\|_{H^{1}(\rr_x)} \leq  \sqrt{c}e^{\frac{cT}{2}}\|g_0\|_{(1,0)}.
\end{equation}
We have
\begin{multline*}
\|\partial_x^pg_n(t)\|_{H^{1}(\rr_x)}^2=\frac{1}{2\pi}\int_{\rr}|\xi|^{2p}\langle \xi \rangle^{2}|\widehat{g_n}(\xi)|^2d\xi\\
 = \frac{1}{2\pi}\int_{\rr}|\xi|^{2p}\exp\Big(-2\delta_1 t\Big(\sqrt{n+\frac{1}{2}}+\langle \xi \rangle\Big)^{\frac{2s}{2s+1}}\Big)\langle \xi \rangle^{2}\Big|\exp\Big(\delta_1 t\Big(\sqrt{n+\frac{1}{2}}+\langle \xi \rangle\Big)^{\frac{2s}{2s+1}}\Big)\widehat{g_n}(\xi)\Big|^2d\xi.
\end{multline*}
We obtain that
\begin{multline}\label{ge14}
\|\partial_x^pg_n(t)\|_{H^{1}(\rr_x)}^2 \leq \frac{1}{2\pi}\exp\Big(-\delta_1 t\Big(n+\frac{1}{2}\Big)^{\frac{s}{2s+1}}\Big) \\ \times
\int_{\rr}\langle\xi \rangle^{2p}e^{-\delta_1 t\langle \xi \rangle^{\frac{2s}{2s+1}}}\langle\xi \rangle^{2}\Big|\exp\Big(\delta_1 t\Big(\sqrt{n+\frac{1}{2}}+\langle \xi \rangle\Big)^{\frac{2s}{2s+1}}\Big)\widehat{g_n}(\xi)\Big|^2d\xi.
\end{multline}
By using that
\begin{multline*}
\forall 0<t \leq T, \forall p \geq 0, \forall \xi \in \rr, \quad \langle\xi \rangle^{2p}e^{-\delta_1 t\langle \xi \rangle^{\frac{2s}{2s+1}}}\\
=\Big(\frac{(\frac{s \delta_1 t}{2s+1} \langle \xi \rangle^{\frac{2s}{2s+1}} )^p}{p!} \Big)^{\frac{2s+1}{s}}e^{-\delta_1 t\langle \xi \rangle^{\frac{2s}{2s+1}}}\Big(\frac{2s+1}{s \delta_1 t}\Big)^{\frac{2s+1}{s}p}(p!)^{\frac{2s+1}{s}}
\leq \Big(\frac{2s+1}{s \delta_1 t}\Big)^{\frac{2s+1}{s}p}(p!)^{\frac{2s+1}{s}},
\end{multline*}
it follows from (\ref{ge13}) and (\ref{ge14}) that for all $0<t \leq T$, $n, p \geq 0$,
\begin{equation}\label{ge15}
\|\partial_x^pg_n(t)\|_{H^{1}(\rr_x)} \leq \sqrt{c}e^{\frac{cT}{2}}\|g_0\|_{(1,0)}\exp\Big(-\frac{\delta_1 t}{2}\Big(n+\frac{1}{2}\Big)^{\frac{s}{2s+1}}\Big)
\Big(\frac{2s+1}{s \delta_1 t}\Big)^{\frac{2s+1}{2s}p}(p!)^{\frac{2s+1}{2s}}.
\end{equation}
We deduce from (\ref{ge12}) and (\ref{ge15}) that for all $0<t \leq T$, $k,l,p \geq 0$,
\begin{align*}
 & \ \|v^k\partial_v^l\partial_x^pg(t)\|_{L^{\infty}(\rr_x,L^2(\rr_v))}  \\ 
 \leq & \   \sqrt{c}e^{\frac{cT}{2}}\|g_0\|_{(1,0)}C_3\Big(\frac{C_2}{\inf(\eps^{\frac{2s+1}{2s}},1)}\Big)^{k+l}\Big(\frac{2s+1}{s \delta_1 t}\Big)^{\frac{2s+1}{2s}p}(k!)^{\frac{2s+1}{2s}}(l!)^{\frac{2s+1}{2s}} (p!)^{\frac{2s+1}{2s}}  \\ 
 \times & \  \sum_{n=0}^{+\infty}
\Big((1-\delta_{n,0})  \exp\Big(\eps {\frac{2s+1}{2s}} n^{\frac{s}{2s+1}}-\frac{\delta_1 t}{2}\Big(n+\frac{1}{2}\Big)^{\frac{s}{2s+1}}\Big)+\delta_{n,0}\exp\Big(-\frac{\delta_1 t}{2}\Big(n+\frac{1}{2}\Big)^{\frac{s}{2s+1}}\Big)\Big).
\end{align*}
If we choose
$$\eps=\frac{s \delta_1 t}{4s+2}>0,$$
we obtain that there exist some positive constants $C_4,C_5>0$ such that for all $0<t \leq T$, $k,l,p \geq 0$,
\begin{equation}\label{ge17}
 \|v^k\partial_v^l\partial_x^pg(t)\|_{L^{\infty}(\rr_x,L^2(\rr_v))}  \leq   C_4 C_5^{k+l+p}\frac{\tilde{F}(t)}{t^{\frac{2s+1}{2s}(k+l+p)}} (k!)^{\frac{2s+1}{2s}}(l!)^{\frac{2s+1}{2s}}(p!)^{\frac{2s+1}{2s}}\|g_0\|_{(1,0)} ,
\end{equation}
where
$$\tilde{F}(x)=\sum_{n=0}^{+\infty}\exp\Big(-\frac{\delta_1 x}{4}\Big(n+\frac{1}{2}\Big)^{\frac{s}{2s+1}}\Big), \quad x > 0.$$
Let $\eta_0>0$ be a positive parameter. We notice that for all $x>0$
\begin{align}\label{saoi1}
& \ x^{\frac{2s+1}{s}+\eta_0}\tilde{F}(x)\\ \notag
=& \ \sum_{n=0}^{+\infty}\Big(\frac{\delta_1 x}{4}\Big(n+\frac{1}{2}\Big)^{\frac{s}{2s+1}}\Big)^{\frac{2s+1}{s}+\eta_0}\exp\Big(-\frac{\delta_1 x}{4}\Big(n+\frac{1}{2}\Big)^{\frac{s}{2s+1}}\Big)\frac{1}{(\frac{\delta_1}{4}(n+\frac{1}{2})^{\frac{s}{2s+1}})^{\frac{2s+1}{s}+\eta_0}}\\ \notag
\leq & \ \|y^{\frac{2s+1}{s}+\eta_0}e^{-y}\|_{L^{\infty}([0,+\infty[)}\sum_{n=0}^{+\infty}\frac{1}{(\frac{\delta_1}{4}(n+\frac{1}{2})^{\frac{s}{2s+1}})^{\frac{2s+1}{s}+\eta_0}}<+\infty.
\end{align}
It follows from (\ref{ge17}) and (\ref{saoi1}) that for any $\eta_0>0$, there exist some positive constants $C_6,C_7>0$ such that for all $0<t \leq T$,  $k,l,p \geq 0$,
\begin{equation}\label{saoi2}
 \|v^k\partial_v^l\partial_x^pg(t)\|_{L^{\infty}(\rr_x,L^2(\rr_v))} \leq   \frac{C_6C_7^{k+l+p}}{t^{\frac{2s+1}{2s}(k+l+p+2)+\eta_0}} (k!)^{\frac{2s+1}{2s}}(l!)^{\frac{2s+1}{2s}}(p!)^{\frac{2s+1}{2s}}\|g_0\|_{(1,0)}.
\end{equation}
We deduce from (\ref{saoi2}) and the Sobolev imbedding theorem that there exist some positive constants $C_8,C_9,C_{10}>0$ such that for all $0<t \leq T$, $k,l,p \geq 0$,
\begin{align*}
& \ \|v^k\partial_v^l\partial_x^pg(t)\|_{L^{\infty}(\rr_{x,v}^2)} \\ 
 \leq & \  C_{8}(\|v^k\partial_v^l\partial_x^pg(t)\|_{L^{\infty}(\rr_x,L^2(\rr_v))}+\|v^k\partial_v^{l+1}\partial_x^pg(t)\|_{L^{\infty}(\rr_x,L^2(\rr_v))}) \\ 
 \leq  & \  \frac{C_{9}C_{10}^{k+l+p}}{t^{\frac{2s+1}{2s}(k+l+p+3)+\eta_0}}
 (k!)^{\frac{2s+1}{2s}}(l!)^{\frac{2s+1}{2s}}(p!)^{\frac{2s+1}{2s}}\|g_0\|_{(1,0)}.
\end{align*}
It proves the Gevrey smoothing property in Theorem~\ref{qw13.66ee}.

\section{Appendix}\label{appendix}

\subsection{Hermite functions}\label{6.sec.harmo}
 The standard Hermite functions $(\phi_{n})_{n\in \N}$ are defined for $v \in \rr$,
 \begin{equation}\label{Ar1}
 \phi_{n}(v)=\frac{(-1)^n}{\sqrt{2^n n!\sqrt{\pi}}} e^{\frac{v^2}{2}}\frac{d^n}{dv^n}(e^{-v^2})
 =\frac{1}{\sqrt{2^n n!\sqrt{\pi}}} \Bigl(v-\frac{d}{dv}\Bigr)^n(e^{-\frac{v^2}{2}})=\frac{ a_{+}^n \phi_{0}}{\sqrt{n!}},
 \end{equation}
where $a_{+}$ is the creation operator
$$a_{+}=\frac{1}{\sqrt{2}}\Big(v-\frac{d}{dv}\Big).$$
The family $(\phi_{n})_{n\in \N}$ is an orthonormal basis of $L^2(\R)$.
We set for $n\in \N$, $v\in \R$,
\begin{equation}\label{Ar2}
\psi_n(v)=2^{-1/4}\phi_n(2^{-1/2}v),\quad \psi_{n}=\frac{1}{\sqrt{n!}}\Bigl(\frac{v}2-\frac{d}{dv}\Bigr)^n\psi_{0}.
\end{equation}
The family $(\psi_{n})_{n \in \nn}$ is an orthonormal basis of $L^2(\R)$
composed by the eigenfunctions of the harmonic oscillator
$$\mathcal{H}=-\Delta_v+\frac{v^2}{4}=\sum_{n\ge 0}\Big(n+\frac{1}{2}\Big)\mathbb P_{n},\quad 1=\sum_{n \ge 0}\mathbb P_{n},$$
where $\mathbb P_{n}$ stands for the orthogonal projection
$$\mathbb P_{n}f=(f,\psi_n)_{L^2(\rr_v)}\psi_n.$$
It satisfies the identities
\begin{equation}\label{ge1}
A_+\psi_n=\sqrt{n+1}\psi_{n+1}, \quad A_-\psi_n=\sqrt{n}\psi_{n-1},
\end{equation}
where
\begin{equation}\label{ge2}
A_{\pm}=\frac{v}{2}\mp \frac{d}{dv}.
\end{equation}
Instrumental in the core of the article are the estimates on the Hermite functions given in the following lemma which are an adaptation in a simpler setting of the analysis led in the work~\cite{langen} (Lemma~3.2).

\begin{lemma}\label{ge3}
We have
\begin{equation}\label{ge4}
\forall n, k,l \geq 0, \quad \|v^k\partial_v^l\psi_n\|_{L^2(\rr)} \leq 2^k\sqrt{\frac{(k+l+n)!}{n!}},
\end{equation}
\begin{multline}\label{ge5}
\forall r \geq \frac{1}{2}, \forall \eps >0, \forall n, k,l \geq 0,  \\ \|v^k\partial_v^l\psi_n\|_{L^2(\rr)} \leq \sqrt{2}\big((1-\delta_{n,0})\exp(\eps r n^{\frac{1}{2r}})+\delta_{n,0}\big)
\Big(\frac{2^{\frac{3}{2}+r}e^r}{\inf(\eps^r,1)}\Big)^{k+l}(k!)^r(l!)^r,
\end{multline}
where $\delta_{n,0}$ stands for the Kronecker delta, i.e., $\delta_{n,0}=1$ if $n=0$, $\delta_{n,0}=0$ if $n \neq 0$.  
\end{lemma}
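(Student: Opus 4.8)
The plan is to establish the two estimates separately, with the elementary bound \eqref{ge4} feeding directly into the refined bound \eqref{ge5}.

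\emph{For \eqref{ge4}.} First I would use \eqref{ge1} and \eqref{ge2} to write $v=A_{+}+A_{-}$ and $\partial_{v}=\tfrac12(A_{-}-A_{+})$, so that $v^{k}\partial_{v}^{l}=2^{-l}\sum_{w}\sigma_{w}\,w$, where $w$ runs over the $2^{k+l}$ ordered words of length $k+l$ in the letters $A_{+},A_{-}$ (the first $k$ letters coming from the factors $v$, the last $l$ from the factors $\partial_{v}$) and $\sigma_{w}\in\{-1,1\}$. By \eqref{ge1}, each such word sends $\psi_{n}$ to $c_{w}\psi_{n_{w}}$ with $n_{w}\ge0$, $|n_{w}-n|\le k+l$ and $c_{w}\ge0$; since along the path the Hermite index never exceeds $n+k+l$, grouping the factors appropriately (equivalently, by log-convexity of $\Gamma$, the all-raising word is extremal) gives $c_{w}\le\sqrt{(n+1)(n+2)\cdots(n+k+l)}=\sqrt{(n+k+l)!/n!}$. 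The triangle inequality and $\|\psi_{n_{w}}\|_{L^{2}}=1$ then yield $\|v^{k}\partial_{v}^{l}\psi_{n}\|_{L^{2}}\le 2^{-l}\,2^{k+l}\sqrt{(n+k+l)!/n!}=2^{k}\sqrt{(n+k+l)!/n!}$.

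\emph{For \eqref{ge5}.} Writing $m=k+l$, I would deduce this from \eqref{ge4} in three steps. For $n=0$: \eqref{ge4} gives $\|v^{k}\partial_{v}^{l}\psi_{0}\|_{L^{2}}\le 2^{k}\sqrt{m!}$, and since $m!\le 2^{m}k!\,l!$ and $\sqrt{k!\,l!}\le(k!)^{r}(l!)^{r}$ for $r\ge1/2$, this is $\le(2^{3/2})^{m}(k!)^{r}(l!)^{r}$, well within the claimed bound. For $1\le n\le m$: estimate $(n+m)!/n!\le(n+m)^{m}\le(2m)^{m}=2^{m}m^{m}\le 2^{m}e^{m}m!\le(4e)^{m}(k!)^{2r}(l!)^{2r}$, using $m!\ge(m/e)^{m}$, $m!\le2^{m}k!\,l!$, and $k!\,l!\ge1$ together with $2r\ge1$; since $2^{k}\le2^{m}$ and $2^{3+2r}e^{2r}\ge16e$ for $r\ge1/2$, this gives \eqref{ge5} with the factor $e^{\epsilon r n^{1/2r}}\ge1$ to spare. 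For $n>m$: estimate $(n+m)!/n!\le(2n)^{m}=2^{m}n^{m}$ and apply the elementary inequality $x^{N}\le\Gamma(N+1)e^{x}$ with $N=2rm$ and $x=2\epsilon r\,n^{1/2r}$, which gives $n^{m}=\bigl(n^{1/2r}\bigr)^{2rm}\le(2\epsilon r)^{-2rm}\Gamma(2rm+1)\,e^{2\epsilon r\,n^{1/2r}}$; then Stirling's bound $\Gamma(2rm+1)\le c_{r}\sqrt{m}\,(2rm/e)^{2rm}$ and $(k!)^{2r}(l!)^{2r}\ge\bigl((\lfloor m/2\rfloor)!\bigr)^{4r}\ge(m/2e)^{2rm}$ reduce the constant-matching to comparing $8^{m}(4r)^{2rm}(2\epsilon r)^{-2rm}=8^{m}(2/\epsilon)^{2rm}$ with $C^{2m}=\bigl(2^{3+2r}e^{2r}/\inf(\epsilon^{r},1)^{2}\bigr)^{m}$, which holds because $2/\epsilon\le 2e/\epsilon$.

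\emph{Main obstacle.} Both estimates are soft, but the real work in \eqref{ge5} is matching the \emph{explicit} constant $2^{3/2+r}e^{r}/\inf(\epsilon^{r},1)$ uniformly in $r\ge1/2$: the bound is sharp at the critical Gelfand–Shilov exponent $r=1/2$ — the computation in the regime $1\le n\le m$ is then an equality — so there is no slack to waste and the intermediate inequalities must be chosen precisely. In particular one cannot bound $(n+k+l)!/n!$ by a rising factorial in $n^{1/2r}$ (ruinously lossy for small $n$), and the substitution $x^{N}\le\Gamma(N+1)e^{x}$ must be made with $N=2r(k+l)$ rather than $N=k+l$, so that the stretched-exponential weight $e^{\epsilon r n^{1/2r}}$ exactly absorbs the polynomial-in-$n$ growth of the dominant term; this weighting, and the appearance of $\inf(\epsilon^{r},1)^{-1}$, is the content adapted from \cite[Lemma~3.2]{langen}, and carrying out the bookkeeping carefully is the only genuinely delicate part.
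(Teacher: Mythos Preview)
Your argument for \eqref{ge4} via direct expansion in words of $A_\pm$ is correct and arguably more transparent than the paper's induction on $k+l$; the key observation that $c_w^2 = \prod_i \max(m_{i-1},m_i) \leq \prod_i(n+i)$ (the Hermite index after $i$ steps never exceeds $n+i$) is exactly what makes the all-raising word extremal.

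For \eqref{ge5}, your cases $n=0$ and $1\le n\le m$ match the paper's treatment and are fine. The case $n>m$ (the paper's $1\le k+l\le n$), however, has two concrete gaps. First, the factor $c_r\sqrt{m}$ from Stirling is simply dropped when you ``reduce the constant-matching''; since the target carries the \emph{exact} prefactor $\sqrt{2}$, this does not come for free and must be absorbed into the slack $e^{2rm}$ by a separate check. Second, the claimed lower bound $(\lfloor m/2\rfloor!)^{4r}\ge(m/2e)^{2rm}$ fails for odd $m$ (try $m=3$, $r=1/2$: the left side is $e^{-2}\approx 0.135$, the right side $(3/2e)^3\approx 0.166$); the correct route is $k!\,l!\ge m!/2^m\ge(m/(2e))^m$.

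More to the point, your assertion that the substitution ``must be made with $N=2r(k+l)$ rather than $N=k+l$'' is backwards. The paper applies $x^{k+l}/(k+l)!\le e^x$ with $x=\eps\, n^{1/(2r)}$ and integer exponent $k+l$, and \emph{then} raises both sides to the $r$-th power, obtaining
\[
n^{(k+l)/2}\le \eps^{-r(k+l)}\bigl((k+l)!\bigr)^r \exp\bigl(\eps r\, n^{1/(2r)}\bigr).
\]
The factor $((k+l)!)^r$ converts to $2^{r(k+l)}(k!)^r(l!)^r$ by the binomial bound, and no Stirling --- hence no stray $\sqrt{m}$ --- ever appears. This is what makes the constant $\sqrt{2}$ come out cleanly.
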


\begin{proof}
The estimate (\ref{ge4}) is trivial if $k=l=0$, since the family $(\psi_{n})_{n \in \nn}$ is an orthonormal basis of $L^2(\R)$. We notice from (\ref{ge1}) and (\ref{ge2}) that
\begin{equation}\label{ge6}
v\psi_n=(A_++A_-)\psi_n=\sqrt{n+1}\psi_{n+1}+\sqrt{n}\psi_{n-1},
\end{equation}
\begin{equation}\label{ge7}
\partial_v\psi_n=\frac{1}{2}(A_--A_+)\psi_n=\frac{1}{2}\sqrt{n}\psi_{n-1}-\frac{1}{2}\sqrt{n+1}\psi_{n+1}.
\end{equation}
This implies that
$$\|v\psi_n\|_{L^2(\rr)}=\sqrt{2n+1}, \quad \|\partial_v\psi_n\|_{L^2(\rr)}=\frac{1}{2}\sqrt{2n+1},$$
since $(\psi_{n})_{n \in \nn}$ is an orthonormal basis of $L^2(\R)$.
It follows that the estimate (\ref{ge4}) holds as well when $(k,l)=(1,0)$ or $(k,l)=(0,1)$. We complete the proof of the estimate (\ref{ge4}) by induction. We assume that the estimate holds for any $k,l \geq 0$, $k+l \leq m$, with $m \geq 1$. Let $k,l \geq 0$ such that $k+l=m$. It follows from (\ref{ge6}) and (\ref{ge7}) that
$$v^{k+1}\partial_v^l\psi_n=\sqrt{n+1}v^{k}\partial_v^l\psi_{n+1}+\sqrt{n}v^{k}\partial_v^l\psi_{n-1}-lv^{k}\partial_v^{l-1}\psi_{n},$$
$$v^k\partial_v^{l+1}\psi_n=\frac{1}{2}\sqrt{n}v^{k}\partial_v^l\psi_{n-1}-\frac{1}{2}\sqrt{n+1}v^{k}\partial_v^l\psi_{n+1}.$$
We deduce from the induction hypothesis that
\begin{multline*}
\|v^{k+1}\partial_v^l\psi_n\|_{L^2(\rr)} \leq \sqrt{n+1}\|v^{k}\partial_v^l\psi_{n+1}\|_{L^2(\rr)}+\sqrt{n}\|v^{k}\partial_v^l\psi_{n-1}\|_{L^2(\rr)}+l\|v^{k}\partial_v^{l-1}\psi_{n}\|_{L^2(\rr)} \\ \leq
2^{k+1}\sqrt{\frac{(k+l+n+1)!}{n!}}\Big(\frac{1}{2}+\frac{\sqrt{n}\sqrt{n}+l}{2\sqrt{(k+l+n+1)(k+l+n)}}\Big) \leq 2^{k+1}\sqrt{\frac{(k+l+n+1)!}{n!}},
\end{multline*}
\begin{multline*}
\|v^k\partial_v^{l+1}\psi_n\|_{L^2(\rr)} \leq \frac{1}{2}\sqrt{n}\|v^{k}\partial_v^l\psi_{n-1}\|_{L^2(\rr)}+\frac{1}{2}\sqrt{n+1}\|v^{k}\partial_v^l\psi_{n+1}\|_{L^2(\rr)}\\ \leq
2^{k}\sqrt{\frac{(k+l+n+1)!}{n!}}\Big(\frac{1}{2}+\frac{\sqrt{n}\sqrt{n}}{2\sqrt{(k+l+n+1)(k+l+n)}}\Big) \leq 2^{k}\sqrt{\frac{(k+l+n+1)!}{n!}}.
\end{multline*}
This ends the proof of the estimate (\ref{ge4}). 
We then prove the estimates (\ref{ge5}).
When $n=0$, we deduce from (\ref{ge4}) that
$$\forall k,l \geq 0, \quad \|v^k\partial_v^l\psi_0\|_{L^2(\rr)} \leq 2^k\sqrt{{(k+l)!}}\le 2^{\frac{3k+l}{2}}\sqrt{{k!}}\sqrt{{l!}},$$
since 
$$C_{k+l}^k=\frac{(k+l)!}{k!l!} \leq 2^{k+l}.$$
It follows that 
$$\forall r \geq \frac{1}{2}, \forall \eps >0, \forall k,l \geq 0, \quad 
\|v^k\partial_v^l\psi_0\|_{L^2(\rr)} \leq \sqrt{2}\Big(\frac{2^{\frac{3}{2}+r}e^r}{\inf(\eps^r,1)}\Big)^{k+l}(k!)^r(l!)^r,$$
since
$$\forall r \geq \frac{1}{2}, \forall \eps >0, \quad 2^{\frac{3k+l}{2}}\sqrt{{k!}}\sqrt{{l!}} \leq (2^{\frac{3}{2}})^{k+l}({k!})^r({l!})^r \leq \sqrt{2}\Big(\frac{2^{\frac{3}{2}+r}e^r
}{\inf(\eps^r,1)}\Big)^{k+l}({k!})^r({l!})^r.$$
The estimates (\ref{ge5}) therefore hold when $n=0$.
When $k=l=0$ and $n \geq 1$, the estimates (\ref{ge5}) also hold since $\|\psi_n\|_{L^2(\rr)} =1$.
From now, we may therefore assume that $k+l \geq 1$ and $n\ge 1$.
We notice that for all $n\ge 1$,
\begin{multline*}
n!=\mathbf{\Gamma}(n+1)=\int_{0}^{+\io} e^{-t} t^{n}dt=\Big(\frac n e\Big)^{n}\int_{0}^{+\io} ne^{-(s-1)n} s^{n}ds \\ \ge \Big(\frac n e\Big)^{n}\int_{1}^{2} ne^{-(s-1)n} ds 
=\Big(\frac n e\Big)^{n}(1-e^{-n})\ge\frac12\Big(\frac n e\Big)^{n},
\end{multline*}
so that 
$$\forall n \geq 1, \quad n^{\frac n2}\le \sqrt{2} \sqrt{n!} e^{\frac{n}{2}}.$$
It follows that 
\begin{equation}\label{vbn1}
\forall r\ge \frac{1}{2}, \forall n\ge 1,\quad n^{\frac n2}\le \sqrt{2} \sqrt{n! e^{n}}\le  \sqrt{2}\bigl({n! e^{n}}\bigr)^{r}.
\end{equation}
We distinguish two cases. When $1\le k+l \leq n$, we deduce from (\ref{ge4}) that for all $r \geq 1/2$, $\eps >0$,
\begin{multline}\label{ge9}
\|v^k\partial_v^l\psi_n\|_{L^2(\rr)} \leq 2^k\sqrt{\frac{(k+l+n)!}{n!}} \leq 2^k(k+l+n)^{\frac{k+l}{2}}\leq 2^k(2n)^{\frac{k+l}{2}}\\
\leq \Big(\frac{2^{\frac{3}{2}}}{\eps^r}\Big)^{k+l}((k+l)!)^r\Big(\frac{(\eps n^{\frac{1}{2r}})^{k+l}}{(k+l)!}\Big)^r
\leq \Big(\frac{2^{\frac{3}{2}}}{\eps^r}\Big)^{k+l}\exp(\eps r n^{\frac{1}{2r}})((k+l)!)^r.
\end{multline}
When $k+l > n\ge 1$, we deduce from (\ref{ge4}) and (\ref{vbn1}) that for all $r \geq 1/2$,
\begin{multline}\label{ge10}
\|v^k\partial_v^l\psi_n\|_{L^2(\rr)} \leq 2^k\sqrt{\frac{(k+l+n)!}{n!}} \leq 2^k(k+l+n)^{\frac{k+l}{2}}\leq 2^k(2k+2l)^{\frac{k+l}{2}}\\
\leq  \sqrt 2 (2^{\frac{3}{2}}e^{r})^{k+l}((k+l)!)^r.
\end{multline}
It follows from (\ref{ge9}) and (\ref{ge10}) that for all $r \geq 1/2$, $\eps >0$, $n \geq 1$, $k+l \geq 1$, 
$$\|v^k\partial_v^l\psi_n\|_{L^2(\rr)} \leq \sqrt 2 \Big(\frac{2^{\frac{3}{2}}e^r}{\inf(\eps^r,1)}\Big)^{k+l}\exp(\eps r n^{\frac{1}{2r}})((k+l)!)^r.$$
By using that 
$$\frac{(k+l)!}{k!l!}=C_{k+l}^k \leq 2^{k+l},$$
we finally obtain that for all $r \geq 1/2$, $\eps >0$, $n \ge 1$, $k+l \geq 1$,
$$\|v^k\partial_v^l\psi_n\|_{L^2(\rr)} \leq \sqrt 2 \Big(\frac{2^{\frac{3}{2}+r}e^r}{\inf(\eps^r,1)}\Big)^{k+l}\exp(\eps r n^{\frac{1}{2r}})(k!)^r(l!)^r.$$
The estimates (\ref{ge5}) therefore hold when $k+l \geq 1$ and $n \geq 1$. The proof of Lemma~\ref{ge3} is complete.
\end{proof}

\subsection{Gelfand-Shilov regularity}\label{regularity}We refer the reader to the works~\cite{gelfand,rodino1,rodino,toft} and the references herein for extensive expositions of the Gelfand-Shilov regularity theory.  The Gelfand-Shilov spaces $S_{\nu}^{\mu}(\rr)$, with $\mu,\nu>0$, $\mu+\nu\geq 1$, are defined as the spaces of smooth functions $f \in C^{\infty}(\rr)$ satisfying
$$\exists C \geq 1, \quad |\partial_v^{p}f(v)| \leq C^{p+1}(p!)^{\mu}e^{-\frac{1}{C}|v|^{1/\nu}}, \quad v \in \rr, \ p \geq 0,$$
or, equivalently
$$\exists C \geq 1, \quad \sup_{v \in \rr}|v^q\partial_v^{p}f(v)| \leq C^{p+q+1}(p !)^{\mu}(q !)^{\nu}, \quad p, q \geq 0.$$
These Gelfand-Shilov spaces  $S_{\nu}^{\mu}(\rr)$ may also be characterized as the spaces of Schwartz functions $f \in \mathscr{S}(\rr)$ satisfying
$$\exists C>0, \eps>0, \quad |f(v)| \leq C e^{-\eps|v|^{1/\nu}}, \quad v \in \rr, \qquad |\widehat{f}(\xi)| \leq C e^{-\eps|\xi|^{1/\mu}}, \quad \xi \in \rr.$$
In particular, we notice that Hermite functions belong to the symmetric Gelfand-Shilov space  $S_{1/2}^{1/2}(\rr)$. More generally, the symmetric Gelfand-Shilov spaces $S_{\mu}^{\mu}(\rr)$, with $\mu \geq 1/2$, can be characterized through the decomposition into the Hermite basis $(\psi_{n})_{n \geq 0}$,
see e.g. \cite{toft} (Proposition~1.2),
\begin{multline*}
f \in S_{\mu}^{\mu}(\rr) \Leftrightarrow f \in L^2(\rr), \ \exists t_0>0, \ \big\|\big((f,\psi_{n})_{L^2}\exp({t_0n^{\frac{1}{2\mu}})}\big)_{n \geq 0}\big\|_{l^2(\nn)}<+\infty\\
\Leftrightarrow f \in L^2(\rr), \ \exists t_0>0, \ \|e^{t_0\mathcal{H}^{1/2\mu}}f\|_{L^2}<+\infty,
\end{multline*}
where $\mathcal{H}=-\Delta_v+\frac{v^2}{4}$ is the harmonic oscillator and $(\psi_{n})_{n \geq 0}$ stands for the Hermite basis defined in Section~\ref{6.sec.harmo}.

\subsection{The Kac collision operator}\label{kacsection}

For $\phi$ a function defined on $\R$, we denote its even part
$$\breve{\phi} (\theta)=\frac12\bigl(\phi(\theta)+\phi(-\theta)\bigr).$$
The following lemma is proved in~\cite{LMPX1} (Lemma~A.1):

\begin{lemma}\label{new003}
Let $\nu\in L^1_{loc}({\rr^*})$ be an even function such that $\theta^2\nu(\theta)\in L^1(\rr)$. Then, the mapping
$$ \phi \in C^2_{c}(\rr) \mapsto\lim_{\varepsilon\rightarrow 0_{+}}\int_{\vert\theta\vert\ge \varepsilon}
\nu(\theta)\bigl(\phi(\theta)-\phi(0)\bigr) d\theta=\int_{0}^1\int_{\rr}(1-t)\theta^2\nu(\theta)\phi''(t\theta) d\theta dt,$$
defines a distribution of order 2 denoted $\finp{(\nu)}$. The linear form $\finp{(\nu)}$ can be extended to $C^{1,1}$ functions
($C^1$ functions whose second derivative is $L^\io$).
For $\phi\in C^{1,1}$ satisfying $\phi(0)=0$, the function $\nu \breve\phi$ belongs to $L^1(\R)$ and
$$\poscal{\finp{(\nu)}}{\phi}=\int\nu(\theta)\breve{\phi}(\theta)d\theta.$$
\end{lemma}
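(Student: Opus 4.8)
The plan is to base everything on the second-order Taylor expansion with integral remainder and to exploit the evenness of $\nu$ together with the assumption $\theta^2\nu(\theta)\in L^1(\rr)$. First, for $\phi\in C^2_c(\rr)$ and $\theta\neq 0$ I would write
$$\phi(\theta)-\phi(0)=\phi'(0)\theta+\theta^2\int_0^1(1-t)\phi''(t\theta)\,dt,$$
which follows from $\int_0^\theta(\theta-s)\phi''(s)\,ds$ after the substitution $s=t\theta$. Since $\theta\mapsto\nu(\theta)\theta$ is odd and integrable on $\{|\theta|\ge\eps\}$ for every $\eps>0$ (here $\theta^2\nu\in L^1$ ensures integrability near infinity and $\nu\in L^1_{\mathrm{loc}}(\rr^\ast)$ near $0$), the principal-value cancellation $\int_{|\theta|\ge\eps}\nu(\theta)\phi'(0)\theta\,d\theta=0$ holds at every truncation level $\eps$, whence
$$\int_{|\theta|\ge\eps}\nu(\theta)\bigl(\phi(\theta)-\phi(0)\bigr)\,d\theta=\int_{|\theta|\ge\eps}\nu(\theta)\theta^2\int_0^1(1-t)\phi''(t\theta)\,dt\,d\theta.$$
The integrand on the right is dominated by $\tfrac12\|\phi''\|_{L^\infty}\,\theta^2|\nu(\theta)|\in L^1(\rr)$, so dominated convergence allows passage to the limit $\eps\to0_+$ and Fubini's theorem allows interchange of the $t$ and $\theta$ integrations, yielding exactly the asserted identity, which I then take as the definition of $\langle\finp(\nu),\phi\rangle$.

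The same domination gives at once the continuity estimate $|\langle\finp(\nu),\phi\rangle|\le\tfrac12\|\theta^2\nu\|_{L^1(\rr)}\|\phi''\|_{L^\infty(\rr)}$ for every $\phi$ supported in a fixed compact set, which shows that $\finp(\nu)$ is a distribution of order at most $2$. Moreover the double integral $\int_0^1\int_\rr(1-t)\theta^2\nu(\theta)\phi''(t\theta)\,d\theta\,dt$ and this estimate still make sense for any $\phi\in C^{1,1}(\rr)$, because Taylor's formula with integral remainder remains valid when $\phi'$ is merely Lipschitz (hence absolutely continuous, with a.e. derivative $\phi''\in L^\infty$). I would therefore define $\langle\finp(\nu),\phi\rangle$ on $C^{1,1}(\rr)$ by that formula; since $C^2_c(\rr)\subset C^{1,1}(\rr)$ and the two definitions agree there, this is a genuine extension.

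For the last assertion, take $\phi\in C^{1,1}(\rr)$ with $\phi(0)=0$. Applying the Taylor identity to $\phi(\theta)$ and to $\phi(-\theta)$ and averaging, the odd term $\phi'(0)\theta$ cancels and
$$\breve\phi(\theta)=\theta^2\int_0^1(1-t)\,\tfrac12\bigl(\phi''(t\theta)+\phi''(-t\theta)\bigr)\,dt,$$
so $|\nu(\theta)\breve\phi(\theta)|\le\tfrac12\|\phi''\|_{L^\infty}\theta^2|\nu(\theta)|$, giving $\nu\breve\phi\in L^1(\rr)$. Integrating this identity against $\nu$, using Fubini and then in the inner integral the change of variables $\theta\mapsto-\theta$ together with the evenness of $\nu$ and of $\theta^2$ to replace $\phi''(-t\theta)$ by $\phi''(t\theta)$, I recover $\int_0^1\int_\rr(1-t)\theta^2\nu(\theta)\phi''(t\theta)\,d\theta\,dt=\langle\finp(\nu),\phi\rangle$, which is the claimed formula $\langle\finp(\nu),\phi\rangle=\int\nu(\theta)\breve\phi(\theta)\,d\theta$.

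The argument is essentially bookkeeping. The only points that need a little attention are the validity of the integral form of Taylor's formula at the reduced regularity $C^{1,1}$ (via absolute continuity of $\phi'$), the fact that the odd linear term cancels at each fixed $\eps$ rather than only in the limit, and the symmetrization $\theta\mapsto-\theta$ in the final step; none of these constitutes a serious obstacle, and all integrability issues are controlled by the single bound $\theta^2|\nu(\theta)|\in L^1(\rr)$.
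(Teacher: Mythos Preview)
Your argument is correct. The Taylor expansion with integral remainder, the parity cancellation of the linear term at each truncation level, the dominated-convergence/Fubini passage, the order-$2$ estimate, the extension to $C^{1,1}$ via absolute continuity of $\phi'$, and the final symmetrization are all sound, and you have correctly identified the single integrability hypothesis $\theta^2\nu\in L^1$ as controlling every step. One cosmetic remark: when you write ``$\nu\in L^1_{\mathrm{loc}}(\rr^\ast)$ near $0$'' to justify integrability of $\theta\nu$ on $\{|\theta|\ge\eps\}$, you really mean on the compact annulus $\{\eps\le|\theta|\le R\}$, not near $0$ itself; the phrasing is slightly off but the mathematics is fine.

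As for comparison: the paper does not actually prove this lemma. It is stated in the appendix with the sentence ``The following lemma is proved in~\cite{LMPX1} (Lemma~A.1)'' and no argument is given here. Your proof is the natural one and is presumably what appears in the cited reference.
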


Let $g,f \in \mathscr{S}(\rr)$ be Schwartz functions. We define
$$F_{f,g}(\underbrace{v,v_{*}}_{w})= f(v) g(v_{*}),\quad
\phi_{f,g}(\theta,v)=\int_{\rr}\bigl(F_{f,g}(R_{\theta}w)-F_{f,g}(w)\bigr) dv_{*},$$
where $R_{\theta}$ stands for the rotation of angle $\theta$ in $\R^2$,
$$R_{\theta}=\mat22{\cos\theta}{-\sin\theta}{\sin\theta}{\cos\theta}=\exp(\theta J),\quad J=R_{\frac{\pi}{2}}.$$
We have
$$F_{f,g}(R_{\theta}w)-F_{f,g}(w)=f(v\cos\theta - v_*\sin\theta)g(v\sin\theta +v_*\cos\theta)-f(v) g(v_{*}),$$
so that by using the notations $f_*'=f(v_*')$, $f'=f(v')$, $f_*=f(v_*)$, $f=f(v)$ with
$$v'=v\cos\theta - v_*\sin\theta, \quad v'_* =v\sin\theta +v_*\cos\theta, \quad v,v_* \in \rr,$$
we may write
$$\phi_{f,g}(\theta,v)=\int_{\R}(g'_{*}f'-g_{*}f)dv_{*}.$$
Furthermore, we easily check that its even part as a function of the variable $\theta$ is given by
$$\breve{\phi}_{f,g}(\theta,v)=\int_{\R}\bigl((\breve{g})'_{*}f'-g_{*}f\bigr) dv_{*}
=\int_{\R}\bigl((\breve{g})'_{*}f'-(\breve{g})_{*}f\bigr) dv_{*}.$$
Notice that for each $\theta \in \rr$, the mapping
$$(f,g) \in \mathscr S(\R)\times\mathscr S(\R) \mapsto \phi_{f,g}(\theta,\cdot)\in \mathscr S(\R),$$
is continuous uniformly with respect to~$\theta$. In fact, the function $F_{f,g}$ belongs to $\mathscr S(\R^2)$. By denoting~$\Pi_1$ the projection onto the first variable, this implies that the function
$$v^l\p_{v}^k\phi_{f,g}(\theta,v)=\int\Pi_{1}(w)^l\p_{v}^k \Phi_{f,g}(\theta,w) dv_{*},$$
is bounded since
$$\Phi_{f,g}(\theta,w)=F_{f,g}(R_{\theta}w)-F_{f,g}(w) \in \mathscr S(\R^2).$$
As a result, the function $v\mapsto \phi_{f,g}(\theta,v)$ belongs to $\mathscr S(\R)$ uniformly with respect to~$\theta$. Moreover, the second derivative with respect to~$\theta$ of the function $\Phi_{f,g}$,
$$F_{f,g}''(e^{\theta J}w)\bigl(e^{\theta J} Jw,e^{\theta J} Jw\bigr)-F_{f,g}'(e^{\theta J}w)e^{\theta J}w,$$
belongs to $\mathscr S(\R^2)$ uniformly with respect to $\theta$. This implies that the second derivative with respect to $\theta$ of the function $\phi_{f,g}$ is in $\mathscr S(\R)$ uniformly with respect to $\theta$.
We define the non-cutoff Kac operator as
$$K(g,f)(v)=\poscal{\finp(\un_{(-\frac{\pi}{4},\frac{\pi}{4})}\beta)}{\phi_{f,g}(\cdot,v)},$$
when $\beta$ is a function satisfying \eqref{w5}.
Since $\phi_{f,g}(0,v)\equiv 0$, Lemma \ref{new003} allows to replace the finite part by the absolutely converging integral
$$K(g,f)(v)=\int_{|\theta| \leq \frac{\pi}{4}}\beta(\theta)\Bigl(\int_{\RR}\big({\breve{g}}'_* f'-{\breve{g}}_*f \big) dv_*\Bigr)d\theta=K(\breve{g},f)(v).$$
It was established in~\cite{LMPX1} (Lemma~A.2) that $K(g,f)\in \mathscr{S}(\rr)$, when $g,f \in \mathscr{S}(\rr)$. We also recall the Bobylev formula providing an explicit formula for the Fourier transform of the Kac operator
\begin{equation}\label{cl11}
\widehat{K(g,f)}(\xi)=\int_{|\theta| \leq \frac{\pi}{4}}\beta(\theta) \left[\widehat{\breve{g}}(\xi \sin{\theta})\widehat{f}(\xi\cos{\theta})-\widehat g(0)\widehat{f}(\xi)\right]d\theta,
\end{equation}
when $f,g\in \mathscr S(\R)$.
The proof of this formula may be found in~\cite{LMPX1} (Lemma~A.4).

\subsection{Metrics on the phase space}\label{symbolic}
The purpose of this section is to check that the two metrics
$$\Gamma_0 = \frac{dv^2+d\eta^2}{\langle (v,\eta) \rangle^2}, \qquad \Gamma_1 = \frac{dv^2+d\eta^2}{M(v,\eta,\xi)},$$
defined in (\ref{f1}) are admissible (slowly varying, temperate, satisfying the uncertainty principle). We refer the reader to~\cite{Le} (Definition~2.2.15) for the definition of an admissible metric and the definition of an admissible weight associated to an admissible metric.
Regarding the metric $\Gamma_0$, this property is established in~\cite{Le} (Lemma~2.2.18). As powers of the gain function associated to the metric $\Gamma_0$, the functions $\langle (v,\eta) \rangle^m$, with $m \in \rr$, are admissible weights for the metric $\Gamma_0$.
Regarding the second metric
$$\Gamma_1=\frac{dv^2+d\eta^2}{M(v,\eta,\xi)},$$
we begin by checking that this metric is slowly varying. To that end, it is sufficient to check that
\begin{multline*}
\exists C>0, \exists r>0, \forall (v_1,\eta_1) \in \rr^2, \forall (v_2,\eta_2) \in \rr^2, \forall \xi \in \rr, \\
\frac{|v_1-v_2|^2+|\eta_1-\eta_2|^2}{M(v_1,\eta_1,\xi)} \leq r \Rightarrow \frac{1}{C}M(v_1,\eta_1,\xi) \leq M(v_2,\eta_2,\xi) \leq C M(v_1,\eta_1,\xi).
\end{multline*}
Indeed, when
$$\frac{|v_1-v_2|^2+|\eta_1-\eta_2|^2}{M(v_1,\eta_1,\xi)} \leq r,$$
it follows from (\ref{ya2}) that
\begin{multline}\label{f6}
M(v_2,\eta_2,\xi)=1+v_2^2+\eta_2^2+(1+v_2^2+\eta_2^2+\xi^2)^{\frac{1}{2s+1}} \lesssim 1+v_2^2+\eta_2^2+|\xi|^{\frac{2}{2s+1}} \\
\leq 1+2v_1^2+2\eta_1^2+2(v_1-v_2)^2+2(\eta_1-\eta_2)^2+|\xi|^{\frac{2}{2s+1}} \leq 2(r+1)M(v_1,\eta_1,\xi),
\end{multline}
since $0<s<1$. On the other hand, we have
\begin{multline*}
M(v_1,\eta_1,\xi)=1+v_1^2+\eta_1^2+(1+v_1^2+\eta_1^2+\xi^2)^{\frac{1}{2s+1}} \lesssim 1+v_1^2+\eta_1^2+|\xi|^{\frac{2}{2s+1}} \\
\leq 1+2v_2^2+2\eta_2^2+2(v_1-v_2)^2+2(\eta_1-\eta_2)^2+|\xi|^{\frac{2}{2s+1}} \leq 2M(v_2,\eta_2,\xi)+2rM(v_1,\eta_1,\xi).
\end{multline*}
This implies that $M(v_1,\eta_1,\xi) \lesssim M(v_2,\eta_2,\xi)$ when $0<r\ll 1$. This proves that the metric $\Gamma_1$ is slowly varying.
According to~\cite{Le} (Lemma~2.2.14), it is sufficient for checking the temperance to establish that
\begin{multline*}
\exists C>0, \exists N \geq 0, \forall (v_1,\eta_1) \in \rr^2, \forall (v_2,\eta_2) \in \rr^2, \forall \xi \in \rr, \\
\frac{M(v_2,\eta_2,\xi)}{M(v_1,\eta_1,\xi)} \leq C\big(1+M(v_1,\eta_1,\xi)(|v_1-v_2|^2+|\eta_1-\eta_2|^2)\big)^N.
\end{multline*}
Indeed, we deduce from (\ref{f6}) that
$$\frac{M(v_2,\eta_2,\xi)}{M(v_1,\eta_1,\xi)} \leq 2+\frac{2(v_1-v_2)^2+2(\eta_1-\eta_2)^2}{M(v_1,\eta_1,\xi)} \leq 2\big(1+M(v_1,\eta_1,\xi)(|v_1-v_2|^2+|\eta_1-\eta_2|^2)\big),$$
since $M(v_1,\eta_1,\xi) \geq 1$.
This proves that the metric $\Gamma_1$ is temperate. This metric also trivially satisfies the uncertainty principle since $M(v_1,\eta_1,\xi) \geq 1$. This implies that $\Gamma_1$ is an admissible metric on the phase space $\rr_{v,\eta}^2$ uniformly with respect to the parameter $\xi \in \rr$.
As powers of the gain function associated to the metric $\Gamma_1$, the functions $M^m$, with $m \in \rr$, are admissible weights for the metric $\Gamma_1$ uniformly with respect to the parameter~$\xi \in \rr$.

\bigskip
\noindent
{\bf Acknowledgements.}
The research of the second author is supported by the Grant-in-Aid for Scientific Research No.~25400160, Japan Society for the Promotion of Science. The research of the third author is supported by the ANR NOSEVOL (Project: ANR 2011 BS01019 01). The research of the last author is supported partially by ``The Fundamental Research Funds for Central Universities''
and the National Science Foundation of China No. 11171261.

\end{document}